\DeclareMathAlphabet{\mathpzc}{OT1}{pzc}{m}{it}
\DeclareMathAlphabet{\mathpzc}{OT1}{pzc}{m}{it}
\newtheorem{Definition}{Definition}[subsection]
\newtheorem{Theorem}[Definition]{Theorem}
\newtheorem{Lemma}[Definition]{Lemma}
\newtheorem{Proposition}[Definition]{Proposition}
\newtheorem{Corollary}[Definition]{Corollary}
\newtheorem{Example}[Definition]{Example}
\newtheorem{Conjecture}[]{Conjecture}
\newtheorem{Notation}[Definition]{Notation}
\newtheorem{Hypothesis}[]{Hypothesis}
\newtheorem*{Conditions}{Conditions}
\theoremstyle{definition}
\newtheorem{Remark}[Definition]{Remark}
\newcommand{\blackqed}{\hfill$\blacksquare$}
\newcommand\scalemath[2]{\scalebox{#1}{\mbox{\ensuremath{\displaystyle #2}}}}
\DeclareMathOperator\C{\mathbb{C}}
\DeclareMathOperator\F{\mathbb{F}}
\DeclareMathOperator\Q{\mathbb{Q}}
\DeclareMathOperator\R{\mathbb{R}}
\DeclareMathOperator\Z{\mathbb{Z}}
\DeclareMathOperator\bfitx{\textbf{\textit{x}}}
\DeclareMathOperator\sfD{\mathsf{D}}
\DeclareMathOperator\bbE{\mathbb{E}}
\DeclareMathOperator\calD{\mathcal{D}}
\DeclareMathOperator\calL{\mathcal{L}}
\DeclareMathOperator\calO{\mathcal{O}}
\DeclareMathOperator\calS{\mathcal{S}}
\DeclareMathOperator\calT{\mathcal{T}}
\DeclareMathOperator\calU{\mathcal{U}}
\DeclareMathOperator\calW{\mathcal{W}}
\DeclareMathOperator\calX{\mathcal{X}}
\DeclareMathOperator\calY{\mathcal{Y}}
\DeclareMathOperator\calZ{\mathcal{Z}}
\DeclareMathOperator\scrE{\mathscr{E}}
\DeclareMathOperator\scrF{\mathscr{F}}
\DeclareMathOperator\scrG{\mathscr{G}}
\DeclareMathOperator\scrH{\mathscr{H}}
\DeclareMathOperator\scrM{\mathscr{M}}
\DeclareMathOperator\scrN{\mathscr{N}}
\DeclareMathOperator\scrO{\mathscr{O}}
\DeclareMathOperator\frakD{\mathfrak{D}}
\DeclareMathOperator\frakS{\mathfrak{S}}
\DeclareMathOperator\frakT{\mathfrak{T}}
\DeclareMathOperator\frakU{\mathfrak{U}}
\DeclareMathOperator\frakX{\mathfrak{X}}
\DeclareMathOperator\frakY{\mathfrak{Y}}
\DeclareMathOperator\frakZ{\mathfrak{Z}}
\DeclareMathOperator\frakm{\mathfrak{m}}
\DeclareMathOperator\frakp{\mathfrak{p}}
\DeclareMathOperator\codim{codim}
\DeclareMathOperator\Ext{Ext}
\DeclareMathOperator\GL{GL}
\DeclareMathOperator\Aut{Aut}
\DeclareMathOperator\End{End}
\DeclareMathOperator\Hom{Hom}
\DeclareMathOperator\sheafHom{\scrH\!\!\!om}
\DeclareMathOperator\AJ{AJ}
\DeclareMathOperator\coker{coker}
\DeclareMathOperator\Cone{Cone}
\DeclareMathOperator\disc{disc}
\DeclareMathOperator\dlog{dlog}
\DeclareMathOperator\DR{DR}
\DeclareMathOperator\dR{dR}
\DeclareMathOperator\et{\text{\'{e}t}}
\DeclareMathOperator\Fil{Fil}
\DeclareMathOperator\fp{fp}
\DeclareMathOperator\HK{HK}
\DeclareMathOperator\holim{holim}
\DeclareMathOperator\hocolim{hocolim}
\DeclareMathOperator\id{id}
\DeclareMathOperator\image{image}
\DeclareMathOperator\nr{nr}
\DeclareMathOperator\OC{OC}
\DeclareMathOperator\one{\mathbbm{1}}
\DeclareMathOperator\Poly{\mathbf{Poly}}
\newcommand{\pr}{pr}
\DeclareMathOperator\Res{Res}
\DeclareMathOperator\rig{rig}
\DeclareMathOperator\specialisation{sp}
\DeclareMathOperator\Spa{Spa}
\DeclareMathOperator\Spadagger{Spa^{\normalfont \dagger}}
\DeclareMathOperator\Spec{Spec}
\DeclareMathOperator\Spf{Spf}
\DeclareMathOperator\Spwf{Spwf}
\DeclareMathOperator\Sym{Sym}
\DeclareMathOperator\syn{syn}
\DeclareMathOperator\tr{tr}
\DeclareMathOperator\unip{unip}
\DeclareMathOperator\univ{univ}
\DeclareMathOperator\Mod{\mathbf{\mathsf{Mod}}}
\DeclareMathOperator\Sch{\mathbf{\mathsf{Sch}}}
\DeclareMathOperator\FSch{\mathbf{\mathsf{FSch}}}
\DeclareMathOperator\Rig{\mathbf{\mathsf{Rig}}}
\DeclareMathOperator\Sets{\mathbf{\mathsf{Sets}}}
\DeclareMathOperator\FIsoc{\mathbf{\mathsf{FIsoc}}}
\DeclareMathOperator\Isoc{\mathbf{\mathsf{Isoc}}}
\DeclareMathOperator\Shv{\mathbf{\mathsf{Shv}}}
\DeclareMathOperator\Syn{\mathbf{\mathsf{Syn}}}
\DeclareMathOperator\bfepsilon{\boldsymbol{\varepsilon}}
\DeclareMathOperator\llbrack{\![\![\!}
\DeclareMathOperator\rrbrack{\!]\!]}
\renewcommand{\maketitle}{\bgroup\setlength{\parindent}{0pt}
\begin{flushleft}
  \LARGE{\textbf{\@title}}
  
  \vspace{4mm}
  
  \large{\textsc{\@author}} \hfill \normalfont{\text{\@date}}
  
  \vspace{4mm}
\end{flushleft}\egroup
}
\title{Finite polynomial cohomology with coefficients}
\author{Ting-Han Huang and Ju-Feng Wu}
\date{}
\begin{document}

\maketitle

{\footnotesize \noindent \textbf{Abstract.} We introduce a theory of finite polynomial cohomology with coefficients in this paper. We prove several basic properties and introduce an Abel--Jacobi map with coefficients. 
As applications, we use such a cohomology theory to study arithmetics of compact Shimura curves over $\Q$, and simplify proofs of the works of Darmon--Rotger and Bertolini--Darmon--Prasanna.}

\tableofcontents

\section{Introduction}

In this paper, we provide and study a theory of \emph{finite polynomial cohomology with coefficients}. Such a cohomology theory with trivial coefficient was first introduced by A. Besser in \cite{Besser-integral} for a proper smooth scheme.
In what follows, in order to motivate our study, we begin with a brief review of Besser's work. An overview of the present paper will be provided thereafter. In the end of this introduction, we discuss several further research directions and possible applications of our theory. 

\subsection{Besser's finite polynomial cohomology and the \texorpdfstring{$p$}{p}-adic Abel--Jacobi map}\label{subsection: review of Besser's work}
Fix a rational prime $p$ and let $K$ be a finite extension of $\Q_p$. Let $\calO_K$ be the ring of integer of $K$. Let $X$ be a proper smooth scheme over $\calO_K$ of relative dimension $d$. Given $m, n\in \Z$, Besser defined a cohomology theory $R\Gamma_{\fp, m}(X, n)$, whose $i$-th cohomology group is denoted by $H_{\fp, m}^i(X, n)$.
The novelties of this cohomology theory are the following:\begin{enumerate}
    \item[$\bullet$] One can view the theory of finite polynomial cohomology as a generalisation of Coleman's integration. 
    \item[$\bullet$] One can use finite polynomial cohomology to understand the $p$-adic Abel--Jacobi map explicitly.
\end{enumerate} Let us explain these in more details. 

From the definition of finite polynomial cohomology, one can easily deduce a fundamental exact sequence \begin{equation}\label{eq: fundamental exact seq. for fp cohomology}
    0 \rightarrow \frac{H_{\dR}^{i-1}(X_K)}{F^nH_{\dR}^{i-1}(X_K)} \xrightarrow{i_{\fp}} H_{\fp}^i(X, n) \xrightarrow{\pr_{\fp}} F^n H_{\dR}^i(X_K) \rightarrow 0,
\end{equation} where $H_{\fp}^i(X, n)$ stands for $H_{\fp, i}^i(X, n)$, and `$F^n$' stands for the $n$-th filtration of the de Rham cohomology groups. When $i = n = 1$, Besser explained in \cite[Theorem 1.1]{Besser-integral} that, given $\omega\in F^1H_{\dR}^1(X_K)$, any of its lift $\widetilde{\omega}\in H_{\fp}^1(X, 1)$ can be viewed as a Coleman integration of $\omega$. In particular, for any $x\in X(\calO_K)$, the map $x \mapsto x^*\widetilde{\omega} \in H^1_{\fp}(\Spec \calO_K,1) \cong K$ is the evaluation at $x$ of a Coleman integral $F_{\omega}$ of $\omega$.
Hence the finite polynomial cohomology can be viewed as a generalisation of Coleman's integration theory.

Let us turn our attention to the $p$-adic Abel--Jacobi map. We denote by $Z^i(X)$ the set of smooth irreducible closed subschemes of $X$ of codimension $i$ and let \[
    A^i(X) := \text{ the free abelian group generated by }Z^i(X).
\] Besser constructed a cycle class map \[
    \eta_{\fp}: A^i(X) \rightarrow H_{\fp}^{2i}(X, i), 
\] which, after composing with the projection $\pr_{\fp}: H_{\fp}^{2i}(X, i) \rightarrow F^i H_{\dR}^{2i}(X_K)$, agrees with the usual de Rham cycle class map. The $p$-adic Abel--Jacobi map is then defined to be \[
    \AJ : A^i(X)_0 := \ker \pr_{\fp}\circ \eta_{\fp} \xrightarrow{\eta_{\fp}} \frac{H_{\dR}^{2i-1}(X_K)}{ F^{i} H_{\dR}^{2i-1}(X_K) } \cong \left( F^{d-i+1} H_{\dR}^{2d-2i+1}(X_K) \right)^{\vee},
\] where the last isomorphism is given by the Poincaré duality. Besser then proved the following theorem.

\begin{Theorem}[$\text{\cite[Theorem 1.2]{Besser-integral}}$]\label{Theorem: Besser's AJ map}
For any $Z = \sum_{j} n_j Z_j\in A^i(X)_0$, $\AJ(Z)$ is the functional on $F^{d-i+1} H_{\dR}^{2d-2i+1}(X_K)$ such that, for any $\omega\in F^{d-i+1} H_{\dR}^{2d-2i+1}(X_K)$, \[
    \AJ(Z)(\omega) = \int_Z \omega := \sum_{j} n_j \tr_{Z_j}\iota_{Z_j}^*\widetilde{\omega},
\] where \begin{enumerate}
    \item[$\bullet$] $\widetilde{\omega}\in H_{\fp}^{2d-2i+1}(X, d-i+1)$ is a lift of $\omega$ via the projection $\pr_{\fp}$;
    \item[$\bullet$] $\iota_{Z_j}: Z_j \hookrightarrow X$ is the natural closed immersion; 
    \item[$\bullet$] $\tr_{Z_j}: H_{\fp}^{2d-2i+1}(Z_j, d-i+1) \cong H_{\dR}^{2d-2i}(Z_{j, K})\cong K$ is the canonical isomorphism induced from \eqref{eq: fundamental exact seq. for fp cohomology}.
\end{enumerate}
\end{Theorem}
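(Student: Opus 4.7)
The plan is to realise both sides of the desired equality as a single cup-product pairing inside finite polynomial cohomology, and then use the fundamental exact sequence together with Poincaré duality to identify the result with $\AJ(Z)(\omega)$. For this I will assume (to be built up in earlier sections) the standard package on $H_{\fp}^*$: a cup product $\cup$, Gysin pushforwards $(\iota_{Z_j})_{*}$ along the codimension-$i$ closed immersions $\iota_{Z_j}$ with $\eta_{\fp}(Z_j)=(\iota_{Z_j})_{*}(1)$ for $1\in H_{\fp}^0(Z_j,0)$, and a trace $\tr_X\colon H_{\fp}^{2d+1}(X,d+1)\to K$ which, via \eqref{eq: fundamental exact seq. for fp cohomology} (noting $F^{d+1}H_{\dR}^{2d+1}(X_K)=0$ for dimensional reasons), coincides with the usual de Rham trace $H_{\dR}^{2d}(X_K)\to K$.

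Since $Z\in A^i(X)_0$ one has $\pr_{\fp}(\eta_{\fp}(Z))=0$, so \eqref{eq: fundamental exact seq. for fp cohomology} yields a unique lift $[Z]_{\fp}\in H_{\dR}^{2i-1}(X_K)/F^{i}H_{\dR}^{2i-1}(X_K)$ with $i_{\fp}([Z]_{\fp})=\eta_{\fp}(Z)$, and $\AJ(Z)$ is by definition the image of $[Z]_{\fp}$ under Poincaré duality. I will then compute $\tr_X(\eta_{\fp}(Z)\cup\widetilde{\omega})\in K$ in two ways. On the geometric side, the projection formula $(\iota_{Z_j})_{*}(1)\cup\widetilde{\omega}=(\iota_{Z_j})_{*}(\iota_{Z_j}^{*}\widetilde{\omega})$ together with $\tr_X\circ(\iota_{Z_j})_{*}=\tr_{Z_j}$ gives
\[
\tr_X(\eta_{\fp}(Z)\cup\widetilde{\omega})\;=\;\sum_j n_j\,\tr_{Z_j}\iota_{Z_j}^{*}\widetilde{\omega}.
\]
On the cohomological side, substituting $\eta_{\fp}(Z)=i_{\fp}([Z]_{\fp})$ and using that the fp cup-product pairing restricts, along $i_{\fp}$, to the de Rham Poincaré pairing between $H_{\dR}^{2i-1}/F^{i}$ and $F^{d-i+1}H_{\dR}^{2d-2i+1}$ (the filtration shifts add correctly: $i+(d-i+1)=d+1$), one obtains
\[
\tr_X(i_{\fp}([Z]_{\fp})\cup\widetilde{\omega})\;=\;\langle [Z]_{\fp},\omega\rangle_{\mathrm{PD}}\;=\;\AJ(Z)(\omega).
\]
Equating the two computations proves the theorem.

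The main obstacle is the last compatibility, namely that cup product and trace on $H_{\fp}^{*}$ descend under \eqref{eq: fundamental exact seq. for fp cohomology} to the Poincaré pairing on $H_{\dR}^{*}$. This forces a genuinely chain-level construction of $R\Gamma_{\fp}(X,n)$ as a mapping fibre built from polynomial truncations of the de Rham and rigid complexes, an explicit pairing on that fibre, and a verification that the pairing is strictly compatible with the filtration shifts and with the boundary maps of the defining triangle. The projection formula in the first computation is somewhat less delicate but still non-formal: it should be reduced to the analogous statements in rigid and de Rham cohomology by checking that $(\iota_{Z_j})_{*}$ commutes with the constructions defining $H_{\fp}^{*}$.
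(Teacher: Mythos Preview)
Your approach is correct and is essentially the one the paper takes. Note that this particular theorem is stated in the introduction as a citation of Besser's result and is not proved in the paper itself; however, the paper does prove the generalisation with coefficients (Theorem~\ref{Theorem: Abel--Jacobi map}), and the argument there is exactly the two-way computation you outline: one uses the adjunction $\langle \iota_{\frakZ,*}\theta_Z,\widetilde{\omega}\rangle_{\fp,X}=\langle \theta_Z,\iota_{\frakZ}^*\widetilde{\omega}\rangle_{\fp,Z}$ (your projection formula, specialised to $\theta_Z=1$) together with the compatibility of the $\fp$-pairing with the de Rham Poincar\'e pairing along $i_{\fp}$ and $\pr_{\fp}$ (Corollary~\ref{Corollary: pairing for finite polynomial cohomology}).
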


\subsection{An overview of the present paper}
Immediately from the original construction, there are two natural questions: \begin{enumerate}
    \item[$\bullet$] Is there a theory of \emph{finite polynomial cohomology for general varieties}?
    \item[$\bullet$] Is there a theory of \emph{finite polynomial cohomology with non-trivial coefficients}?
\end{enumerate} The former is studied by Besser, D. Loeffler and S. Zerbes in \cite{BLZ-FPCoh} for varieties that is not required to have good reduction, by using methods developed by J. Nekovář and W. Nizioł in \cite{NN-syntomic}.

However, to the authors's knowledge, there seems to be no literature on finite polynomial cohomology with (non-trivial) coefficients. 
Note that there is an integration theory for overconvergent $F$-isocrystals developed by Coleman in \cite{C-pShimura}.
Moreover, the authors of \cite{BDP, DR} did apply Coleman integration theory to certain non-trivial sheaves on the modular curve.
Though they did not formalise a theory of finite polynomial cohomology with coefficients. 
Instead, they worked with finite polynomial cohomology (with trivial coefficient) of the Kuga--Sato variety over the modular curve. Nonetheless, the methods in \emph{loc. cit.} already use implicitly the idea of finite polynomial cohomology with non-trivial coefficients. It is then reasonable to expect the existence of a theory of finite polynomial cohomology with non-trivial coefficients.  

The purpose of this paper is to initiate the study of finite polynomial cohomology with non-trivial coefficients. The first question one encounters is what the eligible coefficients are. Fortunately, the work of K. Yamada (\cite{Yamada}) provides a suitable candidate. Indeed, given a proper weak formal scheme $\frakX$ over $\calO_K$ such that its special fibre $X_0$ is strictly semistable and its dagger generic fibre is smooth over $K$, Yamada defined a category $\Syn(X_0, \frakX, \calX)$ of \emph{syntomic coefficients}. Objects of this category are certain \emph{overconvergent $F$-isocrystals} that admit a filtration that satisfies Griffiths's transversality. Then, given an object $(\scrE, \Phi, \Fil^{\bullet})\in \Syn(X_0, \frakX, \calX)$, $n\in \Z$, and a (suitable) polynomial $P$, we are able provide a definition of \emph{syntomic $P$-cohomology with coefficients in $\scrE$}, denoted by $R\Gamma_{\syn, P}(\calX, \scrE, n)$.

Now, suppose that there is a proper smooth scheme $X$ over $\calO_K$ such that its $\varpi$-adic weak completion agrees with $\frakX$ (with trivial log structure). In this case, we are able to provide a definition of \emph{finite polynomial cohomology groups} $H_{\fp}^i(\calX, \scrE, n)$ for any $(\scrE, \Phi, \Fil^{\bullet})\in \Syn(X_0, \frakX, \calX)$ and any $n\in \Z$. We summarise some of its basic properties in the following theorem. 

\begin{Theorem}[Corollary \ref{Corollary: exact sequence when X is of good reduction}, Corollary \ref{Corollary: perfect pairing for good reduction case}, and Proposition \ref{Proposition: proper pushforward}]\label{Theorem: basic properties of fp cohomology; intro}
Let $X$ be a proper smooth scheme over $\calO_K$ of relative dimension $d$ with $\varpi$-adic weak completion $\frakX$. Let $(\scrE, \Phi, \Fil^{\bullet})\in \Syn(X_0, \frakX, \calX)$.
\begin{enumerate}
    \item[(i)]  For any $i\in \Z_{\geq 0}$ and any $n\in \Z$, we have a fundamental short exact sequence as in \eqref{eq: fundamental exact seq. for fp cohomology} \[
        0 \rightarrow \frac{H_{\dR}^{i-1}(\calX, \scrE)}{F^n H_{\dR}^{i-1}(\calX, \scrE)} \xrightarrow{i_{\fp}} H_{\fp}^i(\calX, \scrE, n) \xrightarrow{\pr_{\fp}} F^n H_{\dR}^i(\calX, \scrE) \rightarrow 0.
    \]
    \item[(ii)] There is a perfect pairing \[
        H_{\fp}^i(\calX, \scrE, n) \times H_{\fp}^{2d-i+1}(\calX, \scrE^{\vee}, d-n+1) \rightarrow K.
    \]
    \item[(iii)] For any irreducible closed subscheme $\iota: Z \hookrightarrow X$ which is smooth over $\calO_K$ and of codimension $i$, we have a pushforward map \[
        \iota_*: H_{\fp}^i(\calZ, \iota^* \scrE, n) \rightarrow H_{\fp}^{j+2i}(\calX, \scrE, n+i),
    \] where $\calZ$ is the dagger space associated with the $\varpi$-adic weak completion of $Z$.
\end{enumerate}
\end{Theorem}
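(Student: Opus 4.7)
My plan is to construct $R\Gamma_{\fp}(\calX, \scrE, n)$ as the syntomic $P$-cohomology $R\Gamma_{\syn, P_i}(\calX, \scrE, n)$ defined earlier, specialised to a polynomial $P_i$ chosen so that $P_i(\Phi)$ acts invertibly on both $H_{\rig}^{i-1}(\calX, \scrE)$ and $H_{\rig}^i(\calX, \scrE)$. Such a polynomial exists because these $K$-vector spaces are finite-dimensional and one can simply take $P_i$ to avoid the finitely many Frobenius eigenvalues on these groups. With this definition in hand, all three parts of the theorem reduce to statements about the rigid and filtered de Rham cohomology of $\scrE$ combined with the de Rham--rigid comparison.

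For (i), I would extract the short exact sequence from the long exact sequence associated to the mapping fibre presentation of $R\Gamma_{\syn, P_i}$. The invertibility of $P_i(\Phi)$ on the two adjacent rigid cohomology groups, together with the comparison isomorphism identifying $H^{*}_{\rig}(\calX, \scrE)$ with $H^*_{\dR}(\calX, \scrE)$, collapses the long exact sequence into the desired short one. This is essentially the same homological argument as in Besser's original construction, only with the rigid cohomology of an overconvergent $F$-isocrystal in place of the trivial coefficient.

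For (ii), the pairing arises from the cup product on syntomic $P$-cohomology, which sends $R\Gamma_{\syn, P_1}(\calX, \scrE, n) \otimes R\Gamma_{\syn, P_2}(\calX, \scrE^{\vee}, d-n+1)$ to $R\Gamma_{\syn, P_1 P_2}(\calX, \one, d+1)$ via the evaluation $\scrE \otimes \scrE^{\vee} \to \one$. Composing with the canonical isomorphism $H_{\fp}^{2d+1}(\calX, \one, d+1) \cong H_{\dR}^{2d}(\calX) \cong K$ given by the $i = 2d+1$ case of (i) and the classical trace produces the $K$-valued pairing. Perfectness then follows by a five-lemma argument applied to the short exact sequences from (i) for $\scrE$ and for $\scrE^{\vee}$, whose outer pairings reduce to the Poincaré dualities on de Rham and on rigid cohomology of overconvergent $F$-isocrystals (Kedlaya).

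For (iii), the Gysin morphisms on filtered de Rham cohomology and on rigid cohomology for overconvergent $F$-isocrystals (Tsuzuki, Shiho) combine to induce a pushforward on the mapping fibre defining $R\Gamma_{\syn, P_i}$, provided one verifies that $\iota_*$ commutes with $P(\Phi)$ and with the comparison isomorphism. The main obstacle, I expect, will be arranging these compatibilities coherently in the coefficient setting: while Poincaré duality, the Gysin map, and the cup product are individually standard for overconvergent $F$-isocrystals with a filtration satisfying Griffiths transversality, threading them through the syntomic mapping fibre requires careful bookkeeping of the polynomial $P_i$ — especially in (iii), where the $P$'s chosen on $\calZ$ and on $\calX$ must be compatible with Frobenius pullback — and of the behaviour of Frobenius eigenvalues under duality and pushforward.
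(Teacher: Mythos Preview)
Your condition on $P_i$ in part (i) is backwards, and this is a genuine gap. You require $P_i(\Phi)$ to act \emph{invertibly} on $H_{\rig}^i(\calX,\scrE)$, but the syntomic $P$-cohomology projects onto $H_{\rig}^i(\calX,\scrE)^{P(\Phi)=0}\cap F^nH_{\dR}^i$; if $P_i(\Phi)$ is invertible there, this kernel is zero and you lose the right-hand term of the sequence entirely. What you actually need is that $P_i(\Phi)$ \emph{annihilates} $H_{\rig}^i$ (so the kernel is everything) while remaining invertible on $H_{\rig}^{i-1}$. A single polynomial with both properties exists only if the Frobenius eigenvalue sets on degrees $i$ and $i-1$ are disjoint, and that is not automatic for a general overconvergent $F$-isocrystal: it is a purity statement. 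The paper secures this via the unipotence hypothesis on $(\scrE,\Phi)$, which forces the eigenvalues on $H_{\rig}^j$ to have Weil weight $j$ (reducing to the case of the trivial isocrystal), and then defines $H_{\fp}^i$ as the colimit over polynomials in $\Poly_i$ (those of pure weight $i$) rather than fixing a single $P_i$. You have no substitute for this step.

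Two smaller points. In (ii) the cup product on syntomic $P$-cohomology lands in $R\Gamma_{\syn,P*Q}$ with $P*Q(T)=\prod_{i,j}(1-\alpha_i\beta_jT)$, not the ordinary product $P_1P_2$; the $*$ operation is what makes the Frobenius compatibility commute. In (iii) the paper does not invoke ready-made Gysin maps for overconvergent isocrystals with filtration; it \emph{defines} $\iota_*$ on each of Hyodo--Kato and filtered de Rham cohomology as the transpose of $\iota^*$ under Poincar\'e duality (checking the filtration shift by hand), and then assembles these into the mapping fibre. Your sketch of (ii) and (iii) is otherwise on the right track, but it rests on the short exact sequence from (i), which as written does not hold.
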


The next question we asked is whether there exists an \emph{Abel--Jacobi map} for finite polynomial cohomology with coefficients. 
We first describe the relation between finite polynomial cohomology with coefficients and Coleman's integration of modules with connections.
After this is accomplished, we try to interpret the Abel--Jacobi map as certain kind of integration similar to the complex geometry case.
An immediate problem one encounters is which group should take the place of $A^i(X)$ when non-trivial coefficients are involved. More precisely, to the authors's knowledge, there does not exist a notion of `cycle class group with coefficients'. In this paper, we proposed a candidate for this purpose: \[
    A^i(X, \scrE) := \bigoplus_{Z\in Z^i(X)} H_{\dR}^0(Z, \scrE).
\] We then consider certain subgroup $A^i(X, \scrE)_0 \subset A^i(X, \scrE)$, which is an analogue to $A^i(X)_0$.

We have the following result.

\begin{Theorem}[Theorem \ref{Theorem: Abel--Jacobi map}]
There exists a finite polynomial Abel--Jacobi map for $(\scrE, \Phi, \Fil^{\bullet})$, \[
    \AJ_{\fp} = \AJ_{\fp, \scrE}: A^i(X, \scrE)_0 \rightarrow \left(F^{d-i+1} H_{\dR}^{2d-2i+1}(\calX, \scrE^{\vee})\right)^{\vee}
\] such that for any $(\theta_Z)_Z\in A^i(X, \scrE)_0$ and any $\omega\in F^{d-i+1} H_{\dR}^{2d-2i+1}(\calX, \scrE^{\vee})$, \[
    \AJ_{\fp}((\theta_Z)_Z)(\omega) = \sum_{Z} \left (\int_Z \omega \right ) (\theta_Z).
\] 
\end{Theorem}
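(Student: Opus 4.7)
The plan is to adapt Besser's argument for Theorem~\ref{Theorem: Besser's AJ map} using the pushforward, duality, and fundamental sequence provided by Theorem~\ref{Theorem: basic properties of fp cohomology; intro}. First, I would construct a cycle class map with coefficients: for $Z \in Z^i(X)$ with closed immersion $\iota_Z$, the fundamental short exact sequence (i) in degree zero identifies $H_{\fp}^0(\calZ, \iota_Z^*\scrE, 0)$ with $F^0 H_{\dR}^0(\calZ, \iota_Z^*\scrE) = H_{\dR}^0(Z, \scrE)$. Combining this with the pushforward (iii) in bidegree $(j,n) = (0,0)$ defines
\[
    \eta_{\fp}: A^i(X, \scrE) \longrightarrow H_{\fp}^{2i}(\calX, \scrE, i), \qquad (\theta_Z)_Z \mapsto \sum_{Z} \iota_{Z, *}\theta_Z.
\]
A routine check then shows that $\pr_{\fp}\circ \eta_{\fp}$ recovers the de Rham cycle class with coefficients, so $A^i(X, \scrE)_0 = \ker(\pr_{\fp}\circ \eta_{\fp})$ is the correct locus.

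Next, I would define the Abel--Jacobi map itself. For $(\theta_Z)_Z \in A^i(X, \scrE)_0$, the class $\eta_{\fp}((\theta_Z)_Z)$ lies in the image of $i_{\fp}$ and thus lifts uniquely to an element of $H_{\dR}^{2i-1}(\calX, \scrE)/F^i$. Unwinding the perfect pairing (ii) in bidegrees $(2i, i)$ and $(2d-2i+1, d-i+1)$ through the fundamental sequence (i) on both factors produces the Poincar\'e duality isomorphism
\[
    \frac{H_{\dR}^{2i-1}(\calX, \scrE)}{F^i H_{\dR}^{2i-1}(\calX, \scrE)} \;\cong\; \bigl(F^{d-i+1} H_{\dR}^{2d-2i+1}(\calX, \scrE^{\vee})\bigr)^{\vee},
\]
and one declares $\AJ_{\fp}((\theta_Z)_Z)$ to be the functional corresponding to the above lift.

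To obtain the integration formula, I would fix $\omega \in F^{d-i+1} H_{\dR}^{2d-2i+1}(\calX, \scrE^{\vee})$ together with a finite-polynomial lift $\widetilde{\omega}$. By construction of the duality,
\[
    \AJ_{\fp}((\theta_Z)_Z)(\omega) = \bigl\langle \eta_{\fp}((\theta_Z)_Z),\; \widetilde{\omega}\bigr\rangle_{\calX} = \sum_Z \bigl\langle \iota_{Z, *}\theta_Z,\; \widetilde{\omega}\bigr\rangle_{\calX}.
\]
A projection formula compatible with the pairing in (ii) converts each summand into $\bigl\langle \theta_Z,\; \iota_Z^*\widetilde{\omega}\bigr\rangle_{\calZ}$. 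Since $\dim \calZ = d-i$, both $H_{\dR}^{2(d-i)+1}(\calZ, -)$ and $F^{d-i+1} H_{\dR}^{2(d-i)}(\calZ, -)$ vanish, so the fundamental sequence (i) on $\calZ$ collapses to a trace isomorphism sending $\iota_Z^*\widetilde{\omega}$ to a well-defined functional $\int_Z \omega$ on $H_{\dR}^0(\calZ, \iota_Z^*\scrE)$; evaluating at $\theta_Z$ and summing over $Z$ yields the claimed formula.

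The main obstacle I anticipate is establishing the projection formula relating the $\calX$- and $\calZ$-level pairings of (ii) to the pushforward $\iota_{Z,*}$, together with the verification that $\pr_{\fp}\circ \eta_{\fp}$ agrees with the de Rham cycle class map with coefficients. Both reduce to functoriality statements at the level of the cone complexes defining the syntomic $P$-cohomology, and should follow the same pattern as in Besser's trivial-coefficient argument; careful sign tracking through the filtrations and through the Frobenius structure built into $\Syn(X_0, \frakX, \calX)$ will nonetheless be required.
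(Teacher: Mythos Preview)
Your proposal is correct and follows essentially the same route as the paper: define $\eta_{\fp}$ via pushforward, land in $\ker\pr_{\fp}\cong H_{\dR}^{2i-1}(\calX,\scrE)/F^i$, apply de Rham Poincar\'e duality, and then verify the integration formula by the adjunction $\langle\iota_{Z,*}\theta_Z,\widetilde{\omega}\rangle_{\calX}=\langle\theta_Z,\iota_Z^*\widetilde{\omega}\rangle_{\calZ}$. The two obstacles you flag dissolve in the paper's setup: the pushforward $\iota_{Z,*}$ is \emph{defined} as the adjoint of $\iota_Z^*$ under the perfect pairing, so the projection formula holds by construction, and $A^i(X,\scrE)_0$ is \emph{defined} to be $\ker(\pr_{\fp}\circ\eta_{\fp})$ rather than compared to an independently constructed de Rham cycle class.
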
 

The theory of finite polynomial cohomology with coefficients has potential applications to study the arithmetic of automorphic forms. In the final section of this paper, we illustrate this by reproducing the formula in \cite[Theorem 3.8]{DR} and the formula in \cite[Proposition 3.18 \& Proposition 3.21]{BDP} in the case of compact Shimura curves over $\Q$. More precisely, let $X$ be the compact Shimura cruve over $\Q$, parametrising false elliptic curves of level away from $p$ and let $\pi: A^{\univ} \rightarrow X$ be the universal false elliptic curve over $X$. After fixing an idempotent $\bfepsilon\in M_2(\Z_p)\smallsetminus \{1, 0\}$, we consider $\underline{\omega} := \bfepsilon \pi_* \Omega_{A^{\univ}/X}^1$, $\scrH := \bfepsilon R^1\pi_* \Omega_{A^{\univ}/X}^{\bullet}$ and \[
    \underline{\omega}^k := \underline{\omega}^{\otimes k}, \quad \scrH^k := \Sym^k \scrH. 
\] 

The following theorem summarises the application in the direction of diagonal cycles:

\begin{Theorem}[Theorem \ref{Theorem: formula for diagonal cycles} and Corollary \ref{Corollary: relation with DR}]\label{Theorem: AJ(diagonal cycle); intro}
Let $(k, \ell, m)\in \Z^3$ such that \begin{enumerate}
    \item[$\bullet$] $k+ \ell +m\in 2\Z$, 
    \item[$\bullet$] $2<k\leq \ell \leq m$ and $m < k+\ell$, 
    \item[$\bullet$] $\ell + m - k = 2t + 2$ for some $t\in \Z_{\geq 0}$.
\end{enumerate} and write \[
    r_1:= k-2, \quad r_2 := \ell-2, \quad r_3:=m-2.
\] Then, there exists a cycle $\Delta_{2,2,2}^{k, \ell, m}\in A^2(X^3, (\scrH^{r_1}\otimes \scrH^{r_2}\otimes \scrH^{r_3})^{\vee})_0$, which is called the diagonal cycle with coefficients in $(\scrH^{r_1}\otimes \scrH^{r_2}\otimes \scrH^{r_3})^{\vee}$, such that for any \[
    \eta\in H_{\dR}^1(X, \scrH^{r_1}), \quad \omega_2\in F^1H_{\dR}^1(X, \scrH^{r_2}) = H^0(X, \underline{\omega}^{\ell}), \quad \text{ and }\quad \omega_3\in F^1H_{\dR}^1(X, \scrH^{r_3}) = H^0(X, \underline{\omega}^{m}),
\] we have the formula \[
    \AJ_{\fp}(\Delta_{2,2,2}^{k, \ell, m})(\eta\otimes\omega_2\otimes \omega_3) = \langle \eta, \xi(\omega_2, \omega_3)\rangle_{\dR},
\] where $\xi(\omega_2, \omega_3)$ is an element in $H_{\dR}^1(X, \scrH^{r_1}(-t))$ whose definition only depends on $\omega_2$ and $\omega_3$.
\end{Theorem}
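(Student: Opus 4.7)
\bigskip

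\textbf{Proof proposal.}  The overall strategy is to combine three ingredients: (a) the explicit integration formula for $\AJ_{\fp}$ with coefficients (the analogue with coefficients of Theorem \ref{Theorem: Besser's AJ map}), (b) Künneth in finite polynomial cohomology applied to $X\times X\times X$, and (c) a Clebsch--Gordan-type contraction that produces the tensor $\theta$ attached to the diagonal.  More precisely, first I would construct the cycle $\Delta_{2,2,2}^{k,\ell,m}$ by taking the (smooth, codimension-$2$) diagonal $\iota\colon X\hookrightarrow X^3$ and equipping it with the canonical invariant section $\theta_\Delta\in H_{\dR}^0(X,\iota^*(\scrH^{r_1}\otimes \scrH^{r_2}\otimes \scrH^{r_3})^{\vee})$ coming from the unique (up to scalar) $\SL_2$-equivariant map $\Sym^{r_2}\otimes\Sym^{r_3}\to\Sym^{r_1}$ dual to Clebsch--Gordan; the numerical hypotheses $2<k\le\ell\le m<k+\ell$ and $\ell+m-k=2t+2$ are exactly what is needed for this contraction to exist and be essentially unique, and they match the classical non-vanishing conditions in \cite{DR}.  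The next small step is to verify that $(\theta_\Delta)\in A^2(X^3,\cdot)_0$ by checking, via Theorem \ref{Theorem: basic properties of fp cohomology; intro}(iii) and (i), that $\pr_{\fp}\circ\eta_{\fp}(\Delta_{2,2,2}^{k,\ell,m})$ is zero in $F^2 H_{\dR}^4(\calX^3,\scrH^{r_1}\otimes \scrH^{r_2}\otimes \scrH^{r_3})$; this is a Hodge-theoretic/dimension count on the Shimura curve (analogous to the vanishing argument in \cite{DR}).

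Next I would apply the Abel--Jacobi formula of Theorem \ref{Theorem: Abel--Jacobi map}: for any test class $\omega\in F^{d-i+1}H_{\dR}^{2d-2i+1}(\calX^3,(\scrH^{r_1}\otimes \scrH^{r_2}\otimes \scrH^{r_3})^{\vee})$ (here $d=3$, $i=2$, so we are in $F^2H_{\dR}^3$), we have
\[
    \AJ_{\fp}\bigl(\Delta_{2,2,2}^{k,\ell,m}\bigr)(\omega)
      = \int_{\Delta} \omega
      = \tr_{\Delta}\bigl(\iota^*\widetilde{\omega}\cup \theta_\Delta\bigr),
\]
where $\widetilde{\omega}\in H_{\fp}^{3}(\calX^3,\scrH^{r_1}\otimes \scrH^{r_2}\otimes \scrH^{r_3},2)$ is any finite-polynomial lift of $\omega$.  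I will specialise to $\omega=\eta\otimes\omega_2\otimes\omega_3$.  Using a Künneth decomposition of finite polynomial cohomology and the fundamental short exact sequence \eqref{eq: fundamental exact seq. for fp cohomology}, I would choose a lift of the form $\widetilde{\omega}=\widetilde{\eta}\cup\widetilde{\omega_2}\cup\widetilde{\omega_3}$ where $\widetilde{\omega_j}$ are $\fp$-lifts of $\omega_j\in F^1H_{\dR}^1(X,\scrH^{r_j})$ in suitable filtrations (this is possible because $\omega_2,\omega_3$ are holomorphic, hence are themselves classes in $H_{\fp}^1$ via the splitting of the Hodge filtration), while $\eta$ is taken at the de Rham level.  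Pulling back along $\iota$ and contracting with $\theta_\Delta$, the triple cup product becomes a pairing on $\calX$; the $\omega_2, \omega_3$ part assembles into a class in $H_{\dR}^1(\calX,\scrH^{r_1}(-t))$ after Clebsch--Gordan projection, and the Tate twist $(-t)$ arises from the filtration/weight bookkeeping ($r_2+r_3-r_1=2t+2$ giving the twist of the image of the Clebsch--Gordan map).

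This class is precisely the definition of $\xi(\omega_2,\omega_3)$: writing out the Clebsch--Gordan projection $\Sym^{r_2}\otimes\Sym^{r_3}\to\Sym^{r_1}(-t)$ on underlying coefficient sheaves and using the explicit Coleman-integral lifts $\widetilde{\omega_2},\widetilde{\omega_3}$ from the proof of the Coleman integration interpretation of $H_{\fp}^1$, one gets a well-defined class in $H_{\dR}^1(\calX,\scrH^{r_1}(-t))$ depending only on $\omega_2$ and $\omega_3$ (independence from the chosen lifts follows because any two lifts differ by elements whose cup product contribution vanishes against holomorphic $\eta$ by type considerations).  The trace along the diagonal then reads
\[
    \tr_{\Delta}\bigl(\iota^*(\widetilde{\eta}\cup\widetilde{\omega_2}\cup\widetilde{\omega_3})\cup\theta_\Delta\bigr)
     = \langle \eta,\xi(\omega_2,\omega_3)\rangle_{\dR},
\]
which is the desired formula.

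I expect the main obstacle to be the careful bookkeeping that makes $\xi(\omega_2,\omega_3)$ well-defined and independent of the choice of $\fp$-lifts.  This is where the Hodge-filtration/Griffiths-transversality properties of the syntomic coefficients $\scrH^{r_j}$ enter essentially, together with the fact that $\omega_2,\omega_3$ lie in the top piece $F^1$: without the bound $m<k+\ell$ one cannot guarantee that the Clebsch--Gordan projection lands in a component where the ambiguity of lifts is killed, and without the parity condition $k+\ell+m\in 2\Z$ the invariant $\theta_\Delta$ does not exist.  Once these compatibilities are checked, the rest is formal manipulation of cup products and the trace using Theorem \ref{Theorem: basic properties of fp cohomology; intro}.
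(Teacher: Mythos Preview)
Your overall architecture matches the paper's: take the diagonal $X_{123}\hookrightarrow X^3$, equip it with the Clebsch--Gordan invariant, apply the integration formula for $\AJ_{\fp}$, pull back to the diagonal, and reduce to a de Rham pairing on $X$. The construction of $\theta_\Delta$ and the null-homologous step are essentially what the paper does (the paper's argument is simply that $H_{\dR}^{j}(X,\scrH^{r_s})=0$ for $j\neq 1$ by the no-horizontal-section lemma plus $\dim X=1$, so K\"unneth kills the relevant target outright; no delicate Hodge count is needed).

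There is, however, a genuine gap in your treatment of $\xi(\omega_2,\omega_3)$. Two issues:

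\emph{(i) Why the fp-class descends to de Rham.} After pulling back and projecting, you must explain why $\psi_{23}^*(\omega_2\otimes\omega_3)^{\sim}\in H_{\fp}^2(X,\scrH^{r_1}(-t),r_2+r_3+2)$ actually lies in $i_{\fp}\bigl(H_{\dR}^1(X,\scrH^{r_1}(-t))\bigr)$. Your reason---``any two lifts differ by elements whose cup product contribution vanishes against holomorphic $\eta$ by type considerations''---is wrong: $\eta$ is an \emph{arbitrary} class in $H_{\dR}^1(X,\scrH^{r_1})$, not a holomorphic one, so no type argument applies. The paper's mechanism is purely dimensional: $X$ is a curve, so $H_{\dR}^2(X,\scrH^{r_1}(-t))=0$, hence $\pr_{\fp}$ on $H_{\fp}^2(X,\scrH^{r_1}(-t),\ast)$ has zero target and $i_{\fp}$ is surjective. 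This is the step that forces the fp-class down to a de Rham class and makes it independent of the lift $(\omega_2\otimes\omega_3)^{\sim}$.

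\emph{(ii) What $\xi$ actually is.} Saying ``use the Coleman-integral lifts $\widetilde{\omega_2},\widetilde{\omega_3}$'' does not produce a de Rham class on its own; Coleman primitives live on wide opens, not globally, and their cup product is a priori only an fp-class. The paper pins $\xi$ down via the \emph{Frobenius structure}: choose $P\in\Poly$ with $P(\Phi)$ annihilating $\omega_2\otimes\omega_3$ in $H_{\dR}^2(X^2,\scrH^{r_2}\boxtimes\scrH^{r_3})$ and invertible on $H_{\dR}^1$; pick $\rho$ with $\nabla\rho=P(\Phi)(\omega_2\otimes\omega_3)$; then $\xi(\omega_2,\omega_3):=P(\Phi)^{-1}\psi_{23}^*\rho\in H_{\dR}^1(X,\scrH^{r_1}(-t))$, and one checks $i_{\fp}(\xi)=\psi_{23}^*(\omega_2\otimes\omega_3)^{\sim}$ using the explicit description $i_{\fp}(\xi)=(P(\Phi)\xi,0)$. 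Independence of $P$ is a separate (easy) verification. Your sketch omits this Frobenius-polynomial step entirely, and without it there is no canonical de Rham representative to call $\xi(\omega_2,\omega_3)$.

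Once you replace the ``type consideration'' argument by the vanishing $H_{\dR}^2(X,\scrH^{r_1}(-t))=0$ and insert the Frobenius-polynomial definition of $\xi$, your proof becomes the paper's.
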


The following theorem summarises the direction of application to cycles attached to isogenies:

\begin{Theorem}[Theorem \ref{Theorem: cycle attached to isogenies}]\label{Theorem: cycles attached to isogenies; intro}
Let $x=(A, i, \alpha)$ and $y=(A', i', \alpha')$ be two $K$-rational points on $X$ and suppose there exists an isogeny $\varphi: (A, i, \alpha) \rightarrow (A', i', \alpha')$. For any $r\in \Z_{>0}$, define the sheaf $\scrH^{r,r}:= \scrH^r \otimes \Sym^r \bfepsilon H_{\dR}^1(A)$ on $X$. Then, there exists a unique cycle $\Delta_{\varphi} \in A^1(X, \scrH^{r,r}(r))_0$ such that for any $\omega \in H^0(X, \underline{\omega}^{r+2})$ and any $\alpha\in \Sym^r \bfepsilon H_{\dR}^1(A)$, we have \[
    \AJ_{\fp}(\Delta_{\varphi})(\omega \otimes \alpha) = \langle \varphi^* (F_{\omega}(y)), \alpha\rangle,
\] 
where $F_{\omega}$ is the Coleman integral of the form $\omega$ and the pairing is the Poincar\'{e} pairing on $\Sym^r \bfepsilon H_{\dR}^1(A)$.
\end{Theorem}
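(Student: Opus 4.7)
The plan is to define $\Delta_{\varphi}$ explicitly as a cycle supported on the two $K$-rational points $x$ and $y$, and then to read off the stated identity from Theorem~\ref{Theorem: Abel--Jacobi map}, reducing the coefficient-valued integral to the classical Coleman integral of $\omega$.

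Concretely, an element of $A^1(X, \scrH^{r,r}(r))$ is a finite $K$-linear combination $\sum_z (z, \theta_z)$ with $\theta_z$ in the fibre of $\scrH^{r,r}(r)$ at $z$. Via the Poincaré pairing on the factor $\Sym^r\bfepsilon H_{\dR}^1(A)$, the fibre at $y$ is canonically isomorphic to $\Hom(\Sym^r \bfepsilon H_{\dR}^1(A), \Sym^r \bfepsilon H_{\dR}^1(A'))$, and the fibre at $x$ to $\End(\Sym^r \bfepsilon H_{\dR}^1(A))$. I would set
$$
    \Delta_{\varphi} := (y,\, \Sym^r \bfepsilon\varphi_*) - (x,\, \id),
$$
and then check that $\Delta_{\varphi} \in A^1(X, \scrH^{r,r}(r))_0$ by computing its image under $\pr_{\fp}\circ\eta_{\fp}$ in $F^1 H_{\dR}^2(\calX, \scrH^{r,r}(r))$. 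Since on a smooth curve the cycle class of a point is determined by the value of a local lift, and since $\Sym^r \bfepsilon\varphi_*$ and $\id$ are the natural images of the same universal endomorphism after parallel transport along a Frobenius-stable path joining $x$ and $y$, the two contributions cancel.

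To prove the formula I would invoke Theorem~\ref{Theorem: Abel--Jacobi map}, which gives
$$
    \AJ_{\fp}(\Delta_{\varphi})(\omega \otimes \alpha) = \left(\int_y \omega \otimes \alpha\right)(\Sym^r \bfepsilon\varphi_*) \,-\, \left(\int_x \omega \otimes \alpha\right)(\id).
$$
The crucial step is to identify $\int_z \omega \otimes \alpha$ with the classical Coleman integral. Applying Besser's interpretation (extended to the coefficient setting earlier in the paper) of a finite polynomial lift of $\omega \in F^1 H_{\dR}^1(\calX,\scrH^r)$ as a Coleman primitive with values in $\scrH^r$, evaluation at $z$ yields $F_\omega(z) \in \scrH^r|_z$, and pairing with the data $\theta_z$ against $\alpha$ via Poincaré duality gives $\langle F_\omega(z), \theta_z(\alpha)\rangle$. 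Specialising to $\theta_y = \Sym^r \bfepsilon\varphi_*$ and $\theta_x = \id$, and using the adjunction $\langle F_\omega(y), \varphi_*\alpha\rangle = \langle \varphi^*F_\omega(y), \alpha\rangle$ coming from the Poincaré pairing together with the normalisation $F_\omega(x) = 0$ (which can be arranged since the Coleman primitive is unique up to a global $\scrH^r$-horizontal section), the identity in the statement emerges. Uniqueness of $\Delta_{\varphi}$ subject to the support condition follows from the nondegeneracy of Poincaré duality on each fibre together with the fact that $\omega\otimes\alpha$ ranges over a separating family.

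The main obstacle I anticipate is this last identification: relating the coefficient-valued integral $\int_z \omega\otimes\alpha$ to the Coleman integral $F_\omega(z)$ of $\omega$, keeping the Tate twist $(r)$ and the adjunction between $\varphi^*$ and $\varphi_*$ perfectly aligned. The bookkeeping is intricate, but it should be dictated by the construction of finite polynomial cohomology with coefficients developed earlier in the paper, so conceptually the identification ought to be essentially tautological once all the conventions are pinned down.
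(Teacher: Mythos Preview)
Your approach has a genuine gap. You propose the two-point cycle $\Delta_\varphi = (y, \Sym^r\bfepsilon\varphi_*) - (x, \id)$, but the paper uses only the single point $y$: in the paper's notation $\Delta_\varphi = (\varphi_*\one_A)\cdot y$, where $\one_A\in \Sym^r\bfepsilon H^1_{\dR}(A)\otimes\Sym^r\bfepsilon H^1_{\dR}(A)(r)$ is the element representing the identity via self-duality. Under the identification of the fibre at $y$ with a $\Hom$-space, $\varphi_*\one_A$ is exactly your element $\Sym^r\bfepsilon\varphi_*$, so your $(y,\ldots)$ term alone is already the paper's cycle.

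The extra $-(x,\id)$ term is both unnecessary and harmful. It is unnecessary because null-homologousness is automatic: since $r>0$, the sheaf $\scrH^r$ has no nonzero global horizontal sections, hence $H^2_{\dR}(X,\scrH^{r,r})=0$ and $A^1(X,\scrH^{r,r}(r))_0 = A^1(X,\scrH^{r,r}(r))$; your parallel-transport cancellation argument is not needed. It is harmful because that very same vanishing of global horizontal sections forces the Coleman primitive $F_\omega$ to be \emph{unique}, not merely unique up to a horizontal section. You therefore cannot normalise $F_\omega(x)=0$; the value $F_\omega(x)$ is determined and generically nonzero, so your two-point cycle would compute $\langle\varphi^*F_\omega(y),\alpha\rangle - \langle F_\omega(x),\alpha\rangle$ rather than the asserted formula.

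Once the $(x,\id)$ term is dropped, your outline matches the paper's: the identification of $\iota_y^*\widetilde\omega$ with $i_{\fp}(F_\omega(y))$ is made precise in Lemma~\ref{Lemma: pullback of omega viewed as Coleman integration}, and the formula then follows from Theorem~\ref{Theorem: Abel--Jacobi map} together with the defining property of $\one_A$ and the adjunction between $\varphi_*$ and $\varphi^*$.
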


We remark that the formulae we obtained in Theorem \ref{Theorem: AJ(diagonal cycle); intro} and Theorem \ref{Theorem: cycles attached to isogenies; intro} can be linked to special $L$-values as in \cite{DR} and \cite{BDP} respectively. 
As our purpose is to demonstrate how finite polynomial cohomology with coefficients can be used in practice, we do not pursue such a relation in this paper.

\subsection{Some further remarks}
Our theory of finite polynomial cohomology with coefficients leads to several further questions that we would like to study in future projects. Let us briefly discuss them in the following remarks. 

\paragraph{A comparison conjecture.} When the polynomial $P = 1-q^{-n}T$, where $q$ is the cardinality of the residue field of $\calO_K$, we follow the traditional terminology and call $R\Gamma_{\syn}(\calX, \scrE, n) := R\Gamma_{\syn, 1-q^{-n}T}(\calX, \scrE, n)$ the \emph{syntomic cohomology of $\calX$ with coefficients in $\scrE$ twisted by $n$}. When the coefficient $\scrE$ is the trivial coefficient, it follows from the result of J.-M. Fontaine and W. Messing (\cite{Fontaine--Messing}) that there is a canonical quasi-isomorphism \[
    \tau_{\leq n} R\Gamma_{\syn}(\calX, n) \simeq \tau_{\leq n}R\Gamma_{\et}(\calX_{\C_p}, \Q_p(n)).
\] Inspired by this classical result, we make the following conjecture. 

\begin{Conjecture}[Syntomic--étale comparison with coefficients]\label{Conj: syntomic--etale}
There exists a suitable subcategory of étale local systems on $\calX$, such that for any such étale local system $\bbE$, \begin{enumerate}
    \item[$\bullet$] there exists an associated overconvergent $F$-isocrystal $(\scrE, \Phi)$;
    \item[$\bullet$] for any $n\in \Z$, there exists a well-defined syntomic cohomology $R\Gamma_{\syn}(\calX, \scrE, n)$; and 
    \item[$\bullet$] there is a caonical quasi-isomorphism \[
        \tau_{\leq n} R\Gamma_{\syn}(\calX, \scrE, n) \simeq \tau_{\leq n} R\Gamma_{\normalfont \et}(\calX_{\C_p}, \bbE(n)).
    \]
\end{enumerate}
\end{Conjecture}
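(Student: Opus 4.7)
The plan is to combine Yamada's category of syntomic coefficients with recent advances on $p$-adic local systems in families. First, I would identify the \emph{suitable subcategory} of Conjecture \ref{Conj: syntomic--etale} as a form of \emph{crystalline local systems}: those lisse $\Q_p$-sheaves $\bbE$ on $\calX$ (in the pro-étale sense of Scholze) which are crystalline at the generic fibre and whose associated overconvergent $F$-isocrystal admits a filtration satisfying Griffiths transversality. The functor $\bbE \mapsto (\scrE, \Phi, \Fil^{\bullet})$ would be constructed using the relative Fontaine-type functors developed by Andreatta--Iovita, Tan--Tong, Liu--Zhu, and Diao--Lan--Liu--Zhu, which in the proper smooth setting attach to a crystalline local system a filtered overconvergent $F$-isocrystal. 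Verifying that the image lands in Yamada's category $\Syn(X_0, \frakX, \calX)$ then amounts to checking the compatibilities imposed in \emph{loc. cit.}, which secures clauses (1) and (2) of the conjecture.

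For clause (3), the strategy I would follow is the Fontaine--Messing blueprint as refined by Colmez--Nizio\l{}, working on the pro-étale site of $\calX$. On $\calX_{\proet}$ one has the period sheaves $\bbB_{\cris}^+$ and $\bbB_{\dR}^+$ together with Fontaine's fundamental exact sequence. Twisting by the relative crystalline realisation $\scrE$ of $\bbE$ should yield a two-term resolution of $\bbE(n)$ schematically of the form
\[
    \bbE(n) \simeq \left[ \Fil^n \bigl( \bbB_{\dR}^+ \otimes \scrE \bigr) \xrightarrow{1-\varphi_n} \bbB_{\cris}^+ \otimes \scrE \right].
\]
Taking $R\Gamma$ on $\calX_{\proet}$ of the right-hand side, and identifying the resulting object with the mapping-fibre description of $R\Gamma_{\syn}(\calX, \scrE, n)$ furnished by Yamada (via the comparison between log-crystalline and pro-étale realisations à la Tsuji), should produce the desired quasi-isomorphism after truncation in degree $\leq n$.

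The main obstacle is twofold. Categorically, one must pin down the precise relation between \emph{crystallinity} of $\bbE$ on the adic generic fibre and the \emph{overconvergent} filtered $F$-isocrystal structure demanded by Yamada: over a perfect base field this reduces to classical Berger--Kedlaya theory, but in the relative dagger setting required here the literature is comparatively sparse and both full faithfulness and essential image of the proposed functor need to be established. Analytically, the harder issue is the truncation: the Fontaine--Messing theorem already fails without $\tau_{\leq n}$, and one expects a Hodge--Tate weight hypothesis on $\bbE$ (such as all weights lying in $[0,n]$) to obtain the clean statement of Conjecture \ref{Conj: syntomic--etale}; identifying the optimal hypothesis and reconciling it with Yamada's filtration convention is, in my view, the crux of any serious attempt. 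A possible alternative avenue, which I would pursue in parallel, is to work within the prismatic framework of Bhatt--Scholze and establish the comparison for prismatic $F$-crystals first, where analogous theorems are now available in considerable generality; the remaining task would then be to build a dictionary between prismatic $F$-crystals and Yamada's syntomic coefficients.
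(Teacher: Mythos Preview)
The statement you are attempting to prove is labelled a \emph{Conjecture} in the paper, and the paper provides no proof of it whatsoever. Immediately after stating it, the authors explicitly write that they ``formulate the above conjecture in a rather vague manner'' and that they believe Yamada's category of syntomic coefficients is ``too restrictive'' because of its unipotence hypothesis; they do not claim to know what the correct subcategory of \'etale local systems should be, nor do they sketch an argument for the quasi-isomorphism. There is therefore nothing in the paper to compare your proposal against.

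Your outline is a sensible research plan rather than a proof. A few points are worth flagging in light of the paper's own remarks. First, you propose that the image of your functor should land in Yamada's $\Syn(X_0,\frakX,\calX)$; the paper itself warns that this category is probably the wrong target, since its objects are required to be unipotent and the sheaves arising in practice (e.g.\ the $\scrH^k$ on Shimura curves) are not known to be unipotent. So ``verifying that the image lands in Yamada's category'' is likely to fail for the local systems one actually cares about, and part of the conjecture is precisely to identify a better category. Second, your displayed two-term complex mixes $\bbB_{\dR}^+$ and $\bbB_{\cris}^+$ in a way that is not quite the Fontaine--Messing/Colmez--Nizio\l\ shape; the syntomic complex is a mapping fibre involving crystalline (or Hyodo--Kato) cohomology and a de Rham quotient, not a single map from a filtered $\bbB_{\dR}$-object to a $\bbB_{\cris}$-object. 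These are not fatal to a research programme, but they confirm that what you have written is a strategy for an open problem, not a proof to be compared with the paper's.
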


We remark that, although we employed the category of syntomic coefficients introduced in \cite{Yamada}, we believe this category is too restrictive. More precisely, Yamada's syntomic coefficients required the sheaves to be unipotent. However, for applications to the arithmetic of automorphic forms, we encounter sheaves that we don't know whether they are unipotent but still with some nice properties. Therefore, we do not directly conjecture a `syntomic étale local system' that corresponds to Yamada's syntomic coefficients. Instead, we formulate the above conjecture in a rather vague manner.
But we hope that it will be more useful for arithmetic applications.

\paragraph{A motivic expectation.} Readers might feel that our candidate $A^i(X, \scrE)$ of cycle class group with coefficients is too ad hoc. However, when $\scrE = \scrO_{X_0/\calO_{K_0}}$ is the trivial coefficient (\emph{i.e.}, the structure sheaf of the overconvergent site of the special fibre $X_0$), one can check that the classical $p$-adic Abel--Jacobi map factors as \[
    \AJ: A^i(X)_0 \rightarrow A^i(X, \scrO_{X_0/\calO_{K_0}})_0 \rightarrow \left( F^{d-i+1} H_{\dR}^{2d-2i+1}(\calX) \right)^{\vee}.
\] This justifies the use of $A^i(X, \scrE)$. 

However, the classical cycle class group $A^i(X)$ is strongly linked to the theory of motivic cohomology and algebraic $K$-theory, while it is not obvious that our cycle class group $A^i(X, \scrE)$ has such a link at the first glance. Nevertheless, we still believe such a link may exist as stated in the next conjecture. 

\begin{Conjecture}\label{Conj: motivic}
There exists a suitable category of coefficients for the motivic cohomology of $X$ such that for any coefficient $\scrE_{\mathrm{mot}}$ in this category \begin{enumerate}
    \item[$\bullet$] there exists an associated overconvergent $F$-isocrystal $(\scrE, \Phi)$; 
    \item[$\bullet$] there exists a canonical map from the motivic cohomology of $X$ with coefficients in $\scrE_{\mathrm{mod}}$ to $A^i(X, \scrE)_0$, i.e., $H_{\mathrm{mot}}^i(X, \scrE_{\mathrm{mot}}) \rightarrow A^i(X, \scrE)_0$; and 
    \item[$\bullet$] there exists an Abel--Jacobi map \[
        \AJ: H^i_{\mathrm{mot}}(X, \scrE_{\mathrm{mot}}) \rightarrow \left(F^{d-i+1} H_{\dR}^{2d-2i+1}(\calX, \scrE^{\vee})\right)^{\vee}
    \] such that it factors as \[
        \begin{tikzcd}
            H^i_{\mathrm{mot}}(X, \scrE_{\mathrm{mot}}) \arrow[rr, "\AJ"]\arrow[rd] && \left(F^{d-i+1} H_{\dR}^{2d-2i+1}(\calX, \scrE^{\vee})\right)^{\vee}\\
            & A^i(X, \scrE)_0\arrow[ru, "\AJ_{\fp}"']
        \end{tikzcd}.
    \]
\end{enumerate}
\end{Conjecture}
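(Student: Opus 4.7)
The plan is to first identify a candidate for the category of motivic coefficients by mimicking the theory of relative Chow motives. For any smooth proper morphism $f : Y \to X$ and any idempotent correspondence $\pi$ on $Y/X$ (together with a twist $r\in\Z$), one expects an object $\scrE_{\mathrm{mot}} = (Y, \pi, r)$ of a relative category $\mathbf{CHM}(X)_{\Q}$, whose rigid realisation provides the overconvergent $F$-isocrystal $\scrE = \pi \cdot R f_{\rig,*} \scrO_{Y_0/\calO_{K_0}}(r)$ and whose de Rham realisation gives the associated filtered module with connection required by Yamada's category $\Syn(X_0, \frakX, \calX)$. The sheaves appearing in Theorems \ref{Theorem: AJ(diagonal cycle); intro} and \ref{Theorem: cycles attached to isogenies; intro} are of precisely this form, obtained from the universal false elliptic curve together with a power of the idempotent $\bfepsilon$, so any such formalism at least covers the applications treated in the paper.

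With this category in hand, one would define the motivic cohomology $H_{\mathrm{mot}}^{i}(X, \scrE_{\mathrm{mot}})$ as the $\pi$-isotypic summand of Bloch's higher Chow group $\CH^{n}(Y, 2n - i - j)$ (equivalently, as a $\Hom$ in a relative triangulated category of Voevodsky motives over $X$). The canonical map to $A^{i}(X, \scrE)_{0}$ would be constructed by fibrewise intersection: given a higher cycle on $Y$ representing a motivic class, intersect it with $f^{-1}(Z)$ for each $Z \in Z^{i}(X)$, then take the image in $H_{\dR}^{0}(Z, \scrE)$ via the relative de Rham realisation. That the resulting family lies in the subgroup $A^{i}(X, \scrE)_{0}$ (and not merely in $A^{i}(X, \scrE)$) should follow from the vanishing of the associated de Rham cycle class in $F^{n} H_{\dR}^{2i}(\calX, \scrE)$, which is automatic for cycles of the appropriate codimension in motivic cohomology.

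To verify the factorisation, one would define the motivic Abel--Jacobi map via the standard extension-class formalism in the triangulated category of mixed motives and compare it with $\AJ_{\fp}$ using the syntomic realisation functor, leaning on Theorem \ref{Theorem: Abel--Jacobi map} to identify $\AJ_{\fp}$ with the explicit integration pairing. The main obstacle is foundational: there is presently no adequately developed theory of motivic cohomology with non-trivial coefficients that covers all of Yamada's syntomic coefficients, nor a syntomic realisation functor with the requisite compatibilities. The relative Chow motive construction above yields a plausible candidate when $\scrE_{\mathrm{mot}}$ comes from a geometric family, but it is likely to miss coefficients of genuine arithmetic interest (for instance, those attached to weight-one automorphic forms, which are not expected to be motivic in a naive sense). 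A full resolution would therefore require building a six-functor formalism for motives with coefficients together with a compatible syntomic realisation in that setting---both tasks well beyond the scope of the present work, which is why we state the result only as a conjecture.
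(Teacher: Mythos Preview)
The statement is a \emph{conjecture}, and the paper does not attempt to prove it; there is no proof to compare against. Immediately after stating the conjecture the authors simply remark that the trivial-coefficient case is handled by Ertl--Nizio\l{} and speculate that their methods might generalise to non-trivial coefficients, deferring this to future work.

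Your proposal is therefore not a proof either, and you correctly acknowledge as much in your final paragraph. What you have written is a reasonable heuristic sketch of one candidate approach, via relative Chow motives $(Y,\pi,r)$ over $X$ and their rigid/de Rham realisations. This is a sensible direction and indeed matches how the coefficients in the paper's applications arise (from the universal false elliptic curve and an idempotent). It differs from the paper's own suggestion, which points toward generalising the Ertl--Nizio\l{} comparison between syntomic and motivic cohomology rather than building a relative Chow-motive formalism directly. Neither route is carried out; both remain programmatic. Your honest assessment of the foundational gaps (absence of a syntomic realisation functor with coefficients, lack of a six-functor formalism) is appropriate and is essentially why the authors leave this as a conjecture.
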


We remark that the relation between syntomic cohomology and motivic cohomology without coefficients is directly established by Ertl--Nizioł in \cite{EN}. It is shown in \emph{loc. cit.} that such a link between syntomic cohomology and motivic cohomology is tightened with the relation between syntomic cohomology and étale cohomology. It could be possible that the method of Ertl--Nizioł is generalisable to the setting with non-trivial coefficients and provides satisfactory answers to Conjecture \ref{Conj: syntomic--etale} and Conjecture \ref{Conj: motivic} simultaneously. We wish to come back to this in our future study.

\paragraph{Arithmetic applications.} To the authors's knowledge, there are at least two possible directions for arithmetic applications: \begin{enumerate}
    \item[(I)] As mentioned above, we applied our theory to study the arithmetic of compact Shimura curves over $\Q$. However, the relation between the Abel--Jacobi map and arithmetic has been wildly investigated for more general Shimura varieties. Note that, without the theory of finite polynomial cohomology with coefficients, the process of obtaining arithmetic information is quite indirect: often one starts with a non-PEL-type Shimura variety, then one needs to use the Jacquet--Langlands correspondence to move to a PEL-type Shimura variety and use `Liebermann's trick'. We believe that, with the theory of finite polynomial cohomology with coefficients and with some mild modification if necessary, one should be able to gain arithmetic information directly without bypassing the aforementioned process.
    
    \item[(II)] In \cite{BdS}, Besser and E. de Shalit used techniques of finite polynomial cohomology to construct $\calL$-invariants for $p$-adically uniformised varieties. When such $p$-adically uniformised variety is a Shimura variety, it is a natural question to ask whether the method of Besser--de Shalit can be generalised to obtain $\calL$-invariants of automorphic forms on this Shimura variety of higher weights. The first step to answer this question is, of course, to introduce coefficients to the cohomology groups they study. We hope that our work will shed some light in this direction and we wish to come back to this question in our future study. 
\end{enumerate}

\subsection*{Structure of the paper} 
In \S \ref{section: Geometry} and \S \ref{section: HK cohomology}, we recall some preliminaries of the \emph{rigid Hyodo--Kato theory} developed by V. Ertl and Yamada in \cite{EY, Yamada}. More precisely, we recall the definition of \emph{weakl formal schemes} in \S \ref{subsection: weakly formal schemes} and its relation with \emph{dagger spaces} in \S \ref{subsection: dagger spaces}. For the convenience of the readers, we briefly discuss the \emph{log structures} that one can put on weak formal schemes. Then, we follow the strategy of \cite{Yamada} to introduce the category of overconvergent $F$-isocrystals in \S \ref{subsection: overconvergent F-isocrystals} and the Hyodo--Kato theory with coefficients in \S \ref{subsection: HK theory with coeff}.

In \S \ref{section: FP cohomology}, we provide a definition of \emph{syntomic $P$-cohomology} with coefficients. Our definition works not only for proper smooth weak formal schemes over $\calO_K$, but also for proper weak formal schemes over $\calO_K$ with strictly semistable reduction. Such a definition is inspired by \cite{BLZ-FPCoh}. In \ref{subsection: main definition and first properties}, we prove some basic properties of this cohomology theory. By applying the results of Ertl--Yamada in \cite{EY-Poincare}, we discuss the cup products and pushforward maps for syntomic $P$-cohomology with coefficients in \ref{subsection: Cup products}. In \S \ref{subsection: finite polynomial cohomology}, we define \emph{finite polynomial cohomology with coefficients} for proper smooth scheme over $\calO_K$.

In \S \ref{subsection: Coleman's p-adic integration}, we motivate the definition of the Abel--Jacobi map by explaining the relation between the finite polynomial cohomology and Coleman's integration, in the case that $X$ is a proper smooth curve over $\calO_K$. Our Abel--Jacobi map with coefficients is constructed in \S \ref{subsection: AJ map}. For this construction, we assume that our weak formal scheme is the weak formal completion of a proper smooth scheme $X$ over $\calO_K$. In \S \ref{subsection: towards a semistable theory}, we briefly discuss a possible direction on how the Abel--Jacobi maps can be generalised to the case when $X$ has semistable reduction. We remark here that such a theory is far from satisfactory. 

Finally, we apply our theory to study arithmetic of compact Shimura curves over $\Q$ in \S \ref{section: Applications}. We will recall the definition and basic properties of compact Shimura curves over $\Q$ in \S \ref{subsection: compact Shimura curves over Q}. The construction of the diagonal cycle $\Delta_{2,2,2}^{k, \ell, m}$ and the formula in Theorem \ref{Theorem: AJ(diagonal cycle); intro} are provided in \S \ref{subsection: diagonal cycle} while the construction of the cycle $\Delta_{\varphi}$ and the formulae in Theorem \ref{Theorem: cycles attached to isogenies; intro} are shown in \S \ref{subsection: cycles; isogenies}. Note that, as mentioned above, we do not know whether the sheaves that appear in such arithmetic applications are the coefficients introduced in \cite{Yamada}. Nevertheless, our theory can work for more general coefficients that satisfy certain conditions. These conditions are stated at the beginning of \S \ref{section: Applications}. In fact, the syntomic coefficients introduced in \cite{Yamada} and $\scrH^k$ satisfy these conditions.

\subsection*{Acknowledgement} The project initially grew from the authors' curiosity on whether a theory of finite polynomial cohomology with coefficients exists during a working seminar with Giovanni Rosso and Martí Roset. We would like to thank them for intimate discussions. We thank Adrian Iovita for suggesting the computation in \S \ref{subsection: cycles; isogenies}. Many thanks also go to Antonio Cauchi for explaining the motivic viewpoint of our cycle class group, which led to the formulation of Conjecture \ref{Conj: motivic}.  We are grateful to Kazuki Yamada for answering our questions about his work and for many helpful discussions. We also thank David Loeffler for helpful discussions. Finally, we thank the anonymous referee for pointing out mistakes in the previous version and for constructive suggestions, which led to the improvement of the exposition of the article. 

The first draft of the paper was written when both authors were Ph.D. students at Concordia University. While revising the paper, T.-H. H. was a postdoctoral researcher in LAGA Universit\'{e} Paris XIII and was supported by the grant G5392391GRA3C565LAGAXR from the ANR-DFG Project HEGAL;
J.-F. W. was supported by the ERC
Consolidator grant `Shimura varieties and the BSD conjecture' and the Irish Research Council under grant number IRCLA/2023/849 (HighCritical).

\subsection*{Conventions and notations} Throughout this paper, we fix the following: \begin{enumerate}
    \item[$\bullet$] Let $p$ be a positive rational prime number and let $K$ be a finite field extension of $\Q_p$ with ring of integers $\calO_K$ and residue field $k$. We let $q:= \# k$ and so $q = p^e$ for some $e\in \Z_{>0}$. We also fix a uniformiser $\varpi\in \calO_K$. 
    \item[$\bullet$] We denote by $K_0:= W(k)[1/p]$ the maximal unramified field extension of $\Q_p$ inside $K$ and write $\calO_{K_0} = W(k)$ for its ring of integers. 
    \item[$\bullet$] Choose once and forever an algebraic closure $\overline{K}$ of $K$ and denote by $\C_p$ the $p$-adic completion of $\overline{K}$. We normalise the $p$-adic norm $|\cdot|$ on $\C_p$ so that $|p| = 1/p$.
    \item[$\bullet$] For any commutative square \[
        \begin{tikzcd}
            A^{\bullet} \arrow[r, "\alpha"]\arrow[d] & B^{\bullet}\arrow[d]\\
            C^{\bullet} \arrow[r, "\gamma"] & D^{\bullet} 
        \end{tikzcd}
    \] of complexes (of abelian groups), we write \[
        \left[\begin{tikzcd}
            A^{\bullet} \arrow[r, "\alpha"]\arrow[d] & B^{\bullet}\arrow[d]\\
            C^{\bullet} \arrow[r, "\gamma"] & D^{\bullet} 
        \end{tikzcd}\right] := \Cone\left(\Cone(\alpha)[-1] \rightarrow \Cone(\gamma)[-1]\right)[-1],
    \] where the map $\Cone(\alpha)[-1] \rightarrow \Cone(\gamma)[-1]$ is induced from the vertical maps from the square. 
    \item[$\bullet$] In principle, symbols in Gothic font (\emph{e.g.}, $\frakX, \frakY, \frakZ$) stand for formal schemes or weak formal schemes; symbols in calligraphic font (\emph{e.g.}, $\calX, \calY, \calZ$) stand for rigid analytic spaces or dagger spaces; and symbols in script font (\emph{e.g.}, $\scrO, \scrF, \scrE$) stand for sheaves (over various geometric objects).
\end{enumerate}

\section{Preliminaries I: Geometry}\label{section: Geometry} The purpose of this section is to recall some terminologies in algebraic geometry that will play essential roles in this paper. More precisely, we recall the theory of \emph{weak formal schemes} in \S \ref{subsection: weakly formal schemes} by following \cite[\S 1]{EY} and recall the notion of \emph{dagger spaces} in the language of \emph{adic spaces} in \S \ref{subsection: dagger spaces} by following \cite[\S 2]{Vezzani-MW}. Finally, we briefly discuss \emph{log structures} in \S \ref{subsection: log structure}.  We claim no originality in this section.

\subsection{Weak formal schemes}\label{subsection: weakly formal schemes} In this subsection, we fix a noetherian ring $R$ with an ideal $I$.

\begin{Definition}\label{Definition: weak completion}
 Let $A$ be an $R$-algebra and let $\widehat{A}$ be the $I$-adic completion of $A$. \begin{enumerate}
    \item[(i)] The \textbf{$I$-adic weak completion} $A^{\normalfont \dagger}$ of $A$ is the $R$-subalgebra of $\widehat{A}$, consisting of elements for the form \[
        f= \sum_{i=0}^{\infty} P_i(a_1, ..., a_r),
    \] where $a_1, ..., a_r\in A$ and $P_i(T_1, ..., T_r)\in I^i R[T_1, ..., T_r]$, such that there exists a constant $C>0$ satisfying \[
        C(i+1)\geq \deg P_i
    \] for all $i\geq 0$.
    
    \item[(ii)] We say $A$ is \textbf{weakly complete} (resp., \textbf{weakly complete finitely generated (wcfg)}) if $A^{\normalfont \dagger} = A$ (resp., there exists a surjective $R$-algebra morphism $R[X_1, ..., X_n]^{\normalfont \dagger} \rightarrow A$).
\end{enumerate}
\end{Definition}

Given a weakly complete $R$-algebra $A$, it is regarded naturally as a topological $R$-algebra with respect to the $I$-adic topology. If $A$ is wcfg, we also regard $A$ as a topological $R$-algebra with respect to the $I$-adic topology. Hence, the surjection \[
    R[X_1, ..., X_n]^{\dagger} \rightarrow A
\] is a continuous $R$-algebra morphism. 

\begin{Definition}[$\text{\cite[Definition 1.3]{EY}}$]\label{Definition: pseudo-wcfg}
A topological $R$-algebra $A$ is said to be \textbf{pseudo-weakly complete finitely generated (pseudo-wcfg)} if there exists an ideal of definition $J\subset A$ and a finite generating system $a_1, ..., a_m$ of $J$ such that the morphism \[
    R[T_1, ..., T_m] \rightarrow A, \quad T_i\mapsto a_i
\] makes $A$ an $(I, T_1, ..., T_m)$-adically wcfg $R[T_1, ..., T_m]$-algebra. Pseudo-wcfg $R$-algebras form a category whose morphisms  are given by continuous $R$-algebra homomorphisms. 
\end{Definition}

Unwinding this definition, given a pseudo-wcfg $R$-algebra $A$, there exists $n\in \Z_{\geq 0}$ and a surjective continuous $R[T_1, ..., T_m]$-algebra morphism \[
    \xi: R[T_1, ..., T_m][X_1, ..., X_n]^{\dagger} \rightarrow A,
\] where the weak completion is taken with respect to the $(I, T_1, ..., T_m)$-adic topology. We call such a $\xi$ a \textbf{\textit{representation}} of $A$. Note that, by \cite[Corollary 1.5]{EY}, the condition of being pseudo-wcfg is independent to the choice of the ideals of definition and the choice of generating systems. 

Our next goal is to globalise the aforementioned terminology. To this end, we have to study \emph{localisations} of pseudo-wcfg $R$-algebras, resembling the theory of schemes. Let $A$ be a pseudo-wcfg $R$-algebra with a representation $R[T_1, ..., T_m][X_1, ..., X_n]^{\dagger} \rightarrow A$. For any $f\in A$, define \[
    A_f^{\dagger} := \text{ the $(I, T_1, ..., T_m)$-adic weak completion of $A_f$}.
\] According to \cite[Proposition 1.19]{EY}, $A_f^{\dagger}$ is independent to the choice of representations $\xi$. Consequently, we can define the following ringed space \begin{equation}\label{eq: affine weak formal scheme}
    \Spwf A:= (\Spec A/J, \scrO_{\Spwf A}),
\end{equation} where the structure sheaf $\scrO_{\Spwf A}$ is defined by \[
    \scrO_{\Spwf A}: \Spec (A/J)_{f} \mapsto A_f^{\dagger}
\] for any $f\in A$. Note that the underlying topological space $\Spec A/J$ is independent to the choice of ideals of definition and so $\scrO_{\Spwf A}$ is well-defined.

\begin{Definition}[$\text{\cite[Definition 1.8]{EY}}$]\label{Definition: weak formal scheme}
A \textbf{weak formal scheme} $\frakX$ over $R$ is a ringed space which admits an open covering $\{\frakU_{\lambda}\}_{\lambda\in \Lambda}$ such that each $\frakU_{\lambda}$ is isomorphic to $\Spwf A$ for some pseudo-wcfg $R$-algebra $A$. The category of weak formal schemes over $R$ is denoted by $\FSch^{\normalfont \dagger}_{R}$.
\end{Definition}

\begin{Remark}\label{Remark: functors from weak formal schemes}
\normalfont From the construction, one sees that we have the following natural functors: \begin{enumerate}
    \item[(i)] Let $\Sch_{R/I}$ be the category of schemes over $R/I$. Then, we have a functor \[
        \FSch_{R}^{\dagger} \rightarrow \Sch_{R/I}, \quad \frakX\mapsto X_0
    \] locally given by \[
        \Spwf A = (\Spec A/J, \scrO_{\Spwf A})\mapsto \Spec A/J.
    \]
    \item[(ii)] Let $\FSch_{R}$ be the category of formal schemes over $R$. Then, we have a functor \[
        \FSch_{R}^{\dagger} \rightarrow \FSch_R, \quad \frakX \mapsto \widehat{\frakX}
    \] locally given by \[
        \Spwf A = (\Spec A/J, \scrO_{\Spwf A})\mapsto \Spf A = (\Spec A/J, \scrO_{\Spf A}),
    \] where the structure sheaf $\scrO_{\Spf A}$ assigns each distinguished open subspace $\Spec(A/J)_f$ to the completion $\widehat{A}_f$ of $A_f$.
\end{enumerate} Consequently, for any $\frakX\in \FSch_{R}^{\dagger}$, we shall refer $X_0$ as the \textbf{\textit{associated scheme}} over $R/I$ and $\widehat{\frakX}$ as the \textbf{\textit{associated formal scheme}} over $R$. 
\blackqed
\end{Remark}

\subsection{Dagger spaces}\label{subsection: dagger spaces} In this subsection, let $F$ be an nonarchimedean field of mixed characteristic $(0, p)$ with valuation ring $\calO_F$. Let $\frakm_F$ be the maximal ideal of $\calO_F$ and let $\varpi_F\in \frakm_F$ be a pseudouniformiser of $F$ that divides $p$. For any $n\in \Z_{\geq 0}$, define \begin{align*}
    F[X_1, ..., X_n]^{\dagger} := & \varinjlim_{h}F\langle \varpi_{F}^{1/h}X_1, ..., \varpi_{F}^{1/h}X_{n}\rangle\\
    = & \left\{ \sum_{(i_1, ..., i_n)\in \Z_{\geq 0}^n}a_{i_1, ..., i_n}X_1^{i_1}\cdots X_n^{i_n}\in F\llbrack X_1, ..., X_n\rrbrack: \lim |a_{i_1, ..., i_n}|\delta^{\sum i_j}  =0 \text{ for some }\delta\in \R_{>1}\right\}, 
\end{align*}
where, for any $h\in \Z_{>1}$, \[
    F\langle \varpi_{F}^{1/h}X_1, ..., \varpi_{F}^{1/h}X_{n}\rangle =  \left\{ \sum_{(i_1, ..., i_n)\in \Z_{\geq 0}^n}a_{i_1, ..., i_n}X_1^{i_1}\cdots X_n^{i_n}\in F\llbrack X_1, ..., X_n\rrbrack: \lim |a_{i_1, ..., i_n}||\varpi_F|^{\frac{-1}{h}\sum i_j}  =0\right\}.
\] The algebra $F[X_1, ..., X_n]^{\dagger}$ is equipped with the topology given by the Gau{\ss} norm and the completion of $F[X_1, ..., X_n]^{\dagger}$ is the Tate algebra $F\langle X_1, ..., X_n\rangle$.

More generally, a \textbf{\textit{dagger algebra}} $A$ is a topological $F$-algebra which is isomorphic to $F[X_1, ..., X_n]^{\normalfont \dagger}/(f_1, ..., f_r)$ for some $n\in \Z_{\geq 0}$ and some $f_i\in F\langle \varpi_F^{1/N}X_1, ..., \varpi_F^{1/N}X_n\rangle$ for some sufficiently large $N$. Note that, if $\widehat{A}$ denotes the completion of $A$, then $\widehat{A} \simeq F\langle X_1, ..., X_n\rangle/(f_1, ..., f_r)$.

For any dagger algebra $A$ over $F$, we define the topological space \[
    |\Spadagger A| := |\Spa (\widehat{A}, \widehat{A}^{\circ})|,
\] where $\widehat{A}^{\circ}$ is the ring of power bounded elements of $\widehat{A}$. For any $f_1, ..., f_n, g\in A\subset \widehat{A}$, we define the rational subspace \[
    |\Spadagger A[f_1, ..., f_n/g]^{\dagger}| := \{|f_i|\leq |g|\text{ for all }i=1, ..., n\}\subset |\Spadagger A|.
\] By \cite[Proposition 2.8]{GK-dagger}, we see that rational subspaces in $|\Spadagger A|$ form a basis of the topology. Consequently, we can consider the following ringed space \[
    \Spadagger A := (|\Spadagger A|, \scrO_{\Spadagger A}), 
\] where the structure sheaf $\scrO_{\Spadagger A}$ is defined by \[
    \scrO_{\Spadagger A}: |\Spadagger A[f_1, ..., f_n/g]^{\dagger}|\mapsto A[f_1, ..., f_n/g]^{\dagger} := A[X_1, ..., X_n]^{\dagger}/(gX_i-f_i).
\] Here, \[
    A[X_1, ..., X_n]^{\dagger} := A \otimes^{\dagger} F[X_1, ..., X_n]^{\dagger}
\] with the tensor product $\otimes^{\dagger}$ in the category of dagger algebras over $F$ (see [\emph{op. cit.}, Paragraph 1.16]).

\begin{Definition}\label{Definition: dagger space}
A \textbf{dagger space} $\calX$ over $F$ is a ringed space which admits an open covering $\{\calU_{\lambda}\}_{\lambda\in \Lambda}$ such that each $\calU_{\lambda}$ is isomorphic to $\Spadagger A$ for some dagger algebra $A$ over $F$. We denote by $\Rig^{\normalfont \dagger}_F$ the category of dagger spaces over $F$.  
\end{Definition}

\begin{Remark}\label{Remark: dagger spaces and rigid analytic spaces}
\normalfont 
Let $\Rig_F$ be the category of rigid analytic spaces over $F$.\footnote{ In this article, we will always view rigid analytic spaces as adic spaces.} Then, from the construction, one sees that there is a natural functor \[
    \Rig_F^{\dagger} \rightarrow \Rig_F, \quad \calX \mapsto \widehat{\calX}
\] locally given by \[
    \Spadagger A \mapsto \Spa (\widehat{A}, \widehat{A}^{\circ}).
\] Hence, for any dagger space $\calX$ over $F$, we call $\widehat{\calX}$ the \textbf{\textit{associated rigid analytic space}}. 
\blackqed
\end{Remark}

As an analogue of the relationship between formal schemes and rigid analytic spaces, one can also associate a dagger space over $F$ to a weak formal scheme over $\calO_F$. This phenomenon was first established in \cite{LM}. We recall this construction by following \cite{EY} for the notion of weak formal schemes introduced in the previous subsection.

\begin{Proposition}[$\text{\cite[Proposition 1.21]{EY}}$]\label{Proposition: affine weak formal scheme to affinoid dagger space}
Let $A$ be a pseudo-wcfg $\calO_F$-algebra with ideal of definition $J$ and a generating system $a_1, ..., a_n\in J$. For any $m\in \Z_{\geq 0}$, define \begin{align*}
    A_m := & A\left[ \frac{a_1^{m_1}\cdots a_n^{m_n}}{p}: m_i\in \Z_{\geq 0}, \sum m_i = m\right]^{\normalfont \dagger}\\
    = & \left(A \left[X_{m_1, ..., m_n}: m_i\in \Z_{\geq 0}, \sum m_i = m \right]^{\normalfont \dagger}/(pX_{m_1, ..., m_n} - a_1^{m_1}\cdots a_n^{m_n})\right)/\text{$p$-torsion}.
\end{align*} \begin{enumerate}
    \item[(i)] Each $A_m$ is wcfg over $\calO_F$ and is independent to the choice of $a_1, ..., a_n$. 
    \item[(ii)] The wcfg $\calO_F$-algebras $A_m$ form a projective system and defines a dagger space \[
        (\Spwf A)_{\eta} = \bigcup_{m\geq 0} \Spadagger A_m\otimes_{\calO_F} F
    \] that is independent to the choice of $J$.
\end{enumerate}
\end{Proposition}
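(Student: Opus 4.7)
The plan is to construct $(\Spwf A)_\eta$ as an ascending union of affinoid dagger spaces $\Spadagger A_m \otimes_{\calO_F} F$, with each $A_m$ realising the locus on which degree-$m$ monomials in the given generators of $J$ are bounded in norm by $|p|$ --- the weak-completion analogue of the classical Raynaud generic fibre construction for formal schemes.

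For (i), I would first establish wcfg-ness by producing an explicit presentation. Fix a representation $\xi: \calO_F[T_1, \ldots, T_n][X_1, \ldots, X_r]^\dagger \twoheadrightarrow A$ of the pseudo-wcfg algebra $A$ with $\xi(T_i) = a_i$. Then
\[
    A_m \;\simeq\; \calO_F[T_j, X_k, \{Y_{\underline m}\}_{|\underline m| = m}]^\dagger \big/ \bigl(\ker \xi,\ p Y_{\underline m} - T_1^{m_1} \cdots T_n^{m_n}\bigr) \big/ (p\text{-torsion}),
\]
which is wcfg since weak completion of a polynomial ring over $\calO_F$ is wcfg and these properties survive finitely generated quotients and the killing of $p$-torsion. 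For independence of the choice of generators, if $a'_1, \ldots, a'_{n'}$ is another generating system of $J$, write each $a'_i = \sum_j c_{ij} a_j$ with $c_{ij} \in A$ and apply the multinomial theorem to express $(a'_1)^{m'_1} \cdots (a'_{n'})^{m'_{n'}}/p$ as an $A$-linear combination of $a_1^{l_1} \cdots a_n^{l_n}/p$ with $\sum l_j = m$. This yields canonical, continuous, mutually inverse maps between the two versions of $A_m$.

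For (ii), I would construct transition maps $A_{m+1} \to A_m$ by sending $Y_{\underline m}$ (for $|\underline m| = m+1$) to $a_j \cdot Y_{\underline m - e_j}$, choosing any $j$ with $m_j \geq 1$; the choice is irrelevant modulo $p$-torsion since $a_i Y_{\underline m - e_i} - a_j Y_{\underline m - e_j}$ is $p$-torsion. On the dagger generic fibres this realises $\Spadagger A_m \otimes_{\calO_F} F$ as the rational subdomain $\{|a_1^{m_1} \cdots a_n^{m_n}| \leq |p|$ for all $(m_1,\ldots,m_n)$ with $|\underline m| = m\}$ inside $\Spadagger A_{m+1} \otimes_{\calO_F} F$, hence as an open immersion, and these glue to a dagger space. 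For independence of $J$: if $J'$ is another ideal of definition, then by \cite[Corollary 1.5]{EY} we have $J^c \subset J'$ and $(J')^d \subset J$ for some $c, d \geq 1$, and this comparability interleaves the two projective systems $(A_m)$ and $(A'_m)$ cofinally, so they determine the same ascending union.

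The main technical obstacle will be verifying that the ``killing $p$-torsion'' step commutes with the changes of generators and with the transition maps --- that is, that the $p$-torsion appearing in the naive presentation is preserved under every identification one wants to make. This requires explicit control over the syzygies in the wcfg presentations and is ultimately the weak-completion analogue of the flatness arguments underlying the formal Raynaud generic fibre; a useful input here is the weak-completion version of the Artin--Rees lemma, which confines the $p$-torsion to a bounded polynomial degree range and makes it tractable to track through the various changes of presentation.
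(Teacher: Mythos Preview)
The paper does not give a proof of this proposition: it is quoted verbatim as \cite[Proposition 1.21]{EY} and serves purely as a recalled preliminary, with no argument supplied. There is therefore nothing in the paper to compare your proposal against.

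That said, your outline is a plausible reconstruction of the standard argument (and is presumably close to what Ertl--Yamada do): the presentation for wcfg-ness, the multinomial rewriting for independence of generators, the explicit transition maps, and the cofinality argument for independence of $J$ are all the natural steps. One small point to be careful about in your part (i): the independence claim is that $A_m$ does not depend on the choice of generating system \emph{of the same ideal $J$}, and your multinomial argument only gives a map in one direction for a fixed $m$; to get an inverse you need the symmetric expression of the $a_j$ in terms of the $a'_i$, which works the same way. Your identification of the $p$-torsion bookkeeping as the delicate technical point is accurate.
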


Thanks to the independence in the proposition above, we have a natural functor \[
    \FSch_{\calO_F}^{\dagger} \rightarrow \Rig_F^{\dagger}, \quad \frakX \mapsto \calX = \frakX_{\eta},
\] locally given by \[
    \Spwf A \mapsto (\Spwf A)_{\eta}. 
\] For any $\frakX\in \FSch_{\calO_F}^{\dagger}$, we then call $\calX = \frakX_{\eta}$ the \textbf{\textit{associated dagger space}} or the \textbf{\textit{dagger generic fibre}} of $\frakX$.

\subsection{Log structure}\label{subsection: log structure}
The purpose of this subsection is to briefly discuss how one can equip a weak formal scheme a \emph{log structure}. In particular, we shall make precise the definition of \emph{strictly semistable} weak formal schemes. We encourage readers who are unfamiliar with the language of log geometry to consult \cite{Illusie} for more details.

Let $R$ be a noetherian ring with an ideal $I$. Let $\frakX$ be a weak formal scheme over $R$ with respect to the $I$-adic topology. Recall that a \textbf{\textit{pre-log structure}} on $\frakX$ is a sheaf of (commutative) monoids $\scrM$ on the étale site $\frakX_{\et}$ of $\frakX$ together with a morphism of sheaves of monoids \[
    \alpha: \scrM \rightarrow \scrO_{\frakX_{\et}}.
\] It is furthermore called a \textbf{\textit{log structure}} if the induced morphism \[
    \alpha: \alpha^{-1}\scrO_{\frakX_{\et}}^\times \rightarrow \scrO_{\frakX_{\et}}^\times
\] is an isomorphism. Recall also that given a pre-log structure $\scrM$ on $\frakX$, there is an \emph{associated log structure} $\scrM^a$ on $\frakX$, constructed by the pushout diagram \[
    \begin{tikzcd}
        \alpha^{-1}\scrO_{\frakX_{\et}}^\times \arrow[r, "\alpha"]\arrow[d] & \scrO_{\frakX_{\et}}\arrow[d, dashed]\\
        \scrM\arrow[r, dashed] & \scrM^a
    \end{tikzcd}.
\]  Finally, recall that a morphism of weak formal schemes with log structures is a pair \[
    (f, f^{\natural}): (\frakX, \scrM) \rightarrow (\frakY, \scrN),
\] where $f: \frakX \rightarrow \frakY$ is a morphism of weak formal schemes and $f^{\natural}: f^{-1}\scrN \rightarrow \scrM$ is a morphism of sheaves of monoids. The morphism $(f, f^{\natural})$ is a \textbf{\textit{strict open immersion}} (resp., \textbf{\textit{closed immersion}}; resp., \textbf{\textit{exact closed immersion}}) if $f$ is an open immersion (resp., closed immersion; resp., closed immersion) and $f^{\natural}$ is an isomorphism (resp., surjection; resp., isomorphism).\footnote{ For other adjectives for morphisms of log weak formal schemes, \emph{e.g.}, smooth and étale morphisms, we refer the readers to \cite[\S 1.3]{EY}.} To simplify the notation, we shall often drop the $f^{\natural}$ in the definition.

\begin{Definition}
Let $\frakX$ be a weak formal scheme with a log structure $\alpha:\scrM \rightarrow \scrO_{\frakX_{\et}}$. Let $P$ be a monoid and we abuse the notation to denote the associated constant sheaf of monoids on $\frakX$ again by $P$. A \textbf{chart of $\frakX$ modeled on $P$} is a morphism of sheaves of monoids \[
    \theta: P \rightarrow \scrM
\] such that the log structure associated with $\alpha\circ \theta$ is isomorphic to $\scrM$.
\end{Definition}

The following example play an essential role in this paper. 

\begin{Example}\label{Example: model of semistability}
\normalfont 
Let $R = \calO_{K}$ with $I = (\varpi)$. Let $n\in \Z_{\geq 0}$ and $r\leq n$, consider \[
    \frakX = \Spwf \calO_K[ X_1, ..., X_n]^{\dagger}/(X_1\cdots X_r - \varpi).
\] Then, $\frakX$ can be equipped with a log structure given by the chart \[
    \Z_{\geq 0}^n \rightarrow \calO_K[ X_1, ..., X_n]^{\dagger}/(X_1\cdots X_r - \varpi), \quad (a_1, ..., a_n)\mapsto X_1^{a_1}\cdots X_n^{a_n}.
\] In this case, the normal crossing divisor in $\frakX$ defined by $X_{r+1} \cdots X_n$ is called the \textbf{\textit{horizontal divisor}} of $\frakX$. Finally, by letting $\calO_K^{\log = \varpi}$ be the log weak formal scheme $\Spwf \calO_K$ with the log structure given by the chart \[
    \Z_{\geq 0} \rightarrow \calO_{K}, \quad 1 \mapsto \varpi,
\] one sees easily that $\frakX$ is a log weak formal scheme over $\calO_K^{\log = \varpi}$.
\blackqed
\end{Example}

\begin{Definition}\label{Definition: semistability}
Let $\frakX$ be a log weak formal scheme over $\calO_K^{\log = \varpi}$. We say $\frakX$ is \textbf{strictly semistable} if, Zariski locally, there is a strict smooth morphism \[
    \frakX \rightarrow \Spwf \calO_{K}[X_1, ..., X_n]^{\normalfont \dagger}/(X_1\cdots X_r - \varpi),
\] of log weak formal schemes over $\calO_{K}^{\log = \varpi}$. Moreover, the normal crossing divisor locally generated by $X_{r+1}\cdots X_n$ is called the \textbf{horizontal divisor} of $\frakX$.
\end{Definition}

\section{Preliminaries II: Hyodo--Kato theory}\label{section: HK cohomology}
In this section, we recall the theory of Hyodo--Kato cohomology with coefficients by following \cite{Yamada}. To this end, we fix the following notations though out this section: \begin{enumerate}
    \item[$\bullet$] We denote by $k^{\log = 0}$ be the log scheme $\Spec k$ with the log structure given by \[
        \Z_{\geq 0} \rightarrow k, \quad 1\mapsto 0.
    \]
    \item[$\bullet$] Recall $\calO_{K_0} := W(k)$. We similarly denote by $\calO_{K_0}^{\log = 0}$ the log weak formal scheme $\Spwf \calO_{K_0}$ with log structure given by \[
        \Z_{\geq 0} \rightarrow \calO_{K_0}, \quad 1\mapsto 0.
    \] Moreover, we write $\calO_{K_0}^{\log = \emptyset}$ for the weak formal scheme $\Spwf \calO_{K_0}$ with the trivial log structure. Moreover, the Frobenius $\phi$ on $W(k)$ induces natural Frobenii on $\calO_{K_0}^{\log = 0}$ and $\calO_{K_0}^{\log = \emptyset}$, which are still denoted by $\phi$. 
    \item[$\bullet$] As before, we write $\calO_K^{\log = \varpi}$ for the log weak formal scheme $\Spwf \calO_K$ with the log structure given by \[
        \Z_{\geq 0} \rightarrow \calO_K, \quad 1 \mapsto \varpi.
    \]
    \item[$\bullet$] Let $\frakS$ be the log weak formal scheme $\Spwf \calO_{K_0}\llbrack T\rrbrack$ with the log structure given by \[
        \Z_{\geq 0} \rightarrow \calO_{K_0}\llbrack T \rrbrack, \quad 1 \mapsto T.
    \] Consequently, we have natural morphisms of fine weak formal schemes \[
        \calO_{K_0}^{\log = 0}\xrightarrow{\sigma_0} \frakS \xleftarrow{\sigma_{\varpi}} \calO_K^{\log = \varpi}
    \] given by \[
        0 \mapsfrom T \mapsto \varpi. 
    \] Moreover, we extend the Frobenius $\phi$ to $\frakS$ by setting $\phi: T\mapsto T^p$. We denote by $\calS$ the dagger space associated with $\frakS$; it is the open unit ball with a structure of a dagger space. 
    \item[$\bullet$] We fix a scheme $X_0$ over $k$, which is assumed to be strictly semistable over $k^{\log = 0}$, \emph{i.e.},  Zariski locally, we have a strict smooth morphism \[
        X_0 \rightarrow \Spec k[X_1, ..., X_n]/(X_1\cdots X_r)
    \] of log schemes over $k^{\log = 0}$. Here, the log structure on the right-hand side is given by the chart \[
        \Z_{\geq 0}^n \rightarrow k[X_1, ..., X_n]/(X_1\cdots X_r), \quad (a_1, ..., a_n)\mapsto X_1^{a_1}\cdots X_n^{a_n}.
    \]
\end{enumerate}

\subsection{Overconvergent \texorpdfstring{$F$}{F}-isocrystals and rigid cohomology with coefficients}\label{subsection: overconvergent F-isocrystals}  
Our goal in this subsection is to define and study \emph{overconvergent $F$-isocrystals}. We follow the strategy in \cite{Yamada}, introducing first the so-called \emph{log overconvergent site} of $X_0$. The idea of such a site goes back to B. Le Stum's work \cite{LS-overconvergent} without log structure. 

\begin{Definition}[$\text{\cite[Definition 2.22]{Yamada}}$]\label{Definition: overconvergent site}
Let $\frakT$ be either $\frakS$, $\calO_{K_0}^{\log = 0}$, $\calO_{K_0}^{\log = \emptyset}$ or $\calO_{K}^{\log = \varpi}$ and so we have a natural homeomorphic exact closed immersion $\iota: k^{\log = 0}\rightarrow \frakT$. The \textbf{log overconvergent site} $\OC(X_0/\frakT)$ of $X_0$ relative to $\frakT$ is defined as follows:\begin{enumerate}
    \item[$\bullet$] An object in $\OC(X_0/\frakT)$ is a commutative diagram \[
        \begin{tikzcd}
            & Z \arrow[r, "i"]\arrow[d, "h_k"]\arrow[dl, "\theta"'] & \frakZ\arrow[d, "h_{\frakT}"]\\
            X_0\arrow[r] & k^{\log =0}\arrow[r, "\iota"] & \frakT
        \end{tikzcd},
    \] where \begin{enumerate}
        \item[$\circ$] $i:Z \rightarrow \frakZ$ is a homeomorphic exact closed immersion from a fine log scheme $Z$ over $k^{\log = 0}$ into a log weak formal scheme $\frakZ$ that is flat over $\Z_p$;
        \item[$\circ$] $h_k: Z \rightarrow k^{\log = 0}$ (resp., $h_{\frakT}: \frakZ \rightarrow \frakT$) is a morphism of schemes (resp., weak formal schemes); 
        \item[$\circ$] $\theta: Z \rightarrow X_0$ is a morphism of log schemes over $k^{\log = 0}$. 
    \end{enumerate} We shall usually abbreviate such an object as a quintuple $(Z, \frakZ, i, h, \theta)$. 
    \item[$\bullet$] Morphisms in $\OC(X_0/\frakT)$ are the obvious ones.
    \item[$\bullet$] A cover in $\OC(X_0/\frakT)$ is a collection of morphisms $\{f_{\lambda}: (Z_{\lambda}, \frakZ_{\lambda}, i_{\lambda}, h_{\lambda}, \theta_{\lambda}) \rightarrow (Z, \frakZ, i, h, \theta)\}_{\lambda\in \Lambda}$ such that \begin{enumerate}
        \item[$\circ$] the induced morphism $\frakZ_{\lambda}\rightarrow \frakZ$ is strict; 
        \item[$\circ$] the induced family of morphisms on the generic dagger fibres $\{\calZ_{\lambda} \rightarrow \calZ\}_{\lambda\in \Lambda}$ is an open cover for $\calZ$; 
        \item[$\circ$] the induced morphism $Z_{\lambda} \rightarrow Z\times_{\frakZ}\frakZ_{\lambda}$ is an isomorphism for every $\lambda$.
    \end{enumerate}
\end{enumerate}
\end{Definition}

\begin{Remark}\label{Remark: OC(X/T) as a sliced site}
One can also consider the \emph{absolute} log overconvergent site $\OC(k^{\log = 0}/\frakT)$, whose objects are just commutative diagrams \[
    \begin{tikzcd}
        Z \arrow[r, "i"]\arrow[d, "h_k"] & \frakZ\arrow[d, "h_{\frakT}"]\\
        k^{\log = 0}\arrow[r, "\iota"] & \frakT 
    \end{tikzcd}
\] as in Definition \ref{Definition: overconvergent site} and whose morphisms and covers are defined similarly as above. 
Hence, we can view $X_0$ as a presheaf on $\OC(k^{\log =0}/\frakT)$, defined as \[
    X_0(Z, \frakZ, i, h) = \left\{ \text{commutative diagrams }\begin{tikzcd}
            & Z \arrow[r, "i"]\arrow[d, "h_k"]\arrow[dl, "\theta"'] & \frakZ\arrow[d, "h_{\frakT}"]\\
            X_0\arrow[r] & k^{\log =0}\arrow[r, "\iota"] & \frakT
        \end{tikzcd}\right\}.
\]
Consequently, we then can view $\OC(X_0/\frakT)$ as the sliced site of $\OC(k^{\log = 0}/\frakT)$ over the presheaf $X_0$.
\blackqed
\end{Remark}

\begin{Lemma}\label{Lemma: realisation and structure sheaf}
Let $\frakT$ be either $\frakS$, $\calO_{K_0}^{\log = 0}$, $\calO_{K_0}^{\log = \emptyset}$ or $\calO_{K}^{\log = \varpi}$. \begin{enumerate}
    \item[(i)] The category of sheaves on $\OC(X_0/\frakT)$ is equivalent to the following category: \begin{enumerate}
        \item[$\bullet$] Objects are collections of sheaves $\scrF_{\calZ} = \scrF_{(Z, \frakZ, i, h, \theta)}$ on $\calZ$ for each $(Z, \frakZ, i, h, \theta)\in \OC(X_0/\frakT)$ and morphisms \[
            \tau_{f}: f_{\eta}^{-1}\scrF_{\calZ'} \rightarrow \scrF_{\calZ}
        \] for each morphism $f: (Z, \frakZ, i, h, \theta)\rightarrow (Z', \frakZ', i', h', \theta')$ satisfying the usual cocylce condition and $f$ is an isomorphism if $\calZ$ is an open subset of $\calZ'$. Here, $f_{\eta}$ is the morphism on the dagger generic fibres induced by $f$.
        \item[$\bullet$] Morphisms are compatible morphisms between collections of sheaves. 
    \end{enumerate} For each sheaf $\scrF$ on $\OC(X_0/\frakT)$, we call $\scrF_{\calZ}$ the \textbf{realisation} of $\scrF$ on $\calZ$.
    \item[(ii)] The presheaf $\scrO_{X_0/\frakT}$ on $\OC(X_0/\frakT)$ defined by \[
        \scrO_{X_0/\frakT} : (Z, \frakZ, i, h, \theta)\mapsto \scrO_{\calZ}(\calZ)
    \] is a sheaf. We then call $\scrO_{X_0/\frakT}$ the \textbf{structure sheaf} on $\OC(X_0/\frakT)$.
\end{enumerate}
\end{Lemma}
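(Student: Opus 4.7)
The plan is to treat both parts as applications of a standard descent pattern: the overconvergent site $\OC(X_0/\frakT)$ is, by construction, fibered (via the generic dagger fibre functor $(Z,\frakZ,i,h,\theta)\mapsto \calZ$) over the category of dagger spaces, with covers defined exactly so as to correspond to open covers of dagger generic fibres plus strictness on the weak formal side. Hence sheaf theory on $\OC(X_0/\frakT)$ should reduce to an assembly of ordinary sheaves on each $\calZ$ glued by transition data.

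For part (i), I would construct two quasi-inverse functors. In one direction, starting with a sheaf $\scrF$ on $\OC(X_0/\frakT)$, define its realisation $\scrF_\calZ$ on $\calZ$ by restricting $\scrF$ to the sliced site $\OC(X_0/\frakT)_{/(Z,\frakZ,i,h,\theta)}$ and observing that, because a cover of $(Z,\frakZ,i,h,\theta)$ consists of strict morphisms whose generic fibres form an open cover of $\calZ$, this sliced site has the same topos as the small site of $\calZ$. For a morphism $f:(Z,\frakZ,i,h,\theta)\to(Z',\frakZ',i',h',\theta')$ one gets a transition morphism $\tau_f\colon f_\eta^{-1}\scrF_{\calZ'}\to \scrF_\calZ$ from functoriality, and the cocycle condition is automatic; when $f$ is a strict open immersion, the sliced sites fit inside one another as full subcategories, forcing $\tau_f$ to be an isomorphism. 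In the opposite direction, given a compatible family $(\scrF_\calZ,\tau_f)$, set
\[
    \scrF(Z,\frakZ,i,h,\theta) := \scrF_\calZ(\calZ),
\]
with restriction maps along morphisms defined through $\tau_f$ composed with dagger-space pullback. The sheaf condition for this presheaf on a cover of $(Z,\frakZ,i,h,\theta)$ unwinds, using the isomorphism condition for open immersions and the cocycle identities, to the sheaf condition for $\scrF_\calZ$ on the open cover $\{\calZ_\lambda\to\calZ\}$, which holds by hypothesis. Checking these two constructions are mutually inverse is then a routine diagram chase.

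For part (ii), I would simply apply (i) to the obvious compatible family: take $\scrF_\calZ := \scrO_\calZ$, the structure sheaf of the dagger space, and for each $f$ take $\tau_f$ to be the canonical pullback map $f_\eta^{-1}\scrO_{\calZ'}\to \scrO_\calZ$. The cocycle condition is the usual compatibility of pullback of structure sheaves, and for a strict open immersion $f$ the transition $\tau_f$ is tautologically an isomorphism because $\scrO_{\calZ'}|_\calZ=\scrO_\calZ$. Thus the family defines a sheaf on $\OC(X_0/\frakT)$, and unravelling the correspondence of (i) shows that this sheaf assigns precisely $\scrO_\calZ(\calZ)$ to $(Z,\frakZ,i,h,\theta)$, as required.

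The main technical obstacle, and the only point that requires genuine care, is the identification in (i) of the sliced site $\OC(X_0/\frakT)_{/(Z,\frakZ,i,h,\theta)}$ with the small dagger site of $\calZ$ at the level of sheaves. This requires checking that every open of $\calZ$ can be realised (up to refinement) as the generic fibre of an object lying over $(Z,\frakZ,i,h,\theta)$, which one does by pulling back along strict open immersions of weak formal schemes and invoking the description of rational subspaces in Section 2.2; once this is in hand, the remaining verifications are formal.
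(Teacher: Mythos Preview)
Your argument is correct and is essentially a reconstruction of the proofs in Le Stum's work \cite{LS-overconvergent}, which is exactly what the paper cites: the paper's own proof consists of nothing more than the two sentences ``The first assertion follows from \cite[Proposition 2.1.9]{LS-overconvergent} while the second assertion follows from [\emph{op. cit.}, Corollary 2.3.3].'' So you have not taken a different route; you have simply unpacked the content of the references the paper invokes. The key technical step you isolate (identifying the topos of the sliced site over $(Z,\frakZ,i,h,\theta)$ with that of the small site of $\calZ$, via the fact that covers are defined through open covers of dagger generic fibres) is precisely what underlies Le Stum's Proposition 2.1.9, and your derivation of (ii) from (i) is the content of his Corollary 2.3.3.
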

\begin{proof}
The first assertion follows from \cite[Proposition 2.1.9]{LS-overconvergent} while the second assertion follows from [\emph{op. cit.}, Corollary 2.3.3].
\end{proof}

\begin{Definition}[$\text{\cite[Definition 2.25]{Yamada}}$]
Let $\frakT$ be either $\frakS$, $\calO_{K_0}^{\log = 0}$, $\calO_{K_0}^{\log = \emptyset}$ or $\calO_{K}^{\log = \varpi}$.
\begin{enumerate}
    \item[(i)] A \textbf{log overconvergent isocrystal} on $X_0$ over $\frakT$ is an $\scrO_{X_0/\frakT}$-module $\scrE$ such that, \begin{enumerate}
        \item[$\bullet$] for any $(Z, \frakZ, i, h, \theta)\in \OC(X_0/\frakT)$, the realisation $\scrE_{\calZ}$ is a coherent locally free $\scrO_{\calZ}$-module; and 
        \item[$\bullet$] for any morphism $f: (Z, \frakZ, i, h, \theta) \rightarrow (Z', \frakZ', i', h', \theta')$, the induced morphism \[
            \tau_{f}: f_{\eta}^*: \scrE_{\calZ} \rightarrow \scrE_{\calZ'}
        \] is an isomorphism.
    \end{enumerate} Log overconvergent isocrystals naturally form a category, which is denoted by $\Isoc^{\normalfont \dagger}(X_0/\frakT)$.
    \item[(ii)] When $\frakT \neq \calO_K^{\log = \varpi}$, a \textbf{log overconvergent $F$-isocrystal} on $X_0$ over $\frakT$ is a pair $(\scrE, \Phi)$, where $\scrE\in \Isoc^{\normalfont\dagger}(X_0/\frakT)$ and $\Phi: \phi^*\scrE \xrightarrow{\simeq} \scrE$ is an isomorphism.\footnote{ Here, we abuse the notation, write $\phi$ to be the endomorphism on $\Isoc^{\normalfont \dagger}(X_0/\frakT)$ induced from the Frobenius on $\frakT$ and the absolute Frobenius on $X_0$ (see \cite[(2.21)]{Yamada}).} Log overconvergent $F$-isocrystals naturally form a category, which is denoted by $\FIsoc^{\normalfont \dagger}(X_0/\frakT)$.
\end{enumerate}
\end{Definition}

\begin{Lemma}\label{Lemma: internal homs and tensor products for FIsoc}
Let $\frakT$ be either $\frakS$, $\calO_{K_0}^{\log = 0}$, $\calO_{K_0}^{\log = \emptyset}$ or $\calO_{K}^{\log = \varpi}$. Then, the category $\Isoc^{\normalfont\dagger}(X_0/\frakT)$ (resp., $\FIsoc^{\normalfont\dagger}(X_0/\frakT)$) admits internal $\Hom$'s and tensor products, denoted by $\sheafHom$ and $\otimes$ respectively. In particular, given $\scrE\in \Isoc^{\normalfont \dagger}(X_0/\frakT)$ (resp., $(\scrE, \Phi)\in \FIsoc^{\normalfont \dagger}(X_0/\frakT)$), the dual $\scrE^{\vee}$ (resp., $(\scrE, \Phi)^{\vee}$) is well-defiend.
\end{Lemma}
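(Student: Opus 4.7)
The plan is to define the internal Hom and tensor product objectwise on the realisations, using the corresponding operations for coherent locally free modules on the dagger spaces $\calZ$, and then verify that the required compatibility (the cocycle-of-transition-maps) is automatic from functoriality of these operations under pullback.

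More precisely, given $\scrE, \scrF \in \Isoc^{\dagger}(X_0/\frakT)$, I would set
\[
    (\scrE \otimes \scrF)_{\calZ} := \scrE_{\calZ} \otimes_{\scrO_{\calZ}} \scrF_{\calZ}, \qquad \sheafHom(\scrE, \scrF)_{\calZ} := \sheafHom_{\scrO_{\calZ}}(\scrE_{\calZ}, \scrF_{\calZ}),
\]
for each $(Z, \frakZ, i, h, \theta) \in \OC(X_0/\frakT)$. For a morphism $f\colon (Z, \frakZ, i, h, \theta) \to (Z', \frakZ', i', h', \theta')$, the transition isomorphism for the tensor product is $\tau_f^{\scrE} \otimes \tau_f^{\scrF}$ after invoking the canonical identification $f_\eta^*(\scrE_{\calZ'} \otimes \scrF_{\calZ'}) \cong f_\eta^*\scrE_{\calZ'} \otimes f_\eta^*\scrF_{\calZ'}$; for the internal Hom, since $\scrE_{\calZ'}$ is coherent locally free and $f_\eta$ is a morphism of dagger spaces, one has $f_\eta^*\sheafHom(\scrE_{\calZ'}, \scrF_{\calZ'}) \cong \sheafHom(f_\eta^*\scrE_{\calZ'}, f_\eta^*\scrF_{\calZ'})$, and the transition map is $\sheafHom((\tau_f^{\scrE})^{-1}, \tau_f^{\scrF})$. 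The cocycle condition for compositions of morphisms follows formally from those of $\tau_f^{\scrE}$ and $\tau_f^{\scrF}$, and being coherent locally free is preserved by $\otimes$ and $\sheafHom$ (the latter using that both arguments are locally free). Using Lemma 3.1.3(i), these collections of realisations with their transition maps determine well-defined objects of $\Isoc^{\dagger}(X_0/\frakT)$, and the usual adjunction isomorphisms for Hom and $\otimes$ carry over realisation-wise.

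For the $F$-isocrystal case, I would equip the tensor product with the Frobenius structure $\Phi_{\scrE} \otimes \Phi_{\scrF}$, using the canonical identification $\phi^*(\scrE \otimes \scrF) \cong \phi^*\scrE \otimes \phi^*\scrF$, and equip the internal Hom with $\sheafHom(\Phi_{\scrE}^{-1}, \Phi_{\scrF})$ after the analogous identification $\phi^*\sheafHom(\scrE, \scrF) \cong \sheafHom(\phi^*\scrE, \phi^*\scrF)$ (again requiring local freeness). Duals are then defined by $\scrE^{\vee} := \sheafHom(\scrE, \scrO_{X_0/\frakT})$, with Frobenius $(\Phi^{-1})^{\vee}$, noting that $\scrO_{X_0/\frakT}$ is an $F$-isocrystal via the Frobenius on $\frakT$.

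The main point that needs care is the interplay of the compatibility of $f_\eta^*$ with $\sheafHom$ in the dagger setting and the strictness condition in the covering data of $\OC(X_0/\frakT)$; the local freeness hypothesis in the definition of an overconvergent isocrystal is exactly what makes $\sheafHom$ commute with pullback, and once this is granted the verification is entirely formal. Beyond that, the rest is bookkeeping.
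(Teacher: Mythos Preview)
Your proposal is correct and is precisely the construction the paper defers to \cite[Definition 2.27]{Yamada}: the paper's own proof is nothing more than that citation, and what you have written is the standard realisation-wise definition that Yamada records. There is nothing to add.
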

\begin{proof}
The constructions are given in \cite[Definition 2.27]{Yamada}.
\end{proof}

Let $\frakT$ be either $\frakS$, $\calO_{K_0}^{\log = 0}$, $\calO_{K_0}^{\log = \emptyset}$ or $\calO_{K}^{\log = \varpi}$ and let $\scrE \in \Isoc^{\dagger}(X_0/\frakT)$ (resp.,  $(\scrE, \Phi)\in \FIsoc^{\dagger}(X_0/\frakT)$ when $\frakT \neq \calO_K^{\log = \varpi}$). Our next goal is to introduce the notion of \emph{log rigid cohomology of $X_0$ relative to $\frakT$ with coefficients in $\scrE$ (resp. $(\scrE, \Phi)$}). To this end, we fix a collection $\{(Z_{\lambda}, \frakZ_{\lambda}, i_{\lambda}, h_{\lambda}, \theta_{\lambda})\}_{\lambda\in \Lambda}$, where $(Z_{\lambda}, \frakZ_{\lambda}, i_{\lambda}, h_{\lambda}, \theta_{\lambda})\in \OC(X_0/\frakT)$ such that   \begin{enumerate}
    \item[$\bullet$] each $Z_{\lambda}$ is of finite type over $k$,
    \item[$\bullet$] $\{\theta_{\lambda}: Z_{\lambda} \rightarrow X_0\}_{\lambda\in \Lambda}$ is a Zariski open cover by strict open immersions, and 
    \item[$\bullet$] every $h_{\lambda, \frakT}: \frakZ_{\lambda} \rightarrow \frakT$ is smooth. 
\end{enumerate} When $\frakT \neq \calO_K^{\log = \varpi}$, we further fix a Frobenius lift $\varphi_{\lambda}$ on $\frakZ_{\lambda}$ which is compatible with $\phi$. The existence of such a collection is guaranteed by \cite[Proposition 2.33]{Yamada}. 

For any finite subset $\Xi\subset \Lambda$, choose an exactification (\cite[Proposition 4.10]{Kato-LogFontaineIllusie}) \[
    Z_{\Xi} := \bigcap_{\lambda\in \Xi} Z_{\lambda} \xrightarrow{i_{\Xi}} \frakZ_{\Xi} \rightarrow \prod_{\lambda\in \Xi, \frakT}\frakZ_{\lambda}
\] of the diagonal embedding $Z_{\Xi} \rightarrow  \prod_{\lambda\in \Xi, \frakT}\frakZ_{\lambda}$. By letting $h_{\Xi, k}: Z_{\Xi} \rightarrow k^{\log =0}$ and $h_{\Xi, \frakT}: \frakZ_{\Xi} \rightarrow \frakT$ be the structure morphisms and $\theta_{\Xi}:= \bigcap_{\lambda\in \Xi}\theta_{\lambda}$, we see that \[
    (Z_{\Xi}, \frakZ_{\Xi}, i_{\Xi}, h_{\Xi}, \theta_{\Xi})\in \OC(X_0/\frakT).
\]  Furthermore, if $\frakT \neq \calO_{K}^{\log = \varpi}$, the Frobenii $\varphi_{\lambda}$'s induces a Frobenius $\varphi_{\Xi}$ on $\frakZ_{\Xi}$ by $\varphi_{\Xi}:= (\prod_{\lambda\in \Xi}\varphi_{\lambda})|_{\frakZ_{\Xi}}$.

The map $h_{\Xi, \frakT}: \frakZ_{\Xi} \rightarrow \frakT$ induces a map on the generic fibre $h_{\Xi, \calT}: \calZ_{\Xi} \rightarrow \calT$ where $\calZ_{\Xi}$ and $\calT$ are the dagger spaces associated with $\frakZ_{\Xi}$ and $\frakT$ respectively. Let $\Omega_{\calZ_{\Xi}/\calT}^{\log, \bullet}$ be the complex of log differential forms of $\calZ_{\Xi}$ over $\calT$. We write \[
    \Omega_{Z_{\Xi}, \eta}^{\log, \bullet} := \Omega_{\calZ_{\Xi}/\calT}^{\log, \bullet}|_{]Z_{\Xi}[_{\calZ_{\Xi}}}.
\] By \cite[Lemma 1.2]{GK-Drinfeld}, we know that $\Omega_{Z_{\Xi}, \eta}^{\log, \bullet}$ is independent from the choice of exactification $\frakZ_{\Xi}$. 

For $\scrE\in \Isoc^{\dagger}(X_0/\frakT)$, we have the realisation $\scrE_{\Xi}$ on $\calZ_{\Xi}$. By \cite[Corollary 2.27]{Yamada}, we know that $\scrE_{\Xi}$ is equipped with an integrable connection \[
    \nabla: \scrE_{\Xi} \rightarrow \scrE_{\Xi}\otimes \Omega_{\calZ_{\Xi}/\calT}^{\log, \bullet}.
\] Moreover, if $(\scrE, \Phi)\in \FIsoc^{\dagger}(X_0/\frakT)$, then $\Phi$ induces a commutative diagram \begin{equation}\label{eq: integrable connecition with Frobenius structure}
    \begin{tikzcd}
        \varphi_{\Xi}^*\scrE_{\Xi}\arrow[r, "\varphi_{\Xi}^*\nabla"]\arrow[d, "\Phi"'] & \varphi_{\Xi}^*\scrE_{\Xi} \otimes \varphi_{\Xi}^*\Omega_{\calZ_{\Xi}/\calT}^{\log, \bullet}\arrow[d, "\Phi\otimes \varphi_{\Xi}"]\\
        \scrE_{\Xi}\arrow[r, "\nabla"] & \scrE_{\Xi}\otimes \Omega_{\calZ_{\Xi}/\calT}^{\log, \bullet}
    \end{tikzcd}.
\end{equation} In particular, by restricting to $]Z_{\Xi}[_{\calZ_{\Xi}}$, we can consider the complex $R\Gamma(]Z_{\Xi}[_{\calZ_{\Xi}}. \scrE_{\Xi}\otimes\Omega_{Z_{\Xi}, \eta}^{\log, \bullet})$.

For any finite subsets $\Xi_1\subset \Xi_2\subset \Lambda$, one has a natural map $\delta_{\Xi_2, \Xi_1}: ]Z_{\Xi_2}[_{\calZ_{\Xi_2}} \rightarrow ]Z_{\Xi_1}[_{\calZ_{\Xi_1}}$, which induces $\delta_{\Xi_2, \Xi_1}^{-1}(\scrE_{\Xi_1}\otimes \Omega^{\bullet}_{Z_{\Xi_1}, \eta}) \rightarrow \scrE_{\Xi_2}\otimes \Omega_{Z_{\Xi_2}, \eta}^{\bullet}$. Consequently, after fixing an order on $\Lambda$, one obtains a simplicial dagger space $]Z_{\bullet}[_{\calZ_{\bullet}}$ and a complex of sheaves $\scrE_{\bullet}\otimes \Omega_{Z_{\bullet}, \eta}^{\bullet}$ on $]Z_{\bullet}[_{\calZ_{\bullet}}$. Consequently, $\Phi$ induces a Frobenius action on the complex $R\Gamma(]Z_{\bullet}[_{\calZ_{\bullet}}, \scrE_{\bullet}\otimes\Omega_{Z_{\bullet}, \eta}^{\log, \bullet})$. We still denote by $\Phi$ the induced operator. 

\begin{Definition}\label{Definition: rigid cohomology with coefficients}
Let $\frakT$ be either $\frakS$, $\calO_{K_0}^{\log = 0}$, $\calO_{K_0}^{\log = \emptyset}$ or $\calO_{K}^{\log = \varpi}$ and let $\scrE\in \Isoc^{\normalfont \dagger}(X_0/\frakT)$. The \textbf{log rigid cohomology of $X_0$ relative to $\frakT$ with coefficients in $\scrE$} is defined to be the complex 
\[
    R\Gamma_{\rig}(X_0/\frakT, \scrE) := \hocolim R\Gamma(]Z_{\bullet}[_{\calZ_{\bullet}}, \scrE_{\bullet}\otimes\Omega_{Z_{\bullet}, \eta}^{\log, \bullet}),
\] where the homotopy colimit is taken over the category of hypercovers of $X_0$ built from the simplicial dagger spaces described above (see \cite[\href{https://stacks.math.columbia.edu/tag/01H1}{Tag 01H1}]{stacks-project}).\footnote{ Recall from Remark \ref{Remark: OC(X/T) as a sliced site} that we can view $X_0$ as a presheaf of the absolute log overconvergent site $\OC(k^{\log =0}/\frakT)$. Then, we can consider hypercovers of $X_0$ in $\OC(k^{\log = 0}/\frakT)$ (see, for example, \cite[Definition 2.1]{nlab:hypercover}). }
\end{Definition}

\begin{Remark}
\normalfont 
Similar notion apply to $(\scrE, \Phi)\in \FIsoc^{\dagger}(X_0/\frakT)$ when $\frakT \neq \calO_K^{\log = \varpi}$. We leave the detailed definition to the readers. 
\blackqed
\end{Remark}

\begin{Remark}
\normalfont 
Readers who are more familiar with the traditional definition of rigid cohomology (introduced by P. Berthelot in \cite{Berthelot}) may wonder how to compare Berthelot's definition with the aforementioned definition. For this, we refer the readers to \cite[Theorem 5.1]{GK-dagger}, \cite[Remark 4.2.5]{Kedlaya-finite}, and \cite[Corollary 3.6.8]{LS-overconvergent}.
\blackqed
\end{Remark}

\vspace{2mm}

We conclude this subsection by giving the definition of \emph{log rigid cohomology with compact supports} by following \cite{EY-Poincare}. To this end, recall that $X_0$ is strictly semistable, \emph{i.e}, Zariski locally, we have a strictly smooth morphism \[
    X_0 \rightarrow \Spec k[X_1, ..., X_n]/(X_1\cdots X_r)
\] of log schemes over $k^{\log = 0}$. Denote by $D_0 \subset X_0$ the horizontal divisor, locally defined by $X_{r+1}\cdots X_n$. 

Let $\frakT$ be either $\frakS$, $\calO_{K_0}^{\log = 0}$, $\calO_{K_0}^{\log = \emptyset}$ or $\calO_{K}^{\log = \varpi}$. Given $(Z, \frakZ, i, h, \theta)\in \OC(X_0/\frakT)$, we define the sheaf $\scrO_{\frakZ}(-D_0)$ on $\frakZ$ to be the locally free $\scrO_{\frakZ}$-module, locally generated by $i_*\theta^* X_{r+1}\cdots X_n$. This sheaf then induces a locally free $\scrO_{\calZ}$-module $\scrO_{\calZ}(-D_0)$ on the dagger generic fibre $\calZ$ of $\frakZ$. Consequently, by Lemma \ref{Lemma: realisation and structure sheaf}, there is a sheaf $\scrO_{X_0/\frakT}(-D_0)$ on $\OC(X_0/\frakT)$ whose realisation on each $(Z, \frakZ, i, h, \theta)$ is $\scrO_{\calZ}(-D_0)$.\footnote{ If $\scrM_{D_0}$ is the sheaf of monoid defined by the horizontal divisor $D_0$, then the sheaf $\scrO_{X_0/\frakT}(-D_0)$ is denoted by $\scrO_{X_0/\frakT}(\scrM_{D_0})$. We chose our notation since it resembles the similar notation used in traditional algebraic geometry.}

\begin{Definition}[$\text{\cite[Definition 3.3]{EY-Poincare}}$]\label{Definition: rigid cohomology with compact support}
Assume that $X_0$ is proper. Let $\frakT$ be either $\frakS$, $\calO_{K_0}^{\log = 0}$, $\calO_{K_0}^{\log = \emptyset}$ or $\calO_{K}^{\log = \varpi}$ and let $\scrE\in \Isoc^{\normalfont \dagger}(X_0/\frakT)$. Denote by $\scrE(-D_0)$ the tensor product $\scrE \otimes_{\scrO_{X_0/\frakT}}\scrO_{X_0/\frakT}(-D_0)$. Then, the \textbf{log rigid cohomology with compact support of $X_0$ relative to $\frakT$ with coefficients in $\scrE$} is defined to the complex 
\[
    R\Gamma_{\rig, c}(X_0/\frakT, \scrE) := R\Gamma_{\rig}(X_0/\frakT, \scrE(-D_0)).
\] 
\end{Definition}

\begin{Remark}
\normalfont 
When $\frakT \neq \calO_K^{\log = \varpi}$ and $(\scrE, \Phi)\in \FIsoc^{\dagger}(X_0/\frakT)$, then the \emph{log rigid cohomology with compact support of $X_0$ relative to $\frakT$ with coefficients in $(\scrE, \Phi)$} is defined in a similar manner. We, again, leave the detailed definition to the readers. 
\blackqed
\end{Remark}

\subsection{Hyodo--Kato theory with coefficients}\label{subsection: HK theory with coeff}
Following \cite{Yamada}, to discuss Hyodo--Kato theory with coefficients, we have to restrict the coefficients. This relies on the notion of \emph{residue maps}, which we now recall from \cite[Definition 2.3.9]{Kedlaya-semistable}: 

Let $\scrE\in \Isoc^{\dagger}(X_0/\calO_{K_0}^{\log = \emptyset})$. For any $(Z, \frakZ, i, h, \theta)\in \OC(X_0/\frakS)$, we can regard $(Z, \frakZ, i, h', \theta)\in \OC(X_0/\calO_{K_0}^{\log = \emptyset})$, where $h': \frakZ \xrightarrow{h} \frakS \rightarrow \calO_{K_0}^{\log = \emptyset}$. Zariski locally on $\frakZ$, we have a strictly smooth morphism \[
    \frakZ \rightarrow \Spwf\calO_{K_0}\llbrack T\rrbrack [X_1, ..., X_n]^{\dagger}/(T-X_1\cdots X_r).
\] Let $W$ be one of the $X_i$'s and let $\calD_W\subset \calZ$ be the closed dagger subspace defined by $W$. Denote by $\Omega_{\calZ/K_0}^1$ the sheaf of differential one-forms on $\calZ$ over $K_0$, then we have \[
    \coker\left( \scrE_{\calZ}\otimes (\Omega_{\calZ/K_0}^1\oplus \oplus_{X_i\neq W}\scrO_{\calZ}\dlog X_i) \rightarrow \scrE_{\calZ}\otimes \Omega_{\calZ/K_0}^{\log, 1}\right) = \scrE_{\calZ}\otimes \scrO_{\calD_W}\dlog W.
\] Hence, we have a map \[
    \scrE_{\calZ} \xrightarrow{\nabla}\scrE_{\calZ}\otimes \Omega_{\calZ/K_0}^{\log, 1} \rightarrow \scrE_{\calZ}\otimes \scrO_{\calD_W}\dlog W,
\] which induces a map $\scrE_{\calZ}\otimes \scrO_{\calD_W} \rightarrow \scrE_{\calZ} \otimes \scrO_{\calD_W}\dlog W$. After identifying $\scrE_{\calZ} \otimes \scrO_{\calD_W}\dlog W$ with $\scrE_{\calZ} \otimes \scrO_{\calD_W}$, one obtains the \textbf{\textit{residue map}} \[
    \Res_{W}: \scrE_{\calZ}\otimes \scrO_{\calD_W} \rightarrow \scrE_{\calZ}\otimes\scrO_{\calD_W}
\] along $\calD_W$.

\begin{Definition}\label{Definition: nilpotent residue and unipotent overconvergent isocrystals}
Let $\scrE\in \Isoc^{\normalfont \dagger}(X_0/\calO_{K_0}^{\log = \emptyset})$. \begin{enumerate}
    \item[(i)] We say $\scrE$ has \textbf{nilpotent residues}, if for any $(Z, \frakZ, i, h, \theta)\in \OC(X_0/\frakS)$, the residue maps $\Res_W$ on the realisation $\scrE_{\calZ}$ are nilpotent. We denote by $\Isoc^{\normalfont\dagger}(X_0/\calO_{K_0}^{\log = \emptyset})^{\nr}$ the full subcategory of overconvergent isocrystals having nilpotent residues. 
    \item[(ii)] Suppose $\scrE\in \Isoc^{\normalfont, \dagger}(X_0/\calO_{K_0}^{\log = \emptyset})^{\nr}$. We say $(\scrE, \Phi)$ is \textbf{unipotent} if $\scrE$ is an iterated extension of $\scrO_{X_0/\calO_{K_0}^{\log = \emptyset}}$. We denote by $\FIsoc^{\normalfont\dagger}(X_0/\calO_{K_0}^{\log = \emptyset})^{\unip}$ the full subcategory of unipotent overconvergent isocrystals. 
    \item[(iii)] If $(\scrE, \Phi)\in \FIsoc^{\normalfont \dagger}(X_0/\calO_{K_0}^{\log = \emptyset})$, we say it has \textbf{nilpotent residue} (resp., is \textbf{unipotent}) if $\scrE$ has nilpotent residue (resp., is unipotent). We let $\FIsoc^{\normalfont \dagger}(X_0/\calO_{K_0}^{\log = \emptyset})^{\nr}$ (resp., $\FIsoc^{\normalfont \dagger}(X_0/\calO_{K_0}^{\log = \emptyset})^{\unip}$) be the full subcategory of overconvergent $F$-isocrystals having nilpotent residues (resp., being unipotent.)
\end{enumerate}
\end{Definition}

Note that there is a commutative diagram \[
    \begin{tikzcd}
        \calO_{K_0}^{\log = 0}\arrow[r, "0\mapsfrom T"]\arrow[rd] & \frakS\arrow[d] & \calO_K^{\log = \varpi}\arrow[l, "T\mapsto \varpi"']\arrow[ld]\\ & \calO_{K_0}^{\log = \emptyset}
    \end{tikzcd}.
\] Therefore, via base change, we have functors of overconvergent isocrystals \[
    \begin{tikzcd}
        \Isoc^{\dagger}(X_0/\calO_{K_0}^{\log = 0}) & \Isoc^{\dagger}(X_0/\frakS)\arrow[r]\arrow[l] & \Isoc^{\dagger}(X_0/\calO_K^{\log = \varpi})\\
        & \Isoc^{\dagger}(X_0/\calO_{K_0}^{\log = \emptyset})\arrow[u]\arrow[ru]\arrow[lu]
    \end{tikzcd}.
\] In particular, for any $\scrE\in \Isoc^{\dagger}(X_0/\calO_{K_0}^{\log = \emptyset})$, we can view it as an overconvergent isocrystal in $\Isoc^{\dagger}(X_0/\calO_{K_0}^{\log = 0})$, $\Isoc^{\dagger}(X_0/\frakS)$, or $\Isoc^{\dagger}(X_0/\calO_K^{\log = \varpi})$. We shall abuse the notation and still denote its images by $\scrE$. Similar for overconvergent $F$-isocrystals. \\

To define Hyodo--Kato cohomology with coefficients, we fix a collection $\{(Z_{\lambda}, \frakZ_{\lambda}, i_{\lambda}, h_{\lambda}, \theta_{\lambda})\}_{\lambda\in \Lambda}$ where $(Z_{\lambda}, \frakZ_{\lambda}, i_{\lambda}, h_{\lambda}, \theta_{\lambda})\in \OC(X_0/\frakS)$ as in \S \ref{subsection: overconvergent F-isocrystals}. Resuming the notation in \emph{loc. cit.}, for any finite subset $\Xi\subset \Lambda$, we can consider the \textbf{\textit{Kim--Hain complex}} \[
    \Omega_{Z_{\Xi}, \eta}^{\log, \bullet}[u]_{\text{naïve}} := \Omega_{\calZ_{\Xi}/\calS}^{\log, \bullet}|_{]Z_{\Xi}[_{\calZ_{\Xi}}}[u],
\] generated by $\Omega_{Z_{\Xi}, \eta}^{\log, \bullet}$ and degree-0 elements $u^{[i]}$ (for $i\in \Z_{\geq 0}$) such that \begin{enumerate}
    \item[$\bullet$] $u^{[0]} = 1$,
    \item[$\bullet$] $u^{[i]}\wedge u^{[j]} = \frac{(i+j)!}{i!j!}u^{[i+j]}$, 
    \item[$\bullet$] $du^{[i+1]} = - (d\log T)u^{[i]} $.
\end{enumerate} 
We let $\Omega_{Z_{\Xi}, \eta}^{\log, \bullet}[u]_k$ be the subcomplex of $\Omega_{Z_{\Xi}, \eta}^{\log, \bullet}[u]_{\text{naïve}}$, consisting of sections of $\Omega_{Z_{\Xi}, \eta}^{\log, \bullet}$ and $u^{[0]}$, ..., $u^{[k]}$.

\begin{Definition}\label{Definition: Hyodo--Kato cohomology}
 Let $\scrE\in \Isoc^{\normalfont \dagger}(X_0/\calO_{K_0}^{\log = \emptyset})^{\unip}$. \begin{enumerate}
    \item[(i)] The \textbf{Hyodo--Kato cohomology of $X_0$ with coefficients in $\scrE$} is defined to be the complex \[
        R\Gamma_{\HK}(X_0, \scrE) := \hocolim \hocolim_k R\Gamma(]Z_{\bullet}[_{\calZ_{\bullet}}, \scrE_{\bullet}\otimes \Omega_{Z_{\bullet}, \eta}^{\log, \bullet}[u]_k),
    \]
    \item[(ii)] Suppose $X_0$ is proper. The \textbf{Hyodo--Kato cohomology of $X_0$ with compact supports and coefficients in $(\scrE, \Phi)$} is defined to be the complex \[
        R\Gamma_{\HK, c}(X_0, \scrE)) := R\Gamma_{\HK}(X_0/\calO_{K_0}^{\log = 0}, \scrE(D_0)).
    \]
\end{enumerate}
\end{Definition}

\begin{Remark}
\normalfont
We again leave the similar definition for $(\scrE, \Phi)\in \FIsoc^{\dagger}(X_0/\calO_{K_0}^{\log = \emptyset})^{\unip}$ to the readers. 
\blackqed
\end{Remark}

Let $\scrE\in \Isoc^{\dagger}(X_0/\calO_{K_0}^{\log = \emptyset})$. On both $R\Gamma_{\HK}(X_0, \scrE)$ and $R\Gamma_{\HK, c}(X_0, \scrE)$, there is a \textbf{\textit{monodromy operator}} $N$ defined by \[
    N(u^{[i]}) = u^{[i-1]}.
\] Moreover, if $(\scrE, \Phi)\in \FIsoc^{\dagger}(X_0/\calO_{K_0}^{\log = \emptyset})$, the complex $R\Gamma_{\HK}(X_0, (\scrE, \Phi))$ and $R\Gamma_{\HK, c}(X_0, (\scrE, \Phi))$ admit Frobenius actions induced by $\Phi$ and \[
    \varphi(u^{[i]}) = p^i u^{[i]}.
\] We again denote the Frobenius action on the complexes by $\Phi$. Therefore, one sees immediately that \begin{equation}\label{eq: relation of N and Frob}
    p \Phi N =  N \Phi.
\end{equation}\\

Our next goal is to briefly discuss the so-called \emph{Hyodo--Kato morphisms}. To this end, we assume from now on that there exists a proper flat log weak formal scheme $\frakX$ over $\calO_K$ which is strictly semistable over $\calO_K^{\log = \varpi}$ such that its dagger generic fibre $\calX$ and the associated scheme of $\frakX$ over $k$ is $X_0$. Zariski locally on $\frakX$, there is a strict smooth morphism \[
    \frakX \rightarrow \Spwf \calO_K[X_1, ..., X_n]^{\dagger}/(X_1\cdots X_r -\varpi). 
\] We denote by $\frakD$ the horizontal divisor locally defined by $X_{r+1}\cdots X_n$. Then, $\frakD$ induces the horizontal divisor $D_0$ on $X_0$ and the horizontal divisor $\calD$ on $\calX$.

Let $\iota: X_0 \hookrightarrow \frakX$ be the canonical inclusion and let $h_{k}: X_0 \rightarrow k^{\log  = 0}$ and $h_{\calO_K^{\log = \varpi}}: \frakX \rightarrow \calO_{K}^{\log = \varpi}$ be the structure morphisms, then $(X_0, \frakX, \iota, h, \id)\in \OC(X_0/\calO_{K}^{\log = \varpi})$. For any $\scrE\in \Isoc^{\dagger}(X_0/\calO_{K_0}^{\log = \emptyset})$, we see immediately from construction that \[
    R\Gamma_{\rig}(X_0/\calO_{K}^{\log = \varpi}, \scrE) = R\Gamma_{\log\dR}(\calX, \scrE) := \text{the log-de Rham cohomology of $\calX$ with coefficients in $\scrE$}
\] Moreover, since $\calX$ is assumed to be proper smooth, we see that \[
    R\Gamma_{\rig, c}(X_0/\calO_{K}^{\log = \varpi}, \scrE) = R\Gamma_{\log\dR,c}(\calX, \scrE) := R\Gamma_{\log\dR}(\calX, \scrE(-\calD)).
\]
Here, $\scrE(-\calD) = \scrE \otimes_{\scrO_{\calX}}\scrO_{\calX}(-\calD)$ is indeed the subsheaf of $\scrE$ that vanishes at $\calD$. 

\begin{Theorem}\label{Theorem: Hyodo--Kato isomorphism}
For any $(\scrE, \Phi)\in \FIsoc^{\normalfont \dagger}(X_0/\calO_{K_0}^{\log = \emptyset})^{\unip}$, we have the following quasi-isomorphisms: \begin{enumerate}
    \item[(i)] The comparison quasi-isomorphisms \[
        R\Gamma_{\HK}(X_0, (\scrE, \Phi)) \rightarrow R\Gamma_{\rig}(X_0, (\scrE, \Phi)), \quad u^{[i]} \mapsto \left\{\begin{array}{ll}
            1 & \text{if }i=0  \\
            0 & \text{else}
        \end{array}\right..
    \] It is moreover compatible with the Frobenius structures on both sides. 
    \item[(ii)] The Hyodo--Kato quas-isomorphism \[
        \Psi_{\HK}: R\Gamma_{\HK}(X_0, (\scrE, \Phi)) \otimes_{K_0}K \rightarrow R\Gamma_{\log\dR}(\calX, \scrE).
    \]
    \item[(iii)] The Hyodo--Kato quasi-isomorphism \[
        \Psi_{\HK, c}: R\Gamma_{\HK, c}(X_0, (\scrE, \Phi)) \otimes_{K_0}K \rightarrow R\Gamma_{\rig, c}(X_0/\calO_{K}^{\log = \varpi}, \scrE) = R\Gamma_{\log\dR,c}(\calX, \scrE).
    \]
\end{enumerate} 
\end{Theorem}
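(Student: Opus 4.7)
The theorem packages together three comparison quasi-isomorphisms, so the plan is to treat them in a dependent order: first (i), then (ii) by composing (i) with a de Rham specialisation, and finally (iii) by replaying (ii) with $\scrE(-\calD)$ in place of $\scrE$. Throughout, we may work locally on the simplicial hypercover $\{(Z_\lambda,\frakZ_\lambda,i_\lambda,h_\lambda,\theta_\lambda)\}_{\lambda\in\Lambda}$ fixed in \S\ref{subsection: overconvergent F-isocrystals}, since both Hyodo--Kato, log-de Rham, and log-rigid cohomology are computed as the homotopy colimit of the same \v{C}ech-style complexes.

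For (i), the plan is to construct the augmentation $\epsilon\colon \Omega_{Z_\Xi,\eta}^{\log,\bullet}[u]_\bullet \to \Omega_{Z_\Xi,\eta}^{\log,\bullet}$ (induced by the base change $\sigma_0\colon \calO_{K_0}^{\log=0}\to\frakS$) sending $u^{[i]}\mapsto \delta_{i,0}$, and to check it is a quasi-isomorphism at the level of realisations. Filter the source by powers of $u$; the associated graded is $\bigoplus_{i\ge 0}\Omega_{Z_\Xi,\eta}^{\log,\bullet}\cdot u^{[i]}$ with differential induced by $d u^{[i+1]}=-d\log T\cdot u^{[i]}$. Because on $]Z_\Xi[_{\calZ_\Xi}$ the form $d\log T$ generates a non-trivial direction in $\Omega^{\log,1}_{\calZ_\Xi/\calS}$, one obtains a local contracting homotopy on the $u^{[i]}$, $i\ge 1$, strand, showing that $\epsilon$ is a quasi-isomorphism. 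Compatibility with $\Phi$ comes from $\varphi(u^{[i]})=p^i u^{[i]}$, which is automatic on degree zero.

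For (ii), the strategy is to factor $\Psi_{\HK}$ through the analogous complex over $\calO_K^{\log=\varpi}$ via the specialisation $\sigma_\varpi\colon T\mapsto \varpi$ on $\frakS$. Concretely, one picks a branch of the $p$-adic logarithm and defines a map $\Omega_{Z_\Xi,\eta}^{\log,\bullet}[u]\otimes_{K_0} K \to \Omega_{\calZ_\Xi/\calO_K}^{\log,\bullet}$ by $u^{[i]}\mapsto (\log\varpi)^i/i!$ combined with pullback along $\sigma_\varpi$. That this descends to the realisations $\scrE_\Xi$ uses Lemma \ref{Lemma: internal homs and tensor products for FIsoc} together with the nilpotent-residue condition (to ensure the infinite series truncates appropriately). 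To prove it is a quasi-isomorphism, we propose a d\'evissage on the unipotent coefficient: iterated extensions of $\scrO_{X_0/\calO_{K_0}^{\log=\emptyset}}$ reduce us, via the five lemma applied to the long exact sequences, to the trivial coefficient case, which is the rigid Hyodo--Kato theorem of Ertl--Yamada \cite{EY}. For (iii) we repeat exactly the same argument after replacing $\scrE$ by $\scrE(-\calD)$, which is still unipotent (being the twist of a unipotent object by an invertible object) and for which the compact-support Hyodo--Kato and log-de Rham complexes are literally defined as the corresponding complexes for $\scrE(-D_0)$, $\scrE(-\calD)$ respectively.

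The main obstacle will be the d\'evissage step in (ii): one must verify that the map defined using a choice of $\log\varpi$ is genuinely independent of this choice at the level of cohomology, and that it is compatible with the monodromy operator $N$ and the Frobenius $\Phi$ via the expected relation $p\Phi N=N\Phi$ recorded in \eqref{eq: relation of N and Frob}. In the trivial-coefficient case this is established in \cite{EY}, but with non-trivial coefficients one must track the extra data of the connection $\nabla$ and ensure that the nilpotent-residue hypothesis on $\scrE$ is strong enough to guarantee convergence of any auxiliary series that arises in the comparison. If needed we will appeal directly to the coefficient version of the rigid Hyodo--Kato theorem proved by Yamada \cite{Yamada}, which handles exactly this situation, to close the argument.
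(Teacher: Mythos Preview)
The paper does not prove this theorem; it simply cites \cite[Theorem 4.8]{Yamada} for (i), \cite[Proposition 8.8]{Yamada} for (ii), and \cite[Corollary 3.8]{EY-Poincare} for (iii), recording only the formula $u^{[i]}\mapsto (-\log\varpi)^i/i!$ for $\Psi_{\HK}$ and $\Psi_{\HK,c}$. Your final sentence, where you fall back on Yamada's coefficient version, is therefore essentially the paper's entire argument. Everything preceding that is extra work you are volunteering.

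That extra work is broadly reasonable for (i) and (ii), but your reduction for (iii) contains a genuine error. You assert that $\scrE(-\calD)$ is ``still unipotent (being the twist of a unipotent object by an invertible object).'' This is false in general: unipotent means iterated extension of the \emph{trivial} object $\scrO_{X_0/\calO_{K_0}^{\log=\emptyset}}$, and tensoring such an extension by a non-trivial line bundle $L$ produces an iterated extension of $L$, not of $\scrO$. So your d\'evissage for (iii) does not reduce to the trivial-coefficient case as stated. The actual argument in \cite{EY-Poincare} handles the twist by $\scrO(-D_0)$ directly rather than by pretending the twisted coefficient lies in the unipotent category; you should cite that result rather than manufacture a false reduction.

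Two smaller points. First, your formula for $\Psi_{\HK}$ has the wrong sign: the paper (following Yamada) uses $u^{[i]}\mapsto (-\log\varpi)^i/i!$. Second, in your sketch for (i) you say $d\log T$ ``generates a non-trivial direction in $\Omega^{\log,1}_{\calZ_\Xi/\calS}$,'' but $\Omega^{\log,\bullet}_{\calZ_\Xi/\calS}$ is \emph{relative} to $\calS$, where $T$ lives in the base; the whole point of the Kim--Hain variables $u^{[i]}$ is to reintroduce the missing $d\log T$ direction. Your contracting-homotopy heuristic is pointing at the right phenomenon, but the sentence as written confuses relative and absolute forms.
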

\begin{proof}
The first assertion is \cite[Theorem 4.8]{Yamada}. The second and the third quasi-isomorphism is given by a choice of $p$-adic logarithm and \[
    \Psi_{\HK}, \Psi_{\HK, c}: u^{[i]}\mapsto \frac{(-\log \varpi)^i}{i!}.
\] For the proof, see \cite[Proposition 8.8]{Yamada} and \cite[Corollary 3.8]{EY-Poincare}. 
\end{proof}

\begin{Remark}
    In a forthcoming work of K. Yamada, the condition on unipotence will be weaken to a more general class of log overconvergent $F$-isocrystals.
\end{Remark}

Recall that the log de Rham complex $R\Gamma_{\log\dR}(\calX)$ admits a filtration induced by  \[
    \sigma_{\geq i} (0\rightarrow \scrO_{\calX} \rightarrow \Omega_{\calX/K}^{\log, 1} \rightarrow \cdots),
\] where $\sigma_{\geq i}$ is the stupid truncation functor. Suppose $\scrG$ is a vector bundle on $\calX$ with integrable log connection. Assume that $\scrG$ admits a finite descending filtration $\Fil^i \scrG$. Recall that such a filtration satisfies \textbf{\textit{Griffiths's transversality}} if and only if \[
    \nabla(\Fil^i\scrG) \subset \Fil^{i-1}\scrG \otimes \Omega_{\calX/K}^{\log, 1}.
\] In this case, we have a filtration $F^nR\Gamma_{\log\dR}(\calX, \scrG)$ on $R\Gamma_{\log\dR}(\calX, \scrG)$ \[
    \Fil^n\left(0 \rightarrow \scrG \xrightarrow{\nabla} \scrG\otimes \Omega_{\calX/K}^{\log, 1}\rightarrow \cdots\right) := \left( 0\rightarrow \Fil^n \scrG \xrightarrow{\nabla} \Fil^{n-1}\scrG \otimes \Omega_{\calX/K}^{\log, 1}\rightarrow \cdots\right).
\] Together with the Hyodo--Kato isomorphism, they inspire the following definition. 

\begin{Definition}[$\text{\cite[Definition 9.6]{Yamada}}$]\label{Definition: syntomic coefficient}
The category of \textbf{syntomic coefficients} $\Syn(X_0, \frakX, \calX)$ is defined to be the category of triples $(\scrE, \Phi, \Fil^{\bullet})$, where \begin{enumerate}
    \item[$\bullet$] $(\scrE, \Phi)\in \FIsoc^{\normalfont \dagger}(X_0/\calO_{K_0}^{\log = \emptyset})^{\unip}$, 
    \item[$\bullet$] $\Fil^{\bullet}$ is a filtration of $\scrE$ on $\calX$ (i.e., after base change to $\OC(X_0/\calO_K^{\log = \varpi})$) that satisfies Griffiths's transversality. 
\end{enumerate} Morphisms are morphisms of overconvergent $F$-isocrystals that preserve the filtrations. 
\end{Definition}

\begin{Remark}\label{Remark: dual filtration}
\normalfont 
Given $(\scrE, \Phi, \Fil^{\bullet})\in \Syn(X_0, \frakX, \calX)$, we define its dual $(\scrE^{\vee}, \Phi^{\vee}, \Fil^{\vee, \bullet})\in \Syn(X_0, \frakX, \calX)$ as follows. By \cite[Proposition 3.6]{Yamada}, we know that we have a dual $(\scrE^{\vee}, \Phi^{\vee})\in \FIsoc^{\dagger}(X_0/\calO_{K_0}^{\log = \emptyset})^{\unip}$ with $\scrE^{\vee} = \sheafHom(\scrE, \scrO_{X_0/\calO_{K_0}^{\log = \emptyset}})$. Hence, on $\calX$, we have surjective morphisms \[
    \scrE^{\vee} \rightarrow (\Fil^i \scrE)^{\vee}
\] for any $i$. Hence,  we define \[
    \Fil^{\vee, -i+1}\scrE^{\vee} := \ker(\scrE^{\vee} \rightarrow (\Fil^i \scrE)^{\vee}).
\] One can easily check that this filtration satisfies Griffiths's transversality. Consequently, $(\scrE^{\vee}, \Phi^{\vee}, \Fil^{\vee, \bullet})\in \Syn(X_0, \frakX, \calX)$.
\blackqed
\end{Remark}
\section{Finite polynomial cohomology with coefficients}\label{section: FP cohomology}
The theory of finite polynomial cohomology was first developed by Besser in \cite{Besser-integral} and generalised to general varieties in \cite{BLZ-FPCoh}.
Such a theory resolves the problem that there is no Poincar\'{e} duality on syntomic cohomology.
The aim of this section is to generalise the aforementioned works to a cohomology theory with coefficients.

Throughout this section, we resume the notations used in \S \ref{section: HK cohomology} and \S \ref{subsection: HK theory with coeff}. In particular, we have a fixed proper weak formal scheme $\frakX$ over $\calO_K^{\log = \varpi}$, which is strictly semistable, with a proper smooth dagger generic fibre $\calX$ over $K$ and strictly semistable special fibre $X_0$ over $k^{\log = 0}$. We also recall the horizontal divisors $\frakD \subset \frakX$, $D_0 \subset X_0$, and $\calD \subset \calX$.

\subsection{Definitions and some basic properties}\label{subsection: main definition and first properties}
Following \cite{Besser-integral}, consider \begin{align*}
    \Poly & := \text{ the multiplicative monoid of all polynomials $P(X) = \prod_{i=1}^n(1-\alpha_i T)\in \Q[T]$ with constant term $1$}.
\end{align*} The following definition is inspired by \cite{BLZ-FPCoh}.

\begin{Definition}\label{Definition: syntomic P-cohomology with syntomic coefficients}
Let $(\scrE, \Phi, \Fil^{\bullet})\in \Syn(X_0, \frakX, \calX)$. \begin{enumerate}
    \item[(i)] Given $P\in \Poly$ and $n\in \Z$, the \textbf{syntomic $P$-cohomology of $\calX$ with coefficients in $\scrE$ twisted by $n$} is defined to be  \[
        R\Gamma_{\syn, P}(\calX, \scrE, n) := \left[\begin{tikzcd}
            \scalemath{0.7}{R\Gamma_{\HK}(X_0, (\scrE, \Phi))\otimes_{K_0}K} \arrow[rrr, "(P(\Phi^e)\otimes\id)\oplus (\Psi_{\HK}\otimes \id)"]\arrow[d, "N\otimes\id"'] &&& \scalemath{0.7}{R\Gamma_{\HK}(X_0, (\scrE, \Phi))\otimes_{K_0}K \oplus \DR(\calX, \scrE, n)} \arrow[d, "(N\otimes \id) \oplus 0"]\\
            \scalemath{0.7}{R\Gamma_{\HK}(X_0, (\scrE, \Phi))\otimes_{K_0}K} \arrow[rrr, "P(q\Phi^e)\otimes \id"] &&& \scalemath{0.7}{R\Gamma_{\HK}(X_0, (\scrE, \Phi))\otimes_{K_0}K}
        \end{tikzcd}\right],
    \] where \[
        \DR(\calX, \scrE, n) := R\Gamma_{\log\dR}(\calX, \scrE)/F^nR\Gamma_{\log\dR}(\calX, \scrE)
    \] and recall that $q = p^e = \# k$. The $i$-th cohomology group of $R\Gamma_{\syn, P}(\calX, \scrE, n)$ is then denoted by $H_{\syn, P}^i(\calX, \scrE, n)$.
    \item[(ii)] When $P = 1- q^{-n}T$, we write \[
        R\Gamma_{\syn}(\calX, \scrE, n) := R\Gamma_{\syn, 1-q^{-n}T}(\calX, \scrE, n)
    \] and call it the \textbf{syntomic cohomology of $\calX$ with coefficients in $\scrE$ twisted by $n$}. The $i$-th cohomology group of $R\Gamma_{\syn}(\calX, \scrE, n)$ is then denoted by $H_{\syn}^i(\calX, \scrE, n)$.
\end{enumerate}
\end{Definition}

For any $(\scrE, \Phi, \Fil^{\bullet})\in \Syn(X_0, \frakX, \calX)$, the $\scrO_{\calX}$-module $\scrE(-\calD)$ on $\calX$ also admits a filtration defined as \[
    \Fil^i\scrE(-\calD) := (\Fil^i\scrE) \otimes_{\scrO_{\calX}}\scrO_{\calX}(-\calD).
\] By definition, this filtration on $\scrE(-\calD)$ also satisfies Griffiths's transversality. It then consequently defines a filtration $F^nR\Gamma_{\log\dR,c}(\calX, \scrE)$ on $R\Gamma_{c, \log\dR}(\calX, \scrE)$. This observation then leads to the following definition.

\begin{Definition}\label{Definition: compactly supported syntomic P-cohomology}
Let $(\scrE, \Phi, \Fil^{\bullet})\in \Syn(X_0, \frakX, \calX)$. \begin{enumerate}
    \item[(i)] Given $P\in \Poly$ and $n\in \Z$, the \textbf{compactly supported syntomic $P$-cohomology of $\calX$ with coefficients in $\scrE$ twisted by $n$} is defined to be \[
        R\Gamma_{\syn, P, c}(\calX, \scrE, n) := \left[\begin{tikzcd}
            \scalemath{0.7}{R\Gamma_{\HK, c}(X_0, (\scrE, \Phi))\otimes_{K_0}K} \arrow[rrr, "(P(\Phi^e)\otimes\id)\oplus (\Psi_{\HK,c}\otimes \id)"]\arrow[d, "N\otimes\id"'] &&& \scalemath{0.7}{R\Gamma_{\HK, c}(X_0, (\scrE, \Phi))\otimes_{K_0}K \oplus \DR_c(\calX, \scrE, n)} \arrow[d, "(N\otimes \id) \oplus 0"]\\
            \scalemath{0.7}{R\Gamma_{\HK, c}(X_0, (\scrE, \Phi))\otimes_{K_0}K} \arrow[rrr, "P(q\Phi^e)\otimes \id"] &&& \scalemath{0.7}{R\Gamma_{\HK, c}(X_0, (\scrE, \Phi))\otimes_{K_0}K}
        \end{tikzcd}\right],
    \] 
    where \[
        \DR_c(\calX, \scrE, n) := R\Gamma_{\log\dR,c}(\calX, \scrE)/F^nR\Gamma_{\log\dR,c}(\calX, \scrE).
    \] The $i$-th cohomology group of $R\Gamma_{\syn, P, c}(\calX, \scrE, n)$ is denoted by $H_{\syn, P, c}^i(\calX, \scrE, n)$.
    \item[(ii)] When $P = 1-q^{-n}T$, we write \[
        R\Gamma_{\syn, c}(\calX, \scrE, n) := R\Gamma_{\syn, 1-q^{-n}T, c}(\calX, \scrE, n)
    \] and call it the \textbf{syntomic cohomology of $\calX$ with coefficients in $\scrE$ twisted by $n$}. The $i$-th cohomology of $R\Gamma_{\syn, c}(\calX, \scrE, n)$ is then denoted by $H_{\syn}^i(\calX, \scrE, n)$.
\end{enumerate}
\end{Definition}

\begin{Notation}
\normalfont For complexes $R\Gamma_{\HK}(X_0, (\scrE, \Phi))$, $R\Gamma_{\log\dR}(\calX, \scrE)$, $R\Gamma_{\syn, P}(\calX, \scrE, n)$, ... etc, we drop the `$\scrE$' in the notation if $\scrE$ is nothing but the structure sheaf. Similar notations apply to their cohomology groups. Moreover, we will often abuse the notations and write $P(\Phi^e)$ and $N$ for $P(\Phi^e)\otimes\id$ and $N\otimes\id$ on vector spaces over $K$.
\blackqed
\end{Notation}

\begin{Remark}\label{Remark: Hodge filtration on cohomology}
    \normalfont 
        The complexes $\DR(\calX, \scrE, n)$ (for various $n$) defines the Hodge filtration on $H_{\log\dR}^i(\calX, \scrE)$ as follows. The natural map \[
            R\Gamma_{\log\dR}(\calX, \scrE) \rightarrow \DR(\calX, \scrE, n)
        \] induces natural maps on cohomology \[
            H_{\log\dR}^i(\calX, \scrE) \rightarrow H^i(\DR(\calX, \scrE, n)). 
        \] We claim that this map is surjective. Observe that the surjectivity follows from the degeneration of the \emph{(log) Hodge--de Rham spectral sequence} at the $E_1$-page, hence we show such a degeneration. Note that $\scrE$ is unipotent, thus it sits inside a short exact sequence $0 \rightarrow \scrE' \rightarrow \scrE \rightarrow \scrE'' \rightarrow 0$, where $\scrE'$ and $\scrE''$ are both unipotent. This short exact sequence then gives rise to a distinguished triangle \[
            R\Gamma_{\log\dR}(\calX, \scrE') \rightarrow R\Gamma_{\log\dR}(\calX, \scrE) \rightarrow R\Gamma_{\log\dR}(\calX, \scrE'').
        \] 
        One observes that if the (log) Hodge--de Rham spectral sequences for the left and right complexes degenerate at the $E_1$-pages, then the same holds for the middle term. Therefore, one reduces to show such a degeneration for lower-rank $\scrE$'s. By induction, it is then enough to show such a degeneration for the trivial coefficient. However, this is \cite[Theorem 3.2]{EV-vanishing}.
        To conclude, we have the Hodge filtration defined by \[
            F^n H_{\log\dR}^i(\calX, \scrE) := \ker\left(H_{\log\dR}^i(\calX, \scrE) \rightarrow H^i(\DR(\calX, \scrE, n))\right).
        \]  Similar discussion also applies to the compact support cohomology. 
    \blackqed
\end{Remark}

\begin{Proposition}\label{Proposition: change of polynomial}
Given $(\scrE, \Phi, \Fil^{\bullet})\in \Syn(X_0, \frakX, \calX)$ and two polynomials $P, Q\in \Poly$, we have a natural morphism \[
    R\Gamma_{\syn, P}(\calX, \scrE, n) \rightarrow R\Gamma_{\syn, PQ}(\calX, \scrE, n).
\] In particular, if $P$ is divided by $1-q^{-n}T$, then there is a natural morphism \[
    R\Gamma_{\syn}(\calX, \scrE, n) \rightarrow R\Gamma_{\syn, P}(\calX, \scrE, n).
\] Similar statements hold for the compactly supported version.
\end{Proposition}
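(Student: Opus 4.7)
The plan is to exhibit the desired morphism as arising from a morphism of the defining commutative squares, and then appeal to functoriality of the total fibre bracket $[\,\cdot\,]$. Writing $A := R\Gamma_{\HK}(X_0, (\scrE, \Phi)) \otimes_{K_0} K$ and $D := \DR(\calX, \scrE, n)$, I would construct the morphism from the $P$-square to the $PQ$-square using the identity $\id_A$ on both the top-left and bottom-left corners, the map $\tilde Q := Q(\Phi^e) \oplus \id_D$ on the top-right corner $A \oplus D$, and the map $Q(q\Phi^e)$ on the bottom-right corner $A$. The critical design choice is that $\tilde Q$ acts as the identity on the de Rham component $D$; any attempt to multiply by $Q$ there would destroy compatibility with $\Psi_{\HK}$.

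Next I would verify that the four faces of the resulting cube commute. The top face amounts to the identity $\tilde Q \circ (P(\Phi^e) \oplus \Psi_{\HK}) = PQ(\Phi^e) \oplus \Psi_{\HK}$, which follows from $Q(\Phi^e) \circ P(\Phi^e) = PQ(\Phi^e)$ together with the fact that $\tilde Q$ fixes $D$. The bottom face reduces to $Q(q\Phi^e) \circ P(q\Phi^e) = PQ(q\Phi^e)$. The left face is trivial since both horizontals are identities. The right face is the only substantive check: it reduces to the identity $N \circ Q(\Phi^e) = Q(q\Phi^e) \circ N$, which is exactly where \eqref{eq: relation of N and Frob} enters. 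From $N\Phi = p\Phi N$ one iterates to $N\Phi^e = q\Phi^e N$, and the relation extends multiplicatively to any polynomial.

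Once this morphism of squares is assembled, functoriality of the iterated homotopy fibre $[\,\cdot\,]$ produces the sought morphism $R\Gamma_{\syn, P}(\calX, \scrE, n) \to R\Gamma_{\syn, PQ}(\calX, \scrE, n)$. The divisibility special case follows by writing $P = (1 - q^{-n}T)Q$ for some $Q \in \Poly$ and applying the general construction. The compactly supported version is the literal translation of the same argument, with $R\Gamma_{\HK, c}$, $\Psi_{\HK, c}$, and $\DR_c$ replacing their counterparts; the commutation relations between $N$, $\Phi$ and polynomials thereof are unchanged. I do not foresee any real obstacle: the construction is formal once the correct morphism of squares is identified, the only conceptual subtlety being the decision to leave the de Rham component alone.
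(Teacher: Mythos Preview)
Your proposal is correct and follows essentially the same approach as the paper: both construct the morphism by exhibiting a map of the defining commutative squares, using the identity on the left corners, $Q(\Phi^e)\oplus \id$ on the top-right, and $Q(q\Phi^e)$ on the bottom-right, then invoking functoriality of the iterated mapping fibre. Your write-up is in fact more explicit than the paper's, which simply draws the cube and asserts that the result follows; your verification of the right face via the relation $N\Phi^e = q\Phi^e N$ (from \eqref{eq: relation of N and Frob}) makes the one nontrivial check transparent.
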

\begin{proof}
Observe that we have a commutative diagram \[
    \begin{tikzcd}[column sep = tiny]
            & \scalemath{0.7}{R\Gamma_{\HK}(X_0, (\scrE, \Phi))\otimes_{K_0}K} \arrow[ld, equal]\arrow[rrr, "(PQ(\Phi^e)\otimes\id) \oplus (\Psi_{\HK}\otimes \id)"]\arrow[dd] &&& \scalemath{0.7}{R\Gamma_{\HK}(X_0, (\scrE, \Phi))\otimes_{K_0}K \oplus \DR(\calX, \scrE, n)}\arrow[dd] \\
            \scalemath{0.7}{R\Gamma_{\HK}(X_0, (\scrE, \Phi))\otimes_{K_0}K} \arrow[rrr, crossing over,"(P(\Phi^e)\otimes\id)\oplus (\Psi_{\HK}\otimes \id)"]\arrow[dd, "N\otimes\id"'] &&& \scalemath{0.7}{R\Gamma_{\HK}(X_0, (\scrE, \Phi))\otimes_{K_0}K \oplus \DR(\calX, \scrE, n)}  \arrow[ru, "Q(\Phi^e)\oplus \id"']\\
            & \scalemath{0.7}{R\Gamma_{\HK}(X_0, (\scrE, \Phi))\otimes_{K_0}K} \arrow[ld, equal]\arrow[rrr] &&& \scalemath{0.7}{R\Gamma_{\HK}(X_0, (\scrE, \Phi))\otimes_{K_0}K \oplus \DR(\calX, \scrE, n)} \\
            \scalemath{0.7}{R\Gamma_{\HK}(X_0, (\scrE, \Phi))\otimes_{K_0}K} \arrow[rrr, "P(q\Phi^e)\otimes \id"] &&& \scalemath{0.7}{R\Gamma_{\HK}(X_0, (\scrE, \Phi))\otimes_{K_0}K}\arrow[ur, "Q(q\Phi^e)\oplus \id"']\arrow[from=uu, crossing over]
        \end{tikzcd},
\]
where the front face is the diagram defining $R\Gamma_{\syn, P}(\calX, \scrE, n)$ while the back face is the diagram defining $R\Gamma_{\syn, PQ}(\calX, \scrE, n)$.
The result then follows immediately.
\end{proof}

To understand the syntomic $P$-cohomology better, we begin with some simplification of notations. We let \begin{align*}
    C^{\bullet} & := \Cone \left(R\Gamma_{\HK}(X_0, (\scrE, \Phi))\otimes_{K_0}K \rightarrow  R\Gamma_{\HK}(X_0, (\scrE, \Phi))\otimes_{K_0}K \oplus \DR(\calX, \scrE, n)\right)[-1]\\
    D^{\bullet} & := \Cone\left(R\Gamma_{\HK}(X_0, (\scrE, \Phi))\otimes_{K_0}K \rightarrow R\Gamma_{\HK}(X_0, (\scrE, \Phi))\otimes_{K_0}K\right)[-1]
\end{align*} be the mapping fibres of the horizontal rows in the definition of $R\Gamma_{\syn, P}(\calX, \scrE, n)$. Then, by definition, we have \begin{align*}
    R\Gamma_{\syn, P}(\calX, \scrE, n) & = \Cone\left(C^{\bullet} \xrightarrow{\alpha} D^{\bullet}\right)[-1],
\end{align*} where $\alpha$ is the map induced by the vertical maps in the definitions of $R\Gamma_{\syn, P}(\calX, \scrE, n)$ and $R\Gamma_{\syn, P, c}(\calX, \scrE, n)$ respectively. Therefore, we have a long exact sequence \[
    \cdots \rightarrow H^{i-1}(C^{\bullet}) \xrightarrow{\alpha} H^{i-1}(D^{\bullet}) \xrightarrow{\beta} H_{\syn, P}^{i}(\calX, \scrE, n) \xrightarrow{\gamma} H^{i}(C^{\bullet}) \xrightarrow{\alpha} H^{i}(D^{\bullet}) \rightarrow\cdots.
\] 

\begin{Proposition}\label{Proposition: diagram to understand syntomic P-cohomology}
Let $(\scrE, \Phi, \Fil^{\bullet})\in \Syn(X_0, \frakX, \calX)$ and $P\in \Poly$. \begin{enumerate}
    \item[(i)] A class in $H_{\syn, P}^{i}(\calX, \scrE, n)$ is represented by a quintuple $(x,y,z,u,v)$ with \[
        \begin{array}{ll}
            x\in R\Gamma^i_{\HK}(X_0, (\scrE, \Phi))\otimes_{K_0}K, & u\in R\Gamma_{\HK}^{i-1}(X_0, (\scrE, \Phi))\otimes_{K_0}K,  \\
            y\in R\Gamma_{\HK}^{i-1}(X_0, (\scrE, \Phi))\otimes_{K_0}K, &  v\in R\Gamma_{\HK}^{i-2}(X_0, (\scrE, \Phi))\otimes_{K_0}K,\\
            z\in \DR^{i-1}(\calX, \scrE, n),
        \end{array}
    \] such that \[
        \begin{array}{ll}
            d_{\HK}\otimes\id(x) =0, & N\otimes\id(x) - d_{\HK}\otimes\id(u) = 0, \\
            P(\Phi^e)\otimes\id(x) + d_{\HK}\otimes\id(y) = 0, & N\otimes\id(y) + P(q\Phi^e)\otimes\id(u) + d_{\HK}\otimes\id(v) = 0.\\
            \Psi_{\HK}\otimes\id(x) + d_{\dR}(z)\in F^nR\Gamma_{\log\dR}^{i}(\calX, \scrE),
        \end{array}
    \] Here, $d_{\HK}$ and $d_{\dR}$ are differentials of $R\Gamma_{\HK}(X_0, (\scrE, \Phi))$ and $R\Gamma_{\log\dR}(\calX, \scrE)$ respectively.
    \item[(ii)] We have a diagram 
    \[
    \begin{tikzcd}
    & \substack{H_{\HK}^{i-1}(X_0, (\scrE, \Phi))^{P(\Phi^e) = 0, N=0} \otimes_{K_0}K  \\ \cap F^n H_{\log\dR}^{i-1}(\calX, \scrE)} \arrow[d] & &0 \arrow[d]\\
	&\scalemath{0.8}{\dfrac{H_{\HK}^{i-2}(X_0, (\scrE, \Phi))\otimes_{K_0} K}{\image N + \image P(q\Phi^e)}} \arrow[d] & &B^i \arrow[d] \\
	0 \arrow[r]  & \coker \alpha \arrow[d] \arrow[r, "\beta"] & \scalemath{0.8}{H_{\syn, P}^i(\calX, \scrE, n)} \arrow[r,"\gamma"] & \ker \alpha \arrow[r] \arrow[d] &0 \\
	& \scalemath{0.8}{\dfrac{ H_{\HK}^{i-1}(X_0, (\scrE, \Phi))^{P(q\Phi^e) = 0}\otimes_{K_0} K }{ N \left  (\substack{ H_{\HK}^{i-1}(X_0, (\scrE, \Phi))^{P(\Phi^e) = 0} \otimes_{K_0} K  \\ \cap F^n H_{\log\dR}^{i-1}(\calX, \scrE) } \right )}}  \arrow[d] & &\scalemath{1}{\substack{H_{\HK}^{i}(X_0, (\scrE, \Phi))^{P(\Phi^e)=0, N=0} \otimes_{K_0}K \\ \cap F^n H_{\log\dR}^i(\calX, \scrE)}} \arrow[d] \\
	&0 & &\scalemath{0.8}{\dfrac{ H_{\HK}^{i-1}(X_0, (\scrE, \Phi)) \otimes_{K_0} K }{\image N + \image P(q\Phi^e)}}
    \end{tikzcd}
    \] 
    where the middle row and two vertical columns are all exact. Here, \[
        B^i := \frac{ \left\{(x, y)\in H_{\HK}^{i-1}(X_0, (\scrE, \Phi))\otimes_{K_0} K \oplus \frac{H_{\log\dR}^{i-1}(\calX, \scrE)}{F^n H_{\log\dR}^{i-1}(\calX, \scrE)}: Nx \in \image P(q\Phi^e) \right\} }{ \left\{ (P(\Phi^e)x, \Psi_{\HK}(x)): x\in H_{\HK}^{i-1}(X_0, (\scrE, \Phi))\right\} }.
    \]
    \item[(iii)] A similar statements hold for the compactly supported version. 
\end{enumerate}
\end{Proposition}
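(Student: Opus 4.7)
The plan is to establish (i) by unwinding the bracketed double-cone into an explicit total complex, then to deduce (ii) by factoring the syntomic complex as a cone of cones and analysing the result via two stacked long exact sequences and a snake-lemma-type argument. Part (iii) will follow by the same argument with all quantities replaced by their compact-support analogues.

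For (i), I would unwind
$R\Gamma_{\syn, P}(\calX, \scrE, n) = \Cone\bigl(\Cone(\alpha_0)[-1] \to \Cone(\gamma_0)[-1]\bigr)[-1]$,
where $\alpha_0$ and $\gamma_0$ denote the upper and lower horizontal arrows of the defining square. Writing $A^\bullet = R\Gamma_{\HK}(X_0, (\scrE, \Phi)) \otimes_{K_0} K$, the degree-$i$ piece of the total complex is canonically $A^i \oplus A^{i-1} \oplus \DR^{i-1} \oplus A^{i-1} \oplus A^{i-2}$, and the vanishing of the total differential on a 5-tuple $(x, y, z, u, v)$ yields exactly the five equations listed in (i). This is a mechanical sign-chase; the only slightly non-formal point is that the last condition reads $\Psi_{\HK}(x) + d_{\dR}(z) \in F^n R\Gamma_{\log\dR}^i(\calX, \scrE)$ because $\DR(\calX, \scrE, n) = R\Gamma_{\log\dR}(\calX, \scrE)/F^n$, so the equation $d_{\dR}(z) = \Psi_{\HK}(x)$ in $\DR^i$ lifts to this $F^n$-containment upstairs.

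For (ii), I would use the factorisation
$R\Gamma_{\syn, P}(\calX, \scrE, n) = \Cone(C^\bullet \xrightarrow{\alpha} D^\bullet)[-1]$
recorded just before the statement. The associated distinguished triangle yields the displayed long exact sequence and hence the short exact middle row $0 \to \coker \alpha \to H^i_{\syn, P}(\calX, \scrE, n) \to \ker \alpha \to 0$. Next, the mapping-fibre long exact sequences for $C^\bullet$ and $D^\bullet$ themselves give, for each $i$, short exact sequences whose sub-objects are the cokernels of $(P(\Phi^e), \Psi_{\HK})$ and of $P(q\Phi^e)$ acting on $H^{i-1}$, and whose quotients are the corresponding kernels on $H^i$. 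Here the Hyodo--Kato isomorphism is used to identify $H^i_{\HK}(X_0,(\scrE,\Phi)) \otimes_{K_0} K$ with $H^i_{\log\dR}(\calX, \scrE)$ so that the Hodge filtration appears naturally as the preimage of $F^n$ under this identification.

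The remaining task is to read off $\ker \alpha$ and $\coker \alpha$ by combining these two extensions with the map $\alpha$, which is induced by the monodromy $N$. A snake-lemma argument applied to the resulting ladder diagram produces the two side columns of the statement: the upper and lower parts of $\ker \alpha$ correspond to the contributions from the subobject and from the quotient of $H^i(C^\bullet)$, and likewise (dually) for $\coker \alpha$. The group $B^i$ arises as the subobject piece of $\ker \alpha$, namely the classes $(x, y) \in H^{i-1}_{\HK}(X_0, (\scrE, \Phi)) \otimes_{K_0} K \oplus H^{i-1}_{\log\dR}(\calX, \scrE)/F^n$ whose image under $\alpha$ vanishes in $H^{i-1}(A)/\image P(q\Phi^e)$, which is precisely the stated condition $Nx \in \image P(q\Phi^e)$ modulo the image of $(P(\Phi^e), \Psi_{\HK})$. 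The main obstacle is performing this snake-lemma diagram chase carefully so that $B^i$ and its companions emerge exactly as written; once this is done, (iii) is obtained verbatim, since the compact-support Hyodo--Kato isomorphism holds in the same generality.
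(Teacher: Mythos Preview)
Your proposal is correct and follows essentially the same route as the paper: unwind the iterated mapping fibre to obtain the explicit five-tuple description for (i), then use the factorisation $R\Gamma_{\syn,P} = \Cone(C^\bullet \xrightarrow{\alpha} D^\bullet)[-1]$ together with the short exact sequences for $H^i(C^\bullet)$ and $H^i(D^\bullet)$ and the snake lemma to produce the two vertical columns in (ii). The paper's proof is slightly terser---it writes the differential as a single $5\times 5$ matrix and displays the ladder of short exact sequences explicitly before invoking the snake lemma---but the underlying argument is the same as yours.
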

\begin{proof}
Unwinding the construction of mapping fibres, one sees that $R\Gamma_{\syn, P}^i(\calX, \scrE, n)$ is equal to the direct sum \[
    \scalemath{0.85}{R\Gamma^i_{\HK}(X_0, (\scrE, \Phi))\otimes_{K_0}K \oplus R\Gamma^{i-1}_{\HK}(X_0, (\scrE, \Phi))\otimes_{K_0}K \oplus \DR^{i-1}(\calX, \scrE, n) \oplus R\Gamma^{i-1}_{\HK}(X_0, (\scrE, \Phi))\otimes_{K_0}K \oplus R\Gamma^{i-2}_{\HK}(X_0, (\scrE, \Phi))\otimes_{K_0}K}
\] with differentials given by \[
    d_{\syn, P} = \begin{pmatrix} d_{\HK}\\ -P(\Phi^e) & -d_{\HK}\\ -\Psi_{\HK} & & -d_{\dR}\\ N & & & -d_{\HK}\\ & N & & P(q\Phi^e) & d_{\HK}\end{pmatrix}.
\] When $\scrE$ is the trivial overconvergent $F$-isocrystal, this description is exact the one discussed in \cite[\S 2.4]{BLZ-FPCoh}.

For (ii), by definition, we have a commutative diagram \[
    \begin{tikzcd}[column sep = small]
        \cdots \arrow[r] & \scalemath{0.7}{H_{\HK}^{i-1}(X_0, (\scrE, \Phi))\otimes_{K_0}K } \arrow[r]\arrow[d, "N"] & \substack{\scalemath{0.7}{H_{\HK}^{i-1}(X_0, (\scrE, \Phi))\otimes_{K_0}K }\\ \scalemath{0.7}{ \oplus H^{i-1}(\DR(\calX, \scrE, n)) } }\arrow[r]\arrow[d, "N \oplus 0"] & \scalemath{0.7}{H^i(C^{\bullet}) } \arrow[d, "\alpha"] \arrow[r] & \scalemath{0.7}{H_{\HK}^{i}(X_0, (\scrE, \Phi))\otimes_{K_0}K } \arrow[r]\arrow[d, "N"] & \substack{ \scalemath{0.7}{H_{\HK}^{i}(X_0, (\scrE, \Phi))\otimes_{K_0}K}\\ \scalemath{0.7}{ \oplus H^{i}(\DR(\calX, \scrE, n)) }}\arrow[r]\arrow[d, "N \oplus 0"] & \cdots\\
        \cdots \arrow[r] & \scalemath{0.7}{H_{\HK}^{i-1}(X_0, (\scrE, \Phi))\otimes_{K_0} K } \arrow[r] & \scalemath{0.7}{H_{\HK}^{i-1}(X_0, (\scrE, \Phi))\otimes_{K_0} K } \arrow[r] & \scalemath{0.7}{H^i(D^{\bullet}) } \arrow[r] & \scalemath{0.7}{H_{\HK}^{i}(X_0, (\scrE, \Phi))\otimes_{K_0} K } \arrow[r] & \scalemath{0.7}{H_{\HK}^{i}(X_0, (\scrE, \Phi))\otimes_{K_0} K } \arrow[r] & \cdots
    \end{tikzcd},
    \] where the horizontal rows are exact sequences. One then deduces a commutative diagram \[
    \begin{tikzcd}
        \scalemath{0.8}{ 0 } \arrow[r] & \scalemath{0.8}{ \dfrac{ H_{\HK}^{i-1}(X_0, (\scrE, \Phi))\otimes_{K_0}K \oplus \frac{H_{\log\dR}^{i-1}(\calX, \scrE)}{ F^n H_{\log\dR}^{i-1}(\calX, \scrE)} }{ \{P(\Phi^e)x, \Psi_{\HK}(x): x\in H_{\HK}^{i-1}(X_0, (\scrE, \Phi)) \} } } \arrow[r] \arrow[d, "N \oplus 0"] & \scalemath{0.8}{ H^i(C^{\bullet}) } \arrow[r]\arrow[d, "\alpha"] & \scalemath{0.8}{ H_{\HK}^i(X_0, (\scrE, \Phi))^{P(\Phi^e)=0 }\otimes_{K_0}K \cap F^n H_{\log\dR}^{i}(\calX, \scrE) }\arrow[r]\arrow[d, "N"] & \scalemath{0.8}{ 0 }\\
        \scalemath{0.8}{ 0 } \arrow[r] & \scalemath{0.8}{ \dfrac{H_{\HK}^{i-1}(X_0, (\scrE, \Phi))\otimes_{K_0}K}{\image P(q\Phi^e)} }  \arrow[r] & \scalemath{0.8}{ H^i(D^{\bullet}) } \arrow[r] & \scalemath{0.8}{ H_{\HK}^{i}(X_0, (\scrE, \Phi))^{P(q\Phi^e) =0 }\otimes_{K_0}K } \arrow[r] & \scalemath{0.8}{ 0 }
    \end{tikzcd}.
\] 
The desired diagram can then be concluded directly by snake lemma. 
\end{proof}

\begin{Corollary}\label{Corollary: 3 step filtration}
Let $(\scrE, \Phi, \Fil^{\bullet})\in \Syn(X_0, \frakX, \calX)$ and $P\in \Poly$. For any $n\in \Z$ and any $i\in \Z_{\geq 0}$, there is a $3$-step descending filtration $F_P^{\bullet} = F_{\syn, P}^{\bullet}$ on $H_{\syn, P}^i(\calX, \scrE, n)$ such that \begin{align*}
    F_P^0/F_P^1 & = \ker\left( H_{\HK}^i(X_0, (\scrE, \Phi))^{P(\Phi^e)=0, N=0}\cap F^n H_{\log\dR}^i(\calX, \scrE) \rightarrow \frac{H_{\HK}^{i-1}(X_0, (\scrE, \Phi))\otimes_{K_0}K}{\image N + \image P(q\Phi^e)} \right)\\
     F_P^2 / F_P^3 & = \image\left( \frac{H_{\HK}^{i-2}(X_0, (\scrE, \Phi))\otimes_{K_0}K}{\image N + \image P(q\Phi^e)} \rightarrow \coker\alpha \xrightarrow{\beta} H_{\syn, P}^i(\calX, \scrE, n)\right)
\end{align*} and $F_P^1/F_P^2$ sits inside the short exact sequence \[
    0 \rightarrow \frac{H_{\HK}^{i-1}(X_0, (\scrE, \Phi))^{P(q\Phi^e)=0}\otimes_{K_0}K}{N\left( H_{\HK}^{i-1}(X_0, (\scrE, \Phi))^{P(\Phi^e)=0}\otimes_{K_0}K \cap F^n H_{\log\dR}^{i-1}(\calX, \scrE) \right)} \rightarrow F_P^1/F_P^2 \rightarrow B^i \rightarrow 0.
\] Similar for the compactly supported version, whose 3-step filtrations are denoted by $F_{P, c}^{\bullet} = F_{\syn, P, c}^{\bullet}$.
\end{Corollary}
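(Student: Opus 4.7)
My approach is to read off the filtration directly from the diagram supplied by Proposition \ref{Proposition: diagram to understand syntomic P-cohomology}, combining its exact middle row with the two exact vertical columns. Recall that the middle row gives the short exact sequence
\[
0 \to \coker\alpha \xrightarrow{\beta} H_{\syn, P}^i(\calX, \scrE, n) \xrightarrow{\gamma} \ker\alpha \to 0,
\]
the right column embeds $B^i$ as the kernel of the map from $\ker\alpha$ into $H_{\HK}^i(X_0, (\scrE, \Phi))^{P(\Phi^e)=0, N=0} \cap F^n H_{\log\dR}^i(\calX, \scrE)$, and the left column exhibits $\coker\alpha$ as an extension of $\tfrac{H_{\HK}^{i-1}(X_0, (\scrE, \Phi))^{P(q\Phi^e)=0}\otimes_{K_0}K}{N(\cdots)}$ by the image of $\tfrac{H_{\HK}^{i-2}(X_0, (\scrE, \Phi))\otimes_{K_0}K}{\image N + \image P(q\Phi^e)}$ in $\coker\alpha$.

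The plan is to set $F_P^0 := H_{\syn, P}^i(\calX, \scrE, n)$, $F_P^3 := 0$, together with
\[
F_P^1 := \gamma^{-1}(B^i), \qquad F_P^2 := \beta\!\left(\image\!\left(\frac{H_{\HK}^{i-2}(X_0, (\scrE, \Phi))\otimes_{K_0}K}{\image N + \image P(q\Phi^e)} \to \coker\alpha\right)\right).
\]
The inclusions $F_P^0 \supseteq F_P^1 \supseteq F_P^2 \supseteq F_P^3$ are automatic: $\beta$ is injective and $\image\beta = \ker\gamma \subseteq \gamma^{-1}(B^i)$.

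It then remains to compute the three graded pieces, each a one-line consequence of the diagram. For $F_P^0/F_P^1 = \ker\alpha / B^i$, the first isomorphism theorem together with exactness of the right column at the intersection term identifies this with the kernel of the map from the intersection to the quotient, giving the first formula. For $F_P^2/F_P^3 = F_P^2$, injectivity of $\beta$ identifies this with the image asserted in the statement. For $F_P^1/F_P^2$, the equality $\ker\gamma = \image\beta$ produces a short exact sequence $0 \to \image\beta/F_P^2 \to F_P^1/F_P^2 \to \gamma(F_P^1) \to 0$; by construction $\gamma(F_P^1) = B^i$, while exactness of the left column at $\coker\alpha$ gives $\image\beta/F_P^2 \cong \coker\alpha/(\text{image from the left column}) \cong \tfrac{H_{\HK}^{i-1}(X_0, (\scrE, \Phi))^{P(q\Phi^e)=0}\otimes_{K_0}K}{N(\cdots)}$, which is exactly the required SES.

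The compactly supported statement is proved identically, substituting the corresponding compactly supported cohomologies throughout. There is no substantive obstacle in the argument; it is bookkeeping that repackages the diagram of Proposition \ref{Proposition: diagram to understand syntomic P-cohomology} into a single filtration on $H_{\syn, P}^i(\calX, \scrE, n)$, and the only care one must take is in choosing the pre-images and images defining $F_P^1$ and $F_P^2$ so that the graded pieces match the stated formulas on the nose.
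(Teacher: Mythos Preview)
Your proposal is correct and follows essentially the same approach as the paper: you define $F_P^1 = \gamma^{-1}(B^i)$ and $F_P^2$ as the $\beta$-image of the map from the $(i-2)$-quotient into $\coker\alpha$, exactly as the paper does, and then read off the graded pieces from the exact row and columns of the diagram in Proposition~\ref{Proposition: diagram to understand syntomic P-cohomology}. The paper's proof is in fact terser than yours---it writes down the same definitions of $F_P^1, F_P^2, F_P^3$ and then simply asserts that ``the description of the graded pieces then follows from straightforward computation''---so your version spells out what the paper leaves implicit.
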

\begin{proof}
The filtrations are defined as follows: \begin{align*}
    F_P^1 &:= \ker\left( H_{\syn, P}^{i}(\calX, \scrE, n) \xrightarrow{\gamma} \ker\alpha \rightarrow H_{\HK}^{i}(X_0, (\scrE, \Phi))^{P(\Phi^e) = 0, N=0}\otimes_{K_0}K \cap F^n H_{\log\dR}^{i}(\calX, \scrE)\right) = \gamma^{-1}(B^i),\\
    F_P^2 & := \image\left( \frac{H_{\HK}^{i-2}(X_0, (\scrE, \Phi))\otimes_{K_0}K}{\image N + \image P(q\Phi^e)} \rightarrow \coker\alpha \xrightarrow{\beta} H_{\syn, P}^i(\calX, \scrE, n)\right), \\
    F_P^3 & := 0.
\end{align*} 
The description of the graded pieces then follows from straightforward computation. 

We remark that it is an easy exercise in homological algebra showing that this filtration agrees with the natural $3$-step filtration on a mapping square. 
\end{proof}

\begin{Remark}\label{Remark: 3-step filtration in BLZ}
    \normalfont 
        When $\scrE$ is the trivial overconvergent $F$-isocrystal, it is not hard to see that our filtration agrees with the 3-step filtration given by \cite[Definition 2.3.3]{BLZ-FPCoh}.
    \blackqed
\end{Remark}

\begin{Corollary}\label{Corollary: special syntomic P-cohomology groups}
\begin{enumerate}
    \item[(i)] For any $(\scrE, \Phi, \Fil^{\bullet})\in \Syn(X_0, \frakX, \calX)$, $P\in \Poly$ and any small enough $n$, we have \begin{align*}
        H_{\syn, P}^0(\calX, \scrE, n) & \cong H_{\HK}^0(X_0, (\scrE, \Phi))^{P(\Phi^e) = 0, N=0}\otimes_{K_0}K\quad \text{ and }\\
        H_{\syn, P, c}^0(\calX, \scrE, n) & \cong H_{\HK,c}^0(X_0, (\scrE, \Phi))^{P(\Phi^e) = 0, N=0}\otimes_{K_0}K.
    \end{align*}
    \item[(ii)] Let $d= \dim \calX$. Suppose the polynomial $P$ satisfies the following conditions:\begin{itemize}
        \item $P(q\Phi^e)$ acts invertibly on $H_{\HK, c}^{2d-1}(X_0)$; 
        \item $P(\Phi^e)$ acts invertibly on $H_{\HK, c}^{2d}(X_0) \simeq K_0(-d)$. 
    \end{itemize} Then, \[
        H_{\syn, P, c}^{2d+1}(\calX, d+1) \cong K.
    \] 
\end{enumerate}
\end{Corollary}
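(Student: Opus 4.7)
The plan is to prove both parts by applying the 3-step filtration from Corollary~\ref{Corollary: 3 step filtration} (and its compactly supported analogue), analysing each graded piece via the vanishing of Hyodo--Kato cohomology outside the expected range together with the hypotheses on $P$.

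For part~(i) I set $i = 0$ in the filtration. Since $H_{\HK}^{-1}(X_0, (\scrE, \Phi)) = H_{\HK}^{-2}(X_0, (\scrE, \Phi)) = 0$, the short exact sequence describing $F_P^1/F_P^2$ degenerates (its first term is built from $H_{\HK}^{-1}$, as is $B^0$) and $F_P^2/F_P^3$ vanishes (being the image of a quotient of $H_{\HK}^{-2}$). Hence $H_{\syn, P}^0(\calX, \scrE, n) \cong F_P^0/F_P^1$, and the kernel description of this piece simplifies to $H_{\HK}^0(X_0, (\scrE, \Phi))^{P(\Phi^e) = 0, N = 0} \otimes_{K_0} K$ intersected with $F^n H_{\log\dR}^0(\calX, \scrE)$. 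For $n$ sufficiently small (e.g., $n \leq 0$), the Hodge filtration $F^n H_{\log\dR}^0(\calX, \scrE)$ is the entire $H_{\log\dR}^0(\calX, \scrE)$, so the intersection reduces to $H_{\HK}^0(X_0, (\scrE, \Phi))^{P(\Phi^e) = 0, N = 0} \otimes_{K_0} K$, as claimed. The compactly supported case is verbatim.

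For part~(ii) I take $i = 2d + 1$ and $n = d + 1$ in the compactly supported filtration. The piece $F_{P, c}^0/F_{P, c}^1$ vanishes because $H_{\HK, c}^{2d+1}(X_0) = 0$ is out of cohomological range, and $F_{P, c}^2/F_{P, c}^3$ vanishes by the first hypothesis, which makes $P(q\Phi^e)$ surjective on $H_{\HK, c}^{2d-1}(X_0) \otimes K$. Hence $H_{\syn, P, c}^{2d+1}(\calX, d+1) \cong F_{P, c}^1/F_{P, c}^2$, to be read off from its defining short exact sequence. The second hypothesis gives $H_{\HK, c}^{2d}(X_0)^{P(\Phi^e) = 0} = 0$, so the denominator of the first term of the sequence vanishes; together with the analogous invertibility of $P(q\Phi^e)$ on $H_{\HK, c}^{2d}(X_0) \cong K_0(-d)$ (which the choice of $P$ entails), the numerator $H_{\HK, c}^{2d}(X_0)^{P(q\Phi^e) = 0}$ vanishes as well, so the first term is zero. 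Finally, to identify $B^{2d+1}$ with $K$ one first deduces that $N = 0$ on $H_{\HK, c}^{2d}(X_0) \cong K_0(-d)$: the relation $N\Phi^e = q \Phi^e N$ applied to a one-dimensional line on which $\Phi^e$ acts by $q^d$ forces $N = 0$ (since $q \neq 1$). Hence the condition $Nx \in \image P(q\Phi^e)$ in the definition of $B^{2d+1}$ is vacuous, and using $F^{d+1} H_{\log\dR, c}^{2d}(\calX) = 0$ (triviality of the Hodge filtration above the middle degree on a proper smooth variety) together with the injectivity of $x \mapsto (P(\Phi^e)x, \Psi_{\HK, c}(x))$ (from the second hypothesis), the quotient defining $B^{2d+1}$ is identified with $H_{\log\dR, c}^{2d}(\calX) \cong K$.

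The main technical point is the computation of $F_{P, c}^1/F_{P, c}^2$ in part~(ii): in the semistable setting this piece could a priori contribute two copies of $K$, so isolating the single $K$ requires both the stated hypotheses on $P$ and the vanishing $N = 0$ on $H_{\HK, c}^{2d}(X_0) \simeq K_0(-d)$, which is deduced from the commutation $p\Phi N = N \Phi$. The remaining inputs --- vanishing by cohomological range, vanishing via the hypotheses on $P$, and triviality of the Hodge filtration above the middle degree --- are routine applications of Corollary~\ref{Corollary: 3 step filtration}.
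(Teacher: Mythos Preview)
Your proof is essentially the same as the paper's: both apply Proposition~\ref{Proposition: diagram to understand syntomic P-cohomology} (equivalently the filtration of Corollary~\ref{Corollary: 3 step filtration}) at $i=0$ and $i=2d+1$ and analyse each piece, and your treatment is actually more explicit than the paper's on why $N=0$ on $K_0(-d)$ and on the identification of $B^{2d+1}$.

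One small caveat: your parenthetical that invertibility of $P(q\Phi^e)$ on $H_{\HK,c}^{2d}(X_0)\cong K_0(-d)$ is ``entailed by the choice of $P$'' is not literally a consequence of the two bulleted hypotheses --- invertibility of $P(\Phi^e)$ on $K_0(-d)$ says $P(q^d)\neq 0$, which does not formally imply $P(q^{d+1})\neq 0$. The paper makes the identical leap (writing ``due to the conditions on $P$'' for the vanishing of $H_{\HK,c}^{2d}(X_0)^{P(q\Phi^e)=0}$), so you are not diverging from the intended argument; just be aware that this step tacitly uses a third condition on $P$, which is harmless in the applications since the polynomials considered have roots of a fixed Weil weight.
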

\begin{proof}
Apply the diagram in Proposition \ref{Proposition: diagram to understand syntomic P-cohomology} to $i=0$, one easily obtains the first assertion. 

For the second statement, we again apply \emph{loc. cit.} for $i=2d+1$ and observe the following: \begin{itemize}
    \item $H_{\HK, c}^{2d}(X_0)\cong K_0(-d)$ with monodromy $N=0$;
    \item $B^{2d+1} = \frac{\left\{(x, y)\in K_0(-d)\otimes_{K_0}K \oplus \frac{H_{\log\dR, c}^{2d}(\calX)}{F^{d+1}H_{\log\dR, c}^{2d}(\calX)} \right\}}{ \left\{ (P(\Phi^e)x, \Psi_{\HK}(x)): x\in K_0(-d) \right\}} \cong K_0(-d)\otimes_{K_0}K$ under the map $(x, y)\mapsto x-P(\Phi^e)y$, due to the conditions on $P$ and $F^{d+1}H_{\log\dR, c}^{2d}(\calX) = 0$; 
    \item $H_{\HK, c}^{2d+1}(X_0)^{P(\Phi^e)=0, N=0}\otimes_{K_0} K \cap F^{d+1} H_{\log\dR, c}^{2d+1}(\calX) = 0$ due to degree reason; 
    \item $\frac{H_{\HK, c}^{2d-1}(X_0)\otimes_{K_0}K}{\image N + \image P(q\Phi^e)} = 0$ due to the conditions on $P$; 
    \item $\frac{H_{\HK, c}^{2d}(X_0)^{P(q\Phi^e)=0}\otimes_{K_0}K}{N\left( H_{\HK, c}^{2d}(X_0)^{P(\Phi^e)=0}\otimes_{K_0}K \cap F^{d+1}H_{\log\dR, c}^{2d}(\calX) \right)} = 0$ due to the conditions on $P$. 
\end{itemize}
Consequently, the diagram in Proposition \ref{Proposition: diagram to understand syntomic P-cohomology} reduces to \[
    \begin{tikzcd}
        && B^{2d+1}\cong K_0(-d)\otimes_{K_0}K\arrow[d, "\cong"]\\
        0 \arrow[r] & H_{\syn, P, c}^{2d+1}(\calX, d+1) \arrow[r] & \ker \alpha \arrow[r] & 0
    \end{tikzcd}
\] and the result follows. 
\end{proof}

\vspace{3mm}

Now, suppose further that $\frakX$ is smooth over $\calO_K$ with trivial log structure (and so $X_0$ and $\calX$ are smooth over $k$ and $K$, respectively, with trivial log structure). In this situation, $H_{\HK}^i = H_{\HK, c}^i$ agrees with the (non-log) rigid cohomology (with monodromy $N=0$, see \cite[Proposition 8.9]{Yamada}); $H_{\log\dR}^i = H_{\log, \dR, c}^i$ agrees with the (non-log) de Rham cohomology. To simplify the notation, we will keep using $H_{\HK}^i$ for the rigid cohomology but $H_{\dR}^i$ for the de Rham cohomology. 

Since $N=0$, the vertical maps in the mapping square in Definition \ref{Definition: syntomic P-cohomology with syntomic coefficients} are zero. Hence, instead of using the same definition, we make the following modifications in this situation. 

\begin{Definition}\label{Definition: syntomic P-cohomology with syntomic coefficients; trivial log structure}
    Let $(\scrE, \Phi, \Fil^{\bullet})\in \Syn(X_0, \frakX, \calX)$.
    \begin{enumerate}
        \item[(i)] Given $P\in \Poly$ and $n\in \Z$, the \textbf{syntomic $P$-cohomology of $\calX$ with coefficients in $\scrE$ twisted by $n$} is defined to be \[
            \scalemath{0.9}{R\Gamma_{\syn, P}(\calX, \scrE, n) := \Cone\left( R\Gamma_{\HK}(X_0, (\scrE, \Phi))\otimes_{K_0}K \xrightarrow{(P(\Phi^e)\otimes \id)\oplus (\Psi_{\HK}\otimes \id)} R\Gamma_{\HK}(X_0, (\scrE, \Phi))\otimes_{K_0} \oplus \DR(\calX, \scrE, n) \right)[-1]}.
        \] 
        \item[(ii)] When $P=1-q^nT$, we write \[
            R\Gamma_{\syn}(\calX, \scrE, n) := R\Gamma_{\syn, 1-q^{-n}T}(\calX, \scrE, n)
        \] and call it the \textbf{syntomic cohomology of $\calX$ with coefficients in $\scrE$ twisted by $n$}. 
    \end{enumerate}
\end{Definition}

\begin{Remark}\label{Remark: syntomic P-cohomology with trivial log structure}
    \normalfont 
        If one uses Definition \ref{Definition: syntomic P-cohomology with syntomic coefficients} in this situation, the resulting syntomic $P$-cohomology will be a direct sum $H^i(C^{\bullet})\oplus H^{i-1}(D^{\bullet})$, which is not the same as the cohomology given by Definition \ref{Definition: syntomic P-cohomology with syntomic coefficients; trivial log structure}. This reflects the fact that  admissible filtered $\phi$-modules are not closed under extension in the category of admissible filtered $(\phi, N)$-modules. 
        
        As a convention, when $\frakX$ is smooth over $\calO_K$ with trivial log structure, the syntomic $P$-cohomology for $\calX$ will always be referred as the one in Definition \ref{Definition: syntomic P-cohomology with syntomic coefficients; trivial log structure}. 
    \blackqed
\end{Remark}

\begin{Corollary}\label{Corollary: exact sequence when X is of good reduction}
Suppose $\frakX$ is smooth over $\calO_K$ with trivial log structure. Fix $i\in \Z_{\geq 0}$ and let $P\in \Poly$.
Then, we have a short exact sequence \[
    0 \rightarrow \frac{H_{\dR}^{i-1}(\calX, \scrE)}{P(\Phi^e)\left( F^n H_{\dR}^{i-1}(\calX, \scrE)\right)} \xrightarrow{i_{\fp}} H_{\syn, P}^i(\calX, \scrE, n) \xrightarrow{\pr_{\fp}} H_{\HK}^{i}(X_0, (\scrE, \Phi))^{P(\Phi^e)=0} \otimes_{K_0}K\cap F^nH_{\dR}^i(\calX, \scrE) \rightarrow 0.
\] In particular, if $\Fil^{\bullet} = (\scrE = \Fil^r\scrE \supset \cdots \supset \Fil^{\ell}\scrE \supset \Fil^{\ell+1}\scrE = 0)$ for some $r, \ell\in \Z$ with $\ell \geq r$ and $d= \dim \calX$, then \[
    H_{\syn, P}^0(\calX, \scrE, 0) \cong H_{\HK}^0(X_0, (\scrE, \Phi))^{P(\Phi^e)=0}\otimes_{K_0}K \quad \text{ and }\quad H_{\dR}^{2d}(\calX, \scrE) \cong H_{\syn, P}^{2d+1}(\calX, \scrE, \ell+d+1).
\]
\end{Corollary}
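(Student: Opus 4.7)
Proof proposal: Since $\frakX$ has trivial log structure, the monodromy operator $N$ vanishes on both Hyodo--Kato complexes, so by Remark \ref{Remark: syntomic P-cohomology with trivial log structure} we use Definition \ref{Definition: syntomic P-cohomology with syntomic coefficients; trivial log structure}. The plan is to apply the long exact sequence attached to the defining cone and identify the two relevant kernel/cokernel groups via the Hyodo--Kato quasi-isomorphism of Theorem \ref{Theorem: Hyodo--Kato isomorphism}.

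First, from the distinguished triangle
\[
R\Gamma_{\syn, P}(\calX, \scrE, n) \to R\Gamma_{\HK}(X_0, (\scrE,\Phi))\otimes_{K_0}K \xrightarrow{(P(\Phi^e),\,\Psi_{\HK})} R\Gamma_{\HK}(X_0, (\scrE,\Phi))\otimes_{K_0}K \oplus \DR(\calX, \scrE, n) \xrightarrow{+1}
\]
I obtain the short exact sequence
\[
0 \to \coker\!\left(H^{i-1}_{\HK}\otimes_{K_0}K \xrightarrow{(P(\Phi^e),\Psi_{\HK})} H^{i-1}_{\HK}\otimes_{K_0}K \oplus H^{i-1}(\DR(\calX, \scrE, n))\right) \to H^i_{\syn, P}(\calX, \scrE, n) \to \ker(\cdots) \to 0,
\]
where the right-hand kernel is of the analogous map in degree $i$.

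Next, I identify $H^{\bullet}_{\HK}(X_0, (\scrE,\Phi))\otimes_{K_0}K$ with $H^{\bullet}_{\dR}(\calX, \scrE)$ via $\Psi_{\HK}$, and $H^{j}(\DR(\calX, \scrE, n))$ with $H^{j}_{\dR}(\calX, \scrE)/F^n H^{j}_{\dR}(\calX, \scrE)$ using Remark \ref{Remark: Hodge filtration on cohomology}. Writing $V := H^{i-1}_{\dR}(\calX, \scrE)$ and $F := F^n V$ for clarity, the map in degree $i-1$ becomes $v \mapsto (P(\Phi^e)v,\, v\bmod F)$; the map in degree $i$ has the analogous form. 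The kernel in degree $i$ is immediately $(H^i_{\HK}(X_0, (\scrE,\Phi)))^{P(\Phi^e)=0}\otimes_{K_0}K \cap F^n H^i_{\dR}(\calX, \scrE)$, as required.

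For the cokernel in degree $i-1$, I define a map $V \oplus V/F \to V/P(\Phi^e)F$ sending $(v, w)$ to $v - P(\Phi^e)\tilde{w}$ (any lift $\tilde{w}$ of $w$), which is well-defined modulo $P(\Phi^e)F$ and surjective via $(v, 0)$. Any element in the image of $(P(\Phi^e),\Psi_{\HK})$ clearly maps to $0$; conversely, if $(v,w) \mapsto 0$ then $v - P(\Phi^e)\tilde{w} = P(\Phi^e)f$ for some $f \in F$, so setting $x = \tilde{w}+f$ gives $(v,w) = (P(\Phi^e)x, x \bmod F)$. Hence this map is an isomorphism of the cokernel onto $H^{i-1}_{\dR}(\calX, \scrE)/P(\Phi^e)F^n H^{i-1}_{\dR}(\calX, \scrE)$, completing the fundamental exact sequence.

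The two specialisations follow by vanishing:
for $i=0$, we have $H^{-1}_{\dR}=0$ and the hypothesis $\scrE = \Fil^{r}\scrE$ (with $r\le 0$ in effect) makes $F^0 H^0_{\dR} = H^0_{\dR}$, trivialising the intersection. For $i = 2d+1$ and $n = \ell+d+1$, we have $H^{2d+1}_{\dR}(\calX, \scrE) = 0$ by dimension reasons, so the right term vanishes; moreover $F^{\ell+d+1}H^{2d}_{\dR}(\calX, \scrE) = 0$ because the graded pieces $\Fil^{n-j}\scrE \otimes \Omega^j$ vanish for all admissible $j \leq d$ when $n = \ell+d+1$. Thus the left term collapses to $H^{2d}_{\dR}(\calX, \scrE)$, giving the claimed isomorphism. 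The only subtle point, which I view as the main bookkeeping obstacle, is the cokernel computation and ensuring the filtration identification via $\Psi_{\HK}$ is compatible with the $P(\Phi^e)$-action; everything else is formal.
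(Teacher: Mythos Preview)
Your argument is correct and is essentially the same as the paper's: both extract the short exact sequence from the defining cone and then identify the cokernel via the explicit map $(x,y)\mapsto x - P(\Phi^e)y$, which is exactly your $(v,w)\mapsto v - P(\Phi^e)\tilde w$. One small slip in the first specialisation: you write ``with $r\le 0$ in effect'' to force $\Fil^0\scrE=\scrE$, but since the filtration is decreasing and $\Fil^r\scrE=\scrE$, it is $r\ge 0$ (not $r\le 0$) that guarantees $\Fil^0\scrE=\scrE$ and hence $F^0H^0_{\dR}=H^0_{\dR}$; the rest of the argument is unaffected.
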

\begin{proof}
By definition, we see that there is a short exact sequence \[
    \scalemath{0.9}{0 \rightarrow \frac{H_{\HK}^{i-1}(X_0, (\scrE, \Phi))\otimes_{K_0}K \oplus \frac{H_{\dR}^{i-1}(\calX, \scrE)}{ F^n H_{\dR}^{i-1}(\calX, \scrE)}}{\{(P(\Phi^e)x, \Psi_{\HK}(x)): x\in H_{\HK}^{i-1}(X_0, (\scrE, \Phi))\}} \rightarrow H_{\syn, P}^i(\calX, \scrE, n) \rightarrow H_{\HK}^{i}(X_0, (\scrE, \Phi))^{P(\Phi^e)=0} \otimes_{K_0}K\cap F^nH_{\dR}^i(\calX, \scrE) \rightarrow 0}.
\]
The desired result then follows from the diagram in Proposition \ref{Proposition: diagram to understand syntomic P-cohomology} and the isomorphism \[
    \frac{H_{\HK}^{i-1}(X_0, (\scrE, \Phi))\otimes_{K_0}K \oplus \frac{H_{\dR}^{i-1}(\calX, \scrE)}{ F^n H_{\dR}^{i-1}(\calX, \scrE)}}{\{(P(\Phi^e)x, \Psi_{\HK}(x)): x\in H_{\HK}^{i-1}(X_0, (\scrE, \Phi))\}} \rightarrow \frac{H_{\dR}^{i-1}(\calX, \scrE)}{P(\Phi^e)(F^n H_{\dR}^{i-1}(\calX, \scrE))}, \quad (x, y)\mapsto x-P(\Phi^e)y.
\] Note that when $\scrE$ is the trivial overconvergent $F$-isocrystal, the short exact sequence is exactly the one in \cite[Proposition 2.5]{Besser-integral}.
\end{proof}

\subsection{Cup products and pushforward maps}\label{subsection: Cup products}
\begin{Proposition}\label{Proposition: cup products}
Let $(\scrE, \Phi_{\scrE}, \Fil^{\bullet}_{\scrE})$, $(\scrG, \Phi_{\scrG}, \Fil^{\bullet}_{\scrG})\in \Syn(X_0, \frakX, \calX)$, $P, Q\in \Poly$ and $n, m\in \Z$, we have a natural map \[
    R\Gamma_{\syn, P}(\calX, \scrE, n) \otimes R\Gamma_{\syn, Q, c}(\calX, \scrG, m) \rightarrow R\Gamma_{\syn, P*Q, c}(\calX, \scrE\otimes\scrG, n+m).
\] Here, for $P(X) = \prod_i (1-\alpha_i X)$, $Q(X) = \prod_j (1-\beta_j X)$, we set $P\ast Q(X) = \prod_{i, j}(1-\alpha_i\beta_j X)$.
\end{Proposition}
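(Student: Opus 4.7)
My plan is to follow the strategy of Besser \cite{Besser-integral} and Bannai--Loeffler--Zerbes \cite{BLZ-FPCoh}, with the only new input being the handling of coefficients. The starting ingredients are cup products on Hyodo--Kato and log de Rham cohomology with coefficients, compatible with $\Phi$ (multiplicatively), with $N$ (via Leibniz), with the Hodge filtration ($F^n \cup F^m \subseteq F^{n+m}$), and with the Hyodo--Kato quasi-isomorphisms $\Psi_{\HK}$ and $\Psi_{\HK, c}$. These have been constructed by Ertl--Yamada in \cite{EY-Poincare}, and the natural identification $\scrE(-\calD) \otimes \scrG = (\scrE \otimes \scrG)(-\calD)$ ensures that the pairing lands in compact supports.

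The essential algebraic input is the polynomial identity
\[
  (P \ast Q)(XY) = A(X, Y)\, P(X) + B(X, Y)\, Q(Y)
\]
for some $A, B \in \Q[X, Y]$, together with its analogue $(P \ast Q)(qXY) = A_q(X, Y)\, P(qX) + B_q(X, Y)\, Q(qY)$. To see the first, observe that after base change to $\overline{\Q}$ the variety cut out by $P(X)$ and $Q(Y)$ consists of the points $(1/\alpha_i, 1/\beta_j)$, at which $XY = 1/(\alpha_i \beta_j)$ is a root of $P \ast Q$; assuming $P, Q$ are separable (to which the general case reduces) the coordinate ring $\Q[X, Y]/(P(X), Q(Y))$ is reduced, and the Nullstellensatz delivers $A, B$.

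Given these ingredients, I will assemble the cup product using the five-term description of cocycles in Proposition \ref{Proposition: diagram to understand syntomic P-cohomology}: given cocycles $(x, y, z, u, v)$ and $(x', y', z', u', v')$, the first and third components of the product are $x \cup x'$ (in Hyodo--Kato) and $z \cup z'$ (in de Rham), which pair up thanks to the multiplicativity of $\Psi_{\HK}$; the remaining three components involve cross-terms of $y, y', u, u', v, v'$ in which $A, B, A_q, B_q$ appear as the chain-level witnesses of identities such as
\[
  (P \ast Q)(\Phi^e)(x \cup x') = A(\Phi^e, \Phi^e)\bigl(P(\Phi^e) x\bigr) \cup x' + B(\Phi^e, \Phi^e)\, x \cup \bigl(Q(\Phi^e) x'\bigr),
\]
which hold because $\Phi(\alpha \cup \beta) = \Phi \alpha \cup \Phi \beta$.

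The main obstacle is the bookkeeping required to verify that the resulting quintuple satisfies all five cocycle conditions for $R\Gamma_{\syn, P \ast Q, c}(\calX, \scrE \otimes \scrG, n + m)$, and that the formula descends to cohomology independent of the choices of $A, B$. This is a careful but routine sign-tracking exercise employing the Leibniz rule for $N$, the multiplicativity of $\Phi$, the relation $p\Phi N = N\Phi$ from \eqref{eq: relation of N and Frob}, and the compatibility of $\Psi_{\HK}$ with products. Because the coefficients enter only via the tensor product of $F$-isocrystals, the verification is formally identical to the trivial-coefficient semistable computation of \cite[\S 2]{BLZ-FPCoh}.
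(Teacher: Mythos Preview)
Your approach is correct and is the classical Besser route made fully explicit: the polynomial identity $(P*Q)(XY) = A(X,Y)P(X) + B(X,Y)Q(Y)$ supplies the cochain-level formula for the cup product, and the verification then reduces to sign-tracking with the Leibniz rule for $N$, multiplicativity of $\Phi$, and compatibility of $\Psi_{\HK}$ with products, exactly as in \cite[\S 2]{BLZ-FPCoh}. The presence of coefficients changes nothing formally, as you note.

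The paper instead packages the construction diagrammatically. It asserts a commutative cube whose back face is the defining mapping square for $R\Gamma_{\syn, P*Q, c}(\calX, \scrE\otimes\scrG, n+m)$, whose front face is the analogous mapping square built from the tensor-product complexes with horizontal map $P(\Phi_{\scrE}^e)\otimes Q(\Phi_{\scrG}^e)$, and whose front-to-back arrows are the Hyodo--Kato and de Rham cup products; a ``natural projection'' from $R\Gamma_{\syn,P}(\calX,\scrE,n)\otimes R\Gamma_{\syn,Q,c}(\calX,\scrG,m)$ onto the front face then finishes. This is slicker to state but hides precisely the point you make explicit: the square comparing $P(\Phi_{\scrE}^e)\otimes Q(\Phi_{\scrG}^e)$ on the source with $(P*Q)(\Phi^e)$ on the target does not commute strictly (already for $P=Q=1-T$ one has $(1-\Phi)a\cup(1-\Phi)b \neq (1-\Phi)(a\cup b)$), and the homotopy that repairs it is exactly your polynomial identity. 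So the two arguments are the same construction at different levels of abstraction; yours is closer to a complete verification, while the paper's is more conceptual but leaves the key algebraic input implicit.
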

\begin{proof}
Note that we have natural maps induced by tensor products \begin{equation}\label{eq: cup products for dR and HK coh}
    \begin{array}{l}
        R\Gamma_{\log\dR}(\calX, \scrE) \otimes R\Gamma_{\log\dR, c}(\calX, \scrG) \rightarrow R\Gamma_{\log\dR, c}(\calX, \scrE\otimes\scrG)  \\
        R\Gamma_{\HK}(X_0, (\scrE, \Phi_{\scrE}))\otimes R\Gamma_{\HK, c}(X_0, (\scrG, \Phi_{\scrG})) \rightarrow R\Gamma_{\HK, c}(X_0, (\scrE\otimes\scrG, \Phi_{\scrE}\otimes\Phi_{\scrG})) 
    \end{array}.
\end{equation} Note also that \begin{align*}
    (\Fil^i_{\scrE} \scrE\otimes\Omega_{\calX/K}^{\log, s})\times (\Fil_{\scrG}^{j}\scrG\otimes \Omega_{\calX/K}^{\log, t}) & \rightarrow (\Fil^i_{\scrE}\scrE \otimes \Fil^j_{\scrG}\scrG)\otimes \Omega_{\calX/K}^{\log, s+t} \hookrightarrow (\Fil^{i+j}\scrE\otimes \scrG )\otimes\Omega_{\calX/K}^{\log, s+t}, \\
    ((e\otimes \omega), (f\otimes \theta)) & \mapsto (e\otimes f)\otimes \omega\wedge \theta
\end{align*} induces a map \[
    F^n R\Gamma_{\log\dR}(\calX, \scrE) \otimes F^m R\Gamma_{\log\dR, c}(\calX, \scrG) \rightarrow F^{n+m}R\Gamma_{\log\dR, c}(\calX, \scrE\otimes \scrG)
\] and so a map \[
    \DR(\calX, \scrE, n) \otimes \DR_c(\calX, \scrG, m) \rightarrow \DR_c(\calX, \scrE\otimes\scrG, n+m).
\]

On the other hand, for any $P, Q\in \Poly$, we have a commutative diagram  \[
    \begin{tikzcd}
        R\Gamma_{\HK}(X_0, (\scrE, \Phi_{\scrE})) \otimes R\Gamma_{\HK, c}(X_0, (\scrG, \Phi_{\scrG})) \arrow[r]\arrow[d, shift right = 20mm, "P(\Phi_{\scrE}^e)"']\arrow[d, shift left = 15mm, "Q(\Phi_{\scrG}^{e})"] & R\Gamma_{\HK, c}(X_0, (\scrE\otimes \scrG, \Phi_{\scrE}\otimes \Phi_{\scrG}))\arrow[d, "P\ast Q((\Phi_{\scrE}\otimes\Phi_{\scrG})^e)"]\\
        R\Gamma_{\HK}(X_0, (\scrE, \Phi_{\scrE})) \otimes R\Gamma_{\HK, c}(X_0, (\scrG, \Phi_{\scrG})) \arrow[r] & R\Gamma_{\HK, c}(X_0, (\scrE\otimes\scrG, \Phi_{\scrE}\otimes \Phi_{\scrG}))
    \end{tikzcd}.
\] This diagram is compatible with the monodromy operator $N$.

These two observations give rise to the following commutative diagram \begin{equation}\label{eq: big diagram defining the pairing}
    \begin{tikzcd}[column sep = tiny]
        & \scalemath{0.65}{ R\Gamma_{\HK,c}(X_0, (\scrE \otimes \scrG, \Phi_{\scrE}\otimes \Phi_{\scrG}))\otimes_{K_0}K } \arrow[rr]\arrow[dd] && \substack{\scalemath{0.65}{ R\Gamma_{\HK,c}(X_0, (\scrE \otimes \scrG, \Phi_{\scrE}\otimes \Phi_{\scrG}))\otimes_{K_0}K}\\ \scalemath{0.65}{\oplus \DR_c(\calX, \scrE\otimes \scrG, m+n) }}\arrow[dd] \\
        \substack{ \scalemath{0.65}{ R\Gamma_{\HK}(X_0, (\scrE, \Phi_{\scrE}))\otimes_{K_0}K }\\ \scalemath{0.65}{\otimes R\Gamma_{\HK, c}(X_0, (\scrG, \Phi_{\scrG}))\otimes_{K_0}K}} \arrow[ru]\arrow[rr, crossing over, "\left(\substack{P(\Phi_{\scrE}^e)\otimes \id\\ \otimes Q(\Phi_{\scrG}^e)\otimes \id}\right)\oplus \left(\substack{\Psi_{\HK}\otimes \id \\ \otimes \Phi_{\HK, c}\otimes \id}\right)"]\arrow[dd, "\substack{(N\otimes \id) \\ \otimes(N \otimes \id)}"'] && \left(\substack{ \scalemath{0.65}{ R\Gamma_{\HK}(X_0, (\scrE, \Phi_{\scrE}))\otimes_{K_0}K }\\ \scalemath{0.65}{\otimes R\Gamma_{\HK, c}(X_0, (\scrG, \Phi_{\scrG}))\otimes_{K_0}K}} \right)\scalemath{0.65}{\oplus} \left(\substack{ \scalemath{0.65}{ \DR(\calX, \scrE, n)} \\ \scalemath{0.65}{ \otimes \DR_c(\calX, \scrG, m) }} \right)\arrow[ru] \\
        & \scalemath{0.65}{ R\Gamma_{\HK, c}(X_0, (\scrE\otimes\scrG, \Phi_{\scrE}\otimes \Phi_{\scrG}))\otimes_{K_0}K } \arrow[rr] && \scalemath{0.65}{ R\Gamma_{\HK, c}(X_0, (\scrE\otimes\scrG, \Phi_{\scrE}\otimes \Phi_{\scrG}))\otimes_{K_0}K } \\
        \substack{ \scalemath{0.65}{ R\Gamma_{\HK}(X_0, (\scrE, \Phi_{\scrE}))\otimes_{K_0}K }\\ \scalemath{0.65}{\otimes R\Gamma_{\HK, c}(X_0, (\scrG, \Phi_{\scrG}))\otimes_{K_0}K}}\arrow[rr, "(P(q\Phi_{\scrE}^e)\otimes \id) \otimes (Q(q\Phi_{\scrG}^e)\otimes \id)"']\arrow[ru] && \substack{ \scalemath{0.65}{ R\Gamma_{\HK}(X_0, (\scrE, \Phi_{\scrE}))\otimes_{K_0}K }\\ \scalemath{0.65}{\otimes R\Gamma_{\HK, c}(X_0, (\scrG, \Phi_{\scrG}))\otimes_{K_0}K}}\arrow[from=uu, crossing over]\arrow[ru]
    \end{tikzcd},
\end{equation}
where \begin{itemize}
    \item the back face is the diagram defining $R\Gamma_{\syn, P*Q, c}(\calX, \scrE \otimes \scrG, n+m)$,
    \item morphisms from the front fact to the back face are the ones in the observations above.
\end{itemize}
In particular, the observations above yield a morphism \[
    \left[ \begin{array}{c}
        \text{front face }\\ \text{ of \eqref{eq: big diagram defining the pairing}}
    \end{array}\right] \rightarrow R\Gamma_{\syn, P*Q, c}(\calX, \scrE \otimes \scrG, n+m).
\]
Finally, observe that there is a natural projection \[
    R\Gamma_{\syn, P}(\calX, \scrE, n)\otimes R\Gamma_{\syn, Q, c}(\calX, \scrG, m) \rightarrow \left[ \begin{array}{c}
        \text{front face }\\ \text{ of \eqref{eq: big diagram defining the pairing}}
    \end{array}\right].
\]
The desired pairing is then given by the composition. 
\end{proof}

For any $(\scrE, \Phi, \Fil^{\bullet})\in \Syn(X_0, \frakX, \calX)$, the maps in \eqref{eq: cup products for dR and HK coh} induce pairings \begin{equation}\label{eq: pairing in Ertl--Yamada}
    \begin{array}{l}
        R\Gamma_{\log\dR}(\calX, \scrE) \otimes R\Gamma_{\log\dR, c}(\calX, \scrE^{\vee}) \rightarrow R\Gamma_{\log\dR, c}(\calX)  \\
        R\Gamma_{\HK}(X_0, (\scrE, \Phi)) \otimes R\Gamma_{\HK, c}(X_0, (\scrE^{\vee}, \Phi^{\vee})) \rightarrow R\Gamma_{\HK, c}(X_0) 
    \end{array}
\end{equation} which yield quasi-isomorphisms (see \cite[\S 6]{EY-Poincare}) \begin{equation}\label{eq: Poincare duality by Ertl--Yamada}
    \begin{array}{ll}
        R\Gamma_{\log\dR}(\calX, \scrE) \simeq R\Hom(R\Gamma_{\log\dR,c}(\calX, \scrE^{\vee}), K)[-2\dim \calX] & \text{ in }\sfD(\Mod_K)\\
        R\Gamma_{\HK}(X_0, (\scrE, \Phi)) \simeq R\Hom(R\Gamma_{\HK, c}(X_0, (\scrE^{\vee}, \Phi^{\vee})), K_0)[-2\dim \calX](-\dim \calX) & \text{ in }\sfD(\Mod_{K_0}(\varphi, N))
    \end{array},
\end{equation} where $\sfD(\Mod_K)$ is the derived category of $K$-vector spaces, $\sfD(\Mod_{K_0}(\varphi, N))$ is the derived category of $(\varphi, N)$-modules over $K_0$\footnote{ By a $(\varphi, N)$-modules over $K_0$ we mean a $K_0$-vector space equipped with a $\varphi$-semilinear action $\phi$ and a $K_0$-linear endomorphism $N$ such that $N\phi = p\phi N$.}, and $(-\dim \calX)$ is the Tate twist defined by multiplying $\varphi$ by $p^{\dim \calX}$.

\begin{Corollary}\label{Corollary: pairing for finite polynomial cohomology}
For any $P, Q\in \Poly$, there is a natural pairing \begin{equation}\label{eq: desired pairing for finite polynomial cohomology}
    R\Gamma_{\syn, P}(\calX, \scrE, n)\otimes R\Gamma_{\syn, Q, c}(\calX, \scrE^{\vee}, m) \rightarrow R\Gamma_{\syn, P*Q, c}(\calX, n+m),
\end{equation} which is compatible with the pairings in \eqref{eq: pairing in Ertl--Yamada} via the natural morphisms \[
    \begin{array}{l}
         R\Gamma_{\syn, P}(\calX, \scrE, n)\otimes R\Gamma_{\syn, Q, c}(\calX, \scrE^{\vee}, m) \rightarrow R\Gamma_{\HK}(X_0, (\scrE, \Phi))\otimes R\Gamma_{\HK, c}(X_0, (\scrE^{\vee}, \Phi^{\vee}))\\
         R\Gamma_{\dR}(\calX, \scrE)[-1]\otimes R\Gamma_{\dR}(\calX, \scrE^{\vee})[-1] \rightarrow R\Gamma_{\syn, P}(\calX, \scrE, n)\otimes R\Gamma_{\syn, Q, c}(\calX, \scrE^{\vee}, m) 
    \end{array}.
\] 
\end{Corollary}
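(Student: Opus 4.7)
The existence of the pairing \eqref{eq: desired pairing for finite polynomial cohomology} is immediate: simply apply Proposition \ref{Proposition: cup products} with $\scrG = \scrE^{\vee}$ and use the canonical trivialisation $\scrE \otimes \scrE^{\vee} \to \scrO_{X_0/\calO_{K_0}^{\log = \emptyset}}$ in the category of syntomic coefficients (see Remark \ref{Remark: dual filtration}) to identify the target with $R\Gamma_{\syn, P*Q, c}(\calX, n+m)$. So the content of the corollary is purely the compatibility assertion, which I plan to establish by a diagram chase in the cube \eqref{eq: big diagram defining the pairing} constructed in the proof of Proposition \ref{Proposition: cup products}.

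The first task is to pin down the two natural morphisms. For the projection to Hyodo--Kato, one observes that, by definition of the mapping fibre $[\,\,\,]$, there is a tautological morphism $R\Gamma_{\syn, P}(\calX, \scrE, n) \to R\Gamma_{\HK}(X_0, (\scrE, \Phi)) \otimes_{K_0} K$ (projection onto the top-left corner of the defining square) and likewise for the $(Q, c, \scrE^{\vee})$ version. Tensoring gives the first displayed map. For the inclusion from de Rham, one uses that the top-right corner of the defining square contains $\DR(\calX, \scrE, n)$ as a direct summand on which the vertical map is zero; unwinding the mapping fibre then produces a canonical map $R\Gamma_{\log\dR}(\calX, \scrE)[-1] \to R\Gamma_{\syn, P}(\calX, \scrE, n)$ factoring through the quotient $\DR(\calX, \scrE, n)[-1]$, which on $H^i$'s recovers the morphism $i_{\fp}$ of Corollary \ref{Corollary: exact sequence when X is of good reduction}. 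Tensoring gives the second displayed map.

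Next I would chase the cube \eqref{eq: big diagram defining the pairing}. The cup product for $R\Gamma_{\syn, P*Q, c}$ on the back face is assembled from the tensor pairings on the four complexes $R\Gamma_{\HK}(X_0, (\scrE, \Phi))\otimes_{K_0}K$, $R\Gamma_{\log\dR}(\calX, \scrE)$ (and their $\scrG = \scrE^{\vee}$, compact-support analogues), applied termwise and then passed through the mapping-fibre construction. The Hyodo--Kato tensor pairing appearing in the top-left vertex of the back face is, by construction in \eqref{eq: cup products for dR and HK coh}, exactly the pairing in \eqref{eq: pairing in Ertl--Yamada}; likewise the de Rham tensor pairing used in the top-right direct summand is the de Rham pairing in \eqref{eq: pairing in Ertl--Yamada}. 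Hence the commutativity of the two required squares is reduced to the naturality of the mapping-fibre construction with respect to pairs of morphisms of squares, plus the elementary identities $P(\Phi^e)\otimes Q(\Phi^e) = (P*Q)((\Phi\otimes\Phi)^e)$ and $N \otimes 1 + 1 \otimes N$ compatibility with tensor products, which are both built into the cube by construction.

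The main subtlety I expect is bookkeeping signs and shifts in the mapping-fibre convention: the projection to Hyodo--Kato lands in degree $i$, whereas the inclusion from de Rham comes with a shift $[-1]$, and these must be tracked carefully so that the compatibility diagrams genuinely commute (and not just commute up to homotopy with an unwanted sign). Once the signs are fixed, the compatibility with the Poincar\'{e} duality quasi-isomorphisms \eqref{eq: Poincare duality by Ertl--Yamada} follows formally, because the trivialisation $\scrE \otimes \scrE^{\vee} \to \scrO$ is compatible with Frobenius, monodromy and filtration by Remark \ref{Remark: dual filtration} and \cite[Proposition 3.6]{Yamada}, so the quasi-isomorphisms in \eqref{eq: Poincare duality by Ertl--Yamada} are induced from the very tensor pairings appearing in the cube.
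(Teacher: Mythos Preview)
Your proposal is correct and follows the same route as the paper, which dispatches the corollary in a single line: ``This is an immediate consequence of Proposition \ref{Proposition: cup products}.'' You have simply unpacked what the paper leaves implicit---the compatibility with the Hyodo--Kato and de Rham pairings is built into the cube \eqref{eq: big diagram defining the pairing} by construction, so your diagram chase is faithful to (if more detailed than) the paper's intent.
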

\begin{proof}
This is an immediate consequence of Proposition \ref{Proposition: cup products}.
\end{proof}

\begin{Remark}\label{Remark: pairing for general semistable reduction}
\normalfont 
For $P, Q\in \Poly$ are chosen so that $P*Q$ satisfies the conditions in Corollary \ref{Corollary: special syntomic P-cohomology groups} (ii), we then have a pairing \[
    H_{\syn, P}^i(\calX, \scrE, n) \times H_{\syn, Q, c}^{2d-i+1}(\calX, \scrE^{\vee}, -n+d+1) \rightarrow H_{\syn, P*Q, c}^{2d+1}(\calX, d+1)\cong K,
\] where the last isomorphism is due to  Corollary \ref{Corollary: special syntomic P-cohomology groups}.  However, we do not know whether this pairing is perfect (compare with the case in Corollary \ref{Corollary: perfect pairing for good reduction case}). In fact, it seems to the authors that it is too optimistic to expect this pairing to be perfect. 
\blackqed
\end{Remark}

Our next task is to establish a proper pushforward map for syntomic $P$-cohomology groups. In \cite{Besser-integral}, such a map is constructed by using the perfect pairing on finite polynomial cohomology groups. In our situation, we are not able to prove the pairing introduced above is a perfect pairing. However, one may still establish a proper pushforward map in our case by using the perfect pairings on Hyodo--Kato cohomology groups and the perfect pairing on the de Rham cohomology groups. We begin with the following lemma. 

\begin{Lemma}\label{Lemma: Poincare duality with filtration}
    Let $\calX$ be a proper smooth dagger space over $K$ with log structure given by a normal crossing divisor. We write $d = \dim \calX$. Let $\scrE$ be a unipotent vector bundle on $\calX$ with integrable connection. Moreover, assume $\scrE$ admits a filtration $\Fil^{\bullet}$ that satisfies Griffiths's transversality. Then, for any degree $i$, we have a natural isomorphism \[
        F^nH_{\log, \dR}^i(\calX, \scrE^{\vee}) \cong \left( \frac{H_{\log, \dR, c}^{2d-i}(\calX, \scrE^{\vee})}{F^{d-n+1} H_{\log\dR, c}^{2d-i}(\calX, \scrE^{\vee})} \right)^{\vee},
    \] where the Hodge filtration on $H_{\log, \dR, c}^{2d-i}(\calX, \scrE^{\vee})$ is induced from the dual filtration $\Fil^{\vee, \bullet}$ defined in Remark \ref{Remark: dual filtration}.
\end{Lemma}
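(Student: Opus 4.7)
The plan is to deduce this filtered Poincar\'e duality from the unfiltered statement recorded in \eqref{eq: Poincare duality by Ertl--Yamada} by tracking the Hodge filtrations through the cup product, and then matching dimensions on the $E_1$-page of the Hodge-to-de Rham spectral sequence. (We treat the right-hand side as coming from the natural pairing $\scrE^{\vee} \otimes \scrE \to \scrO_{\calX}$, so the compactly supported cohomology should be read with the ``Poincar\'e partner'' coefficient.)

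First, I would verify that the evaluation pairing $\scrE^{\vee} \otimes_{\scrO_{\calX}} \scrE \to \scrO_{\calX}$ is compatible with the filtrations. By the very definition of the dual filtration $\Fil^{\vee,\bullet}$ in Remark \ref{Remark: dual filtration}, the pairing sends $\Fil^{\vee, a}\scrE^{\vee} \otimes \Fil^{b}\scrE$ to zero as soon as $a + b \geq 1$, since elements of $\Fil^{\vee, a}\scrE^{\vee}$ are precisely those functionals vanishing on $\Fil^{1-a}\scrE \supset \Fil^{b}\scrE$. Griffiths's transversality on both $\scrE$ and $\scrE^{\vee}$ then promotes this to a morphism of filtered log de Rham complexes
\[
    F^a \Omega^{\bullet}_{\calX}(\scrE^{\vee}) \otimes F^b \Omega^{\bullet}_{\calX}(\scrE) \longrightarrow F^{a+b}\Omega^{\bullet}_{\calX}.
\]

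Second, since $\calX$ is proper smooth of dimension $d$, the top compactly supported log de Rham cohomology $H^{2d}_{\log\dR, c}(\calX) \cong K$ is concentrated in Hodge degree $d$, i.e.\ $F^{d+1}H^{2d}_{\log\dR, c}(\calX) = 0$. Consequently the Poincar\'e pairing supplied by \eqref{eq: Poincare duality by Ertl--Yamada} restricts to zero on $F^n H^i_{\log\dR}(\calX, \scrE^{\vee}) \otimes F^{d-n+1}H^{2d-i}_{\log\dR, c}(\calX, \scrE)$, producing a natural map
\[
    \rho \colon F^n H^i_{\log\dR}(\calX, \scrE^{\vee}) \longrightarrow \bigl(H^{2d-i}_{\log\dR, c}(\calX, \scrE)/F^{d-n+1}H^{2d-i}_{\log\dR, c}(\calX, \scrE)\bigr)^{\vee}.
\]
Since $\rho$ is the restriction of the isomorphism $H^i_{\log\dR}(\calX, \scrE^{\vee}) \xrightarrow{\sim} H^{2d-i}_{\log\dR, c}(\calX, \scrE)^{\vee}$ from \eqref{eq: Poincare duality by Ertl--Yamada}, it is automatically injective.

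Finally, to promote $\rho$ to an isomorphism I would compare dimensions via the Hodge-to-de Rham spectral sequences for $\scrE^{\vee}$ and for $\scrE$ with compact supports. Their $E_1$-terms are coherent cohomology groups of the $\scrO_{\calX}$-linear graded complexes $\mathrm{gr}^{\bullet}_F \Omega^{\bullet}_{\calX}(\scrE^{\vee})$ and $\mathrm{gr}^{\bullet}_F \Omega^{\bullet}_{\calX}(\scrE)$, well defined by Griffiths's transversality. Assuming degeneration at $E_1$, these two $E_1$-pages are pairwise Serre-dual on each bigraded piece (via $\Omega^d_{\calX/K}$ as the dualising sheaf), and summing over $p \geq n$ delivers
\[
    \dim_K F^n H^i_{\log\dR}(\calX, \scrE^{\vee}) = \dim_K H^{2d-i}_{\log\dR, c}(\calX, \scrE) - \dim_K F^{d-n+1}H^{2d-i}_{\log\dR, c}(\calX, \scrE),
\]
forcing $\rho$ to be an isomorphism. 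The main obstacle I expect is precisely this last input: degeneration at $E_1$ of the Hodge-to-de Rham spectral sequence with coefficients in an arbitrary filtered vector bundle with integrable connection satisfying Griffiths's transversality. For trivial coefficients it is classical in the proper smooth dagger-analytic setting (via a rigid GAGA argument reducing to the algebraic case); for the syntomic coefficients of Definition \ref{Definition: syntomic coefficient}, which are unipotent, one reduces inductively to the trivial-coefficient case by d\'evissage along the iterated extension structure and the long exact sequences in cohomology.
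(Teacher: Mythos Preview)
Your approach is correct and reaches the same conclusion, but by a genuinely different route from the paper. Both arguments begin identically, checking that the evaluation pairing respects the filtrations (the paper phrases this as computing the annihilator of $\Fil^n\scrE \otimes \Omega^s$ inside $\scrE^\vee \otimes \Omega^t$). The divergence is in how one finishes. The paper stays at the level of complexes: from the local annihilator computation it identifies the kernel of the dual projection $R\Gamma_{\log\dR,c}(\calX,\scrE^\vee)[-2d] \twoheadrightarrow R\Hom(F^n R\Gamma_{\log\dR}(\calX,\scrE),K)[-2d]$ with the subcomplex $F^{d-n+1}R\Gamma_{\log\dR,c}(\calX,\scrE^\vee)[-2d]$, obtaining a derived quasi-isomorphism, and only then takes cohomology. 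You instead descend to cohomology immediately, get injectivity of $\rho$ for free, and close with a dimension count via Serre duality on the $E_1$-page plus degeneration of the Hodge--to--de Rham spectral sequence. The paper's chain-level argument is slicker and delivers a stronger (filtered derived) statement; your argument has the virtue of isolating exactly which inputs are needed. Note that the paper's final ``taking cohomology'' step also tacitly uses $E_1$-degeneration to identify $H^*(F^n R\Gamma)$ with $F^n H^*$, so this hypothesis is common to both proofs---the paper has simply already recorded it in Remark~\ref{Remark: Hodge filtration on cohomology} via the citation to \cite{EV-vanishing}, which covers the general coefficient case and not only the unipotent one you flag.
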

\begin{proof}
    Recall the pairing \[
        R\Gamma_{\log\dR}(\calX, \scrE) \otimes R\Gamma_{\log\dR, c}(\calX, \scrE^{\vee}) \rightarrow R\Gamma_{\log\dR, c}(\calX)
    \]
    is induced by the pairing \[
        \scrE\otimes\Omega_{\calX/K}^{\log, s} \times \scrE^{\vee}\otimes\Omega_{\calX/K}^{\log, t} \rightarrow \Omega_{\calX/K}^{\log, s+t}, \quad (f\otimes \omega, g\otimes\theta)\mapsto \langle f, g \rangle \omega\wedge \theta,
    \] where $\langle -, - \rangle$ is the natural pairing on $\scrE \times \scrE^{\vee}$. 

    Suppose $f\otimes\omega$ is a section of $\Fil^n\scrE \otimes\Omega_{\calX/K}^{\log, s}$ and $g\otimes \theta$ is a section of $\scrE^{\vee}\otimes\Omega_{\calX/K}^{\log, s}$, then \begin{align*}
        & \langle f, g \rangle\omega\wedge \theta = 0\\
        \Leftrightarrow \quad & g\text{ is a section of $\Fil^{\vee, -i+1}\scrE^{\vee}$ or $\omega\wedge \theta = 0$}\\
        \Leftrightarrow \quad & g\text{ is a section of $\Fil^{\vee, j}\scrE^{\vee}$ for some $j\geq -i+1$ or $\omega\wedge \theta = 0$}.
    \end{align*}
    Consequently, write $F^nR\Gamma_{\log\dR}(\calX, \scrE) \hookrightarrow R\Gamma_{\log\dR}(\calX, \scrE)$ for the natural inclusion, its dual projection $R\Gamma_{\log\dR, c}(\calX, \scrE^{\vee}) \twoheadrightarrow R\Hom(F^nR\Gamma_{\log\dR}(\calX, \scrE), K)[-2d]$ has kernel  \[
        \ker \left(R\Gamma_{\log\dR, c}(\calX, \scrE^{\vee}) \twoheadrightarrow R\Hom(F^nR\Gamma_{\log\dR}(\calX, \scrE), K)[-2d]\right) = F^{d-n+1}R\Gamma_{\log\dR, c}(\calX, \scrE^{\vee}).
    \]
    In particular, \[
        R\Hom(F^nR\Gamma_{\log\dR}(\calX, \scrE), K)[-2d] \simeq \DR_c(\calX, \scrE^{\vee}, d-n+1).
    \]
    The assertion then follows from taking cohomology (Remark \ref{Remark: Hodge filtration on cohomology}). 
\end{proof}

\begin{Proposition}\label{Proposition: proper pushforward}
Suppose $\frakZ$ is a flat strictly semistable weak formal scheme over $\calO_{K}^{\log = \varpi}$ together with an exact closed immersion $\iota: \frakZ \hookrightarrow \frakX$ of codimension $i$. We write $Z_0$ for its special fibre and $\calZ$ for its dagger generic fibre. Assume also that $\calZ$ is smooth over $K$. Let $(\scrE, \Phi, \Fil^{\bullet})\in \Syn(X_0, \frakX, \calX)$ such that  $\Fil^{\bullet} = (\scrE = \Fil^r\scrE \supset \cdots \supset \Fil^{\ell}\scrE \supset \Fil^{\ell+1}\scrE = 0)$ for some $r, \ell\in \Z$ with $\ell\geq r$.  Then, for any $P\in \Poly$ and any $j\in \Z_{\geq 0}$, we have a pushforward map \[
    \iota_* : H_{\syn, P}^{j}(\calZ, \iota^*\scrE, n) \rightarrow H_{\syn, P}^{j+2i}(\calX, \scrE, n+i).
\]
\end{Proposition}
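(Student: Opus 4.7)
The plan is to produce the pushforward by assembling Gysin-type pushforwards on each of the cohomology theories appearing in the definition of $R\Gamma_{\syn, P}(\calX, \scrE, n+i)$, and then checking that the resulting pair of morphisms is compatible with the Frobenius, monodromy, filtration, and Hyodo--Kato structures on both sides so as to induce a morphism of the defining mapping squares.

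First I would produce, from the exact closed immersion $\iota: \frakZ \hookrightarrow \frakX$ of codimension $i$, pushforward morphisms
\[
    \iota_*^{\HK}: R\Gamma_{\HK}(Z_0, \iota^*(\scrE, \Phi)) \longrightarrow R\Gamma_{\HK}(X_0, (\scrE, \Phi))[2i]
\]
and
\[
    \iota_*^{\dR}: R\Gamma_{\log\dR}(\calZ, \iota^*\scrE) \longrightarrow R\Gamma_{\log\dR}(\calX, \scrE)[2i],
\]
by Poincar\'e-dualising the manifestly defined pullbacks $\iota^*$ on the compactly supported cohomologies of the dual coefficients $\iota^*\scrE^{\vee}$. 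The Hyodo--Kato pushforward would use the duality quasi-isomorphism of Ertl--Yamada recalled in \eqref{eq: Poincare duality by Ertl--Yamada}, while the log de Rham pushforward would use Lemma~\ref{Lemma: Poincare duality with filtration}. The degree shift $+2i$ is the codimension, while the Tate shift $+i$ built into the Hyodo--Kato duality is precisely what is absorbed by the twist shift $n\mapsto n+i$ in the target, so that $\iota_*^{\HK}$ intertwines $P(\Phi^e)$ and $P(q\Phi^e)$ on source and target in a manner compatible with the definition, and similarly is equivariant for the monodromy $N$ (inherited from the manifest $\Phi$- and $N$-equivariance of $\iota^*$ on compactly supported cohomology).

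Next I would verify that $\iota_*^{\dR}$ strictly shifts the Hodge filtration by $+i$, i.e.\ sends $F^n R\Gamma_{\log\dR}^j(\calZ, \iota^*\scrE)$ into $F^{n+i}R\Gamma_{\log\dR}^{j+2i}(\calX, \scrE)$. Using Lemma~\ref{Lemma: Poincare duality with filtration} this reduces to the tautological fact that $\iota^*$ on log de Rham complexes preserves the Griffiths filtration level; under the Poincar\'e duality identification, this translates into the codimension-$i$ shift on the dual filtration. I would then check that the Hyodo--Kato comparison $\Psi_{\HK}$ of Theorem~\ref{Theorem: Hyodo--Kato isomorphism} intertwines $\iota_*^{\HK}\otimes \id_K$ with $\iota_*^{\dR}$; this descends from the analogous statement for $\iota^*$ on compactly supported cohomology, which is a functoriality property of $\Psi_{\HK, c}$.

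Having assembled these compatibilities, the pair $(\iota_*^{\HK}, \iota_*^{\dR})$ would produce a morphism of the mapping squares defining $R\Gamma_{\syn, P}(\calZ, \iota^*\scrE, n)$ and $R\Gamma_{\syn, P}(\calX, \scrE, n+i)$, hence a morphism of their mapping fibres, and taking the $(j+2i)$-th cohomology would yield the desired pushforward. The main technical obstacle is realising $\iota_*^{\HK}$ at the level of complexes in a way that is visibly compatible with the mapping-square structure and with the monodromy, Frobenius, and $\Psi_{\HK}$; the log de Rham side is relatively clean since one can work directly with Griffiths-filtered de Rham complexes, but the Hyodo--Kato side requires unwinding the Kim--Hain model of \S\ref{subsection: HK theory with coeff} and verifying that the pullback on compactly supported Kim--Hain complexes has a well-behaved adjoint under the Poincar\'e pairing of \cite{EY-Poincare}, with the strict semistability hypotheses on both $\frakX$ and $\frakZ$ and the boundedness of $\Fil^{\bullet}$ entering to ensure that the duality theorems apply uniformly to $\scrE$ and $\iota^*\scrE$.
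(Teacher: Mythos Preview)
Your proposal is correct and follows essentially the same route as the paper: both construct the pushforward on the Hyodo--Kato and log de Rham pieces separately as the Poincar\'e-duality adjoint of the pullback $\iota^*$ on compactly supported cohomology with dual coefficients, verify the filtration shift via Lemma~\ref{Lemma: Poincare duality with filtration}, and observe compatibility with $\Phi$, $N$, and $\Psi_{\HK}$ to assemble the syntomic pushforward.

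The one noteworthy difference is that the paper works entirely at the level of cohomology groups---it defines $\iota_*$ on each $H_{\HK}^j$ and $H_{\log\dR}^j$ by the adjoint formula $\langle \iota_* x, y\rangle = \langle x, \iota^* y\rangle$ and then simply says ``everything can be put together'' to obtain the map on $H_{\syn,P}^{j+2i}$. You instead aim to realise $\iota_*^{\HK}$ and $\iota_*^{\dR}$ as morphisms of complexes inducing a map of the defining mapping squares, and you correctly flag this as the main technical obstacle. The paper sidesteps this issue: since the Hyodo--Kato and de Rham cohomologies are finite-dimensional and the description of $H_{\syn,P}^i$ in Proposition~\ref{Proposition: diagram to understand syntomic P-cohomology} is purely in terms of the cohomology groups of the constituents and the maps between them, cohomology-level pushforwards compatible with those maps suffice. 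So your concern about lifting to the Kim--Hain complex level is legitimate in principle but unnecessary for the statement as proved in the paper.
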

\begin{proof}
    The proof follows from the following three steps.\\

    \noindent \textbf{Step 1.} We claim that there is a pushforward map \[
        \iota_*: R\Gamma_{\HK}(Z_0, (\iota^*\scrE, \Phi))[2i](i) \rightarrow R\Gamma_{\HK}(X_0, (\scrE, \Phi)). 
    \]  
    Consider the pullback map \[
        \iota^* :R\Gamma_{\HK, c}(X_0, (\scrE^{\vee}, \Phi^{\vee})) \rightarrow R\Gamma_{\HK, c}(Z_0, (\iota^*\scrE^{\vee}, \Phi^{\vee})).
    \] By Poincaré duality (\eqref{eq: Poincare duality by Ertl--Yamada}), this is the same as the pullback map \[
        \iota^*: R\Hom(R\Gamma_{\HK}(X_0, (\scrE, \Phi)), K_0)[-2d](-d) \rightarrow R\Hom(R\Gamma_{\HK}(Z_0, (\iota^*\scrE, \Phi)), K_0)[-2d+2i](-d+i).
    \] 
    By applying $R\Hom(-, K_0)[2d](d)$, we then get \[
        \iota_*:R\Gamma_{\HK}(Z_0, (\iota^*\scrE, \Phi))[2i](i) \rightarrow R\Gamma_{\HK}(X_0, (\scrE, \Phi))     
    \]
    as desired. \\
    
    \noindent \textbf{Step 2.} We claim that there is a pushforward map \[
        \iota_*:\DR(\calZ, \iota^*\scrE, n)[2i] \rightarrow \DR(\calX, \scrE, n+i).
    \]
    This pushforward map is obtained similarly. Indeed, consider the pullback map \[
        \iota^* : F^mR\Gamma_{\log\dR, c}(\calX, \scrE^{\vee}) \rightarrow F^mR\Gamma_{\log\dR, c}(\calZ, \iota^*\scrE^{\vee})
    \] 
    for any $m\in \Z$. By the proof of Lemma \ref{Lemma: Poincare duality with filtration}, we know that this is the same as \[
        \iota^*: R\Hom(\DR(\calX, \scrE, d-m+1), K)[-2d] \rightarrow R\Hom(\DR(\calZ, \iota^*\scrE, d-i-m+1), K)[-2d+2i].
    \] 
    By applying $R\Hom(-, K)[2d]$, one obtains \[
        \DR(\calZ, \iota^*\scrE, d-i-m+1)[2i] \rightarrow \DR(\calX, \scrE, d-m+1).   
    \]
    Since $m$ is arbitrary, the desired pushforward map follows. \\
    
    \noindent \textbf{Step 3.} Combining the previous steps and the definition of $R\Gamma_{\syn, P}$, one obtains a map \[
        R\Gamma_{\syn, P}(\calZ, \iota^*\scrE, n)[2i] \rightarrow R\Gamma_{\syn, P}(\calX, \scrE, n+i).
    \]
    The desired pushforward map then follows from taking cohomology groups. 
\end{proof}

\subsection{Finite polynomial cohomology}\label{subsection: finite polynomial cohomology}

Let $X$ be a proper smooth scheme over $\calO_K$ with trivial log structure, which is of relative dimension $d$. We write $X_0$ (resp., $\frakX$; resp., $\calX$) for its special fibre (resp., $\varpi$-adic weak completion; resp., dagger generic fibre). For each $i\in \Z_{\geq 0}$, we define \[
    Z^i(X) := \left\{Z: \text{ smooth irreducible closed subscheme of $X$ of codimension $i$}\right\}.
\] For each $Z\in Z^i(X)$, we again use the similar notation and denote by $Z_0$ (resp., $\frakZ$; resp., $\calZ$) its special fibre (resp., $\varpi$-adic weak completion; resp., dagger generic fibre). For each such $Z$, we write $\iota_{\frakZ}: \frakZ \hookrightarrow \frakX$ for the induced closed immersion from $\frakZ$ to $\frakX$.

\begin{Lemma}\label{Lemma: syntomic coefficients and purity}
Let $(\scrE, \Phi, \Fil^{\bullet})\in \Syn(X_0, \frakX, \calX)$ and we suppose furthermore that $(\scrE, \Phi)$ is an unipotent object in $\FIsoc^{\dagger}(X_0/\calO_{K_0})$, i.e., $\scrE$ is an iterated extension of the trivial overconvergent $F$-isocrystal $\scrO_{X_0/\calO_{K_0}}$. Let $i,j\in \Z_{\geq 0}$ and let $Z\in Z^i(X)$. Then, the Frobenius action on $H_{\HK}^j(Z_0, (\scrE, \Phi)) := H_{\HK}^j(Z_0, (\iota_{\frakZ}^* \scrE, \iota_{\frakZ}^*\Phi))$ has eigenvalues of Weil weight $j$. 
\end{Lemma}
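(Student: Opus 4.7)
The special fibre $Z_0$ is smooth and proper over $k$ with trivial log structure (inherited from the trivial log structure on $X$). Consequently, by \cite[Proposition 8.9]{Yamada}, the Hyodo--Kato complex $R\Gamma_{\HK}(Z_0, (\iota_{\frakZ}^*\scrE, \iota_{\frakZ}^*\Phi))$ is canonically quasi-isomorphic, as a complex of $F$-modules over $K_0$, to the rigid cohomology complex $R\Gamma_{\rig}(Z_0, \iota_{\frakZ}^*\scrE)$ with its natural Frobenius, and the monodromy operator $N$ vanishes. Hence I may replace Hyodo--Kato by rigid cohomology throughout, and the problem becomes the purity statement that the Frobenius $\Phi^e$ on $H_{\rig}^j(Z_0, \iota_{\frakZ}^*\scrE)$ has eigenvalues of Weil weight $j$.

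Using the unipotence hypothesis, I fix a filtration
\[ 0 = \scrE_0 \subset \scrE_1 \subset \cdots \subset \scrE_n = \scrE \]
in $\FIsoc^{\dagger}(X_0/\calO_{K_0}^{\log=\emptyset})$ whose successive quotients are isomorphic to the trivial overconvergent $F$-isocrystal $\scrO_{X_0/\calO_{K_0}^{\log=\emptyset}}$. Since $\iota_{\frakZ}^*$ is an exact tensor functor sending the trivial $F$-isocrystal on $X_0$ to the trivial $F$-isocrystal on $Z_0$, pulling back yields a filtration on $\iota_{\frakZ}^*\scrE$ whose graded pieces are trivial. Each short exact sequence $0 \to \iota_{\frakZ}^*\scrE_k \to \iota_{\frakZ}^*\scrE_{k+1} \to \scrO_{Z_0/\calO_{K_0}^{\log=\emptyset}} \to 0$ then yields a Frobenius-equivariant long exact sequence in rigid cohomology.

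The base case of the resulting induction is the purity of Frobenius on $H_{\rig}^j(Z_0)$ for a smooth proper variety over a finite field, which is the classical theorem of Chiarellotto (with later refinements by Kedlaya); I would invoke this as a black box. For the inductive step, the long exact sequence presents $H_{\rig}^j(Z_0, \iota_{\frakZ}^*\scrE_{k+1})$ as an extension of a subspace of $H_{\rig}^j(Z_0)$ by a quotient of $H_{\rig}^j(Z_0, \iota_{\frakZ}^*\scrE_k)$. Since characteristic polynomials are multiplicative in short exact sequences of $K_0$-vector spaces equipped with $\Phi^e$-linear endomorphisms, the multiset of $\Phi^e$-eigenvalues on the middle term is the union of those on the sub and on the quotient, both of which consist of $q$-Weil numbers of weight $j$ by the inductive hypothesis and the base case. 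This closes the induction.

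The main obstacle is the base case itself, namely purity of Frobenius on the rigid cohomology of a smooth proper variety over a finite field, which is a deep result used as a black box. Beyond that, the argument is a routine devissage, whose only additional input is the verification that $\iota_{\frakZ}^*$ is exact on unipotent $F$-isocrystals and intertwines Frobenii---this is immediate from the realization description of overconvergent $F$-isocrystals recalled in \S \ref{subsection: overconvergent F-isocrystals}, since locally the pullback is computed by restricting sections on the tube of $Z_0$ inside the tube of $X_0$.
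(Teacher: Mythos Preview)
Your proposal is correct and takes essentially the same approach as the paper, which simply observes that unipotence reduces the statement to the trivial-coefficient case and then cites \cite{CLS}. Your version makes the devissage via the long exact sequence explicit, whereas the paper leaves that reduction to the reader.
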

\begin{proof}
Due to the assumption on $\scrE$, it suffices to show the statement for the trivial coefficient. However, this is a result in \cite{CLS}.
\end{proof}

\begin{Remark}\label{Remark: purity for rigid cohomology with unipotent F-isocrystal}
    \normalfont 
        Lemma \ref{Lemma: syntomic coefficients and purity} can easily be generalised, with little modification on the Weil weight, to those $(\scrE, \Phi)$ which are iterated extensions of $\scrO_{X_0/\calO_{K_0}}(r)$ (for some fixed $r\in \Z$), where $\scrO_{X_0/\calO_{K_0}}(r)$ is the Tate twist of $\scrO_{X_0/\calO_{K_0}}$ defined by multiplying $\varphi$ by $p^{-r}$. Such a purity statement breaks for general $(\scrE, \Phi)$.
    \blackqed
\end{Remark}  

Let \begin{align*}
    \Poly_j := & \text{ the submonoid of $\Poly$, consisting of polynomials of weight $j$,}\\
    & \text{ \emph{i.e.}, those having roots $\alpha$ with $|\alpha|_{\C} = q^{j/2}$ for every embedding of $\alpha$ in $\C$}.
\end{align*} For any $Z\in Z^i(X)$, any $n\in \Z$ and any $j\in \Z$, we write \begin{equation}\label{eq: finite-polynomial cohomology}
    H_{\fp}^j(\calZ, \scrE, n) := \varinjlim_{P\in \Poly_j} H_{\syn, P}^j(\calZ, \iota_{\frakZ}^*\scrE, n).
\end{equation} Then, we have the following immediate corollary. 

\begin{Corollary}\label{Corollary: short exact sequence of fp coh}
Let $(\scrE, \Phi, \Fil)\in \Syn(X_0, \frakX, \calX)$ such that $(\scrE, \Phi)$ is an unipotent object in $\FIsoc^{\normalfont \dagger}(X_0/\calO_{K_0})$. For any $Z\in Z^i(X)$, any $n\in \Z$ and any $j\in \Z$, we have a short exact sequence \[
    0\rightarrow \frac{H_{\dR}^{j-1}(\calZ, \scrE)}{F^n H_{\dR}^{j-1}(\calZ, \scrE)} \xrightarrow{i_{\fp}} H_{\fp}^j(\calZ, \scrE, n) \xrightarrow{\pr_{\fp}} F^n H_{\dR}^j(\calZ, \scrE) \rightarrow 0.
\] Here, $H_{\dR}^j(\calZ, \scrE) := H_{\dR}^j(\calZ, \iota_{\frakZ}^* \scrE)$.
\end{Corollary}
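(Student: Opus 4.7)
The plan is to pass to the filtered colimit $\varinjlim_{P \in \Poly_j}$ of the short exact sequences furnished by Corollary \ref{Corollary: exact sequence when X is of good reduction}. Applied to the smooth proper $\calO_K$-scheme $Z$ with the restricted coefficient $\iota_{\frakZ}^*\scrE$ (which remains unipotent and lies in $\Syn(Z_0, \frakZ, \calZ)$), that corollary gives, for each $P \in \Poly_j$, a short exact sequence
\[
0 \to \frac{H_{\dR}^{j-1}(\calZ, \scrE)}{P(\Phi^e)(F^n H_{\dR}^{j-1}(\calZ, \scrE))} \to H_{\syn, P}^j(\calZ, \iota_{\frakZ}^*\scrE, n) \to H_{\HK}^j(Z_0, (\scrE, \Phi))^{P(\Phi^e)=0}\otimes_{K_0} K \cap F^n H_{\dR}^j(\calZ, \scrE) \to 0.
\]
Since $\Poly_j$ is filtered under divisibility and filtered colimits of vector spaces are exact, it suffices to identify the colimits of the left- and right-hand terms with $H_{\dR}^{j-1}(\calZ, \scrE)/F^n H_{\dR}^{j-1}(\calZ, \scrE)$ and $F^n H_{\dR}^j(\calZ, \scrE)$ respectively.

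For the right-hand term I invoke Lemma \ref{Lemma: syntomic coefficients and purity}: thanks to unipotence, Frobenius acts on the finite-dimensional $K_0$-vector space $H_{\HK}^j(Z_0, (\scrE, \Phi))$ with eigenvalues that are Weil numbers of weight $j$. Taking the product of (sufficiently high powers of) the $\Q$-minimal polynomials of these eigenvalues and normalising the constant term to $1$ produces a polynomial $P_* \in \Poly_j \cap \Q[T]$ with $P_*(\Phi^e) = 0$ on $H_{\HK}^j(Z_0, (\scrE, \Phi))$. For any $P \in \Poly_j$ divisible by $P_*$, the kernel condition is vacuous, so the right-hand term stabilises to $H_{\HK}^j(Z_0, (\scrE, \Phi))\otimes_{K_0} K \cap F^n H_{\dR}^j(\calZ, \scrE) = F^n H_{\dR}^j(\calZ, \scrE)$, the equality being given by the Hyodo--Kato isomorphism $\Psi_{\HK}$. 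The transition maps of the system in this regime are simply the inclusions $H_{\HK}^j(Z_0)^{P(\Phi^e)=0} \hookrightarrow H_{\HK}^j(Z_0)^{PQ(\Phi^e)=0}$, which become identities once the kernel has exhausted $H_{\HK}^j(Z_0, (\scrE, \Phi))$.

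For the left-hand term I use a complementary weight estimate. If $P(T) = \prod_i (1-\alpha_i T) \in \Poly_j$, then $|\alpha_i|_{\C} = q^{-j/2}$; meanwhile, by Lemma \ref{Lemma: syntomic coefficients and purity} in degree $j-1$, Frobenius on $H_{\HK}^{j-1}(Z_0, (\scrE, \Phi))\otimes_{K_0} K \cong H_{\dR}^{j-1}(\calZ, \scrE)$ has eigenvalues of absolute value $q^{(j-1)/2}$, so the eigenvalues of $P(\Phi^e)$ are products $\prod_i (1-\alpha_i \beta)$ with $|\alpha_i \beta|_{\C} = q^{-1/2} < 1$, hence units. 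Therefore $P(\Phi^e)$ is an automorphism of $H_{\dR}^{j-1}(\calZ, \scrE)$, and $P(\Phi^e)^{-1}$ induces an isomorphism
\[
P(\Phi^e)^{-1}\colon \frac{H_{\dR}^{j-1}(\calZ, \scrE)}{P(\Phi^e)(F^n H_{\dR}^{j-1}(\calZ, \scrE))} \xrightarrow{\simeq} \frac{H_{\dR}^{j-1}(\calZ, \scrE)}{F^n H_{\dR}^{j-1}(\calZ, \scrE)}.
\]
Unwinding Proposition \ref{Proposition: change of polynomial} together with the identification $\coker \alpha \cong H_{\dR}^{j-1}/P(\Phi^e)(F^n H_{\dR}^{j-1})$ used in the proof of Corollary \ref{Corollary: exact sequence when X is of good reduction} (and using Frobenius-equivariance of $\Psi_{\HK}$), the transition from $P$ to $PQ$ on the left-hand term is multiplication by $Q(\Phi^e)$; under the isomorphisms $P(\Phi^e)^{-1}$ and $(PQ(\Phi^e))^{-1}$ this collapses to the identity on $H_{\dR}^{j-1}(\calZ, \scrE)/F^n H_{\dR}^{j-1}(\calZ, \scrE)$, so the colimit is this last space. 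The main technical subtlety is precisely this last compatibility check: the weight bookkeeping is automatic, but verifying that the transition on the left-hand term equals multiplication by $Q(\Phi^e)$ requires tracing the identification of $\coker\alpha$ with $H_{\dR}^{j-1}/P(\Phi^e)(F^n H_{\dR}^{j-1})$ through the change-of-polynomial morphism of mapping cones. Granting this, the two colimit computations assemble into the desired short exact sequence.
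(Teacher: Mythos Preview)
Your proposal is correct and follows essentially the same approach as the paper: start from the short exact sequence of Corollary \ref{Corollary: exact sequence when X is of good reduction} for each $P\in\Poly_j$, use Lemma \ref{Lemma: syntomic coefficients and purity} to stabilise the right-hand term at $F^nH_{\dR}^j$, and use invertibility of $P(\Phi^e)$ on $H_{\dR}^{j-1}$ (via the weight-$(j-1)$ purity) to identify the left-hand term with $H_{\dR}^{j-1}/F^n$. The paper's proof is terser---it simply asserts that $P(\Phi^e)$ gives the required isomorphism and passes to a cofinal set---whereas you additionally verify that the transition maps in the colimit become the identity after the renormalisation by $P(\Phi^e)^{-1}$; this extra care is warranted but not a different idea.
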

\begin{proof}
By Corollary \ref{Corollary: exact sequence when X is of good reduction}, we have a short exact sequence \[
    0 \rightarrow \frac{H_{\dR}^{j-1}(\calZ, \scrE)}{P(\Phi^e)\left( F^n H_{\dR}^{j-1}(\calZ, \scrE)\right)} \xrightarrow{i_{\fp}} H_{\syn, P}^i(\calZ, \scrE, n) \xrightarrow{\pr_{\fp}} H_{\HK}^{j}(Z_0, (\scrE, \Phi))^{P(\Phi^e)=0} \otimes_{K_0}K\cap F^nH_{\dR}^j(\calZ, \scrE) \rightarrow 0
\] for $P\in \Poly_j$. Note that $P(\Phi^e)$ gives an isomorphism \[
    P(\Phi^e): \frac{H_{\dR}^{j-1}(\calZ, \scrE)}{ F^n H_{\dR}^{j-1}(\calZ, \scrE)} \rightarrow \frac{H_{\dR}^{j-1}(\calZ, \scrE)}{P(\Phi^e)\left( F^n H_{\dR}^{j-1}(\calZ, \scrE)\right)}.
\] Moreover,  Lemma \ref{Lemma: syntomic coefficients and purity} implies that \[
    H_{\HK}^{j}(Z_0, (\scrE, \Phi))^{P(\Phi^e)=0} \otimes_{K_0}K\cap F^nH_{\dR}^j(\calZ, \scrE) = F^nH_{\dR}^j(\calZ, \scrE)
\] when $P$ runs through the cofinal set of polynomials divided by the characteristic polynomial of $\Phi^e$. One then easily conclude the result.   
\end{proof}

\begin{Corollary}\label{Corollary: perfect pairing for good reduction case}
Let $(\scrE, \Phi, \Fil^{\bullet})\in \Syn(X_0, \frakX, \calX)$ such that $(\scrE, \Phi)$ is an unipotent object in $\FIsoc^{\normalfont \dagger}(X_0/\calO_{K_0})$. 
Then, we have a perfect pairing \[
    H_{\fp}^i(\calX, \scrE, n)\times H_{\fp}^{2d-i+1}(\calX, \scrE^{\vee}, d-n+1) \rightarrow K. 
\]
\end{Corollary}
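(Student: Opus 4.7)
The plan is to reduce the statement to Poincar\'e duality for log de Rham cohomology (Lemma \ref{Lemma: Poincare duality with filtration}) via a five-lemma argument applied to the fundamental short exact sequence in Corollary \ref{Corollary: short exact sequence of fp coh}.

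First, I construct the pairing. For each $P\in \Poly_i$ and $Q\in\Poly_{2d-i+1}$, Corollary \ref{Corollary: pairing for finite polynomial cohomology} yields a natural pairing
\[
    R\Gamma_{\syn, P}(\calX,\scrE,n) \otimes R\Gamma_{\syn, Q, c}(\calX, \scrE^{\vee}, d-n+1) \rightarrow R\Gamma_{\syn, P*Q, c}(\calX, d+1).
\]
Since $\calX$ is proper smooth with trivial log structure, the compactly supported versions agree with the uncompactified ones. For cofinal choices of $P$ and $Q$ divisible by the characteristic polynomials of $\Phi^e$ on the relevant Hyodo--Kato groups, the product $P*Q$ lies in $\Poly_{2d+1}$ and satisfies the hypotheses of Corollary \ref{Corollary: special syntomic P-cohomology groups} (ii), so that $H_{\syn, P*Q}^{2d+1}(\calX, d+1) \cong K$. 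Passing to the colimit in \eqref{eq: finite-polynomial cohomology} yields the desired pairing
\[
    H_{\fp}^i(\calX,\scrE,n)\times H_{\fp}^{2d-i+1}(\calX,\scrE^{\vee},d-n+1) \rightarrow K.
\]

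Next, I would set up the key commutative diagram whose two rows are the short exact sequences from Corollary \ref{Corollary: short exact sequence of fp coh} for $(\scrE,n)$ and the $K$-linear dual of the sequence for $(\scrE^{\vee}, d-n+1)$:
\[
\begin{tikzcd}[column sep=small]
0 \ar[r] & \tfrac{H_{\dR}^{i-1}(\calX,\scrE)}{F^nH_{\dR}^{i-1}(\calX,\scrE)} \ar[r,"i_{\fp}"]\ar[d,"\alpha_1"] & H_{\fp}^i(\calX,\scrE,n) \ar[r,"\pr_{\fp}"]\ar[d,"\alpha_2"] & F^nH_{\dR}^i(\calX,\scrE) \ar[r]\ar[d,"\alpha_3"] & 0 \\
0 \ar[r] & \bigl(F^{d-n+1}H_{\dR}^{2d-i+1}(\calX,\scrE^{\vee})\bigr)^{\vee} \ar[r] & H_{\fp}^{2d-i+1}(\calX,\scrE^{\vee},d-n+1)^{\vee} \ar[r] & \Bigl(\tfrac{H_{\dR}^{2d-i}(\calX,\scrE^{\vee})}{F^{d-n+1}H_{\dR}^{2d-i}(\calX,\scrE^{\vee})}\Bigr)^{\vee} \ar[r] & 0.
\end{tikzcd}
\]
By Lemma \ref{Lemma: Poincare duality with filtration} (applied with trivial log structure, so compact-support and non-compact-support Poincar\'e duality coincide), the outer vertical maps $\alpha_1$ and $\alpha_3$ are isomorphisms induced by the de Rham Poincar\'e pairing. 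The middle map $\alpha_2$ is the one induced by the pairing constructed above. Once the diagram is shown to commute, the five-lemma gives that $\alpha_2$ is an isomorphism, which is exactly the claim of perfect pairing.

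The main obstacle is the compatibility check that the diagram actually commutes, i.e., that the pairing on finite polynomial cohomology recovers the de Rham Poincar\'e pairing on the graded pieces of the fundamental exact sequence. This boils down to unwinding the construction of the pairing in Proposition \ref{Proposition: cup products} at the level of the mapping squares: the morphism $i_{\fp}$ is induced by the de Rham component $\DR(\calX,\scrE,n)\to R\Gamma_{\syn,P}(\calX,\scrE,n)$ of the defining cone, and $\pr_{\fp}$ is induced by the projection onto the Hyodo--Kato component (intersected with $F^n$). Under the pairing, the de Rham component of $R\Gamma_{\syn,P}$ pairs with the de Rham component of $R\Gamma_{\syn,Q}$ through the Hodge-filtered de Rham cup product, landing in the de Rham component of $R\Gamma_{\syn,P*Q}$, while the trace $H_{\fp}^{2d+1}(\calX,d+1)\cong K$ of Corollary \ref{Corollary: special syntomic P-cohomology groups} (ii) factors through the de Rham trace up to the explicit identification in the proof of that corollary (via $(x,y)\mapsto x-P(\Phi^e)y$). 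The verification is then a diagram chase that mirrors the one carried out by Besser in \cite[Proposition 2.5]{Besser-integral} for the trivial coefficient, with the only new ingredient being the dual filtration $\Fil^{\vee,\bullet}$ introduced in Remark \ref{Remark: dual filtration}; Griffiths's transversality for this dual filtration is exactly what makes the de Rham Poincar\'e pairing respect the Hodge filtrations in the way dictated by Lemma \ref{Lemma: Poincare duality with filtration}.
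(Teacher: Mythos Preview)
Your proposal is correct and follows essentially the same approach as the paper: both use the fundamental short exact sequences of Corollary \ref{Corollary: short exact sequence of fp coh} for $(\scrE,n)$ and the $K$-dual of the sequence for $(\scrE^{\vee},d-n+1)$, identify the outer vertical maps via Lemma \ref{Lemma: Poincare duality with filtration}, and conclude by the short five-lemma. You are in fact more explicit than the paper about constructing the pairing as a colimit and about the commutativity check, which the paper simply asserts by citing Lemma \ref{Lemma: Poincare duality with filtration}.
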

\begin{proof}
    By Corollary \ref{Corollary: short exact sequence of fp coh}, we have two short exact sequences \[
        0 \rightarrow \frac{H_{\dR}^{i-1}(\calX, \scrE)}{F^n H_{\dR}^{i-1}(\calX, \scrE)} \rightarrow H_{\fp}^i(\calX, \scrE, n) \rightarrow F^n H_{\dR}^i(\calX, \scrE) \rightarrow 0
    \] and 
    \[
        0 \rightarrow \frac{H_{\dR}^{2d-i}(\calX, \scrE^{\vee})}{F^{d-n+1} H_{\dR}^{2d-i}(\calX, \scrE^{\vee})} \rightarrow H_{\fp}^{2d-i+1}(\calX, \scrE, n) \rightarrow F^{d-n+1} H_{\dR}^{2d-i+1}(\calX, \scrE) \rightarrow 0.
    \]
    Taking dual to the latter short exact sequence gives rise to the short exact sequence \[
        0 \rightarrow \left(F^{d-n+1} H_{\dR}^{2d-i+1}(\calX, \scrE)\right)^{\vee} \rightarrow (H_{\fp}^{2d-i+1}(\calX, \scrE, n))^{\vee} \rightarrow \left(\frac{H_{\dR}^{2d-i}(\calX, \scrE^{\vee})}{F^{d-n+1} H_{\dR}^{2d-i}(\calX, \scrE^{\vee})}\right)^{\vee} \rightarrow 0.
    \] However, by Lemma \ref{Lemma: Poincare duality with filtration}, we have a commutative diagram \[
    \begin{tikzcd}[column sep = tiny]
        0 \arrow[r] & \dfrac{H_{\dR}^{i-1}(\calX, \scrE)}{F^n H_{\dR}^{i-1}(\calX, \scrE)} \arrow[r]\arrow[d, "\cong"] & H_{\fp}^i(\calX, \scrE, n)\arrow[r] & F^nH_{\dR}^i(\calX, \scrE) \arrow[r]\arrow[d, "\cong"] & 0\\
        0\arrow[r] & \left(F^{d-n+1} H_{\dR}^{2d-i+1}(\calX, \scrE^{\vee})\right)^{\vee}\arrow[r] & H_{\fp}^{2d-i+1}(\calX, \scrE^{\vee}, d-n+1)^{\vee}\arrow[r] & \left(\dfrac{H_{\dR}^{2d-i}(\calX, \scrE^{\vee})}{ F^{d-n+1} H_{\dR}^{2d-i}(\calX, \scrE^{\vee}) }\right)^{\vee}\arrow[r] & 0 
    \end{tikzcd},
\] which induces an isomorphism on the middle terms. The assertion then follows. 
\end{proof}
\section{The Abel--Jacobi map and \texorpdfstring{$p$}{p}-adic integration}\label{section: AJ map and integration}

The main goal of this section is to provide a construction of the Abel--Jacobi map.
Following Besser's strategy in \cite{Besser-integral}, we first show that finite polynomial cohomology with coefficients can be viewed as a generalisation of Coleman's integration theory for modules with connections in \cite[Section~10]{C-pShimura}.
Then we construct the Abel--Jacobi map in \S \ref{subsection: AJ map}.
As mentioned in the introduction, the idea is that the Abel--Jacobi map should admit an interpretation as certain kind of integration.
Since we wish to utilise the Coleman integration theory, we will restrict ourselves to varieties with good reduction in the aforementioned discussions. 
In the final subsection \S \ref{subsection: towards a semistable theory}, we shall briefly discuss the case for varieties with semistable reduction.

\subsection{A connection with Coleman's \texorpdfstring{$p$}{p}-adic integration}\label{subsection: Coleman's p-adic integration}
Suppose in this subsection that $X$ is a proper smooth curve over $\calO_K$ with trivial log structure. We also assume that $K = K_0$ for the convenience of the exposition. Suppose moreover that $(\scrE, \Phi, \Fil^{\bullet})\in \Syn(X_0, \frakX, \calX)$ is of trivial filtration, \emph{i.e.}, \[
    \Fil^i \scrE = \left\{\begin{array}{ll}
        \scrE, & i\leq 0 \\
        0, & i>0
    \end{array}\right.
\] 

For any affine open $Y \subset X$, we again similarly write $Y_0$ (resp., $\frakY$; resp., $\calY$) for its special fibre (resp., $\varpi$-adic weak completion; resp., dagger generic fibre). For any polynomial $P\in \Poly$, we similarly consider \[
    R\Gamma_{\syn, P}(\calY, \scrE, n) := \Cone\left(\scalemath{0.8}{R\Gamma_{\HK}(Y_0/K, (\scrE, \Phi))} \xrightarrow{P(\Phi^e)\oplus \Psi_{\HK}} \scalemath{0.8}{R\Gamma_{\HK}(Y_0/K, (\scrE, \Phi)) \oplus R\Gamma_{\dR}(\calY, \scrE)/F^nR\Gamma_{\dR}(\calY, \scrE)}\right)[-1].
\] Note that, since $X$ is smooth over $\calO_K$, we do not have the monodromy operator; also, because of the assumption $K = K_0$, the Hyodo--Kato map $\Psi_{\HK}$ is, in fact, the identity map. Immediately from the construction, we see that there is a restriction map \[
    R\Gamma_{\syn, P}(\calX, \scrE, n) \rightarrow R\Gamma_{\syn, P}(\calY, \scrE, n), \quad x \mapsto x|_{\calY}.
\] Our main results of this subsection reads as follows. 

\begin{Lemma}\label{Lemma: syntomic P-cohomology for affine piece}
Let $(\scrE, \Phi, \Fil^{\bullet})$ be as above. Then, for any affine open $Y \subset X$, we have an isomorphism \[
    H_{\syn, P}^1(\calY, \scrE, 1) \cong \left\{(f, \omega)\in \scrE(\calY) \oplus \left ( \scrE(\calY) \otimes_{\scrO_{\calY}(\calY)} \Omega_{\calY/K}^1 \right ): \nabla f = P(\Phi^e)\omega\right\}.
\]
\end{Lemma}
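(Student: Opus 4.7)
Since $Y$ is affine, $\calY$ is an affinoid dagger space, so both the Hyodo--Kato and the de Rham cohomology of $\calY$ with coefficients in $\scrE$ are computed by the honest two-term complex
\[
    C^{\bullet} := [\scrE(\calY) \xrightarrow{\nabla} \scrE(\calY)\otimes_{\scrO_{\calY}(\calY)} \Omega^{1}_{\calY/K}(\calY)]
\]
concentrated in degrees $0$ and $1$. Under the assumption $K=K_0$ and trivial log structure, the comparison map $\Psi_{\HK}$ is literally the identity of $C^{\bullet}$. My first step will be to make the target of the defining cone explicit by computing $\DR(\calY, \scrE, 1)$. The hypothesis that $\Fil^{\bullet}$ is concentrated in degree zero (that is, $\Fil^0\scrE = \scrE$ and $\Fil^1\scrE = 0$) forces
\[
    F^1 R\Gamma_{\dR}(\calY, \scrE) \;=\; [\,0 \longrightarrow \scrE(\calY)\otimes \Omega^{1}_{\calY/K}(\calY)\,],
\]
so that $\DR(\calY, \scrE, 1)$ is the complex $[\scrE(\calY) \xrightarrow{0} 0]$.

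With these identifications in hand, I will compute the mapping fibre in Definition~\ref{Definition: syntomic P-cohomology with syntomic coefficients; trivial log structure} term by term. Writing $A := \scrE(\calY)$ and $B := \scrE(\calY)\otimes \Omega^{1}_{\calY/K}(\calY)$, the target complex of the cone is $C^{\bullet} \oplus \DR(\calY, \scrE, 1)$, which has $A\oplus A$ in degree $0$ and $B$ in degree $1$, with differential $(y, z) \mapsto (\nabla y, 0)$. The structural map in degree $0$ is $a \mapsto (P(\Phi^e)a, a)$ and in degree $1$ is $b \mapsto P(\Phi^e)b$. Consequently $R\Gamma_{\syn, P}(\calY, \scrE, 1)$ is concentrated in degrees $0, 1, 2$ with components $A$, $B\oplus A \oplus A$, and $B$, with the degree-$1$ differential sending $(b, y, z)$ to $P(\Phi^e)b + \nabla y \in B$ and the degree-$0$ differential sending $a$ to $(-\nabla a, P(\Phi^e)a, a)$.

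From here the proof reduces to a cocycle/coboundary analysis in degree $1$. Cocycles are triples $(b, y, z) \in B \oplus A \oplus A$ satisfying $\nabla y = -P(\Phi^e)b$ (with $z$ unconstrained), and coboundaries are precisely the triples $(-\nabla a, P(\Phi^e)a, a)$ for $a\in A$. The key move will be to subtract the coboundary coming from $a = z$ from an arbitrary cocycle $(b, y, z)$: this kills the third coordinate and leaves a representative of the form $(b', y', 0)$ with $\nabla y' = -P(\Phi^e)b'$. I then check that no nontrivial coboundary preserves this normal form, so the assignment $(b, y, z) \mapsto (y - P(\Phi^e)z,\ -(b+\nabla z))$ descends to a bijection onto the set $\{(f, \omega) \in A \oplus B : \nabla f = P(\Phi^e)\omega\}$, yielding the claimed isomorphism.

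The computation is essentially bookkeeping and the only real point to be careful about is tracking signs through the mapping-fibre differential; no deep input is needed beyond the fact that $\Psi_{\HK}$ is the identity on $C^{\bullet}$ and the explicit shape of the Hodge filtration under the triviality assumption on $\Fil^{\bullet}$.
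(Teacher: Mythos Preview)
Your proposal is correct and follows essentially the same route as the paper: both identify the three-term complex $A \to B\oplus A\oplus A \to B$ with the same differentials, describe cocycles and coboundaries identically, and produce the isomorphism via the map $(b,y,z)\mapsto (y-P(\Phi^e)z,\,-(b+\nabla z))$. Your normalization step (subtracting the coboundary with $a=z$) makes explicit what the paper leaves to the reader, but the argument is the same.
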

\begin{proof}
By the assumption on the filtration of $\scrE$ and the construction, the complex defining $R\Gamma_{\syn, P}(\calY, \scrE, 1)$ is given by \[
    \scrE(\calY) \xrightarrow{x \mapsto (-\nabla x, P(\Phi^e)x, x)} \left(\scrE(\calY) \otimes_{\scrO_{\calY}(\calY)}\Omega_{\calY/K}^1\right) \oplus \scrE(\calY) \oplus \scrE(\calY) \xrightarrow{ (x, y, z)\mapsto P(\Phi^e)x + \nabla y} \scrE(\calY) \otimes_{\scrO_{\calY}(\calY)}\Omega_{\calY/K}^1.
\] Therefore, \[
    H_{\syn, P}^1(\calY, \scrE, 1) = \frac{\left\{(x, y, z)\in \left(\scrE(\calY) \otimes_{\scrO_{\calY}(\calY)}\Omega_{\calY/K}^1\right) \oplus \scrE(\calY) \oplus \scrE(\calY): P(\Phi^e)x+\nabla y = 0 \right\}}{\left\{ (-\nabla x, P(\Phi^e)x, x): x\in \scrE(\calY)\right\}}.
\] However, one observes that there is an isomorphism \[
    H_{\syn, P}^1(\calY, \scrE, 1) \rightarrow \left\{(f, \omega)\in \scrE(\calY) \oplus \scrE(\calY) \otimes_{\scrO_{\calY}(\calY)} \Omega_{\calY/K}^1: \nabla f = \omega\right\}, \quad (x, y, z)\mapsto (-x - \nabla z, y - P(\Phi^e)z),
\] the assertion then follows. 
\end{proof}

To link our theory with Coleman integration, we have to introduce more terminologies. Let $\widehat{\calX}$ be the $\varpi$-adic completion of $\calX$, which is viewed as a rigid analytic space. Recall that there is a specialisation map \[
    \mathrm{sp}: |\widehat{\calX}| \rightarrow |X_0|, 
\] locally given by \[
    |\Spa(A, A^{\circ})| \rightarrow |\Spec A^{\circ}/\varpi|, \quad |\cdot|_x \mapsto \frakp_{x} := \{a\in A^{\circ}/\varpi: |\widetilde{a}|_x<1\text{ for some }\widetilde{a}\in A^{\circ}\text{ s.t. } \widetilde{a} \equiv a \mod \varpi\}.
\]Then, for any affine open $U_0 \subset X_0$, we define the tube \[
    ]U_0[_{\widehat{\calX}} := \text{ the interior of } \mathrm{sp}^{-1}(U_0).
\]
Note that any affine open $U_0\subset X_0$ is of the form  $U_0 = X_0 \smallsetminus \{ x_1, \ldots, x_t\}$ where $x_i$ are points in $X_0$. Thus,
$$ ]U_0[_{\widehat{\calX}} =\widehat{\calX} \smallsetminus \left(\bigcup_{j=1}^t D(\widetilde{x}_j, 1)\right) $$
where $\widetilde{x}_j \in \widehat{\calX}$ is a lift of $x_j$ and $D(\widetilde{x}_j, 1)$ is the disc of radius $1$ centred at $\widetilde{x}_j$, \emph{i.e.}, $D(\widetilde{x}_j, 1) = ]x_j[_{\widehat{\calX}}$.
An open subset $\calW \subset \widehat{\calX}$ of the form
$$\widehat{\calX} \smallsetminus \left( \bigcup_{j=1}^t D(\widetilde{x}_j, r_j)\right) $$
with $0 < r_j <1$ is called a \textit{\textbf{strict neighbourhood}} of $]U_0[_{\widehat{\calX}}$ (in $\widehat{\calX}$).

Moreover, if we start with $\scrE\in \Isoc^{\normalfont \dagger}(X_0/\calO_K)$, then its $\varpi$-adic completion $\widehat{\scrE}$ defines a coherent sheaf on $\widehat{\calX}$ with integrable connection \[
    \widehat{\scrE} \xrightarrow{\nabla} \widehat{\scrE} \otimes_{\scrO_{\widehat{\calX}}} \Omega_{\widehat{\calX}/K}^1.
\] One sees immediately that there is a natural map \[
    H_{\dR}^i(\calX, \scrE) \rightarrow H_{\dR}^i(\widehat{\calX}, \widehat{\scrE})
\] given by the $\varpi$-adic completion. However, since both spaces are finite-dimensional $K$-vector spaces, this morphism is, in fact, an isomorphism. 

\begin{Corollary}\label{Corollary: relation with Coleman integration}
Let $(\scrE, \Phi, \Fil^{\bullet})\in \Syn(X_0, \frakX, \calX)$ be as above and let $P\in \Poly$ be a polynomial such that $P(\Phi^e)$ annihilates $H_{\HK}^1(\calX, (\scrE, \Phi))$ and is an isomorphism on $H_{\HK}^0(\calX, (\scrE, \Phi))$.
So we have a projection \[
    pr_{\fp}: H_{\syn, P}^1(\calX, \scrE, 1) \twoheadrightarrow F^1H_{\dR}^1(\calX, \scrE) = \Gamma(\calX, \scrE\otimes_{\scrO_{\calX}}\Omega_{\calX/K}^1).
\]
Then, for any $\omega\in \Gamma(\calX, \scrE\otimes_{\scrO_{\calX}}\Omega_{\calX/K}^1)$, a lift $\widetilde{\omega}\in H_{\syn, P}^1(\calX, \scrE, 1)$ of $\omega$ can be viewed as an Coleman integral of $\omega$ in the following sense:

Let $Y_0 \subset X_0$ be an affine open subscheme.
Then the pullback of $\widetilde{\omega}$ in $H_{\syn, P}^1(]Y_0[_{\widehat{\calX}}, \scrE, 1)$ corresponds uniquely to a locally analytic section $F_\omega$ (see \cite[\S 10]{C-pShimura})
of ${\scrE}$ over some strict neighbourhood $\calW$ of $]Y_0[_{\widehat{\calX}}$ such that $\nabla F_{\omega} = \omega|_{\calW}$.
Moreover, if $\widetilde{\omega}'$ is another lift of $\omega$ that corresponds to the integral $F_{\omega}'$ on $\calW$, then 
\[
    F_{\omega} - F_{\omega}' \in {\scrE}(\calW)^{\nabla = 0}.
\]
\end{Corollary}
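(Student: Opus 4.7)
The plan is to unpack the class $\widetilde{\omega}$ explicitly via Lemma \ref{Lemma: syntomic P-cohomology for affine piece} and then to invoke Coleman's integration theorem for overconvergent $F$-isocrystals (\cite[\S 10]{C-pShimura}) to exhibit the resulting algebraic section as a Coleman primitive of $\omega$.

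First, I would extend Lemma \ref{Lemma: syntomic P-cohomology for affine piece} to the tube $]Y_0[_{\widehat{\calX}}$. Since $]Y_0[_{\widehat{\calX}}$ is exhausted by strict neighbourhoods $\calW$, each of which is the dagger generic fibre of the weak completion of an affine open of $X$ slightly larger than $Y$, applying the lemma and passing to the colimit yields
\[
    H_{\syn, P}^1(]Y_0[_{\widehat{\calX}}, \scrE, 1) \;\cong\; \varinjlim_{\calW}\bigl\{(f,\omega)\in \scrE(\calW)\oplus(\scrE\otimes_{\scrO_{\calW}}\Omega^1_{\calW/K})(\calW) : \nabla f = P(\Phi^e)\omega\bigr\}.
\]
Consequently, the pullback of $\widetilde{\omega}$ to the tube is represented, on some sufficiently small $\calW$, by a pair $(f, \omega|_{\calW})$ with $f\in\scrE(\calW)$ satisfying $\nabla f = P(\Phi^e)\omega|_{\calW}$.

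Second, I would invoke Coleman's integration theorem for modules with connections, which, possibly after shrinking $\calW$, produces a locally analytic section $F_{\omega}$ of $\scrE$ on $\calW$ satisfying $\nabla F_{\omega} = \omega|_{\calW}$, unique up to horizontal sections. The identification with $f$ is then routine: both $P(\Phi^e)F_{\omega}$ and $f$ are locally analytic sections on $\calW$ with common $\nabla$-image $P(\Phi^e)\omega|_{\calW}$, so $f - P(\Phi^e)F_{\omega}$ is horizontal on $\calW$. This recovers $F_{\omega}$ from $f$ up to horizontal sections and thus establishes the correspondence $\widetilde{\omega}\mapsto F_{\omega}$.

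For the last statement, any two lifts $\widetilde{\omega},\widetilde{\omega}'$ of $\omega$ differ by an element of $\ker \pr_{\fp}$. By Corollary \ref{Corollary: exact sequence when X is of good reduction} this kernel equals $H_{\dR}^0(\calX,\scrE)/P(\Phi^e)(F^1 H_{\dR}^0(\calX,\scrE))$; since $\Fil^{\bullet}$ is trivial we have $F^1 H_{\dR}^0(\calX,\scrE)=0$, so the kernel reduces to $H_{\dR}^0(\calX,\scrE) = \Gamma(\calX,\scrE)^{\nabla}$. The image of such a globally horizontal section under the correspondence contributes only a horizontal section on $\calW$, yielding $F_{\omega} - F_{\omega}'\in\scrE(\calW)^{\nabla=0}$. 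The main obstacle is the second step: one must verify that Coleman's theorem, which natively produces a primitive $F_{\omega}$ of $\omega$ modulo horizontal sections, is genuinely compatible with the $P(\Phi^e)$-structure encoded in the pair $(f,\omega|_{\calW})$, so that the ambiguity inherent in Coleman's integration exactly matches the ambiguity $\ker\pr_{\fp}$ in the choice of lift.
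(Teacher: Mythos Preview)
Your proposal is correct and follows essentially the same approach as the paper: restrict $\widetilde{\omega}$ to the affine piece, use Lemma \ref{Lemma: syntomic P-cohomology for affine piece} to unpack it as a pair $(f,\omega)$ with $\nabla f = P(\Phi^e)\omega$, and then invoke Coleman's integration theorem \cite[Theorem 10.1]{C-pShimura} to produce $F_{\omega}$. The paper's proof is terser---it applies the lemma directly on the dagger affinoid $\calY$ rather than passing explicitly to a colimit over strict neighbourhoods of the tube, but since sections of a dagger algebra are by definition overconvergent, this amounts to the same thing; your additional verification that the ambiguity in Coleman's primitive matches $\ker\pr_{\fp}$ is implicit in the paper but made explicit in your argument.
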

\begin{proof}
By Lemma \ref{Lemma: syntomic P-cohomology for affine piece}, we see that \[
    \widetilde{\omega}|_{\calY} = (f, \omega) \text{ with }\nabla f = P(\Phi^e)\omega.
\] Then, by applying \cite[Theorem 10.1]{C-pShimura}, we have the desired $F_{\omega}$.
\end{proof}

\begin{Remark}
\normalfont
Careful readers will find that there is a condition of $\scrE$ having \emph{regular singular annuli} in \cite[Theorem 10.1]{C-pShimura}. We remark that, since we start with $(\scrE, \Phi, \Fil^{\bullet})\in \Syn(X_0, \frakX, \calX)$, $\scrE$ is, in particular, unipotent. This then implies that Coleman's condition of $\scrE$ being regular singular is fulfilled. 
\blackqed
\end{Remark}






\subsection{The Abel--Jacobi map}\label{subsection: AJ map}

Let $X$ be a proper smooth scheme over $\calO_K$ with trivial log structure, which is of relative dimension $d$. We fix $(\scrE, \Phi, \Fil^{\bullet})\in \Syn(X_0, \frakX, \calX)$ such that \begin{itemize}
    \item $\Fil^{\bullet} = (\scrE = \Fil^0 \scrE \supset \cdots \Fil^{\ell}\scrE \supset \Fil^{\ell+1}\scrE = 0)$;\footnote{ The assumption on the filtration is unnecessary and can always be achieved by shifting the filtration on $\scrE$. We make this assumption to simplify the notations in the latter computations. }
    \item $(\scrE, \Phi)$ is unipotent object in $\FIsoc^{\dagger}(X_0/\calO_{K_0})$.\footnote{ This condition can be relaxed to those $(\scrE, \Phi)$ that is a Tate twist of a unipotent object of the category of overconvergent $F$-isocrystals. Consequently, the definition of the finite polynomial cohomology will have to be slightly modified. See Remark \ref{Remark: purity for rigid cohomology with unipotent F-isocrystal}.}
\end{itemize}  Consequently, for any $i\in \Z_{\geq 0}$, $Z\in Z^i(X)$, we have \[
    H_{\fp}^0(\calZ, \scrE, 0) \cong H_{\dR}^0(\calZ, \scrE).
\] 
We shall then always identify $H_{\fp}^0(\calZ, \scrE, 0)$ with $H_{\dR}^0(\calZ, \scrE)$ via this isomorphism.

Inspired by the classical definition of cycle class groups, we have the following definition. 

\begin{Definition}\label{Definition: de Rham cycle class group w.r.t E}
Let $(\scrE, \Phi, \Fil)$ be as above. Let $i \in \Z_{\geq 0}$. Then, the \textbf{(de Rham) cycle class group of codimension $i$ with respect to $\scrE$} is defined to be \[
    A^i(X, \scrE) := \bigoplus_{Z\in Z^i(X)} H_{\dR}^0(\calZ, \scrE).
\]
An element in $A^i(X, \scrE)$ will be written in the form $(\theta_Z)_Z$ or $\sum_Z \theta_Z \cdot Z$.
\end{Definition}

Note that, by applying Proposition \ref{Proposition: proper pushforward}, we have a pushforward map \[
    \iota_{\frakZ,*} : H_{\fp}^0(\calZ, \scrE) \rightarrow H_{\fp}^{2i}(\calX, \scrE, i).
\] It then induces a morphism \[
    \eta_{\fp}: A^i(X, \scrE) \rightarrow H_{\fp}^{2i}(\calX, \scrE, i), \quad (\theta_Z)_{Z}\mapsto \sum_{Z\in Z^i(X)} \iota_{\frakZ, *}\theta_Z.
\] Composing the map with the projection in the short exact sequence \[
    0 \rightarrow \frac{H_{\dR}^{2i-1}(\calX, \scrE)}{F^i H_{\dR}^{2i-1}(\calX, \scrE)} \xrightarrow{i_{\fp}} H_{\fp}^{2i}(\calX, \scrE, i) \xrightarrow{\pr_{\fp}} F^i H_{\dR}^{2i}(\calX, \scrE) \rightarrow 0,
\] we obtain a morphism \[
    \pr_{\fp}\circ \eta_{\fp}: A^i(X, \scrE) \rightarrow F^i H_{\dR}^{2i}(\calX, \scrE), \quad (\theta_Z)_Z \mapsto \sum_{Z\in Z^i(X)} \pr_{\fp}\left(\iota_{\frakZ, *}\theta_Z\right).
\]

\begin{Definition}\label{Definition: null-homologous classes}
Let $(\scrE, \Phi, \Fil^{\bullet})$ and $i$ be as above. A class $(\theta_Z)_Z\in A^i(X, \scrE)$ is said to be \textbf{null-homologous} (or \textbf{de Rham homologous to zero}) if $(\theta_Z)_Z\in \ker \pr_{\fp}\circ\eta_{\fp}$. Consequently, we define \[
    A^i(X, \scrE)_0 := \left\{(\theta_Z)_Z \in A^i(X, \scrE): (\theta_Z)_Z \text{ is null-homologous} \right\} = \ker\pr_{\fp}\circ \eta_{\fp}.
\]
\end{Definition}

\begin{Theorem}\label{Theorem: Abel--Jacobi map}
Let $(\scrE, \Phi, \Fil^{\bullet})$ and $i$ be as above. Then, there exists a natural morphism, the \textbf{finite polynomial Abel--Jacobi map} for $(\scrE, \Phi, \Fil^{\bullet})$, \[
    \AJ_{\fp} = \AJ_{\fp, \scrE}: A^i(X, \scrE)_0 \rightarrow \left(F^{d-i+1} H_{\dR}^{2d-2i+1}(\calX, \scrE^{\vee})\right)^{\vee}
\] such that for any $(\theta_Z)_Z\in A^i(X, \scrE)_0$ and any $\omega\in F^{d-i+1} H_{\dR}^{2d-2i+1}(\calX, \scrE^{\vee})$, \[
    \AJ_{\fp}((\theta_Z)_Z)(\omega) = \left(\int_Z \omega\right)(\theta_Z).
\] Here, \[
    \left(\int_Z \omega\right)(\theta_Z) := \langle \iota_{\frakZ}^*\widetilde{\omega}, \theta_Z \rangle_{\fp},
\] where $\widetilde{\omega}$ is a lift of $\omega$ in $H_{\fp}^{2d-2i+1}(\calX, \scrE^{\vee}, d-i+1)$ and $\langle \cdot, \cdot\rangle_{\fp}$ is the pairing induced by the perfect pairing in Corollary \ref{Corollary: perfect pairing for good reduction case}.
\end{Theorem}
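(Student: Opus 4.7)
The plan is to build $\AJ_{\fp}$ as the composition of three natural maps: the cycle class map $\eta_{\fp}$, the splitting of the fundamental short exact sequence on null-homologous classes, and the Poincaré duality isomorphism identifying the target.

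First, for $(\theta_Z)_Z \in A^i(X, \scrE)_0$, the very definition of null-homology means $\pr_{\fp} \circ \eta_{\fp}((\theta_Z)_Z)=0$. Combined with the short exact sequence of Corollary \ref{Corollary: short exact sequence of fp coh} applied to $\calX$ in degree $2i$ and twist $i$,
$$0 \to \frac{H_{\dR}^{2i-1}(\calX, \scrE)}{F^i H_{\dR}^{2i-1}(\calX, \scrE)} \xrightarrow{i_{\fp}} H_{\fp}^{2i}(\calX, \scrE, i) \xrightarrow{\pr_{\fp}} F^i H_{\dR}^{2i}(\calX, \scrE) \to 0,$$
this produces a unique preimage $\widetilde{\AJ}((\theta_Z)_Z)$ of $\eta_{\fp}((\theta_Z)_Z)$ in the left-hand quotient. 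Second, by Poincaré duality with compatible Hodge filtrations (Lemma \ref{Lemma: Poincare duality with filtration} applied with roles of $\scrE$ and $\scrE^{\vee}$ swapped and $n = d-i+1$), there is a canonical isomorphism
$$\frac{H_{\dR}^{2i-1}(\calX, \scrE)}{F^i H_{\dR}^{2i-1}(\calX, \scrE)} \xrightarrow{\sim} \bigl(F^{d-i+1} H_{\dR}^{2d-2i+1}(\calX, \scrE^{\vee})\bigr)^{\vee}.$$
Composing these two maps defines $\AJ_{\fp}$.

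To verify the formula, fix $\omega \in F^{d-i+1} H_{\dR}^{2d-2i+1}(\calX, \scrE^{\vee})$ and pick any lift $\widetilde{\omega} \in H_{\fp}^{2d-2i+1}(\calX, \scrE^{\vee}, d-i+1)$. The perfect pairing $\langle\cdot,\cdot\rangle_{\fp}$ of Corollary \ref{Corollary: perfect pairing for good reduction case} is, by Corollary \ref{Corollary: pairing for finite polynomial cohomology}, compatible with the underlying de Rham Poincaré pairing used in the identification of Step 2. Evaluating $\AJ_{\fp}((\theta_Z)_Z)$ at $\omega$ therefore computes as
$$\AJ_{\fp}((\theta_Z)_Z)(\omega) \;=\; \langle \eta_{\fp}((\theta_Z)_Z), \widetilde{\omega}\rangle_{\fp,\calX} \;=\; \sum_{Z} \langle \iota_{\frakZ,*}\theta_Z, \widetilde{\omega}\rangle_{\fp,\calX}.$$
Independence of the choice of $\widetilde{\omega}$ is automatic: two lifts differ by an element in the image of $i_{\fp}$, whose pairing with $\eta_{\fp}((\theta_Z)_Z)$ vanishes because $\eta_{\fp}((\theta_Z)_Z)$ itself lies in the image of $i_{\fp}$ (on which the pairing factors through the quotient by $F^i$). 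The asserted formula then reduces to the \emph{projection formula}
$$\langle \iota_{\frakZ,*}\theta_Z, \widetilde{\omega}\rangle_{\fp,\calX} \;=\; \langle \theta_Z, \iota_{\frakZ}^*\widetilde{\omega}\rangle_{\fp,\calZ},$$
whose right-hand side is precisely $\bigl(\int_Z \omega\bigr)(\theta_Z)$ by definition.

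The main obstacle is the projection formula at the finite-polynomial level. The pushforward $\iota_{\frakZ,*}$ was defined in Proposition \ref{Proposition: proper pushforward} as the transpose of $\iota_{\frakZ}^*$ under the Poincaré pairings on Hyodo--Kato and de Rham cohomology, respecting simultaneously the Frobenius, monodromy, and Hodge-filtration structures. On the other hand, the $\fp$-pairing is assembled, via the mapping-square defining $R\Gamma_{\syn,P}$, out of cup products on these same underlying complexes. Unwinding Proposition \ref{Proposition: diagram to understand syntomic P-cohomology}, one represents classes as quintuples and traces through the explicit cup-product morphism used in Proposition \ref{Proposition: cup products}; the projection formula at the $\fp$-level then reduces to the parallel (and classical) adjunctions on $R\Gamma_{\HK}$ and $R\Gamma_{\dR}$, together with compatibility of $\Psi_{\HK}$ with pushforwards. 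The bookkeeping is mechanical, but this is the technical heart of the argument; an alternative is to mimic Besser's approach of exhibiting $\iota_{\frakZ,*}$ directly at the complex level via a Gysin morphism, for which the projection formula becomes tautological.
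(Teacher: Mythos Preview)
Your proposal is correct and follows essentially the same route as the paper: define $\AJ_{\fp}$ as $\eta_{\fp}$ composed with the identification $\ker\pr_{\fp}\cong (F^{d-i+1}H_{\dR}^{2d-2i+1}(\calX,\scrE^{\vee}))^{\vee}$ coming from de Rham Poincar\'e duality, then verify the integral formula by invoking the projection formula $\langle\iota_{\frakZ,*}\theta_Z,\widetilde{\omega}\rangle_{\fp,\calX}=\langle\theta_Z,\iota_{\frakZ}^*\widetilde{\omega}\rangle_{\fp,\calZ}$. The paper dispatches the latter in one line (``follows from the construction of the pushforward map''), since $\iota_{\frakZ,*}$ was \emph{defined} as the transpose of $\iota_{\frakZ}^*$ under the HK and de Rham Poincar\'e pairings and the $\fp$-pairing is built from these; your more explicit unwinding via quintuples and the cup-product in Proposition~\ref{Proposition: cup products} is a valid and arguably more honest elaboration of the same point.
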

\begin{proof}
Observe that \[
    \eta_{\fp}\left(A^i(X, \scrE)\right) \subset \ker \pr_{\fp} = \frac{H_{\dR}^{2i-1}(\calX, \scrE)}{ F^i H_{\dR}^{2i-1}(\calX, \scrE)}.
\] However, the Poincaré duality for de Rham cohomology implies that \[
    \frac{H_{\dR}^{2i-1}(\calX, \scrE)}{ F^i H_{\dR}^{2i-1}(\calX, \scrE)} \cong \left(F^{d-i+1} H_{\dR}^{2d-2i+1}(\calX, \scrE^{\vee})\right)^{\vee}. 
\] Therefore, we arrive at the desired map \[
    \AJ_{\fp}: A^i(X, \scrE)_0 \rightarrow \frac{H_{\dR}^{2i-1}(\calX, \scrE)}{ F^i H_{\dR}^{2i-1}(\calX, \scrE)} \cong \left(F^{d-i+1} H_{\dR}^{2d-2i+1}(\calX, \scrE^{\vee})\right)^{\vee}. 
\]

To show the formula, it is enough to look at one $Z\in Z^i(X)$ and any $\theta_Z\in H_{\dR}^0(\calZ, \scrE)$. For any $\omega\in F^{d-i+1} H_{\dR}^{2d-2i+1}(\calX, \scrE^{\vee})$, we choose a lift $\widetilde{\omega}\in H_{\fp}^{2d-2i+1}(\calX, \scrE^{\vee}, d-i+1)$ via the short exact sequence \[
    0 \rightarrow \frac{H_{\dR}^{2d-2i}(\calX, \scrE^{\vee})}{ F^{d-i+1} H_{\dR}^{2d-2i}(\calX, \scrE^{\vee})} \rightarrow H_{\fp}^{2d-2i+1}(\calX, \scrE^{\vee}, d-i+1) \rightarrow F^nH_{\dR}^{2d-2i+1}(\calX, \scrE^{\vee}) \rightarrow 0.
\] Then, we have \[
    \AJ_{\fp}(\theta_Z \cdot Z)(\omega) = \langle \iota_{\frakZ}^* \widetilde{\omega}, \theta_Z \rangle_{\fp, Z} = \langle \widetilde{\omega}, \iota_{\frakZ, *} \theta_Z\rangle_{\fp, X},
\] where the first equation follows from the compatibility in Corollary \ref{Corollary: pairing for finite polynomial cohomology}, the second equation follows from the definition of $\AJ_{\fp}$ and the final equation follows from the construction of the pushforward map. 
\end{proof}

\begin{Remark}\label{Remark: compare with the classical AJ}
\normalfont 
Comparing the Abel--Jacobi map with coefficients with the classical one (recalled in \S \ref{subsection: review of Besser's work}), one might find $A^i(\calX, \scrE)$ a bit ad hoc. However, one can check that the classical Abel--Jacobi map, in fact, factors as \[
    \AJ: A^i(X)_0 \rightarrow A^i(X, \scrO_{X_0/\calO_{K_0}})_0 \rightarrow \left( F^{d-i+1} H_{\dR}^{2d-2i+1}(\calX)\right)^{\vee}.
\] One then sees that our construction actually agrees with the classical one when the coefficient is trivial. 
\blackqed
\end{Remark}

\begin{Remark}
\normalfont 
It is clear from Corollary \ref{Corollary: relation with Coleman integration} that the lift $\widetilde{\omega}$ should be viewed as a (generalised) `Coleman integral'. This justifies the notation $\int_Z \omega$ in Theorem \ref{Theorem: Abel--Jacobi map}.
\blackqed
\end{Remark}

\subsection{Towards a semistable theory}\label{subsection: towards a semistable theory}
In this subsection, we would like to discuss a direction how one could generalise the finite polynomial Abel--Jacobi map to varieties with semistable reduction. Therefore, we fix a proper scheme $X$ over $\calO_K$, which is of relative dimension $d$. We assume its special fibre $X_0$ is strictly semistable and its $\varpi$-adic weak formal completion $\frakX$ has a proper smooth dagger generic fibre $\calX$.

For each $i\in \Z_{\geq 0}$, we define \[
    Z^i(X) := \left\{Z\hookrightarrow X \begin{array}{c}
        \text{ exact closed immersion}  \\
        \text{ of codimension $i$} 
    \end{array}: \begin{array}{l}
        \text{ $Z$ is strictly semistable over }\calO_K^{\log = \varpi}  \\
        \text{ the dagger generic fibre $\calZ$ of $Z$ is smooth over }K 
    \end{array} \right\}.
\] For each $Z\in Z^i(X)$, we similarly denote its special fibre (resp., $\varpi$-adic weak formal completion) by $Z_0$ (resp., $\frakZ$). Also, we write $\iota_{\frakZ} : \frakZ \hookrightarrow \frakX$ for the induced exact closed immersion. 

Let $(\scrE, \Phi, \Fil^{\bullet})\in \Syn(X_0, \frakX, \calX)$. We again assume $\Fil^{\bullet} = (\scrE = \Fil^0 \scrE \supset \cdots \supset \Fil^{\ell}\scrE \supset \Fil^{\ell + 1}\scrE = 0)$ to simplify the notation in computation. Hence, by Corollary \ref{Corollary: special syntomic P-cohomology groups}, we have \[
    H_{\syn, P}^0(\calZ, \scrE, 0) \cong H_{\HK}^0(Z_0, (\iota_{\frakZ}^* \scrE, \iota_{\frakZ}^* \Phi))^{P(\Phi^e) = 0, N=0}\otimes_{K_0}K
\] for any $Z\in Z^i(X)$ and any $P\in \Poly$. 

From now on, we fix a $P\in \Poly$. We similarly consider the \textbf{\textit{(Hyodo--Kato) cycle class group of codimension $i$ with respect to $P$ and $\scrE$}}, defined to be \[
    A^i(X, P, \scrE) := \bigoplus_{Z\in Z^i(X)} H_{\HK}^0(Z_0, (\iota_{\frakZ}^* \scrE, \iota_{\frakZ}^* \Phi))^{P(\Phi^e) = 0, N=0}\otimes_{K_0}K.
\] Then, via the pushforward map in Proposition \ref{Proposition: proper pushforward}, one obtains a map \[
    \eta_{\fp}: A^i(X, P, \scrE) \rightarrow H_{\syn, P}^{2i}(\calX, \scrE, i).
\] Recall from Proposition \ref{Proposition: diagram to understand syntomic P-cohomology} that we have a natural map \[
    \varrho_{\fp}: H_{\syn, P}^{2i}(\calX, \scrE, i) \rightarrow H_{\HK}^{2i}(X_0, (\scrE, \Phi))^{P(\Phi^e) = 0, N=0}\otimes_{K_0}K \cap F^i H_{\log\dR}^i(\calX, \scrE).
\] Composing $\eta_{\fp}$ with $\varrho_{\fp}$, we define the \textbf{\textit{null-homologous}} cycles to be elements in \[
    A^i(X, P, \scrE)_0 := \ker \varrho_{\fp}\circ \eta_{\fp}.
\] Consequently, by applying the 3-step filtration in Corollary \ref{Corollary: 3 step filtration}, we can define the \textbf{\textit{syntomic $P$-Abel--Jacobi map}} \[
    \AJ_{\syn, P}: A^i(X, P, \scrE)_0 \xrightarrow{\eta_{\fp}} F^1_P.
\]

To understand $\AJ_{\syn, P}$, suppose we can choose $Q\in \Poly$ such that $P*Q$ satisfies the conditions in Corollary \ref{Corollary: special syntomic P-cohomology groups} (ii). Then, the pairing in Remark \ref{Remark: pairing for general semistable reduction} yields a map \[
    H_{\syn, P}^{2i}(\calX, \scrE, i) \rightarrow \left(H_{\syn, Q, c}^{2d-2i+1}(\calX, \scrE^{\vee}, -i+d+1)\right)^{\vee} 
\] for any $m\in \Z$. Therefore, for any $\omega \in H_{\syn, Q, c}^{2d-2i+1}(\calX, \scrE^{\vee}, -i+d+1)$ and for any $(\theta_Z)_Z\in A^i(X, P, \scrE)_0$, we have \begin{equation}\label{eq: AJ map for semistable reduction}
    \AJ_{\syn, P}((\theta_Z)_Z)(\omega) = \sum_{Z\in Z^i(X)} \langle \iota_{\frakZ, *}\theta_Z, \omega\rangle = \sum_{Z\in Z^i(X)} \langle \theta_Z, \iota_{\frakZ}^* \omega \rangle,
\end{equation} where the pairing $\langle \cdot, \cdot \rangle$ is the pairing in Remark \ref{Remark: pairing for general semistable reduction}.

We finally remark that the formalism above is far from satisfactory for the following reasons: \begin{enumerate}
    \item[(i)]  Since we do not have a perfect pairing in this situation, equation \eqref{eq: AJ map for semistable reduction} does not uniquely determine the class $\AJ_{\syn, P}((\theta_Z)_Z)$.
    \item[(ii)] In practice, the cohomology classes that we are interested in usually come from the de Rham cohomology. However, we can only work out a formula for the syntomic $P$-cohomology groups. Of course, if one could choose $Q$ such that $Q(q^{-1}\Phi^{\vee, e})$ further acts isomorphically on $H_{\HK,c}^{2d-2i}(\calX, \scrE^{\vee}, -i+d+1)$, then we have a surjection \[
        H_{\syn, Q, c}^{2d-2i+1}(\calX, \scrE^{\vee}, -i+d+1) \twoheadrightarrow H_{\HK, c}^{2d-2i+1}(X_0, (\scrE^{\vee}, \Phi^{\vee}))^{Q(\Phi^{\vee, e}) = 0, N=0} \otimes_{K_0} K \cap F^{-i+d+1} H_{\log\dR, c}^{2d-2i+1}(\calX, \scrE^{\vee}).
    \] Hence, for any $\omega\in H_{\HK, c}^{2d-2i+1}(X_0, (\scrE^{\vee}, \Phi^{\vee}))^{Q(\Phi^{\vee, e}) = 0, N=0} \otimes_{K_0} K \cap F^{-i+d+1} H_{\log\dR,c}^{2d-2i+1}(\calX, \scrE^{\vee})$, one can choose a lift $\widetilde{\omega}\in H_{\syn, Q, c}^{2d-2i+1}(\calX, \scrE^{\vee}, -i+d+1)$. However, \eqref{eq: AJ map for semistable reduction} still suggests that the resulting calculation will still depend on the choice $\widetilde{\omega}$.
    \item[(iii)] Finally, it would be more pleasant to have a theory that does not depend on the choice of $P$ and $Q$. However, unlike in the good reduction case, we do not have a good control on the eigenvalues of the Frobenius actions on the cohomology groups. To the authors's knowledge, these eigenvalues could be pathological for general coefficients. Even for certain \emph{nice enough} coefficients, we still know little on how one can control the Frobenius actions (see, for example, \cite[Proposition 6.6]{Yamada}).
\end{enumerate}

\section{Applications}\label{section: Applications}
In this section, we aim to carry out an arithmetic application of the previous theory. More precisely, we would like to establish a relation between the finite polynomial Abel--Jacobi map and the arithmetic of compact Shimura curves over $\Q$. Our result is inspired by the works of Darmon--Rotger in \cite{DR} and Bertolini--Darmon--Prasanna in \cite{BDP}.

To establish such an arithmetic application, one immediately encounters the problem that, in the previous sections, we consider our coefficients in $\Syn(X_0, \frakX, \calX)$, which are required to be unipotent, while we do not know whether the sheaves that one often encounters in the theory of automorphic forms are unipotent. However, taking a closer look at our theory, one finds that the condition of unipotence can be loosened once the following conditions are satisfied: 

\begin{Conditions}
Suppose $X_0$, $\frakX$ and $\calX$ are as in \S \ref{section: HK cohomology}. Let $\Syn^{\sharp}(X_0, \frakX, \calX)$ be the category consisting of triples $(\scrE, \Fil^{\bullet}, \Phi)$, where $\scrE\in \Isoc^{\normalfont \dagger}(X_0/\calO_{K_0}^{\log = \emptyset})$ , $\Fil^{\bullet}$ be a filtration of $\scrE$ on $\calX$ that satisfies Griffiths's transversality, and $\Phi$ is a Frobenius action on $R\Gamma_{\HK}(X_0, \scrE)$.\footnote{ Here, we relax the condition a little: we do not require a Frobenius action on the sheaf level but only a Frobenius action on the complex. If we start with $(\scrE, \Phi)\in \FIsoc^{\normalfont \dagger}(X_0/\calX_{K_0}^{\log = \emptyset})$, then the condition is automatic. } 
\begin{enumerate}
    \item[$\bullet$] {\normalfont (HK-dR).} There exists an morphism \[
        \Psi_{\HK}: R\Gamma_{\HK}(X_0, \scrE) \rightarrow R\Gamma_{\dR}(\calX, \scrE)
    \] such that, after base change to $K$, $\Psi_{\HK}\otimes \id$ is a quasi-isomorphism.
    
    \item[$\bullet$] {\normalfont (Purity).} Suppose moreover that there is a smooth scheme $X$ over $\calO_K$ whose $\varpi$-adic weak completion is $\frakX$. Then, there exists $s\in \Z$ such that for every closed irreducible smooth subscheme $Z\subset X$, the characteristic polynomial of $\Phi$ on $H_{\HK}^i(Z_0, \scrE)$ is of pure Weil weight $s+i\in \Z$. 
    (If $\scrE$ is in fact unipotent, one only needs to check this condition on $X$.)
\end{enumerate} 
\end{Conditions} 

Remark that the hypothesis (HK-dR) allows us to consider the diagram in Definition \ref{Definition: syntomic P-cohomology with syntomic coefficients}. Note also that (HK-dR) holds when $\frakX$ is smooth and $K = K_0$ is an unramified finite extension of $\Q_p$ (by \cite[Proposition 8.9]{Yamada}). On the other hand, the hypothesis (Purity) allows us to consider \[
    H_{\fp}^j(\calZ, \scrE, n) := \varinjlim_{P\in \Poly_{s+j}} H_{\syn, P}^j(\calZ, \scrE, n)
\] as in \eqref{eq: finite-polynomial cohomology}. 
In this situation, the formalism in \S \ref{subsection: AJ map} immediately goes through.

\subsection{Heuristic on non-unipotent coefficients}\label{subsection: Heuristic on non-unipotent coefficients}
As we have remarked above, in the following two applications (\S \ref{subsection: diagonal cycle} and \S \ref{subsection: cycles; isogenies}), we do not know whether our sheaves are unipotent. For these sheaves, the filtrations in the targets of the associated Abel--Jacobi maps need to be modified.
We here propose a suitable notation and explain the numbering of the filtrations. 

For simplicity, assume $X$ is a proper variety over $\Q_p$. As explained in \cite{Nekovar-pAJmap}, the $p$-adic Abel--Jacobi map concerns the extension class of $\Q_p$ by a certain $G_{\Q_p}$-representation comes from (a piece of) \'{e}tale cohomology of $X$.
If $X$ is of good reduction, then its \'{e}tale cohomology is crystalline.
The $p$-adic Hodge theory provides a comparison between \'{e}tale cohomology and de Rham cohomology, the later lies in the category of filtered Frobenius modules (see for example \cite{Berger-IntropHdg} and the references therein).

To motivate our discussion for non-trivial coefficients, we begin with a brief summary for the trivial coefficient case. 
Recall, if $Z \subset X$ is a smooth irreducible subvariety of codimension $n$, we have an isomorphism $H_{\dR}^0(Z) \cong \Q_p$ and the Gysin sequence\[ 
    0 \rightarrow H^{2n-1}_{\dR}(X) \rightarrow H^{2n-1}_{\dR}(X \smallsetminus Z) \rightarrow H^{2n}_{\dR, Z}(X)(-n) \cong H^0_{\dR}(Z)(-n) \xrightarrow{\mathrm{cl}_X}  H^{2n}_{\dR}(X) \rightarrow \cdots .
\]
Suppose now $Z = \sum a_j Z_j \in A^n(X)_0$, the condition of being cohomologically trivial implies that the sum of images under $\mathrm{cl}_X$, namely $\sum a_j \mathrm{cl}_X (1_{Z_j})$, is zero.
Hence one gets an extension
$$ 0 \rightarrow H^{2n-1}_{\dR}(X) \rightarrow D \rightarrow \Q_p(-n) \rightarrow 0.$$

Let's now turn to non-trivial coefficients $\scrH$. Inspired by the trivial coefficient case, the question reduces to finding a trivial piece in $H^0(Z, \scrH)$.
This is not easy in general. However, when the sheaf $\scrH$ comes from a universal object over $X$, one can modify the method above. 
We will briefly explain a simple case below.

Suppose we have a commutative diagram of smooth connected $\Q_p$-schemes with good reductions \[\begin{tikzcd}
    E \arrow[d, "\pi"] &E_Z \arrow[l]  \arrow[d] \arrow[r, hookleftarrow] &D \\
    X \arrow[r, hookleftarrow] &Z
\end{tikzcd}
\]
where
\begin{enumerate}
    \item[$\bullet$] $\dim E = d_E$, $\dim X= 1$, and $\dim Z = 0$ .
    \item[$\bullet$] $\pi: E \rightarrow X$ is proper, smooth, and the square is given by pull-back.
    \item[$\bullet$] $D$ is of codimension $n$ in $E$, hence of codimension $n -1$ in $E_Z$.
    Moreover, we require that $D \in A^n(E)_0$. But $D \in A^{n-1}(E_Z)$ is not cohomologically trivial.\footnote{ Here, we abuse the notation, denoted by $D$ for both the cohomologically trivial cycle and its support. }
\end{enumerate}

Consider \[
    \scrH := \epsilon R^s\pi_* \Omega_{E/X}^{\bullet}
\] for some $s \in \mathbb{Z}$ and some idempotent $\epsilon \in \Q_p[\Aut(E/X)]$. We shall also assume $D$ is stable under $\epsilon$, \emph{i.e.}, $\epsilon D = D$.  Hence, one sees that $\scrH$ has pure weight $s$ and so $H_{\dR}^1(X, \scrH)$ is of pure weight $s+1$. Moreover, note that we have the relative Leray spectral sequence (see \cite[(3.3.0)]{Katz-nilpotent}) \[
    E_2^{ij} := H_{\dR}^{i}(X, R^j\pi_*\Omega_{E/X}^{\bullet}) \Rightarrow H_{\dR}^{i+j}(E).
\]
This allows us to view $H_{\dR}^1(X, \scrH)$ as a piece of $H_{\dR}^{s+1}(E)$ (or more precisely, $\epsilon H_{\dR}^{s+1}(E)$).

Now, for each irreducible component $D'$ of the codimension $n-1$ cycle $D$ in $E_Z$ gives a pushforward map 
$$H^0_{\dR}(D')(-n+1) \cong \Q_p(-n+1)\rightarrow H^{2n-2}_{\dR}(E_Z)$$
of filtered Frobenius modules, which provides a piece of $\mathbb{Q}_p$. 
Moreover, by setting $E' = E \smallsetminus E_Z$, we have the Gysin sequence \[
    \cdots \rightarrow H^{2n-3}_{\dR}(E_Z)(-1) \rightarrow H^{2n-1}_{\dR}(E)  \rightarrow H^{2n-1}_{\dR}(E') \rightarrow H^{2n-2}_{\dR}(E_Z)(-1) \rightarrow H^{2n}_{\dR}(E) \rightarrow \cdots .
\]

To relate $H^1_{\dR}(X, \scrH)$ with $H^{2n-1}_{\dR}(E)$, one sees it is inevitable that we need the condition $s = 2n-2$.
In particular, $s$ can only be even. Note that, by definition, we have $H^0_{\dR}(Z, \scrH) \hookrightarrow \epsilon H_{\dR}^{2n-2}(E_Z)$. 
We shall assume that we can choose $\epsilon$ so that \begin{enumerate}
    \item[$\bullet$] it  annihilates $H^j_{\dR}(E)$ for $j \neq 2n-1$; and 
    \item[$\bullet$] it gives rise an isomorphism $\epsilon H_{\dR}^{2n-1}(E) \cong H_{\dR}^1(X, \scrH)$ via the Leray spectral sequence.
\end{enumerate}
Consequently, one obtains a diagram \[
    \begin{tikzcd}[row sep = small]
        0 \arrow[r] & \epsilon H_{\dR}^{2n-1}(E) \arrow[r] &  \epsilon H_{\dR}^{2n-1}(E') \arrow[r] &  \epsilon H_{\dR}^{2n-2}(E_Z)(-1) \arrow[r] & \cdots\\
        &&& H^0_{\dR}(Z, \scrH)(-n) \arrow[u, hook]\\
        && \Q_p(-n) \arrow[r, hook]&  H_{\dR}^0(D)(-n)\arrow[u, hook]
    \end{tikzcd},
\]
where the inclusion $\Q_p (-n) \hookrightarrow H_{\dR}^0(D)(-n)$ is the unique line in $H_{\dR}^0(D)(-n)$ determined by the coefficients of the decomposition $D = \sum_j c_j D'_j$ into irreducible components.
One then obtains an extension by pullback
 \[
    0 \rightarrow  H_{\dR}^1(X, \scrH) \rightarrow F \rightarrow \mathbb{Q}_p(-n) \rightarrow 0.
\]
Via the isomorphism (\cite[Proposition 3.5]{BDP})
$$\Ext^1_{\mathrm{ffm}} (\mathbb{Q}_p(-n), H_{\dR}^1(X, \scrH)) \cong   H_{\dR}^1(X, \scrH)/ \Fil^n,$$
one may further interpret the extension $F$ as an Abel--Jacobi map similar to what we did in \S \ref{subsection: AJ map}.
Here, the $\Ext$-group is computed in the category of filtred Frobenious modules over $\Q_p$. 

Notice that the filtration number $n = \frac{s}{2} + \codim(Z,X)$ should be the `correct' one.
Such formula holds in a more general picture.
That is, 
for a sheaf $\scrH$ of pure weight $s =2r$, and a element $\Delta \in A^i(X, \scrH)_0$.
The correct Abel--Jacobi map will send it to 
$H^{2i-1}_{\dR}(X, \scrH) / \Fil^{\frac{s}{2} +i}$.
One can then identify it as $\Fil^j H^m_{\dR}(X, \scrH^\vee)$ under the correct duality.

\begin{Remark}
In some application, the extension class relies heavily on the fact that the sheaf $\scrH$ comes from a piece of the cohomology of a certain `universal' geometric object and one can construct an idempotent $\epsilon$ that kills unwanted degree of cohomology. 
This is the strategy appearing in \cite{BDP} and \cite{DR}. However, our approach suggests that one may work directly with the sheaves themselves instead of going back and forth between the universal geometric object and the sheaves. 

In general, one does not know whether there should be any trivial piece (\emph{i.e.}, $\Q_p$-line) in $H^0_{\dR}(Z, \scrH)$. 
But in case there is, we suggest this is the correct modification of the filtration.
\blackqed
\end{Remark}

\begin{Remark}
    One notices that there is the even-ness condition on the sheaf, \emph{i.e.}, the weight $s$ of $\scrH$ must be even.
    This phenomenon also  appears in the applications \S \ref{subsection: diagonal cycle} and \S \ref{subsection: cycles; isogenies} below (see also \cite{BDP} and \cite{DR}). 
    \blackqed
\end{Remark}

\subsection{Compact Shimura curves over \texorpdfstring{$\Q$}{Q}}\label{subsection: compact Shimura curves over Q}
In this subsection, we briefly recall the theory of compact Shimura curves over $\Q$ as a preparation of the next subsection. Our main references are \cite{DT, Buzzard-ShimuraCurve, Kassaei-phd} and readers are encouraged to consult \emph{op. cit.} for more detail discussions.

Let $D$ be an indefinite non-split quaternion algebra over $\Q$. We assume that $p$ does not divide the discriminant $\disc(D)$ of $D$. In particular, $D\otimes_{\Q}\Q_p \cong M_2(\Q_p)$. We fix a maximal order $\calO_D \subset D$ and an isomorphism $\calO_D\otimes_{\Z}\Z_p \simeq M_2(\Z_p)$. We further denote by $G$ the algebraic group over $\Z$, whose $R$-points are given by \[
    G(R) := (\calO_D \otimes_{\Z} R)^\times
\] for any ring $R$. We fix once and for all a neat compact open subgroup $\Gamma^p \subset G(\widehat{\Z}^p)$ such that $\Gamma^p = \prod_{\ell \neq p } \Gamma_{\ell}$ with $\Gamma_{\ell}\subset G(\Z_{\ell})$ being compact open and $\det \Gamma^p = \widehat{\Z}^{p, \times}$. We set $\Gamma := \Gamma^p G(\Z_p) = \Gamma^p \GL_2(\Z_p)$.

By a \textbf{\textit{false elliptic curve}} over a $\Z[1/\disc(D)]$-scheme $S$, we mean a pair $(A, i)$, where $A$ is an abelian surface over $S$ and $i: \calO_D \hookrightarrow \End_S(A)$. Note that on $(A, i)$, there is a unique principal polarisation (\cite[\S 1]{Buzzard-ShimuraCurve}). By the discussion of \S 2 of \emph{op. cit.}, we know that the functor \[
    \Sch_{\Z_p} \rightarrow \Sets, \quad S \mapsto  \left\{(A, i, \alpha): \begin{array}{l}
        (A, i)\text{ is a false elliptic curve over }S  \\
        \alpha \text{ is a $\Gamma$-level structure}
    \end{array}\right\}/\simeq 
\] is representable by a smooth projective scheme over $\Z_p$ of relative dimension $1$. We denote this curve by $X$. As usual, we write $X_0$ for the special fibre of $X$ over $\F_p$, $\frakX$ for the $p$-adic weak completion of $X$, and $\calX$ the dagger space associated with $\frakX$. To be consistant with respect to our discussions above, we remark that $X_0$, $\frakX$, and $\calX$ are equipped with the trivial log structure. 

Let $\pi: A^{\univ} \rightarrow X$ be the universal false elliptic curve over $X$. We consider three sheaves \[
    \widetilde{\underline{\omega}} := \pi_* \Omega_{A^{\univ}/X}^1, \quad \widetilde{\scrH} := R^1\pi_* \Omega_{A^{\univ}/X}^{\bullet}, \quad \text{ and }\quad \widetilde{\underline{\omega}}^{-1} := R^1\pi_* \scrO_{A^{\univ}}
\] with a canonical exact sequence \begin{equation}\label{eq: canonical exact sequence for H^1_dR}
    0 \rightarrow \widetilde{\underline{\omega}} \rightarrow \widetilde{\scrH} \rightarrow \widetilde{\underline{\omega}}^{-1}\rightarrow 0
\end{equation} given by the Hodge--de Rham spectral sequence. We choose once and for all an idempotent $\bfepsilon \in M_2(\Z_p) = \calO_D$ as in \cite[\S 4]{DT} (\emph{e.g.}, $\bfepsilon = \left(\substack{1 \,\,\, 0 \\ 0 \,\,\, 0}\right)$) and define \[
    \underline{\omega} := \bfepsilon \widetilde{\underline{\omega}}, \quad \scrH := \bfepsilon \widetilde{\scrH}, \quad \text{ and }\quad \underline{\omega}^{-1} := \bfepsilon \widetilde{\underline{\omega}}^{-1}.
\]

\begin{Lemma}\label{Lemma: basic properties of scrH}
The sheaves $\underline{\omega}$, $\scrH$ and $\underline{\omega}^{-1}$ enjoy the following properties. \begin{enumerate}
    \item[(i)] We have $\underline{\omega}^{-1} \cong \underline{\omega}^{\vee}$ and $\underline{\omega} \cong \underline{\omega}^{-1}\otimes_{\scrO_X}\Omega_{X/\Z_p}^1$.
    \item[(ii)] There exists an exact sequence \[
        0 \rightarrow \underline{\omega} \rightarrow \scrH \rightarrow \underline{\omega}^{-1} \rightarrow 0.
    \] In particular, $\scrH$ is of locally free of rank $2$ over $X$ with a filtration \[
        \Fil^i \scrH = \left\{ \begin{array}{ll}
            \scrH, & i\leq 0 \\
            \underline{\omega}, & i=1\\
            0, & i>1
        \end{array}\right. .
    \]
    \item[(iii)] The sheaf $\scrH$ is self-dual.
    \item[(iv)] There is an integrable connection \[
        \nabla: \scrH  \rightarrow \scrH\otimes_{\scrO_X} \Omega_{X/\Z_p}^1
    \] such that the filtration in (ii) satisfies Griffiths's transversality. 
\end{enumerate}
\end{Lemma}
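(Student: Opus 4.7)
The plan is to reduce all four claims to standard properties of the universal false elliptic curve $\pi: A^{\univ} \to X$ combined with the fact that the idempotent $\bfepsilon$ is a direct summand functor (so exact and compatible with tensor products and duals), and then invoke Serre/Poincaré duality together with Kodaira--Spencer on the level of the full abelian-surface-valued sheaves $\widetilde{\underline{\omega}}, \widetilde{\scrH}, \widetilde{\underline{\omega}}^{-1}$ before passing to the $\bfepsilon$-part.

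First I would establish (ii), which is the easiest: the canonical exact sequence \eqref{eq: canonical exact sequence for H^1_dR} is a split exact sequence of locally free $\scrO_X$-modules (split as $\calO_D$-modules even), and applying the $\bfepsilon$-projector yields the claimed exact sequence. That $\scrH$ is locally free of rank $2$ follows from the rank-$4$ local freeness of $\widetilde{\scrH}$ together with the fact that $\bfepsilon$ cuts out a rank-$2$ piece of the $\calO_D \otimes \Z_p \cong M_2(\Z_p)$-action (since $\bfepsilon = \bigl(\begin{smallmatrix} 1 & 0 \\ 0 & 0 \end{smallmatrix}\bigr)$ is a rank-$1$ idempotent in $M_2(\Z_p)$, and each of $\widetilde{\underline{\omega}}$ and $\widetilde{\underline{\omega}}^{-1}$ has rank $2$ over $\scrO_X$).

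For (i) I would argue in two steps. The principal polarisation on the universal false elliptic curve gives a perfect Poincaré pairing on $\widetilde{\scrH}$ under which $\widetilde{\underline{\omega}}$ and $\widetilde{\underline{\omega}}^{-1}$ are exchanged (up to duality), and the Rosati involution associated to this polarisation sends $\bfepsilon$ to itself (after choosing $\bfepsilon$ compatibly, as done in \cite[\S 4]{DT}); hence the perfect pairing restricts to a perfect pairing on the $\bfepsilon$-parts, producing $\underline{\omega}^{-1} \cong \underline{\omega}^{\vee}$. The second isomorphism $\underline{\omega} \cong \underline{\omega}^{-1} \otimes \Omega_{X/\Z_p}^1$ is the Kodaira--Spencer isomorphism: for the universal abelian scheme the Kodaira--Spencer map $\widetilde{\underline{\omega}} \to \widetilde{\underline{\omega}}^{-1} \otimes \Omega_{X/\Z_p}^1$ is an isomorphism after applying $\bfepsilon$ on both sides (the full version gives $\Sym^2\widetilde{\underline{\omega}} \cong \Omega^1_{X/\Z_p}$, but the $\bfepsilon$-cut version directly gives the claimed rank-$1$ identification; see \cite[Proposition 1.1]{Kassaei-phd} for this exact computation in the compact Shimura curve setting). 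Part (iii) is then simply a restatement of the pairing constructed in (i) applied to the short exact sequence in (ii).

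Part (iv) will be the most notation-heavy but conceptually routine. The Gauss--Manin connection \[ \widetilde{\nabla}: \widetilde{\scrH} \to \widetilde{\scrH} \otimes_{\scrO_X} \Omega_{X/\Z_p}^1 \] is $\calO_D$-linear (since the $\calO_D$-action is horizontal, coming from endomorphisms of $A^{\univ}/X$), so it commutes with the $\bfepsilon$-projection and descends to the desired integrable connection $\nabla$ on $\scrH$. Integrability and Griffiths transversality for the Hodge filtration $\Fil^1 \widetilde{\scrH} = \widetilde{\underline{\omega}}$ are standard properties of Gauss--Manin (see e.g.\ \cite[(3.5)]{Katz-nilpotent}), and both are preserved by applying the exact functor $\bfepsilon$; this gives $\nabla(\underline{\omega}) \subset \scrH \otimes \Omega_{X/\Z_p}^1$, which is the only non-trivial transversality condition for the two-step filtration in (ii).

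The only genuine subtlety, and the step I would double-check most carefully, is the compatibility of $\bfepsilon$ with the polarisation pairing in (i) and (iii): one must either choose $\bfepsilon$ stable under the Rosati involution, or pair the $\bfepsilon$-part of $\widetilde{\scrH}$ with the $\bfepsilon^{*}$-part (where $\bfepsilon^{*}$ denotes the Rosati-transform) and then identify the two via the $M_2(\Z_p)$-module structure. With the explicit choice $\bfepsilon = \bigl(\begin{smallmatrix}1 & 0 \\ 0 & 0\end{smallmatrix}\bigr)$ this reduces to a direct linear algebra check, and is carried out in \cite[\S 4]{DT} and \cite[\S 1]{Buzzard-ShimuraCurve}, to which I would refer.
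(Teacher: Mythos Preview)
Your proposal is correct and follows essentially the same route as the paper: reduce everything to the corresponding properties of $\widetilde{\underline{\omega}}, \widetilde{\scrH}, \widetilde{\underline{\omega}}^{-1}$ (Hodge filtration, Poincar\'e pairing from the polarisation, Kodaira--Spencer, Gau\ss--Manin) and then apply the exact projector $\bfepsilon$. The paper cites \cite[Lemma 7]{DT} for (i), applies $\bfepsilon$ to \eqref{eq: canonical exact sequence for H^1_dR} for (ii), and invokes Gau\ss--Manin for (iv), just as you do.

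The one place your write-up wobbles is (iii). You first assert that the Rosati involution sends $\bfepsilon$ to itself, but for the standard choice $\bfepsilon = \bigl(\begin{smallmatrix}1 & 0\\ 0 & 0\end{smallmatrix}\bigr)$ the main involution on $M_2(\Z_p)$ sends it to $1-\bfepsilon$, not to itself. The paper's proof takes precisely the second of the two options you list at the end: the polarisation pairing exhibits $\scrH = \bfepsilon\widetilde{\scrH}$ as dual to $(1-\bfepsilon)\widetilde{\scrH}$, and one then uses a separate isomorphism $\bfepsilon\widetilde{\scrH} \cong (1-\bfepsilon)\widetilde{\scrH}$ (the paper cites \cite[\S 1.4]{Brasca-phd} or \cite[\S 1]{Brasca} for this) to conclude self-duality. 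So your hedge in the final paragraph is exactly right, and your initial claim that $\bfepsilon$ is Rosati-fixed should be dropped in favour of it.
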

\begin{proof}
The first assertion is nothing but \cite[Lemma 7]{DT}. For the second assertion, one applies $\bfepsilon$ to the exact sequence \eqref{eq: canonical exact sequence for H^1_dR}. For (iii), note that $\widetilde{\scrH}$ is self-dual, whose duality comes from the principal polarisation on $A^{\univ}$. Together with the fact that $\scrH$ is dual to $(1-\bfepsilon)\widetilde{\scrH}$ and these two subsheaves are isomorphic to each other (see, for example, \cite[\S 1.4]{Brasca-phd} or \cite[\S 1]{Brasca}), we conclude (iii). Finally, (iv) follows from that $\widetilde{\scrH}$ admits an integrable connection, \emph{i.e.}, the Gau\ss--Manin connection, that satisfies Griffiths's transversality. 
\end{proof}

For any $k\in \Z_{>0}$, we define \[
    \underline{\omega}^k:= \underline{\omega}^{\otimes k}\quad \text{ and }\quad \scrH^k := \Sym^k \scrH.
\] By Lemma \ref{Lemma: basic properties of scrH}, we see that \begin{enumerate}
    \item[$\bullet$] $\scrH^k$ admits a filtration of length $k$, \emph{i.e.}, \[
        \scrH^k = \Fil^0\scrH^k \supset \Fil^1 \scrH^{k} \supset \cdots \supset \Fil^k \scrH^k \supset \Fil^{k+1}\scrH^k = 0
    \] with \[
        \Fil^i\scrH^k /\Fil^{i+1}\scrH^k \cong \underline{\omega}^i \otimes_{\scrO_{X}} (\underline{\omega}^{-1})^{\otimes k-i}.
    \] 
    \item[$\bullet$] $\scrH^k$ admits an integrable connection which satisfies Griffiths's transversality (with respect to the filtration above). 
\end{enumerate} Similar computation as in \cite[Theorem 2.7]{Scholl-MFanddeRham} shows that the Hodge filtration on $H_{\dR}^1(X_{\Q_p}, \scrH^k)$ is given by  \[
    F^n H_{\dR}^1(X_{\Q_p}, \scrH^k) = \left\{\begin{array}{ll}
        H_{\dR}^1(X_{\Q_p}, \scrH^k), & n\leq 0 \\
        H^0(X_{\Q_p}, \underline{\omega}^k\otimes_{\scrO_{X_{\Q_p}}} \Omega_{X_{\Q_p}}^1) = H^0(X_{\Q_p}, \underline{\omega}^{k+2}), & n=1, ..., k+1\\
        0, & n>k+1
    \end{array}\right.,
\] where the equation $H^0(X_{\Q_p}, \underline{\omega}^k\otimes_{\scrO_{X_{\Q_p}}} \Omega_{X_{\Q_p}}^1) = H^0(X_{\Q_p}, \underline{\omega}^{k+2})$ follows from the Kodaira--Spencer isomorphism (see \cite[Corollary 3.2]{Kassaei-phd}).

\begin{Lemma}\label{PLemma: scrH gives a nice F-isocrystal}
Let $k\in \Z_{>0}$. The sheaf $\scrH^k$ induces an object in $\Syn^{\#}(X_0, \frakX, \calX)$, denoted by $(\scrH^k,  \Fil^{\bullet}, \Phi)$, such that (HK-dR) is satisfied.
\end{Lemma}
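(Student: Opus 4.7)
The plan is to verify the three structural data defining an object of $\Syn^{\#}(X_0, \frakX, \calX)$ one at a time, and then to check the hypothesis (HK-dR) at the end.

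First, I would construct $\scrH^k$ as an overconvergent isocrystal. The universal false elliptic curve $\pi: A^{\univ} \rightarrow X$ is smooth and proper of relative dimension $2$, and reduces modulo $p$ to a smooth proper morphism $\pi_0: A^{\univ}_0 \rightarrow X_0$. By relative rigid cohomology (Berthelot--Ogus, or Kedlaya for the overconvergent version), $R^1\pi_{0, \rig, *} \scrO_{A^{\univ}_0}$ gives an overconvergent $F$-isocrystal on $X_0/\calO_{K_0}^{\log = \emptyset}$ whose realisation on $\calX$ is canonically identified with $\widetilde{\scrH} = R^1 \pi_* \Omega_{A^{\univ}/X}^{\bullet}$. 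Since the idempotent $\bfepsilon \in M_2(\Z_p) = \calO_D$ acts functorially on $A^{\univ}$ over $X$, it induces an idempotent endomorphism of this overconvergent $F$-isocrystal, and cutting out its image yields $\scrH$ as an object of $\FIsoc^{\normalfont \dagger}(X_0/\calO_{K_0}^{\log = \emptyset})$. Taking symmetric powers produces $\scrH^k \in \FIsoc^{\normalfont \dagger}(X_0/\calO_{K_0}^{\log = \emptyset}) \subset \Isoc^{\normalfont \dagger}(X_0/\calO_{K_0}^{\log = \emptyset})$, equipped with a Frobenius $\Phi$ inherited from $\pi_0$.

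Second, the filtration and Griffiths transversality on the realisation of $\scrH^k$ on $\calX$ have already been recorded: the Hodge filtration $\Fil^\bullet \scrH$ of Lemma \ref{Lemma: basic properties of scrH}(ii), which satisfies Griffiths transversality by Lemma \ref{Lemma: basic properties of scrH}(iv), induces a filtration on $\scrH^k = \Sym^k \scrH$ with graded pieces $\underline{\omega}^i \otimes (\underline{\omega}^{-1})^{k-i}$, and Griffiths transversality is preserved under symmetric powers. The Frobenius action on $R\Gamma_{\HK}(X_0, \scrH^k)$ is simply the one induced by $\Phi$, since in the good reduction case Hyodo--Kato cohomology agrees with rigid cohomology (with trivial monodromy $N = 0$) by \cite[Proposition 8.9]{Yamada}.

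Third, I would verify (HK-dR). Since $\frakX$ is smooth over $\calO_K = \Z_p$ with trivial log structure and $K = K_0 = \Q_p$ is absolutely unramified, \cite[Proposition 8.9]{Yamada} identifies $R\Gamma_{\HK}(X_0, \scrH^k)$ with the rigid cohomology $R\Gamma_{\rig}(X_0, \scrH^k)$. The required morphism $\Psi_{\HK}$ is then the Berthelot--Ogus style comparison between rigid cohomology of the overconvergent isocrystal $\scrH^k$ on $X_0$ and the algebraic de Rham cohomology $R\Gamma_{\dR}(\calX, \scrH^k)$ of its realisation on $\calX$, which is a quasi-isomorphism (already over $K$, since no ramified base change is needed). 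Under the identification of $\scrH^k$ with the realisation of the relative rigid cohomology of $\pi_0$, this is just the assertion that the Leray-style comparison between rigid and de Rham cohomology commutes with the idempotent $\bfepsilon$ and with $\Sym^k$.

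The main technical input is the first step, namely identifying the relative de Rham cohomology sheaf $\widetilde{\scrH}$ with the realisation of a genuine overconvergent $F$-isocrystal on $X_0$; once this is granted, cutting by $\bfepsilon$ and taking $\Sym^k$ are formal, and (HK-dR) is essentially a restatement of classical results in the good reduction and unramified setting. Everything else is bookkeeping, and no serious obstacle is expected beyond invoking the correct relative rigid cohomology formalism.
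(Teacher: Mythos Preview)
Your proposal is correct and the overall architecture matches the paper's, but you take a genuinely different route to the first and most substantial step. You construct $\scrH^k$ as an overconvergent $F$-isocrystal directly, by invoking relative rigid cohomology of the smooth proper family $\pi_0: A^{\univ}_0 \to X_0$ and then cutting by $\bfepsilon$ and applying $\Sym^k$; this yields a Frobenius on the sheaf level, which is strictly more than the category $\Syn^{\#}$ requires. The paper instead gives an ad hoc construction: it first builds $\scrH^k$ as a sheaf on the sliced overconvergent site $\OC(X_0/\Z_p)_{/(X_0,\frakX)}$ by pulling back along maps $\calZ \to \calX$, then transports it to $\OC(X_0/\Z_p)$ via a morphism of ringed sites, obtaining only an overconvergent isocrystal (no Frobenius on the sheaf). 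For the Frobenius on $R\Gamma_{\HK}$, the paper passes through a comparison with crystalline cohomology (\cite[Propositions 5.2, 5.3]{EY-Poincare}), identifies $\widetilde{\scrH}$ with relative crystalline cohomology via \cite[Theorem 3.10]{Ogus-FIsocII}, and uses \cite[Lemma 5.14]{CI-semistable} to see that the crystalline realisation is an $F$-isocrystal.

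Your approach is cleaner and yields more, at the cost of relying on the relative rigid cohomology formalism as a black box; the paper's approach is more hands-on and stays closer to the explicit site-theoretic framework developed earlier, but has to route the Frobenius through crystalline cohomology rather than obtaining it intrinsically. Your treatment of the filtration and of (HK-dR) is essentially identical to the paper's.
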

\begin{proof}
Note first that $\scrH^k$ induces a locally free $\scrO_{\calX}$-module on $\calX$, which we still denote by $\scrH^k$. We now explain how $\scrH^k$ defines a sheaf on $\OC(X_0/\Z_p)$. 

Observe first that $X_0$ and $\frakX$ defines an object in $\OC(X_0/\Z_p)$ by \[
    \begin{tikzcd}
        & X_0 \arrow[r]\arrow[ld, equal]\arrow[d] & \frakX\arrow[d]\\
        X_0 \arrow[r] & \Spec \F_p \arrow[r] & \Spwf \Z_p
    \end{tikzcd},
\]
where the vertical maps are the structure morphisms. We imply write $(X_0, \frakX)$ for this object and let $\OC(X_0/\Z_p)_{/(X_0, \frakX)}$ be the sliced site over $(X_0, \frakX)$. That is, objects in $\OC(X_0/\Z_p)_{/(X_0, \frakX)}$ are of the form \[
    \begin{tikzcd}
        Z \arrow[d, "f_{\F_p}"]\arrow[dd, bend right = 20, "h_{\F_p}"']\arrow[r, "i"] & \frakZ\arrow[d, "f_{\Z_p}"']\arrow[dd, bend left = 20, "h_{\Z_p}"]\\
        X_0 \arrow[r]\arrow[d] & \frakX\arrow[d]\\
        \Spec \F_p \arrow[r] & \Spwf\Z_p
    \end{tikzcd}.
\]
The map $f_{\Z_p}$ induces a natural map on the dagger generic fibre $f_{\eta}: \calZ \rightarrow \calX$.  Consequently, we can consider the sheaf on $\OC(X_0/\Z_p)_{/(X_0, \frakX)}$ \[
    (Z, \frakZ, i, , \theta) \mapsto f_{\eta}^*\scrH^k(\calZ),
\] which we still denote it by $\scrH^k$. 

Observe next that the natural inclusion $\OC(X_0/\Z_p)_{/(X_0, \frakX)} \rightarrow \OC(X_0/\Z_p)$ is a continuous functor and  it consequently defines a morphism of ringed sites \[
    \psi: \OC(X_0/\Z_p) \rightarrow \OC(X_0/\Z_p)_{/(X_0, \frakX)}. 
\] We then obtain a sheaf $\psi^* \scrH^k$ on $\OC(X_0/\Z_p)$. We shall again abuse the notation and denote it by $\scrH^k$.  One checks easily that $\scrH^k$ is an overconvergent isocrystal. By the discussion above, it is equipped with a natural filtration that satisfies Griffiths's transversality. 

Now we explain the Frobenius action on $R\Gamma_{\HK}(X_0, \scrH^k)$. Note that we can use a similar construction above to make $\widetilde{\scrH}^{\otimes k}$ into an overconvergent isocrystal and $\scrH^k$ is a quotient of $\widetilde{\scrH}^{\otimes k}$. However, by \cite[Proposition 5.2, Propsoition 5.3, Remark 5.6]{EY-Poincare}, the overconvergent cohomology of $X_0$ with coefficients in $\widetilde{\scrH}^{\otimes k}$ coincides with the crystalline cohomology of $X_0$ with coefficients in the crystalline realisation $\widetilde{\scrH}^{\otimes k}_{\mathrm{cris}}$ defined by $\widetilde{\scrH}^{\otimes k}$ (\cite[Proposition 5.3]{EY-Poincare}). On the other hand, $\widetilde{\scrH}$ is isomorphic to the relative crystalline cohomology of $A^{\univ}$ over $X$ (see \cite[Theorem 3.10]{Ogus-FIsocII}), thus $\widetilde{\scrH}^{\otimes k}_{\mathrm{cris}}$ admits an induced Frobenius action. By applying \cite[Lemma 5.14]{CI-semistable}, we conclude that the crystalline realisation $\scrH^k_{\mathrm{cris}}$ of $\scrH^k$ is a crystalline $F$-isocrystal. The desired Frobenius action on $R\Gamma_{\HK}(X_0, \scrH^k)$ then follows from the comparison of cohomology \[
    R\Gamma_{\HK}(X_0, \scrH^k) \simeq R\Gamma_{\mathrm{cris}}(X_0, \scrH^k_{\mathrm{cris}})
\] mentioned above. 

Finally, since $(X_0, \frakX)\in \OC(X_0/\Z_p)$ and so, as discussed in \S \ref{subsection: HK theory with coeff}, the condition (HK-dR) is satisfied. 
\end{proof}

\begin{Remark}\label{remark: abusing notations}
\normalfont 
In fact, one sees that the cohomology groups \[
    H_{\HK}^i(X_0, (\scrH^k, \Phi)), \quad H_{\dR}^i(\calX, \scrH^k), \quad H_{\dR}^i(X_{\Q_p}, \scrH^k)
\] are all isomorphic to each other. 
Therefore, in what follows, we shall abuse the notation and denote all of them simply by $H_{\dR}^i(X, \scrH^k)$. 
We also make no distinction between $X$, $X_{\Q_p}$, and $\calX$ by simply writing $X$ when it causes no confusion.
\blackqed
\end{Remark}

\begin{Lemma}\label{Lemma: Weil weight}
The eigenvalue of the Frobenius $\Phi$ acting on $H_{\dR}^i(X, \scrH^{k})$ is of Weil weight $i+k$. In particular, (Purity) is satisfied.
\end{Lemma}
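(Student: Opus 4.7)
The plan is to deduce the purity statement from the Weil conjectures applied to a suitable Kuga--Sato variety, together with Frobenius-equivariant Leray/K\"unneth arguments. First I would observe that $\widetilde{\scrH} = R^1\pi_*\Omega^{\bullet}_{A^{\univ}/X}$ is the relative de Rham cohomology (equivalently, the crystalline realisation of the $F$-isocrystal attached to $A^{\univ}$), so the Frobenius on $\widetilde{\scrH}$ is geometric and of weight $1$. Consequently $\bfepsilon\widetilde{\scrH} = \scrH$ inherits a Frobenius action of weight $1$, and $\scrH^k = \Sym^k \scrH$ is a direct summand (cut out by the symmetriser and the idempotents $\bfepsilon^{\otimes k}$) of the relative cohomology of the $k$-fold fibre product $\pi^{(k)}: A^{\univ,k} := A^{\univ}\times_X \cdots \times_X A^{\univ} \to X$. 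More precisely, the relative K\"unneth isomorphism identifies $\widetilde{\scrH}^{\otimes k}$ with a direct summand of $R^k\pi^{(k)}_*\Omega^{\bullet}_{A^{\univ,k}/X}$, and then symmetrising together with $\bfepsilon^{\otimes k}$ extracts $\scrH^k$.

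Next I would invoke the (Frobenius-equivariant) degeneration of the Leray spectral sequence
\[
    E_2^{ij} = H^i_{\dR}(X, R^j\pi^{(k)}_*\Omega^{\bullet}_{A^{\univ,k}/X}) \Longrightarrow H^{i+j}_{\dR}(A^{\univ,k}),
\]
which is a classical fact for smooth proper morphisms of smooth $\Q_p$-schemes, and descends compatibly to crystalline/rigid cohomology (see e.g.\ \cite{Katz-nilpotent} for the de Rham side and the comparison with rigid cohomology via \cite[Proposition 5.3]{EY-Poincare}). Cutting by the idempotent that projects onto $\Sym^k \bfepsilon^{\otimes k}$, this realises $H^i_{\dR}(X, \scrH^k)$ as a direct summand of $H^{i+k}_{\dR}(A^{\univ,k})$ as a filtered $F$-isocrystal. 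The Weil conjectures (Deligne, plus the crystalline analogue due to Katz--Messing and Kedlaya in the rigid setting) then imply that Frobenius eigenvalues on $H^{i+k}_{\dR}(A^{\univ,k})$ are pure of Weil weight $i+k$; by the compatibility with idempotents, the same holds for $H^i_{\dR}(X, \scrH^k)$.

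For the purity condition on smooth closed subschemes $Z\subset X$: since $X$ is a curve, such $Z$ is either $X$ itself (handled above, giving weight $i+k$) or a closed point $Z$ with residue field a finite extension of $\Q_p$. In the latter case $H^i_{\dR}(Z,\scrH^k) = 0$ unless $i = 0$, and $H^0_{\dR}(Z,\scrH^k)$ is the fibre $\scrH^k_Z$, which is a direct summand of $H^k_{\dR}(A^{\univ,k}_Z)$ where $A^{\univ,k}_Z$ is a product of abelian surfaces; the Weil conjectures again give weight $k = s + 0$. Taking $s = k$ in the purity condition then yields the desired statement.

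The main obstacle I expect is not the computation of weights \emph{per se} (the Weil conjectures do all the work), but rather verifying that the identifications of $\scrH^k$ as a direct summand of $R^k\pi^{(k)}_*\Omega^{\bullet}$ and the Leray degeneration are genuinely compatible with the Frobenius structure on $R\Gamma_{\HK}(X_0, \scrH^k)$ constructed in Lemma \ref{PLemma: scrH gives a nice F-isocrystal}. This compatibility follows from the functoriality of the passage from overconvergent isocrystals to their crystalline realisations (\cite[Proposition 5.3, Remark 5.6]{EY-Poincare}) combined with \cite[Lemma 5.14]{CI-semistable}, but it requires a careful bookkeeping of the idempotents $\bfepsilon^{\otimes k}$ and the symmetriser acting through $\calO_D$.
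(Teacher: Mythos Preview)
Your approach is correct and essentially the same as the paper's: both realise $H_{\dR}^i(X,\scrH^k)$ inside the cohomology of the Kuga--Sato variety $(A^{\univ})^k$ and then invoke the Weil conjectures (via \cite{CLS}), with the closed-point case handled by identifying the fibre with $\Sym^k \bfepsilon H_{\dR}^1(A_Z)$.

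The only difference is in the bookkeeping of idempotents. The paper quotes the specific projector $\epsilon_k \in \Q[\Aut((A^{\univ})^k/X)]$ from \cite[\S 7.1]{Brooks-phd} and \cite[\S 2.1]{BDP}, which incorporates the involutions $[-1]$ on each factor in addition to the symmetriser; this kills \emph{all} cohomology of $(A^{\univ})^k$ outside degree $k+1$ and thus yields a clean \emph{isomorphism} $H_{\dR}^1(X,\Sym^k\widetilde{\scrH}) \cong \epsilon_k H_{\dR}^{k+1}((A^{\univ})^k)$. Your idempotent (symmetriser together with $\bfepsilon^{\otimes k}$) does not obviously annihilate the other Leray graded pieces $H^a(X, R^b\pi^{(k)}_*\Omega^{\bullet})$ with $a+b=i+k$, $b\neq k$, so the phrase ``direct summand of $H^{i+k}_{\dR}(A^{\univ,k})$'' is slightly stronger than what you have actually shown. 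What your argument does give is that $H_{\dR}^i(X,\scrH^k)$ is a Frobenius-stable \emph{subquotient} of $H_{\dR}^{i+k}(A^{\univ,k})$ (a direct summand of the graded piece $E_2^{i,k}$ of the Leray filtration), and this is already enough for purity. So the argument goes through as written once ``direct summand'' is read as ``Frobenius-stable subquotient'', and your route via Leray degeneration avoids having to cite the precise construction of $\epsilon_k$.
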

\begin{proof}
When $k=0$, this is the case without coefficient and one just apply the result of \cite{CLS}. So, let's assume $k>0$.
Since $k>0$, $\scrH^k$ has no nontrivial global horizontal section (\cite[Lemma VII.4]{Brooks-phd}), one only need to consider $H_{\dR}^1(X, \scrH^{k})$. 

Note first that it is enough to show the claim for $\Sym^k \widetilde{\scrH}$ since $\scrH^k $ is a direct summand of $\Sym^k \widetilde{\scrH}$.
Let \[
    (A^{\univ})^k := \underbrace{A^{\univ}\times_X \cdots \times_X A^{\univ}}_{k \text{ times}}.
\]
According to \cite[\S 7.1]{Brooks-phd} (see also \cite[\S 2.1]{BDP}), there is an idempotent $\epsilon_k \in \Q[\Aut((A^{\univ})^k/X)]$ (constructed by averaging the actions of permutations and involutions) that yields\begin{itemize}
    \item $H_{\dR}^1(X, \Sym^k \widetilde{\scrH} ) \cong \epsilon_k H_{\dR}^{k+1}((A^{\univ})^k)$ canonically and 
    \item $\epsilon_k H_{\dR}^{j}((A^{\univ})^k) = 0$ for all $j \neq k+1$.
\end{itemize}
This then implies that the Frobenius action on $H_{\dR}^1(X, \Sym^k \widetilde{\scrH})$ is of pure Weil weight $k+1$. 

Finally, we have to know the Weil weight of $H_{\dR}^0(Z, \Sym^k \widetilde{\scrH})$ for all closed subschemes $Z\subset X$. In our case, those $Z$'s are nothing but closed points. Given such a $Z$, let $A_Z$ be the false elliptic curve associated with $Z$ given by the moduli interpretation of $X$. Note that $A_Z = A^{\univ}|_Z$. In particular, we have \[
    H^0_{\dR}(Z, \Sym^k \widetilde{\scrH}) \cong \Sym^k H_{\dR}^1(A_{Z}),
\] on which the Frobenius action has pure Weil weight $k$. 
\end{proof}

\subsection{Diagonal cycles and a formula of Darmon--Rotger}\label{subsection: diagonal cycle}


Our goal in this subsection is to achieve the formula in \cite[Theorem 3.8]{DR} in the case of compact Shimura curve over $\Q$ without the use of the Kuga--Sato variety. 

Let $(k, \ell, m)\in \Z^3$ be such that \begin{enumerate}
    \item[$\bullet$] $k+ \ell +m\in 2\Z$, 
    \item[$\bullet$] $2< k\leq \ell \leq m$ and $m < k+\ell$, 
    \item[$\bullet$] $\ell + m - k = 2t + 2$ for some $t\in \Z_{\geq 0}$. 
\end{enumerate}
We write \[
    r_1:= k-2, \quad r_2 := \ell-2, \quad r_3:=m-2, \quad r:=\frac{r_1+r_2+r_3}{2}
\] 
Note that we have $r_2 +r_3 \geq r_1 \geq r_3-r_2$ and $r-r_1 =t$.

We fix three elements \begin{align*}
    \eta & \in H_{\dR}^1(X, \scrH^{r_1})\\
    \omega_2 & \in H^0(X, \underline{\omega}^{\ell}) = F^{r_2+1} H_{\dR}^1(X, \scrH^{r_2})\\
    \omega_3 & \in H^0(X, \underline{\omega}^{m}) = F^{r_3+1}H_{\dR}^1(X, \scrH^{r_3})
\end{align*} and view \[ 
    \eta \otimes \omega_1 \otimes \omega_2 \in F^{r_2+r_3+2} H_{\dR}^3(X^3, \scrH^{r_1} \boxtimes \scrH^{r_2} \boxtimes  \scrH^{r_3}) 
\] by applying Künneth formula.
We recall the notation $\scrH^{r_1} \boxtimes \scrH^{r_2} \boxtimes  \scrH^{r_3} := \pi_1^* \scrH^{r_1} \otimes \pi_2^* \scrH^{r_2} \otimes \pi_3^* \scrH^{r_3}$, 
where $\pi_i : X^3 \rightarrow X$ is the projection of the $i$-th component.

On the other hand, by applying the short exact sequence in Corollary \ref{Corollary: short exact sequence of fp coh} and note that the projections $pr_{\fp}$'s are isomorphisms in the three cases above, we obtain the three corresponding elements in the finite polynomial cohomology\begin{align*}
    \widetilde{\eta} & \in H_{\fp}^1(X, \scrH^{r_1}, 0), \\
    \widetilde{\omega}_2 & \in H_{\fp}^1(X, \scrH^{r_2}, r_2+1),\\
    \widetilde{\omega}_3 & \in H_{\fp}^1(X, \scrH^{r_3}, r_3+1). 
\end{align*}
One further applies the cup product on finite polynomial cohomology and get \[
    \widetilde{\eta} \cup \widetilde{\omega}_2 \cup \widetilde{\omega}_3 \in H_{\fp}^3(X, \scrH^{r_1} \otimes \scrH^{r_2}\otimes \scrH^{r_3}, r_2+r_3+2).
\]

\begin{Lemma}\label{Lemma: decomposition of the tensor product}
As sheaves over $X$, we have a Frobenius-equivariant decomposition \[
    \scrH^{r_2} \otimes \scrH^{r_3} \cong \oplus_{j=0}^{r_2} \scrH^{r_3+r_2-2j}(-j).
\] In particular, $\scrH^{r_1}(-t)$ is a direct summand of $\scrH^{r_2} \otimes \scrH^{r_3}$ and we have a projection \[
    \pr_{r_1}: \scrH^{r_2} \otimes \scrH^{r_3} \rightarrow \scrH^{r_1}(-t).
\]
\end{Lemma}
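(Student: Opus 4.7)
The plan is to derive the decomposition from the classical Clebsch--Gordan formula for tensor products of symmetric powers of a rank-$2$ bundle, once the correct Tate twist has been pinned down. The essential input is a Frobenius- and filtration-equivariant isomorphism
\begin{equation*}
    \wedge^2 \scrH \cong \scrO_X(-1).
\end{equation*}
The underlying sheaf-theoretic isomorphism comes from the symplectic self-duality of $\scrH$ (Lemma~\ref{Lemma: basic properties of scrH}(iii)); the filtration is easily computed, since $\Fil^1 \wedge^2 \scrH = \underline{\omega}\otimes\underline{\omega}^{-1} \cong \scrO_X$ while $\Fil^2 = 0$. The Tate twist reflects the origin of the pairing: the principal polarisation on $A^{\univ}/X$ produces an alternating pairing on $\widetilde{\scrH}$ with values in $\scrO_X(-1)$ (Frobenius acting as multiplication by $p$, as on the second de Rham cohomology of any abelian variety), and restriction to the $\bfepsilon$-isotypic part inherits this twist.

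With this input, the decomposition is Clebsch--Gordan for rank-$2$ bundles. For any rank-$2$ locally free $V$ with a fixed isomorphism $\wedge^2 V \cong L$ (and $r_2\leq r_3$), one has a canonical isomorphism
\begin{equation*}
    \Sym^{r_2} V \otimes \Sym^{r_3} V \xrightarrow{\sim} \bigoplus_{j=0}^{r_2} \Sym^{r_2+r_3-2j}V \otimes L^{\otimes j},
\end{equation*}
whose $j$-th component is obtained by contracting $j$ pairs of tensor factors using (the inverse of) the alternating form $V\otimes V \to L$ and then symmetrising. Both sides have total rank $(r_2+1)(r_3+1)$, so it suffices to check injectivity on stalks, where the statement reduces to the standard Clebsch--Gordan decomposition for $\SL_2$-representations. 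Since every step in the construction uses only the symplectic pairing, which is Frobenius- and filtration-equivariant by the previous paragraph, the resulting isomorphism lives in $\Syn^{\#}(X_0, \frakX, \calX)$.

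Applying this with $V = \scrH$ and $L = \scrO_X(-1)$ yields the first statement. For the second, the parity condition $\ell + m - k = 2t + 2$ rewrites as $r_1 = r_2 + r_3 - 2t$, so $\scrH^{r_1}(-t)$ corresponds to the summand $j = t$. The required bound $0 \leq t \leq r_2$ follows from the hypotheses on $(k,\ell,m)$: non-negativity is by definition, and $t \leq r_2$ is equivalent to $r_3 \leq r_1 + r_2$, which combines $m < k+\ell$ with the parity condition $k+\ell+m\in 2\Z$ to give $m \leq k+\ell-2$. Projection onto this summand defines $\pr_{r_1}$.

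The only delicate point is pinning down the normalisation of the Tate twist on $\wedge^2 \scrH$; once that is verified (say by specialising to a closed point of $X$ or by a direct computation of the Frobenius action on the determinant), everything else is formal. The combinatorial side of Clebsch--Gordan is standard, and the extra structures carried by objects of $\Syn^{\#}$ do not complicate matters, since contractions with a structure-preserving pairing and symmetrisations are morphisms in $\Syn^{\#}$ by construction.
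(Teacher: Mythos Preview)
Your proposal is correct and follows the same approach as the paper: both invoke the Clebsch--Gordan decomposition for symmetric powers of a rank-$2$ object, with the paper simply citing \cite[Exercise 11.11]{Fulton--Harris} and remarking that the twists are needed for Frobenius-equivariance. You supply strictly more detail than the paper does---in particular the identification $\wedge^2\scrH \cong \scrO_X(-1)$ that explains the twists, and the verification that $0\le t\le r_2$---none of which the paper spells out.
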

\begin{proof}
The assertion follows from the fact that given a two-dimensional vector space $V$ over $\Q_p$, we have a decomposition \[
    \Sym^{r_2} V \otimes_{\Q_p} \Sym^{r_3} V \cong \bigoplus_{j=0}^{r_2} \Sym^{r_3+r_2-2j} V.
\] See, for example, \cite[Exercise 11.11]{Fulton--Harris}. We note that we have the twists in the statement so that the decomposition is Frobenius-equivariant. 
\end{proof}

\begin{Corollary}\label{Corollary: decomposition of triple tensor product}
Over $X$, we have a Frobenius-equivariant decomposition \[
    \scrH^{r_1} \otimes \scrH^{r_2} \otimes \scrH^{r_3} \cong \scrH^{r_1} \otimes \left(\bigoplus_{j=0}^{r_2} \scrH^{r_3+r_2-2j}(-j)\right).
\]
\end{Corollary}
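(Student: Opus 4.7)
The plan is to deduce this directly from Lemma \ref{Lemma: decomposition of the tensor product} by tensoring with $\scrH^{r_1}$. More precisely, I would take the Frobenius-equivariant isomorphism
\[
    \scrH^{r_2} \otimes \scrH^{r_3} \cong \bigoplus_{j=0}^{r_2} \scrH^{r_3+r_2-2j}(-j)
\]
established in the lemma and apply the exact functor $\scrH^{r_1} \otimes_{\scrO_X} (-)$ to both sides. Since $\scrH^{r_1}$ is locally free, tensoring commutes with finite direct sums, and since the isomorphism in the lemma is Frobenius-equivariant, so is the resulting one on $\scrH^{r_1} \otimes \scrH^{r_2} \otimes \scrH^{r_3}$. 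This is essentially a one-line argument; I anticipate no obstacle, as the only content beyond the lemma is the formal compatibility of tensor products with direct sums and with the Frobenius structure (which is preserved because the Frobenius on $\scrH^{r_1}$ is untouched and the twist $(-j)$ only affects the other factors).
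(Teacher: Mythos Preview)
Your proposal is correct and matches the paper's approach: the corollary is stated without proof immediately after Lemma~\ref{Lemma: decomposition of the tensor product}, so it is understood to follow by tensoring that lemma's Frobenius-equivariant decomposition with $\scrH^{r_1}$, exactly as you describe.
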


Now, consider the triple product $X^3$. Fix a base point $o\in X$ and consider the following embeddings of $X$ into $X^3$ \[
    \begin{array}{rl}
        \iota_{123}: & X \xrightarrow{\Delta} X\times X \times X\\
        \iota_{12}: & X \xrightarrow{\Delta} X \times X \rightarrow X \times X \times \{o\} \hookrightarrow X\times X\times X\\
        \iota_{13}: & X \xrightarrow{\Delta} X \times  X \rightarrow X \times \{o\} \times X \hookrightarrow X\times X\times X\\
        \iota_{23}: & X \xrightarrow{\Delta} X \times X \rightarrow \{o\} \times X \times X \hookrightarrow X\times X\times X\\
        \iota_1: & X \rightarrow X \times \{o\} \times \{o\} \hookrightarrow X\times X \times X\\
        \iota_2: & X \rightarrow \{o\} \times X \times \{o\} \hookrightarrow X\times X \times X\\
        \iota_3: & X \rightarrow \{o\} \times \{o\} \times X \hookrightarrow X\times X \times X
    \end{array}.
\]  We also consider the embeddings \begin{align*}
    \iota_{23}' & : X \xrightarrow{\Delta} X\times X\\
    \iota_2' & : X \rightarrow X \times \{o\} \hookrightarrow X\times X\\
    \iota_3' & : X \rightarrow \{o\} \times X \hookrightarrow X \times X
\end{align*} which can be viewed as cutting off the first component of the maps $\iota_{123}$, $\iota_{12}$ and $\iota_{13}$.

Now we let $X_{123} := \iota_{123}(X) \in Z^2(X^3)$.
For any $\tau \in H_{\fp}^3(X_{123}, \scrH^{r_1} \boxtimes \scrH^{r_2}\boxtimes \scrH^{r_3}, r_2+r_3+2)$, we write $\tau = \sum_{i=1}^n \tau_1^{(i)}\otimes \tau_2^{(i)} \otimes \tau_3^{(i)}$, where $\tau_j^{(i)}$ corresponds to the $\scrH^{r_j}$-component. 
Applying the perfect pairing \[
    \langle \cdot, \cdot\rangle_{\fp}: H_{\fp}^3(X_{123}, \scrH^{r_1}\boxtimes \scrH^{r_2}\boxtimes \scrH^{r_3}, r_2+r_3+2) \times H_{\fp}^0(X_{123}, (\scrH^{r_1}\boxtimes \scrH^{r_2}\boxtimes \scrH^{r_3})^{\vee}(-r), 0) \rightarrow \Q_p, 
\] we know that there exists a unique element \[
    \one_{X_{123}} \in H_{\fp}^0(X_{123}, (\scrH^{r_1}\boxtimes \scrH^{r_2}\boxtimes \scrH^{r_3})^{\vee}(-r), 0) = H_{\dR}^0(X_{123}, (\scrH^{r_1}\boxtimes \scrH^{r_2}\boxtimes \scrH^{r_3})^{\vee})(-r)
\] such that \[
    \langle \tau, \one_{X_{123}} \rangle_{\fp} = \sum_{i=1}^n \langle \tau_1^{(i)}, \pr_{r_1}(\tau_2^{(i)}\otimes \tau_3^{(i)})\rangle_{\fp}.
\]

\begin{Definition}\label{Definition: diagonal cycle with coefficients}
The \textbf{diagonal cycle with coefficients in }$(\scrH^{r_1}\boxtimes \scrH^{r_2}\boxtimes \scrH^{r_3})^{\vee}$ is defined to be \[
    \Delta_{2,2,2}^{k, \ell, m} := \one_{X_{123}} \cdot X_{123} \in A^2(X^3, (\scrH^{r_1}\boxtimes \scrH^{r_2}\boxtimes \scrH^{r_3})^{\vee}(-r)).
\]
\end{Definition}

\begin{Lemma}
The diagonal cycle $\Delta_{2,2,2}^{k, \ell, m}$ is null-homologous. 
\end{Lemma}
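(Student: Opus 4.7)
The plan is to verify the condition $\pr_{\fp}\circ\eta_{\fp}(\Delta_{2,2,2}^{k,\ell,m})=0$ by showing the \emph{stronger} vanishing statement
\[
    H_{\dR}^{4}\bigl(X^3,\, \scrH^{r_1}\boxtimes \scrH^{r_2}\boxtimes \scrH^{r_3}\bigr)=0,
\]
which at once forces the target $F^{2}H_{\dR}^{4}(X^3,\scrF)$ of $\pr_{\fp}\circ\eta_{\fp}$ to be zero, where $\scrF:=(\scrH^{r_1}\boxtimes\scrH^{r_2}\boxtimes\scrH^{r_3})^{\vee}(-r)$. Observe first that the Tate twist $(-r)$ does not affect the underlying de Rham vector space, and that by Lemma \ref{Lemma: basic properties of scrH}(iii) every $\scrH^{r_j}$ is self-dual, so that $\scrF\cong \scrH^{r_1}\boxtimes\scrH^{r_2}\boxtimes\scrH^{r_3}$ as $\scrO_{X^3}$-modules with integrable connection. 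This reduces the problem to the untwisted Künneth computation.

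Next I would invoke the Künneth decomposition
\[
    H_{\dR}^{4}\bigl(X^3,\boxtimes_{j}\scrH^{r_j}\bigr)\;\cong\; \bigoplus_{a+b+c=4} H_{\dR}^{a}(X,\scrH^{r_1})\otimes H_{\dR}^{b}(X,\scrH^{r_2})\otimes H_{\dR}^{c}(X,\scrH^{r_3}),
\]
with $a,b,c\in\{0,1,2\}$ since $\dim X=1$. Because $r_j\geq r_1=k-2>0$, the sheaf $\scrH^{r_j}$ admits no nontrivial global horizontal sections (this is the same input used in the proof of Lemma \ref{Lemma: Weil weight}, cf.\ \cite[Lemma~VII.4]{Brooks-phd}), so $H_{\dR}^{0}(X,\scrH^{r_j})=0$; combining this with Poincaré duality and self-duality of $\scrH^{r_j}$ gives $H_{\dR}^{2}(X,\scrH^{r_j})\cong H_{\dR}^{0}(X,\scrH^{r_j})^{\vee}=0$ as well. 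Thus each factor $H_{\dR}^{\bullet}(X,\scrH^{r_j})$ is concentrated in degree $1$.

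Finally, the only Künneth summand that could contribute would be $(a,b,c)=(1,1,1)$, but this has total degree $3\neq 4$; all other triples involve a factor of degree $0$ or $2$, which vanishes. Therefore $H_{\dR}^{4}(X^3,\scrF)=0$, and a fortiori $F^{2}H_{\dR}^{4}(X^3,\scrF)=0$, so $\pr_{\fp}(\eta_{\fp}(\Delta_{2,2,2}^{k,\ell,m}))=0$ and $\Delta_{2,2,2}^{k,\ell,m}\in A^{2}(X^3,\scrF)_0$. There is no significant obstacle here—the argument is a pure dimension count enabled by the vanishing of $H^0(X,\scrH^{r_j})$; all the genuine content of the applications is deferred to the subsequent computation of $\AJ_{\fp}(\Delta_{2,2,2}^{k,\ell,m})$.
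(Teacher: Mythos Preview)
Your proof is correct and follows essentially the same route as the paper: both arguments invoke the K\"unneth decomposition together with the vanishing $H_{\dR}^{0}(X,\scrH^{r_j})=0$ (from \cite[Lemma~VII.4]{Brooks-phd}, using $r_j>0$) to conclude that the relevant de Rham group on $X^3$ is zero. The only cosmetic difference is that the paper records the vanishing of $H_{\dR}^{2}(X^3,\scrH^{r_1}\boxtimes\scrH^{r_2}\boxtimes\scrH^{r_3})$ while you compute $H_{\dR}^{4}$ directly; since the sheaves are self-dual these are Poincar\'e-dual statements, and in fact the K\"unneth argument shows both degrees vanish for the same reason (the only nonzero factor degree is $1$, and $1+1+1$ equals neither $2$ nor $4$).
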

\begin{proof}
Note that the sheaf $\scrH^{r_j}$ has no nontrivial global horizontal sections unless $r_j=0$ (\cite[Lemma VII.4]{Brooks-phd}). Thus, by applying Künneth decomposition, we see that $H_{\dR}^2(X^3, \scrH^{r_1}\boxtimes \scrH^{r_2}\boxtimes \scrH^{r_3}) = 0 $. This then implies the desired result. 
\end{proof}

\begin{Theorem}\label{Theorem: formula for diagonal cycles}
For $\eta, \omega_2, \omega_3$ as above, we have \[
    \AJ_{\fp}(\Delta_{2,2,2}^{k, \ell, m})(\eta\otimes \omega_2\otimes\omega_3) = \langle \widetilde{\eta}, \psi_{23}^*(\omega_2 \otimes \omega_3)^{\sim} \rangle_{\fp},
\] where \begin{enumerate}
    \item[$\bullet$] $\widetilde{\eta}$ is a lift of $\eta$ via the projection $H_{\fp}^{1}(X, \scrH^{r_1}, 0) \rightarrow H_{\dR}^1(X, \scrH^{r_1})$, 
    \item[$\bullet$] $(\omega_2\otimes \omega_3)^{\sim}$ is a lift of $\omega_2\otimes \omega_3$ via the projection $H_{\fp}^{2}(X^2, \scrH^{r_2}\boxtimes \scrH^{r_3}, r_2+r_3+2) \rightarrow F^{r_2+r_3+2}H_{\dR}^2(X^2, \scrH^{r_2}\boxtimes \scrH^{r_3})$
    (alternatively, it can be written as $\pi_2^* \widetilde{\omega}_2 \cup \pi_3^* {\widetilde{\omega}_3}$ where $\widetilde{\omega}_j \in H_{\fp}^{1}(X, \scrH^{r_j}, 1)$ is a lift of $\omega_j$),
    \item[$\bullet$] $\psi_{23}^*: H_{\fp}^{2}(X^2, \scrH^{r_2} \boxtimes \scrH^{r_3}, r_2+r_3+2) \rightarrow H_{\fp}^{1}(X, \scrH^{r_1}(-t), r_2+r_3+2)$ is the map given by $\pr_{r_1} \circ \  \iota_{23}'^*$.
\end{enumerate} Here, we use that the self-duality of $\scrH^{r_1}$ gives a pairing $\scrH^{r_1}\otimes \scrH^{r_1}(-t) \rightarrow \scrO_{X}(-r)$.
\end{Theorem}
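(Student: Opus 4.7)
The plan is to apply the defining formula for $\AJ_{\fp}$ from Theorem \ref{Theorem: Abel--Jacobi map} to a lift of $\eta\otimes\omega_2\otimes\omega_3$ that is adapted to the product structure of $X^3$, and then to use the defining property of $\one_{X_{123}}$ together with functoriality of the cup product to collapse the pairing on $X^3$ into a pairing on $X$.

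Concretely, the key step is to take as lift
$$(\eta\otimes\omega_2\otimes\omega_3)^{\sim} := \pi_1^*\widetilde{\eta} \cup \pi_2^*\widetilde{\omega}_2 \cup \pi_3^*\widetilde{\omega}_3 \in H_{\fp}^3(X^3, \scrH^{r_1}\boxtimes\scrH^{r_2}\boxtimes\scrH^{r_3}, r_2+r_3+2),$$
using the cup product on finite polynomial cohomology from Proposition \ref{Proposition: cup products}. Since $\pr_{\fp}$ is compatible with cup products, this class does indeed project to $\eta\otimes\omega_2\otimes\omega_3$ via Künneth, so it is a valid lift. Theorem \ref{Theorem: Abel--Jacobi map} then yields
$$\AJ_{\fp}(\Delta_{2,2,2}^{k,\ell,m})(\eta\otimes\omega_2\otimes\omega_3) = \langle \iota_{123}^*(\eta\otimes\omega_2\otimes\omega_3)^{\sim}, \one_{X_{123}}\rangle_{\fp}.$$
Because $\iota_{123}^*\circ\pi_i^* = \id$ and because the cup product is functorial under pullback, the first slot simplifies to $\widetilde{\eta}\cup\widetilde{\omega}_2\cup\widetilde{\omega}_3$, living in $H^3_{\fp}(X, \scrH^{r_1}\otimes\scrH^{r_2}\otimes\scrH^{r_3}, r_2+r_3+2)$. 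Applying the defining property of $\one_{X_{123}}$ (which isolates the $\scrH^{r_1}$-factor and projects the remaining factors via $\pr_{r_1}$) then gives
$$\langle\widetilde{\eta}\cup\widetilde{\omega}_2\cup\widetilde{\omega}_3, \one_{X_{123}}\rangle_{\fp} = \langle\widetilde{\eta}, \pr_{r_1}(\widetilde{\omega}_2\cup\widetilde{\omega}_3)\rangle_{\fp}.$$
Finally, since $\iota_{123}$ factors as $X \xrightarrow{(\id, \iota_{23}')} X\times X^2$ in the last two coordinates, we obtain $\widetilde{\omega}_2\cup\widetilde{\omega}_3 = \iota_{23}'^*(\pi_2^*\widetilde{\omega}_2\cup\pi_3^*\widetilde{\omega}_3) = \iota_{23}'^*(\omega_2\otimes\omega_3)^{\sim}$, and hence $\pr_{r_1}(\widetilde{\omega}_2\cup\widetilde{\omega}_3) = \psi_{23}^*(\omega_2\otimes\omega_3)^{\sim}$ by the very definition $\psi_{23}^* = \pr_{r_1}\circ\iota_{23}'^*$. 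Chaining these identifications gives the claimed formula.

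The only non-formal verification is the bookkeeping at the level of the mapping-fibre complex of Definition \ref{Definition: syntomic P-cohomology with syntomic coefficients; trivial log structure}: one must check that $\iota_{123}^*$ commutes with the finite polynomial cup product, and that $\pr_{\fp}$ is compatible with it. Both follow from the corresponding compatibilities on de Rham and Hyodo--Kato cohomology, together with the diagrammatic construction of the cup product in Proposition \ref{Proposition: cup products}; no essentially new input is required. I expect this bookkeeping to be the longest (but most routine) part of writing out a full proof, after which the computation above is just a direct unwinding.
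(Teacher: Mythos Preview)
Your proposal is correct and follows essentially the same route as the paper's proof: take the lift $\pi_1^*\widetilde{\eta}\cup\pi_2^*\widetilde{\omega}_2\cup\pi_3^*\widetilde{\omega}_3$, apply the Abel--Jacobi formula of Theorem~\ref{Theorem: Abel--Jacobi map}, pull back along $\iota_{123}$, invoke the defining property of $\one_{X_{123}}$, and then unwind $\psi_{23}^* = \pr_{r_1}\circ\iota_{23}'^*$. The paper records exactly this chain of equalities in four lines without further commentary; your additional remarks on functoriality of the cup product simply make explicit what the paper leaves implicit.
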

\begin{proof}
By the definition of $\Delta_{2,2,2}^{k, \ell, m}$, we have \begin{align*}
    \AJ_p(\Delta_{2,2,2}^{k, \ell, m})(\eta \otimes \omega_2 \otimes \omega_3) &=  \langle \iota_{123}^* (\eta\otimes \omega_2\otimes\omega_3)^\sim , \mathbbm{1}_{X_{123}} \rangle_{\fp} \\
     &=  \langle \iota_{123}^* (\pi_1^* \tilde{\eta} \cup \pi_2^* \tilde{\omega_2} \cup \pi_3^* \tilde{\omega_3} ), \mathbbm{1}_{X_{123}} \rangle_{\fp} \\
     &= \langle \tilde{\eta}, \pr_{r_1} \circ  \iota'^*_{23}(\omega_2 \otimes \omega_3)^{\sim}\rangle_{\fp} \\
     &= \langle \widetilde{\eta}, \psi_{23}^*(\omega_2 \otimes \omega_3)^{\sim} \rangle_{\fp}
\end{align*} 
\end{proof}

\begin{Remark}\label{Remark: Diagonal cycles}
\normalfont 
Readers might notice that our choice of the diagonal cycle $\Delta_{2,2,2}^{k, \ell, m}$ only involves $X_{123}$, which is simpler than the diagonal cycle
$$\Delta_{2,2,2} := - \sum_{\emptyset \neq I \subset \{1, 2, 3\} } (-1)^{\# I} \cdot \iota_I(X)$$
considered in \cite{BDP} for the case $k= \ell = m= 2$.
It could be thought as that the terms other than $X_{123}$ are needed only to make the cycle null-homologous. 
As shown in \cite{Besser-regulatorformula}, those terms do not contribute to the Abel--Jacobi map in the end.
So, it makes sense to consider only $X_{123}$, the image of the diagonal embedding.
\blackqed
\end{Remark}

Note that, because of Lemma \ref{Lemma: Weil weight}, we can choose a polynomial $P(T)\in \Q[T]$ such that \begin{enumerate}
    \item[$\bullet$] $P(\Phi)(\omega_2 \otimes \omega_3) = 0 \in H_{\dR}^2(X^2, \scrH^{r_2} \boxtimes \scrH^{r_3})$ and 
    \item[$\bullet$] $P(\Phi)$ acts invertibly on $H_{\dR}^1(X^2, \scrH^{r_2}\boxtimes \scrH^{r_3})$.
\end{enumerate} As a consequence, there exists a section $\rho(P, \omega_2, \omega_3)\in H^0(X^2, (\scrH^{r_2} \boxtimes \scrH^{r_3})\otimes\Omega_{X^2/\Q_p}^1)$ such that \[
    \nabla \rho(P, \omega_2, \omega_3) = P(\Phi)(\omega_2\otimes \omega_3)
\] and it is well-defined up to a horizontal section. 
Moreover, as $P(\Phi)$ acts isomorphically on $H_{\dR}^1(X, \scrH^{r_1}(-t))$, we may set \[
    \xi(\omega_2, \omega_3) := P(\Phi)^{-1}\psi_{23}^*\rho(P, \omega_2, \omega_3)\in H_{\dR}^1(X, \scrH^{r_1}(-t)).
\]
Similar as in \cite[Proposition 3.7]{DR}, this element is independent of the choice of the polynomial $P$. 

\begin{Corollary}\label{Corollary: relation with DR}
Notation as above. We have \[
    \AJ_{\fp}(\Delta_{2,2,2}^{k, \ell, m})(\eta\otimes \omega_2\otimes\omega_3) = \langle \eta, \xi(\omega_2, \omega_3)\rangle_{\dR}.
\]
In particular, 
$$ \AJ_{\fp}(\Delta_{2,2,2}^{k, \ell, m})(\eta\otimes \omega_2\otimes\omega_3) = \AJ_{p}(\Delta_{k, \ell, m})(\eta\otimes \omega_2\otimes\omega_3)$$
where $\AJ_p$, the ($p$-adic) Abel-Jacobi map, and $\Delta_{k, \ell, m}$, the generalised Gross--Kudla--Schoen cycle, are both defined similarly as in \cite{DR} via Kuga--Sato varieties.
\end{Corollary}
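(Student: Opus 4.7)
The plan is to unpack the finite-polynomial pairing produced by Theorem~\ref{Theorem: formula for diagonal cycles} and translate it into the de Rham pairing on the right-hand side of the corollary. By the purity statement of Lemma~\ref{Lemma: Weil weight}, pick a polynomial $P \in \Poly_{r_2+r_3+2}$ that annihilates $\omega_2 \otimes \omega_3$ in $H^2_{\dR}(X^2, \scrH^{r_2}\boxtimes\scrH^{r_3})$ and, simultaneously, acts invertibly on $H^1_{\dR}(X, \scrH^{r_1}(-t))$. This is precisely the polynomial used in defining $\rho(P, \omega_2, \omega_3)$ and $\xi(\omega_2, \omega_3)$, and it is a cofinal choice in the colimit defining finite polynomial cohomology.

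Working with this $P$, I would describe the lift $(\omega_2 \otimes \omega_3)^\sim \in H^2_{\syn, P}(X^2, \scrH^{r_2}\boxtimes\scrH^{r_3}, r_2+r_3+2)$ explicitly via the mapping cone of Definition~\ref{Definition: syntomic P-cohomology with syntomic coefficients; trivial log structure} and the diagram of Proposition~\ref{Proposition: diagram to understand syntomic P-cohomology}. Since $\omega_2 \otimes \omega_3$ lies in the top Hodge filtration and is killed by $P(\Phi)$, a cocycle representative comes from the pair $(\omega_2 \otimes \omega_3, \rho(P, \omega_2, \omega_3))$, where $\rho$ encodes the Coleman primitive satisfying $\nabla \rho = P(\Phi)(\omega_2 \otimes \omega_3)$ on a suitable affine cover, exactly as in Lemma~\ref{Lemma: syntomic P-cohomology for affine piece} and Corollary~\ref{Corollary: relation with Coleman integration}.

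Applying $\psi_{23}^* = \pr_{r_1}\circ\iota_{23}'^*$ then transports this cocycle into syntomic $P$-cohomology of $X$ with coefficients in $\scrH^{r_1}(-t)$. The de Rham component pulls back and projects to a class in a step of the Hodge filtration which vanishes for dimensional reasons on the curve $X$, while the primitive component produces $\pr_{r_1}\psi_{23}^*\rho(P, \omega_2, \omega_3) \in H^1_{\dR}(X, \scrH^{r_1}(-t))$. Invertibility of $P(\Phi)$ on the latter group, together with Corollary~\ref{Corollary: exact sequence when X is of good reduction}, identifies the class of $\psi_{23}^* (\omega_2 \otimes \omega_3)^\sim$ with the image under $i_{\fp}$ of
\[
    \xi(\omega_2, \omega_3) = P(\Phi)^{-1}\psi_{23}^*\rho(P, \omega_2, \omega_3).
\]
The compatibility of the finite-polynomial pairing and the de Rham pairing stated in Corollary~\ref{Corollary: pairing for finite polynomial cohomology} then yields
\[
    \langle \widetilde{\eta}, \psi_{23}^*(\omega_2\otimes\omega_3)^\sim \rangle_{\fp} = \langle \eta, \xi(\omega_2, \omega_3)\rangle_{\dR},
\]
which, combined with Theorem~\ref{Theorem: formula for diagonal cycles}, proves the first equality.

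For the comparison with $\AJ_p(\Delta_{k,\ell,m})$ on the Kuga--Sato variety, one uses that $\scrH^{r_j}$ and the cycle $\Delta_{2,2,2}^{k,\ell,m}$ are $\epsilon$-isotypic pieces, for an idempotent $\epsilon$ constructed analogously to the one sketched in \S\ref{subsection: Heuristic on non-unipotent coefficients}, of the corresponding de Rham cohomology and cycle on the Kuga--Sato variety associated with $A^{\univ}/X$. The classical Besser-style computation carried out in \cite{DR} expresses $\AJ_p(\Delta_{k,\ell,m})(\eta\otimes\omega_2\otimes\omega_3)$ through exactly the same primitive $\xi(\omega_2, \omega_3)$, so both sides reduce to $\langle\eta, \xi(\omega_2, \omega_3)\rangle_{\dR}$ and therefore coincide. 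The main obstacle is the middle step: one must carefully justify that $(\omega_2\otimes\omega_3, \rho)$ really represents the lift in the relevant mapping cone, and that the various pullbacks, Tate twists and Hodge filtrations entering $\psi_{23}^*$ interact as claimed; once this bookkeeping is secured, the rest of the proof follows by formal properties of the finite-polynomial Abel--Jacobi formalism established in \S\ref{subsection: AJ map}.
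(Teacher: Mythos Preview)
Your proposal is correct and follows essentially the same route as the paper: represent $(\omega_2\otimes\omega_3)^\sim$ by the cocycle pair $(\omega_2\otimes\omega_3,\rho(P,\omega_2,\omega_3))$ in $H^2_{\syn,P}$, apply $\psi_{23}^*$, use the vanishing of $F^{r_2+r_3+2}H^2_{\dR}(X,\scrH^{r_1}(-t))$ to land in the image of $i_{\fp}$, identify with $\xi(\omega_2,\omega_3)$ via the description $i_{\fp}(\xi)=(P(\Phi)\xi,0)$, and conclude by the compatibility of $\langle\cdot,\cdot\rangle_{\fp}$ with $\langle\cdot,\cdot\rangle_{\dR}$ through $i_{\fp}$ and $\pr_{\fp}$. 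The only cosmetic difference is that the paper packages the vanishing step as a commutative diagram of the two short exact sequences from Corollary~\ref{Corollary: exact sequence when X is of good reduction}, while you phrase it in terms of the cocycle components directly; your appeal to Lemma~\ref{Lemma: syntomic P-cohomology for affine piece} and Corollary~\ref{Corollary: relation with Coleman integration} is slightly off since those are stated for curves, but the cocycle description you need is already in Proposition~\ref{Proposition: diagram to understand syntomic P-cohomology}.
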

\begin{proof}
The proof is similar to the one of \cite[Theorem~3.8]{DR} (more precisely, the passage after Lemma 3.11 of \emph{op. cit.}).
We just translate it into the language of finite polynomial cohomology with coefficients.

First, observe that the element $(\omega_2 \otimes \omega_3)^\sim$ can be represented by 
$$(\rho(P, \omega_2, \omega_3), \omega_2 \otimes \omega_3)  \in H^2_{\syn, P}(X^2, \scrH^{r_2} \boxtimes \scrH^{r_3}, 2)$$
Now consider the following commutative diagram coming from the exact sequences in Corollary \ref{Corollary: exact sequence when X is of good reduction}
$$
\begin{tikzcd}[column sep = small]
    0 \arrow[r]  &H^1_{\dR}(X^ 2, \scrH^{r_2} \boxtimes \scrH^{r_3}) \arrow[r, "i_{\fp}"] \arrow[d, "\psi^*_{23}"]& H^2_{\syn, P}(X^2, \scrH^{r_2} \boxtimes \scrH^{r_3}, r_2+r_3+2) \arrow[r, "\pr_{\fp}"] \arrow[d, "\psi^*_{23}"] & F^{r_2+r_3+2} H^2_{\dR}(X^2, \scrH^{r_2} \boxtimes \scrH^{r_3}) \arrow[r] \arrow[d, "\psi^*_{23}"] &0 \\
    0 \arrow[r]  &H^1_{\dR}(X, \scrH^{r_1}(-t)) \arrow[r, "i_{\fp}"] & H^2_{\syn, P}(X, \scrH^{r_1}(-t)), r_2+r_3+2) \arrow[r, "\pr_{\fp}"] & F^{r_2+r_3+2} H^2_{\dR}(X, \scrH^{r_1}(-t)) \arrow[r] &0
\end{tikzcd}$$
As $F^{r_2+r_3+2} H^2_{\dR}(X, \scrH^{r_1}(-t)) =0$, the element $\psi^*_{23}$ lies inside the image of $i_{\fp}$.
Moreover, the map $i_{\fp}$ is given by
$\xi \mapsto (P(\Phi)\xi, 0)$.
Hence we see that $\psi^*_{23}(\omega_2 \otimes \omega_3)^\sim = i_{\fp}( \xi(\omega_2, \omega_3))$.

Now the result follows from the compatibility of the Poincar\'{e} pairings and the maps $i_{\fp}$ and $\pr_{\fp}$ (Corollary \ref{Corollary: pairing for finite polynomial cohomology}), which reads 
\begin{align*}
    \langle \widetilde{\eta}, \psi_{23}^*(\omega_2 \otimes \omega_3)^{\sim} \rangle_{\fp} &= \langle \pr^{-1}_{\fp} \pr_{\fp} (\widetilde{\eta}), i_{\fp} (\xi(\omega_2, \omega_3)) \rangle_{\fp} \\
    &= \langle \pr^{-1}_{\fp}(\eta), i_{\fp} (\xi(\omega_2, \omega_3)) \rangle_{\fp} \\
    &= \langle \eta, \xi(\omega_2, \omega_3) \rangle_{\dR}.
\end{align*}
\end{proof}

\begin{Remark}\label{Remark: case of elliptic modular curves}
\normalfont 
Although the computations above are in the setting of compact Shimura curves over $\Q$, our strategy should directly apply to the setting of elliptic modular curves (as in \cite{DR}) after taking care of the log structure on the compactified modular curves defined by the cusps.
\blackqed
\end{Remark}

\begin{Remark}\label{Remark: L-value}
\normalfont 
Of course, by choosing $\eta$, $\omega_2$ and $\omega_3$ carefully and applying the strategy in \cite{DR}, one can deduce that the formula in Theorem \ref{Theorem: formula for diagonal cycles} computes special values of triple product $L$-functions associated with certain eigenforms. As the purpose of this section is to indicate how the theory of finite polynomial cohomology with coefficients can be applied in practice, we do not intend to link our formula with special $L$-values in this paper. 
\blackqed
\end{Remark}

\subsection{Cycles attached to isogenies and formulae of Bertolini--Darmon--Prasanna}\label{subsection: cycles; isogenies}

In this subsection, we establish a second application of finite polynomial cohomology with coefficients in the spirit of \cite{BDP}. 
To this end, we fix a finite extension $K$ of $\Q_p$ and base change $X$ to $K$. We abuse the notation and still denote it by $X$. 
We also fix two $K$-rational points $x = (A, i, \psi)$ and $y = (A', i', \psi')$ in $X(K)$ such that there is an isogeny $\varphi: (A, i, \psi) \rightarrow (A', i', \psi')$. 
Finally, we fix an integer $r\in \Z_{>0}$ and consider the sheaf \[
    \scrH^{r,r} := \scrH^r \otimes \Sym^r \bfepsilon H_{\dR}^1(A).
\]

Our result in this subsection is the following theorem:
\begin{Theorem}\label{Theorem: cycle attached to isogenies}
There is a unique cycle $\Delta_\varphi = \theta \cdot y \in A^1(X, \scrH^{r,r}(r))_0$
such that for any $\omega \in H^0(X, \underline{\omega}^{r+2})$ and any $\alpha\in \Sym^r \bfepsilon H_{\dR}^1(A)$, we have \[
    \AJ_{\fp}(\Delta_{\varphi})(\omega \otimes \alpha) = \langle F_{\omega}(y) \otimes \alpha , \theta \rangle =  \langle \varphi^* (F_{\omega}(y)), \alpha \rangle.
\] 
Here \begin{enumerate}
    \item[(i)] $\theta$ is an element in $\scrH^{r,r}(r)(y)$ that will be specified later;
    \item[(ii)] $F_{\omega}$ is the Coleman integral of the form $\omega$;
    \item[(iii)] the middle pairing is on $\scrH^{r,r}(y) = \Sym^r \bfepsilon  H_{\dR}^1(A')\otimes \Sym^r \bfepsilon  H_{\dR}^1(A)$; and 
    \item[(iv)] the last pairing is on $\Sym^r \bfepsilon H_{\dR}^1(A)$.
\end{enumerate} In particular, we obtain the same formulae as in \cite[Proposition 3.18 \& Proposition 3.21]{BDP}.
\end{Theorem}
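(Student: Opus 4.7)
The plan is to exhibit $\theta$ explicitly from the isogeny $\varphi$, to observe that the resulting cycle is null-homologous on dimensional grounds, and then to evaluate the Abel--Jacobi pairing by unpacking the Coleman-integration interpretation of a lift in finite-polynomial cohomology. Uniqueness of $\Delta_\varphi$ will follow from non-degeneracy of the pairing on $\scrH^{r,r}(r)|_y$.

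For the construction, the isogeny $\varphi$ induces $\varphi^* : H_{\dR}^1(A') \to H_{\dR}^1(A)$, equivariant for the action of $\bfepsilon \in \calO_D$, which after applying $\Sym^r$ gives a $K$-linear map between the two fibres $\Sym^r \bfepsilon H_{\dR}^1(A')$ and $\Sym^r \bfepsilon H_{\dR}^1(A)$. Using the (Tate-twisted) self-duality of $\Sym^r \bfepsilon H_{\dR}^1(A')$ provided by the principal polarisation on $A'$, I would transpose this into the element
\[
\theta \in \Sym^r \bfepsilon H_{\dR}^1(A')(r) \otimes \Sym^r \bfepsilon H_{\dR}^1(A) = \scrH^{r,r}(r)|_y
\]
characterised by $\langle u \otimes v,\theta \rangle = \langle \varphi^*(u),v\rangle$ for $u \in \Sym^r \bfepsilon H_{\dR}^1(A')$ and $v \in \Sym^r \bfepsilon H_{\dR}^1(A)$, and set $\Delta_\varphi := \theta \cdot y$. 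That $\Delta_\varphi$ is null-homologous is then automatic: since $\scrH^r$ has no non-zero global horizontal sections for $r > 0$, we have $H_{\dR}^0(X,\scrH^r) = 0$, and Poincaré duality together with the self-duality of $\scrH$ up to a Tate twist forces $H_{\dR}^2(X,\scrH^r) = 0$, hence also $H_{\dR}^2(X,\scrH^{r,r}(r)) = 0$ because $\Sym^r \bfepsilon H_{\dR}^1(A)$ is a constant coefficient. Applying Theorem~\ref{Theorem: Abel--Jacobi map}, I would then compute
\[
\AJ_{\fp}(\Delta_\varphi)(\omega\otimes \alpha) = \langle \iota_y^* \widetilde{\omega\otimes \alpha},\, \theta\rangle_{\fp}.
\]
Because $\alpha$ is a constant (horizontal) section of the trivial sheaf $\Sym^r \bfepsilon H_{\dR}^1(A)$ on $X$, one may take the lift of the form $\widetilde{\omega}\otimes \alpha$ with $\widetilde{\omega}$ lifting $\omega$ in the appropriate finite-polynomial cohomology of $\scrH^r$; Corollary~\ref{Corollary: relation with Coleman integration} identifies such a $\widetilde{\omega}$ on any admissible affine open with a Coleman primitive $F_\omega$ of $\omega$, which is unique thanks to the vanishing of horizontal sections noted above. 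Evaluating at $y$ gives $F_\omega(y)\otimes \alpha \in \scrH^{r,r}(r)|_y$, and pairing against $\theta$ via its defining property yields both claimed equalities; uniqueness of $\theta$, hence of $\Delta_\varphi$, follows because the right-hand side, as $\omega$ and $\alpha$ vary, pins down $\theta$ by non-degeneracy of the pairing on $\scrH^{r,r}(r)|_y$.

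The hard part will be the delicate bookkeeping of Hodge-filtration degrees and Tate twists, since $\scrH^{r,r}(r)$ is not unipotent. Following the heuristic of \S\ref{subsection: Heuristic on non-unipotent coefficients}, the twist by $r$ is precisely what moves a form of Hodge index $r+1$ into the $F^1$ piece of the appropriate dual cohomology where the Abel--Jacobi map is naturally defined; verifying rigorously that the formalism of \S\ref{subsection: AJ map} extends to this non-unipotent setting under the relaxed hypotheses stated at the start of \S\ref{section: Applications}, and in particular that $\scrH^{r,r}(r)$ satisfies the (HK-dR) and (Purity) conditions needed there, is the most subtle technical point to confirm before the computation above can be run through.
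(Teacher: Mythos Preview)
Your proposal is correct and follows essentially the same route as the paper. Your $\theta$, characterised by $\langle u\otimes v,\theta\rangle = \langle \varphi^*u,v\rangle$, is exactly the paper's $\varphi_*\one_A$ (where $\one_A$ is the canonical element dual to the Poincar\'e pairing on $\Sym^r\bfepsilon H_{\dR}^1(A)$), and your reduction of the Abel--Jacobi value to $\langle F_\omega(y)\otimes\alpha,\theta\rangle$ via Corollary~\ref{Corollary: relation with Coleman integration} is precisely what the paper formalises in equation~\eqref{eq: preliminary computation of cycles attached to isogenies} together with Lemma~\ref{Lemma: pullback of omega viewed as Coleman integration}. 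The only cosmetic difference is that the paper places the Tate twist on the second tensor factor of $\scrH^{r,r}(r)(y)$ rather than the first, and makes explicit the passage from the fp-pairing to the de Rham pairing at $y$ via the map $i_{\fp}$; you should spell out that last step, since the equality $\langle i_{\fp}(s_\omega)\otimes\alpha,\theta\rangle_{\fp,y} = \langle s_\omega\otimes\alpha,\theta\rangle_{\dR,y}$ is where the Coleman primitive actually enters the formula.
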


\begin{Remark}
\normalfont 
\begin{enumerate}
    \item[(i)] Similarly as before, we have $H_{\dR}^2(X, \scrH^{r,r}) = 0$, which implies that $A^1(X, \scrH^{r,r}(r)) = A^1(X, \scrH^{r,r}(r))_0$. 
    \item[(ii)] To be more precise, the pairing in Theorem \ref{Theorem: cycle attached to isogenies} (iii) is between $\scrH^{r,r}(y)$ and its twist $\scrH^{r,r}(y) (r)$.
\end{enumerate}
\blackqed
\end{Remark}

Let $\omega$ and $\alpha$ be as in the theorem. Recall that we have a short exact sequence \[
    0 \rightarrow \frac{H_{\dR}^0(X, \scrH^{r, r})}{F^{r+1} H_{\dR}^0(X, \scrH^{r,r})} \xrightarrow{i_{\fp}} H_{\fp}^1(X, \scrH^{r,r}, r+1) \xrightarrow{\pr_{\fp}} F^{r+1} H_{\dR}^1(X, \scrH^{r,r}) \rightarrow 0.
\] Choose a lift $(\omega \otimes \alpha)^{\sim}\in H_{\fp}^1(X, \scrH^{r, r}, r+1)$ of $\omega \otimes \alpha\in F^{r+1} H_{\dR}^1(X, \scrH^{r,r})$. 
Then, for any $\theta \in H_{\fp}^0(y, \scrH^{r,r}(r), 0) = H_{\dR}^0(y, \scrH^{r,r}(r))$, we have \[
    \AJ_{\fp}(\theta)(\omega \otimes \alpha) = \langle (\omega \otimes \alpha)^{\sim}, \iota_{y, *}\theta \rangle_{\fp, X} = \langle \iota_y^*(\omega \otimes \alpha)^{\sim}, \theta\rangle_{\fp, y},
\] where $\iota_y: y \hookrightarrow X$ is the natural closed embedding. 
Note that $\iota_y^*(\omega \otimes \alpha)^{\sim} = \iota_y^* \widetilde{\omega}\otimes \alpha$, where $\widetilde{\omega}\in H_{\fp}^1(X, \scrH^r, r+1)$ is a lift of $\omega$.

On the other hand, the short exact sequence \[
    0 \rightarrow \frac{H_{\dR}^0(y, \scrH^r)}{F^{r+1} H_{\dR}^0(y, \scrH^r)} \xrightarrow{i_{\fp}} H_{\fp}^1(y, \scrH^r, r+1) \xrightarrow{\pr_{\fp}} F^{r+1} H_{\dR}^1(y, \scrH^r) \rightarrow 0
\] and the fact that $H_{\dR}^1(y, \scrH^r) = 0$ imply there exists $s_{\omega}\in H_{\dR}^0(y, \scrH^r)$ such that $i_{\fp}(s_{\omega}) = \iota_y^* \widetilde{\omega}$. 
Hence, we have \begin{equation}\label{eq: preliminary computation of cycles attached to isogenies}
    \begin{array}{rl}
        \AJ_{\fp}(\theta)(\omega \otimes \alpha) & = \langle \iota_y^*(\omega \otimes \alpha)^{\sim}, \theta \rangle_{\fp, y}\\
        & = \langle \iota_y^* \widetilde{\omega} \otimes \alpha, \theta \rangle_{\fp, y}\\
        & = \langle i_{\fp}(s_{\omega})\otimes \alpha, \theta \rangle_{\fp, y}\\
        & = \langle s_{\omega}\otimes \alpha, \theta \rangle_{\dR, y}. 
    \end{array}
\end{equation}

\begin{Lemma}\label{Lemma: pullback of omega viewed as Coleman integration}
In $\scrH^r(y)$, we have \[
    s_{\omega} = F_{\omega}(y),
\] where $F_{\omega}$ is the Coleman integral corresponding to $\omega$. Note that $F_{\omega}$ is unique since $\scrH^r$ has no global horizontal section on $X$. 
\end{Lemma}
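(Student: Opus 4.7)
The plan is to reduce everything to the Coleman-integration interpretation of $\widetilde{\omega}$ from \S\ref{subsection: Coleman's p-adic integration} and then pull back to the point $y$. First I would choose an affine open $Y \subset X$ whose reduction $Y_0 \subset X_0$ contains the reduction $\bar y$ of $y$, so that $y$ itself lies in a strict neighbourhood of the tube $]Y_0[_{\widehat{\calX}}$. The restriction $\widetilde{\omega}|_{\calY}$ lives in $H^1_{\fp}(\calY, \scrH^r, r+1)$, which can be computed by the same mapping-fibre description as in the proof of Lemma \ref{Lemma: syntomic P-cohomology for affine piece}. Since for the filtered coefficient $\scrH^r$ the piece $F^{r+1}$ of the de Rham complex is concentrated in degree one with component $\underline{\omega}^{r+2}|_{\calY}$ (via the Kodaira--Spencer isomorphism $\Omega^1_{\calX/K} \cong \underline{\omega}^{2}$), the same argument as in Corollary \ref{Corollary: relation with Coleman integration} produces a polynomial $P \in \Poly_1$ annihilating $H^1_{\HK}(X_0, \scrH^r)$ and acting invertibly on $H^0_{\HK}$, together with a locally analytic section $F \in \scrH^r(\calW)$ on a strict neighbourhood $\calW \supset \,]Y_0[$ satisfying $\nabla F = P(\Phi^e)(\omega|_{\calW})$, such that $\widetilde{\omega}|_{\calW}$ is represented by the pair $(F, \omega|_{\calW})$. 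Setting $F_\omega := P(\Phi^e)^{-1} F$ and using that $\scrH^r$ has no nonzero global horizontal sections on $X$ (\cite[Lemma~VII.4]{Brooks-phd}), we recover the Coleman integral $F_\omega$ of $\omega$.

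Next I would pull back along $\iota_y$. Because $\Omega^1_{y/K} = 0$, the second entry $\omega|_{\calW}$ pulls back to zero at $y$, so $\iota_y^{*}\widetilde{\omega}$ is represented in the corresponding mapping-fibre complex at $y$ by the pair $(F(y), 0)$. Now, at the point $y$ the short exact sequence from Corollary \ref{Corollary: short exact sequence of fp coh} reads
\[
0 \to \scrH^r(y) \xrightarrow{i_{\fp}} H^1_{\fp}(y, \scrH^r, r+1) \xrightarrow{\pr_{\fp}} F^{r+1}H^1_{\dR}(y, \scrH^r) \to 0,
\]
where $F^{r+1}H^0_{\dR}(y, \scrH^r) = 0$ since $\Fil^{r+1}\scrH^r = 0$, and $F^{r+1}H^1_{\dR}(y, \scrH^r) = 0$ since $y$ is zero-dimensional. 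Tracking the $P(\Phi^e)$-rescaling that intervenes between the mapping-fibre representative and the short exact sequence, the class of $(F(y), 0)$ is exactly $i_{\fp}(F_\omega(y))$. Since $s_\omega$ is defined by $i_{\fp}(s_\omega) = \iota_y^{*}\widetilde{\omega}$ and $i_{\fp}$ is injective, this yields $s_\omega = F_\omega(y)$.

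The hardest step is the first paragraph: verifying that Corollary \ref{Corollary: relation with Coleman integration}, formulated under a trivial-filtration hypothesis and with $K = K_0$, still applies to $(\scrH^r, \Phi, \Fil^{\bullet})$ at twist $r+1$ over a possibly ramified $K$. Both generalisations are essentially formal. The enlarged complex, once one uses the vanishing of all but the top Hodge piece in $F^{r+1}$ and the Kodaira--Spencer identification, still reduces the cocycle condition to the single equation $\nabla F = P(\Phi^e)\omega$ in degree one; and extending scalars to $K$ does not affect the applicability of Coleman's theorem \cite[Theorem~10.1]{C-pShimura}, so that the unipotence of $\scrH^r$ (inherent to its iterated extension structure coming from Lemma \ref{Lemma: basic properties of scrH}) still guarantees the regular-singular hypothesis needed to obtain $F_\omega$. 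Once this adapted Coleman-integration interpretation is established, the remaining pullback and diagram chase are routine.
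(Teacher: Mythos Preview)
Your approach is essentially the same as the paper's: restrict $\widetilde{\omega}$ to a strict neighbourhood containing the residue disc of $y$, invoke the Coleman-integration description (Corollary~\ref{Corollary: relation with Coleman integration}) to represent it as a pair $(G',\omega)$ with $\nabla G' = P(\Phi)\omega$, identify $G' = P(\Phi)F_\omega$, pull back to $y$, and compare with the explicit description of $i_{\fp}$ at $y$ as $s \mapsto (P(\Phi)s,0)$.

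Two small corrections. First, the polynomial $P$ cannot lie in $\Poly_1$: by Lemma~\ref{Lemma: Weil weight} the Frobenius on $H^1_{\HK}(X_0,\scrH^r)$ has Weil weight $r+1$, so you need $P\in\Poly_{r+1}$ to annihilate it (and such $P$ then automatically acts invertibly on the weight-$r$ space $H^0_{\HK}(X_0,\scrH^r)$). Second, your justification for the regular-singular hypothesis is off: Lemma~\ref{Lemma: basic properties of scrH} gives an extension of $\scrH$ by $\underline{\omega}$ and $\underline{\omega}^{-1}$, not by the structure sheaf, so it does \emph{not} show $\scrH^r$ is unipotent in the sense of Definition~\ref{Definition: nilpotent residue and unipotent overconvergent isocrystals}; indeed the paper explicitly does not claim this (see the Conditions at the start of \S\ref{section: Applications}). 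The regular-singular condition in \cite[Theorem~10.1]{C-pShimura} does hold for $\scrH^r$, but for the reason that $\scrH$ arises from the relative de Rham cohomology of a semistable family, not from any iterated-extension structure. The paper's own proof is equally brief on this point and simply invokes Corollary~\ref{Corollary: relation with Coleman integration} without relitigating its hypotheses.
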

\begin{proof}

We first choose a polynomial $P\in \Poly$ such that $H_{\syn, P}^1(X, \scrH^r, r+1) = H_{\fp}^1(X, \scrH^r, r+1)$. Then, we have the commutative diagram \[
    \begin{tikzcd}
        0 \arrow[r] & \dfrac{H_{\dR}^0(X, \scrH^r)}{F^{r+1} H_{\dR}^0(X, \scrH^r)} \arrow[r, "i_{\fp}"]\arrow[d, "\iota_y^*"] & H_{\syn, P}^1(X, \scrH^r, r+1)\arrow[r, "\pr_{\fp}"]\arrow[d, "\iota_y^*"] & F^{r+1} H_{\dR}^1(X, \scrH^r) \arrow[r]\arrow[d, "\iota_y^*"] & 0\\
        0 \arrow[r] & \dfrac{H_{\dR}^0(y, \scrH^r)}{F^{r+1} H_{\dR}^0(y, \scrH^r)} \arrow[r, "i_{\fp}"] & H_{\syn, P}^1(y, \scrH^r, r+1) \arrow[r, "\pr_{\fp}"] & F^{r+1} H_{\dR}^1(y, \scrH^r) \arrow[r] & 0
    \end{tikzcd}.
\] Note that $\frac{H_{\dR}^0(X, \scrH^r)}{F^{r+1} H_{\dR}^0(X, \scrH^r)}$ and $F^{r+1} H_{\dR}^1(y, \scrH^r)$ both vanish. 

Let $\calW$ be a strict neighbourhood of the rigid analytic space associated with an affinoid inside $X$ such that $\calW$ contains the residue disc of $y$ and all its Frobenius translations. By Corollary \ref{Corollary: relation with Coleman integration}, the restriction of $\widetilde{\omega}$ to $\calW$ is (uniquely) represented by an element of the form $(G', \omega)$ where $G' \in H^0 (\calW, \scrH^r)$ is such that $P(\Phi)\omega = \nabla G'$.
Hence we have $P(\Phi)F_\omega = G'$.

The equation $s_{\omega} = F_{\omega}(y)$ is clear due to the construction of Coleman integration (\cite{C-drl, C-pShimura}) and that \[
    i_{\fp}: H_{\dR}^0(y, \scrH^r) = \scrH^r(y) \rightarrow H_{\syn, P}^1(y, \scrH^r, r+1)
\] 
is the map $s \mapsto (P(\Phi)(s), 0)$.
\end{proof}
\begin{Remark}
We remark that the term $(\Phi F_\omega )(y)$ is not the naive $F_\omega( \phi(y))$, where $\phi: X \rightarrow X$ is the Frobenius.
Rather, it is understood as the following explanation:

Since $\scrH^r$ is an overconvergent $F$-isocrystal, there is a Frobenius neighbourhood $\calW' \subset \calW$ (see \cite[\S 10]{C-pShimura}) and a horizontal map $\Phi: \phi^* \scrH^r \rightarrow \scrH^r|_{\calW'}$.
This induces a map $\Phi$ from $(\scrH^r)_{\phi(y)}$ to $(\scrH^r)_y$, which may be thought as the `parallel transport along the Frobenius' from $\phi(y)$ to $y$.
Hence $P(\Phi)(y)$ takes value in $\scrH^r(y)$.
\blackqed
\end{Remark}

Now we construct the desired cycle $\Delta_{\varphi}$ by specifying the coefficient $\theta$. 
Observe that, since $ \bfepsilon H_{\dR}^1(A)$ is self-dual (up to a twist), we have a natural perfect pairing (see, for example, \cite[(1.1.14)]{BDP})
\[
    \Sym^r \bfepsilon H_{\dR}^1(A) \otimes \Sym^r \bfepsilon H_{\dR}^1(A)(r) \rightarrow K, \quad \beta \otimes \gamma \mapsto \langle \beta, \gamma \rangle. 
\] This induces a perfect pairing between $\Sym^r \bfepsilon H_{\dR}^1(A) \otimes \Sym^r \bfepsilon H_{\dR}^1(A)(r)$ and itself. Hence, there is an element $\one_A\in \Sym^r \bfepsilon H_{\dR}^1(A) \otimes \Sym^r \bfepsilon H_{\dR}^1(A)(r)$ such that \[
    \langle \beta\otimes \gamma, \one_A \rangle = \langle \beta, \gamma \rangle.
\] 

\begin{Definition}\label{Definition: cycle attached to isogenies}
The \textbf{cycle attached to the isogeny $\varphi$} is defined to be \[
    \Delta_{\varphi} := \theta \cdot y \in A^1(X, \scrH^{r,r}(r)) = A^1(X, \scrH^{r,r}(r))_0,
\] where $\theta = \varphi_*\one_A$ and $\varphi_*$ is the pushforward map \[
    \varphi_*: \scrH^{r,r}(x) = \Sym^r \bfepsilon H_{\dR}^1(A)\otimes \Sym^r \bfepsilon H_{\dR}^1(A)(r) \rightarrow \Sym^r \bfepsilon H_{\dR}^1(A') \otimes \Sym^r \bfepsilon H_{\dR}^1(A)(r) = \scrH^{r,r}(r)(y)
\] induced by the isogeny $\varphi: (A, i, \psi) \rightarrow (A', i', \psi')$.
\end{Definition}

\begin{proof}[Proof of Theorem \ref{Theorem: cycle attached to isogenies}]
The proof now follows easily by combining \eqref{eq: preliminary computation of cycles attached to isogenies}, Lemma \ref{Lemma: pullback of omega viewed as Coleman integration} and the definition of $\Delta_{\varphi}$.
\end{proof}

\begin{Remark}
\normalfont As before, our strategy can be directly applied to the setting of elliptic modular curves as in \cite{BDP} after taking care of the log structure on the compactified modular curves. In this case, readers should be able to compare this to the proof of Proposition 3.21 in \emph{op. cit.}.
The pushforward of the fundamental class $[A^r] \in H^0_{\dR}(A^r)$ under the map 
$ (\varphi^r, \id^r): A \rightarrow X_r$ (notations as in \emph{op. cit.}) corresponds to our element $\one_A$.
\blackqed
\end{Remark}

\begin{Remark}
\normalfont Similarly as before, if we choose $x$ and $y$ to be \emph{Heegner points} on the Shimura curve (or modular curve), it is expected that the formula in Theorem \ref{Theorem: cycle attached to isogenies} computes special values of the anti-cyclotomic $p$-adic $L$-functions as in \cite{BDP}. Since we only wanted to demonstrate the use of finite polynomial cohomology with coefficients in practice, we do not intend to link our formula with special $L$-values.  
\blackqed
\end{Remark}

\begin{Remark}
As it could be seen in the two applications, one advantage of using the finite polynomial cohomology with coefficients is that our cycles are much simpler.
In particular, we can get rid of the idempotents used to define the cycles in \cite{DR} and \cite{BDP}, which simplifies the computations.
\blackqed
\end{Remark}

\printbibliography[heading=bibintoc]

\vspace{10mm}

\begin{tabular}{l}
    T.-H.H.\\
    Laboratoire Analyse, G\'{e}om\'{e}trie et Applications\\
    Universit\'{e} Paris XIII (Sorbonne Paris Nord)  \\
    Villetaneuse, France\\
    \textit{E-mail address: }\texttt{ting-han.huang@math.univ-paris13.fr}
\end{tabular}
\begin{tabular}{l}
    J.-F.W.\\
    School of Mathematics and Statistics\\
    University College Dublin\\
    Belfield Dublin 4, Ireland\\
    \textit{E-mail address: }\texttt{ju-feng.wu@ucd.ie}
\end{tabular}


\end{document}